\definecolor{orange2}{HTML}{FF7F2A}
\tikzset{cdlabel/.style={above,sloped,%
    execute at begin node=$\scriptstyle,execute at end node=$}}
\tikzset{algarrow/.style={->, thick}}   
\tikzset{alb/.style={->, bend right=25, thick}}
\tikzset{arb/.style={->, bend left=25, thick}}
\tikzset{al/.style={->, bend right=20, thick}}
\tikzset{ar/.style={->, bend left=20, thick}}
\tikzset{als/.style={->, bend right=15, thick}}
\tikzset{ars/.style={->, bend left=15, thick}}
\tikzset{blgarrow/.style={->, thick}}
\tikzset{clgarrow/.style={->, thick}}
\tikzset{tensoralgarrow/.style={double, double equal sign distance, -implies}}
\tikzset{tensorblgarrow/.style={double, double equal sign distance, -implies}}
\tikzset{tensorclgarrow/.style={double, double equal sign distance, -implies}}
\tikzset{tensorelgarrow/.style={double, double equal sign distance, -implies}}
\tikzset{modarrow/.style={->, dashed}}
\tikzset{Amodar/.style={->, dashed}}
\tikzset{Dmodar/.style={->, dashed}}
\tikzset{DAmodar/.style={->, dashed}}
\def\mathcenter#1{%
  \vcenter{\hbox{$#1$}}%
}
\theoremstyle{definition}
\newtheorem{defn}[equation]{Definition}
\newtheorem*{rmk}{Remark}
\theoremstyle{plain}
\newtheorem{lem}[equation]{Lemma}
\newtheorem{prop}[equation]{Proposition}
\numberwithin{equation}{section}
\numberwithin{figure}{section}
\numberwithin{table}{section}
\newtheorem{ourtheorem}{Theorem}
\newtheorem{ourcorollary}[ourtheorem]{Corollary}
\newtheorem{ourconjecture}[ourtheorem]{Conjecture}
\newcommand{\set}[1]{\left\lbrace #1 \right\rbrace}
\newcommand{\setc}[2]{
  \left\lbrace #1 \, \middle\vert \, #2 \right\rbrace
}
\newcommand{\brac}[1]{\left( #1 \right)}
\newcommand{\sqbrac}[1]{\left[ #1 \right]}
\newcommand{\cbrac}[1]{\left\lbrace #1 \right\rbrace}
\newcommand{\comp}{\DOTSB\circ}
\newcommand{\vphi}{\varphi}
\renewcommand{\phi}{\vphi}
\newcommand{\eset}{\emptyset} 
\DeclareMathOperator{\id}{id}
\DeclareMathOperator{\Id}{Id}
\DeclareMathOperator{\Cone}{Cone}
\newcommand{\numset}[1]{\mathbb{#1}}
\newcommand{\Z}{\numset{Z}}
\newcommand{\F}[1]{\numset{F}_{#1}}
\newcommand{\field}[1]{\mathbf{#1}}
\renewcommand{\j}{\field{j}}
\renewcommand{\k}{\field{k}}
\newcommand{\cycgrp}[1]{\Z / #1 \Z}
\newcommand{\T}{T}
\newcommand{\eT}{T^{\mathit{el}}}
\newcommand{\ee}{e^{\mathit{el}}}
\DeclareMathOperator{\Neg}{neg}
\newcommand*{\blank}{\mathord{-}} 
\newcommand{\opminus}[1]{#1^-}
\newcommand{\ophat}[1]{\widehat{#1}}
\newcommand{\optilde}[1]{\widetilde{#1}}
\newcommand{\opdelta}[1]{#1^\delta}
\newcommand{\opdeltaa}[1]{#1 {}^\delta}
\newcommand{\opunor}[1]{#1^u}
\newcommand{\opunorr}[1]{#1 {}^u}
\DeclareMathOperator{\HF}{HF}
\DeclareMathOperator{\CFK}{CFK}
\DeclareMathOperator{\HFK}{HFK}
\DeclareMathOperator{\Kh}{Kh}
\DeclareMathOperator{\CT}{CT}
\DeclareMathOperator{\CDTD}{CDTD}
\DeclareMathOperator{\CDTA}{CDTA}
\DeclareMathOperator{\CATD}{CATD}
\DeclareMathOperator{\CATA}{CATA}
\newcommand{\HFh}{\ophat{\HF}}
\newcommand{\CFKm}{\opminus{\CFK}}
\newcommand{\CFKh}{\ophat{\CFK}}
\newcommand{\HFKm}{\opminus{\HFK}}
\newcommand{\HFKh}{\ophat{\HFK}}
\newcommand{\Kht}{\optilde{\Kh}}
\newcommand{\CTt}{\optilde{\CT}}
\newcommand{\CTd}{\opdelta{\CTt}}
\newcommand{\CTu}{\opunorr{\CTu}}
\newcommand{\CDTDm}{\opminus{\CDTD}}
\newcommand{\CDTAm}{\opminus{\CDTA}}
\newcommand{\CATDm}{\opminus{\CATD}}
\newcommand{\CATAm}{\opminus{\CATA}}
\newcommand{\CDTDt}{\optilde{\CDTD}}
\newcommand{\CDTAt}{\optilde{\CDTA}}
\newcommand{\CATDt}{\optilde{\CATD}}
\newcommand{\CDTDd}{\opdeltaa{\CDTDt}}
\newcommand{\CDTAd}{\opdeltaa{\CDTAt}}
\newcommand{\CATDd}{\opdeltaa{\CATDt}}
\newcommand{\CDTDu}{\opunorr{\CDTDt}}
\newcommand{\CDTAu}{\opunorr{\CDTAt}}
\newcommand{\CATDu}{\opunorr{\CATDt}}
\newcommand{\Ainf}{\alg{A}_\infty}
\newcommand{\alg}[1]{\mathcal{#1}}
\newcommand{\module}[1]{\mathcal{#1}}
\newcommand{\moduletype}[1]{\ensuremath{\mathit{#1}}}
\newcommand{\DD}{\moduletype{DD}}
\newcommand{\DA}{\moduletype{DA}}
\newcommand{\AD}{\moduletype{AD}}
\renewcommand{\AA}{\moduletype{AA}}
\newcommand{\DDm}[1]{\module{#1}}
\newcommand{\Idm}[1]{\Id_{\DDm{#1}}}
\newcommand{\am}[1]{\opminus{\alg{A}} (#1)}
\newcommand{\im}[1]{\opminus{\alg{I}} (#1)}
\newcommand{\ah}[1]{\ophat{\alg{A}} (#1)}
\newcommand{\asub}[1]{\alg{A}_{#1}}
\newcommand{\ad}[1]{\opdelta{\ophat{\alg{A}}} (#1)}
\newcommand{\au}[1]{\opunor{\ophat{\alg{A}}} (#1)}
\newcommand{\ads}[1]{\opdelta{\asub{#1}}}
\newcommand{\aus}[1]{\opunor{\asub{#1}}}
\newcommand{\aun}{\opunor{\asub{n}}}
\newcommand{\as}[1]{\ads{#1}}
\newcommand{\an}{\aun}
\newcommand{\subsetIdem}[1]{\mathbf{#1}}
\newcommand{\sss}{\subsetIdem{s}}
\newcommand{\ttt}{\subsetIdem{t}}
\DeclareMathOperator{\inv}{inv}
\newcommand{\bdy}{\partial}
\DeclareMathOperator{\Int}{Int}
\newcommand{\HD}{\mathcal H}
\newcommand{\curves}[1]{\boldsymbol{#1}}
\newcommand{\alphas}[1][]{%
  \ifthenelse{\equal{#1}{}}{\curves{\alpha}}{\curves{\alpha^{#1}}}
}
\newcommand{\betas}[1][]{%
  \ifthenelse{\equal{#1}{}}{\curves{\beta}}{\curves{\beta_{#1}}}
}
\newcommand{\position}[2]{#1^#2}
\newcommand{\FL}{\mathit{FL}}
\newcommand{\FR}{\mathit{FR}}
\newcommand{\BL}{\mathit{BL}}
\newcommand{\BR}{\mathit{BR}}
\newcommand{\leftbdy}[1]{\position{#1}{L}}
\newcommand{\rightbdy}[1]{\position{#1}{R}}
\newcommand{\frontleftbdy}[1]{\position{#1}{{\FL}}}
\newcommand{\frontrightbdy}[1]{\position{#1}{{\FR}}}
\newcommand{\backleftbdy}[1]{\position{#1}{\BL}}
\newcommand{\backrightbdy}[1]{\position{#1}{\BR}}
\newcommand{\onalphas}[1]{\position{#1}{\alpha}}
\newcommand{\onbetas}[1]{\position{#1}{\beta}}
\newcommand{\bdyL}{\leftbdy{\bdy}}
\newcommand{\bdyR}{\rightbdy{\bdy}}
\newcommand{\bdya}{\onalphas{\bdy}}
\newcommand{\bdyb}{\onbetas{\bdy}}
\newcommand{\bdyFL}{\frontleftbdy{\bdy}}
\newcommand{\bdyFR}{\frontrightbdy{\bdy}}
\newcommand{\bdyBL}{\backleftbdy{\bdy}}
\newcommand{\bdyBR}{\backrightbdy{\bdy}}
\newcommand{\markers}[1]{\mathbb{#1}}
\newcommand{\OO}{\markers{O}}
\newcommand{\XX}{\markers{X}}
\newcommand{\YY}{\markers{Y}}
\newcommand{\ocl}[1]{\leftbdy{o} (#1)}
\newcommand{\ocr}[1]{\rightbdy{o} (#1)}
\newcommand{\unocl}[1]{\overline{\leftbdy{o}} (#1)}
\newcommand{\unocr}[1]{\overline{\rightbdy{o}} (#1)}
\newcommand{\ideml}[1]{\leftbdy{e}_D (#1)}
\newcommand{\idemr}[1]{\rightbdy{e}_D (#1)}
\newcommand{\algl}[1]{\leftbdy{a} (#1)}
\newcommand{\algr}[1]{\rightbdy{a} (#1)}
\newcommand{\gen}[1]{\mathbf{#1}}
\newcommand{\genset}[1]{\mathfrak{#1}}
\newcommand{\subgenset}[1]{\mathbf{#1}}
\newcommand{\x}{\gen{x}}
\newcommand{\y}{\gen{y}}
\newcommand{\z}{\gen{z}}
\newcommand{\gt}{\gen{t}}
\renewcommand{\SS}{\genset{S}}
\newcommand{\II}{\subgenset{I}}
\newcommand{\NN}{\subgenset{N}}
\DeclareMathOperator{\Rect}{Rect}
\DeclareMathOperator{\Tri}{Tri}
\DeclareMathOperator{\Quad}{Quad}
\DeclareMathOperator{\Pent}{Pent}
\DeclareMathOperator{\Hex}{Hex}
\DeclareMathOperator{\Hept}{Hept}
\newcommand{\emptypoly}[2][]{%
  #2^{\ifthenelse{\equal{#1}{}}{\circ}{\circ, #1}}
}
\newcommand{\eRect}{\emptypoly{\Rect}}
\newcommand{\eTri}{\emptypoly{\Tri}}
\newcommand{\ePent}{\emptypoly{\Pent}}
\newcommand{\eQuad}{\emptypoly{\Quad}}
\newcommand{\eHex}{\emptypoly{\Hex}}
\newcommand{\eHept}{\emptypoly{\Hept}}
\newcommand{\eRectI}{\emptypoly[\Int]{\Rect}}
\newcommand{\eRectL}{\emptypoly[L]{\Rect}}
\newcommand{\eRectR}{\emptypoly[R]{\Rect}}
\newcommand{\ePentL}{\emptypoly[L]{\Pent}}
\newcommand{\eHexI}{\emptypoly[\Int]{\Hex}}
\newcommand{\eHexR}{\emptypoly[R]{\Hex}}
\newcommand{\poly}[1]{\mathscr{#1}}
\newcommand{\PP}{\poly{P}}
\newcommand{\TT}{\poly{T}}
\newcommand{\HH}{\poly{H}}
\newcommand{\Q}{\poly{Q}}
\newcommand{\K}{\poly{K}}
\newcommand{\nan}{\mathtt{o}}
\newcommand{\lan}{\mathtt{s}}
\newcommand{\ran}{\mathtt{d}}
\newcommand{\lran}{\mathtt{sd}}
\newcommand{\htd}[2]{\xrightarrow{(#1, #2)}}
\DeclareMathOperator{\NM}{NM}
\newcommand{\htdnm}{\xrightarrow{\NM}}
\newcommand{\htdid}{\xrightarrow{\Id}}
\newcommand{\mr}[2]{\multirow{#1}{*}{#2}}
\newcommand{\vs}[1]{\rule{0pt}{#1 pt}}
\newcommand{\mra}[3]{{\mr{#1}{\vs{#2} #3 \hspace*{0.0pt}}}}
\newcommand{\str}[1]{r_{#1}}
\DeclareMathOperator{\hgt}{ht}
\newcommand{\numOO}[1]{n_{\OO} (#1)}
\newcommand{\numXX}[1]{n_{\XX} (#1)}
\newcommand{\eiota}{\iota^{\mathit{el}}}
\newcommand{\orf}{f^{\mathit{or}}}
\newcommand{\degd}{\deg_\delta}
\newcommand{\Ppm}{P_{+,-}}
\newcommand{\Ppmt}{\widetilde{P}_{+,-}}
\DeclareMathOperator{\rk}{rk}
\begin{document}
\title{Skein relations for tangle Floer homology}
\author{Ina Petkova}
\address {Department of Mathematics, Dartmouth College, Hanover, NH 03755, 
  USA}
\email{\href{mailto:ina.petkova@dartmouth.edu}{ina.petkova@dartmouth.edu}}
\urladdr{\url{http://www.math.dartmouth.edu/~ina/}}
\author{C.-M.~Michael Wong}
\address {Department of Mathematics, Louisiana State University, Baton Rouge, LA 70803, USA}
\email{\href{mailto:cmmwong@lsu.edu}{cmmwong@lsu.edu}}
\urladdr{\url{http://www.math.lsu.edu/~cmmwong/}}
\subjclass[2010]{57M58 (Primary); 57M25, 57M27 (Secondary)}
\keywords{Tangles, knot Floer homology, bordered Floer homology, skein relations}
\begin{abstract}
In a previous paper, V\'ertesi and the first author used grid-like Heegaard diagrams to define tangle Floer homology, which associates to a tangle $\T$ a differential graded bimodule $\CTt (\T)$. If $L$ is obtained by gluing together $\T_1, \dotsc, \T_m$, then the knot Floer homology $\HFKh (L)$ of $L$ can be recovered from $\CTt (\T_1), \dotsc, \CTt (\T_m)$. In the present paper, we prove combinatorially that tangle Floer homology satisfies unoriented and oriented skein relations, which are analogues of the skein exact triangles for knot Floer homology.
\end{abstract}
\maketitle



\section{Introduction} 
\label{sec:introduction}



\emph{Heegaard Floer homology} is an invariant of closed, oriented $3$-manifolds introduced in \cite{osz8} that has found numerous applications in recent years, and is known to be equivalent \cite{klt1, klt2, klt3, klt4, klt5} to monopole Floer homology \cite{km}, and also equivalent \cite{cgh} to embedded contact homology \cite{hut, ht1, ht2}. In \cite{hfk, jrth}, it is extended to give an invariant, \emph{knot Floer homology}, of null-homologous knots in closed, oriented $3$-manifolds, which is further generalized to oriented links in \cite{oszlink}. Knot Floer homology comes in many flavors; its simplest form, $\HFKh (L)$ for an oriented link $L$, is a bigraded module over $\F{2}$ or $\Z$. There is a combinatorial description of the knot Floer homology of links $L \subset S^3$ called \emph{grid homology} \cite{mos, most, OSSbook}, defined using \emph{grid diagrams}. Because knot Floer homology, defined analytically, is known to categorify the Alexander polynomial, it is often compared with Khovanov homology $\Kht (L)$ \cite{kh1, kh3}, a link invariant from representation theory that categorifies the Jones polynomial.

Ozsv\'ath and Szab\'o \cite{bdc} show that the Heegaard Floer homology $\HFh (- \Sigma (L))$ of the branched double cover of a link $L$ satisfies an \emph{unoriented skein exact triangle}, from which they derive a spectral sequence from $\Kht (L)$ to $\HFh (- \Sigma (L))$, thus relating the two theories. Following this, Manolescu \cite{mskein}, by counting holomorphic polygons, shows that knot Floer homology with $\F{2}$ coefficients also satisfies an unoriented skein exact triangle, and uses it to show that $\operatorname{rk} \HFKh (L; \F{2}) = 2^{\ell-1} \det (L)$ for a quasi-alternating link $L$ with $l$ components. Manolescu and Ozsv\'ath \cite{quasi} then use the skein exact sequence to show that quasi-alternating links are \emph{Floer-homologically $\sigma$-thin} over $\F{2}$.

While the discussion above seems to suggest that there may be a spectral sequence relating $\Kht (L)$ and $\HFKh (L)$ that comes from iterating Manolescu's skein relation, Baldwin and Levine \cite{bl} discover that the $E_2$ page of the spectral sequence they so construct is not even a link invariant. However, one may be able to relate the two theories with some modifications: Baldwin, Levine, and Sarkar \cite{bls} construct another spectral sequence that converges to $\HFKh (L) \otimes V^n$ for some module $V$ of rank $2$ and some integer $n$, where the differential $D^0$ counts some of the holomorphic polygons in Manolescu's unoriented skein sequence. They conjecture that the $E_1$ page of this spectral sequence coincides with a variant of Khovanov homology for pointed links, the proof of which would imply a version of the following conjecture, first formulated by Rasmussen \cite{rasconj} for knots:

\begin{ourconjecture}
  \label{conj:rasconj}
  For any $\ell$-component link $L \subset S^3$, we have
  \[
    2^{\ell - 1} \rk \Kht (L) \geq \rk \HFKh (L).
  \]
\end{ourconjecture}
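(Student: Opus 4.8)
To attack Conjecture~\ref{conj:rasconj}, the plan is to use the unoriented skein relation for $\CTt$ proved in this paper to build, at the level of tangles, the ``cube of resolutions'' complex underlying the Baldwin--Levine--Sarkar spectral sequence, and then to compare its early pages with Khovanov homology. Fix a diagram $D$ for $L$ with $n$ crossings, and present $D$ as a gluing $\T_1 \cup \dotsb \cup \T_m$ of elementary tangles in which each crossing lies in a one-crossing piece. Applying the unoriented skein relation to each such piece and assembling the resulting mapping cones over the cube $\{0,1\}^n$ produces a filtered differential bimodule whose associated graded at a vertex $v$ is the tangle invariant of the fully resolved diagram $D_v$; gluing the outer ends and passing to homology yields a spectral sequence converging to $\HFKh(L) \otimes V^{\otimes N}$, where $V$ has rank $2$ and $N$ is an explicit function of $n$ and $\ell$ obtained by tracking the grading shifts in the skein relation. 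Its $E_1$ page is $\bigoplus_v \HFKh(D_v)$; since each $D_v$ is an unlink, $\HFKh(D_v) \cong V^{\otimes k_v}$, where $k_v$ is the number of components of $D_v$, and the $E_1$ differential is assembled from the leading-order terms of the elementary skein maps.

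The crux is to identify this complex with the Khovanov-type cube of resolutions. A suitable pointed Khovanov homology $\Kh^{\mathrm{pt}}(L)$ is the homology of a cube over the \emph{same} vertex set, assigning $V^{\otimes k_v}$ to $D_v$ --- so the $E_1$-level ranks already match --- with differential built from the Bar--Natan merge and split cobordism maps. The key step is therefore to prove that the leading-order term of the skein bimodule map for a one-crossing elementary tangle reproduces, in the $\delta$-graded (unoriented) theory, the appropriate merge or split map, so that the $E_1$ differential above is conjugate to the Khovanov differential; this is precisely the Baldwin--Levine--Sarkar conjecture, now phrased as a local statement. Granting it, $E_1 \cong \Kh^{\mathrm{pt}}(L)$, and since the later differentials cannot raise rank,
\[
  2^{N} \rk \HFKh(L) = \rk E_\infty \le \rk E_1 = \rk \Kh^{\mathrm{pt}}(L) \le 2^{\,N + \ell - 1}\, \rk \Kht(L),
\]
where the last inequality is the elementary comparison between the pointed and ordinary theories (a matter of basepoint bookkeeping, since basepoints interact with merges and splits by tensoring with $V$); dividing by $2^N$ gives $2^{\ell - 1} \rk \Kht(L) \ge \rk \HFKh(L)$.

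The main obstacle is the local identification in the second paragraph. The skein relations established here guarantee only that the three-term exact triangles exist, not that their connecting maps, once assembled over the whole cube, form the Bar--Natan complex rather than a deformation of it --- and by Baldwin--Levine \cite{bl} the naive iteration does \emph{not} produce a link invariant, so some such subtlety is unavoidable. The advantage of the tangle formulation is that the question is reduced to a single one-crossing tangle: one computes its elementary skein bimodule map explicitly, compares the top-degree part with a $(1+1)$-dimensional cobordism map, and then checks that the remaining terms contribute only to $d_2, d_3, \dotsc$ --- i.e., to the higher, ``diagonal'' maps in the cube --- and so do not disturb the identification of $E_1$ with $\Kh^{\mathrm{pt}}(L)$. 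The remaining ingredients, namely pinning down $N$ and verifying the pointed-versus-ordinary Khovanov rank relation, are routine grading computations that can be read off from the skein relation and from the behaviour of basepoints under Khovanov's TQFT.
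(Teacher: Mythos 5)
The statement you are addressing is labeled a \emph{Conjecture} in the paper, and it is not proved there; the paper only mentions it as motivation, noting that it was formulated by Rasmussen and that a proof of the Baldwin--Levine--Sarkar conjecture ``would imply a version of'' it. Your proposal is not a proof either: you explicitly ``grant'' the assertion that the $E_1$ differential of the tangle-Floer cube agrees with the Khovanov differential, and as you yourself observe, that assertion \emph{is} the Baldwin--Levine--Sarkar conjecture, which is open. A proof that reduces an open conjecture to another open conjecture is a reduction, not a proof, and it should be presented as such.

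Beyond that headline gap, two subsidiary points in your sketch are asserted more confidently than is warranted. First, the inequality $\rk \Kh^{\mathrm{pt}}(L) \le 2^{N+\ell-1}\rk\Kht(L)$ is not ``a matter of basepoint bookkeeping'' in general: the relationship between reduced, unreduced, and pointed Khovanov homologies depends delicately on where the basepoints sit relative to merges and splits, and one must actually verify that the specific pointed theory produced by your cube satisfies the stated bound with the claimed exponent. Second, the Baldwin--Levine obstruction you cite is more serious than you let on: they show the naive $E_2$ page is not a link invariant, which means the deviation of the tangle-Floer cube from the Bar--Natan cube is not merely a matter of higher differentials $d_2, d_3, \dotsc$ that ``do not disturb $E_1$''---the $E_1$ differential itself can fail to match the Khovanov one, and ruling this out is exactly what the Baldwin--Levine--Sarkar construction (which modifies the differential, not just the higher pages) is designed to do. None of this is resolved by the skein relations proved in the present paper; Theorems~\ref{thm:ourtheorem} and~\ref{thm:gradings} give the three-term triangles and their grading shifts, but say nothing about the comparison of the connecting maps with TQFT cobordism maps.
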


\begin{rmk}
  \label{rmk:dowlin}
  During the peer review process of this article, Dowlin \cite{dowlin} has announced the existence of a spectral sequence relating $\Kht (L)$ and $\HFKh (L)$, and hence a proof of Conjecture~\ref{conj:rasconj}. How this spectral sequence compares with the candidate constructed by Baldwin, Levine, and Sarkar \cite{bls} remains unknown.
\end{rmk}

To better understand Manolescu's skein relation and the related conjectures, there are several approaches. One idea involves computing the maps in the skein relation combinatorially: The second author \cite{wong} gives a version of the skein sequence for grid homology, generalizing the results on quasi-alternating links \cite{mskein, quasi} to $\Z$-coefficients and giving a spectral sequence from a cube-of-resolutions complex with no diagonal maps. Lambert-Cole \cite{lc} exploits the computability in \cite{wong} to show that $\delta$-graded knot Floer homology is invariant under Conway mutation by a large class of tangles.

Another idea, suggested to the authors by Levine \cite{adampc}, is to understand the maps in the skein relation on a local level, by slicing the links involved into tangles and studying a tangle version of knot Floer homology. One such theory is \emph{tangle Floer homology}, defined by V\'ertesi and the first author \cite{pv}. In this theory, to a sequence of points one associates a differential graded algebra, and to a tangle $\T \subset I \times \mathbb{R}^2$ one associates an $\Ainf$-module $\CTt (\T)$ over the differential graded algebra(s) associated to its boundary. If a link $L$ is obtained by gluing together tangles $\T_1, \dotsc, \T_m$, then $\HFKh (L)$ can be recovered by taking a suitable notion of tensor product, called the \emph{box tensor product}, of $\CTt (\T_1), \dotsc, \CTt (\T_m)$. The $\Ainf$-modules $\CTt (\T)$ are defined combinatorially using \emph{nice diagrams} (in the sense of Sarkar and Wang \cite{sw}) that are similar to grid diagrams. The tangle Floer package is inspired by \emph{bordered Floer homology}, an invariant of $3$-manifolds with parametrized boundary that can be used to recover the Heegaard Floer homology of a manifold obtained by gluing, defined by Lipshitz, Ozsv\'ath, and Thurston \cite{bfh2}.

Similar to knot Floer homology, tangle Floer homology also comes in multiple flavors. For example, $\CDTDu (\T, n)$ is an ungraded \emph{type~$\DD$ structure}, $\CDTDd (\T, n)$ is a $\delta$-graded type~$\DD$ structure, and $\CDTDt (\T, n)$ is an $(M, A)$-bigraded type~$\DD$ structure, where $M$ and $A$ are the Maslov and Alexander gradings respectively. As the notation suggests, these structures do not depend on the choice of Heegaard diagram $\HD$ for $\T$, but only on the number of markers in $\HD$, which we denote by $n$. There is also a richer bigraded version, $\CDTDm (\HD)$, which recovers the richer version of knot Floer homology $\HFKm (L)$, and which is believed but not yet proven to be an invariant of $\T$. We postpone the precise definitions of these, as well as other \emph{type~$\DA$}, \emph{type~$\AD$}, and \emph{type~$\AA$} structures, to Section~\ref{sec:background}.

The first part of this paper addresses the idea above; namely, we prove an unoriented skein relation for tangle Floer homology. Suppose $\T_\infty$, $\T_0$, and $\T_1$ are three unoriented tangles in $I \times \mathbb{R}^2$ identical except near a point, as in Figure~\ref{fig:T_inf01}.
\begin{figure}[h]
  \centering
  \labellist
  \pinlabel  $\T_\infty$ at 25 55
  \pinlabel $\T_0$ at 152 55
  \pinlabel $\T_1$ at 280 55
  \endlabellist
  \includegraphics[scale=0.97]{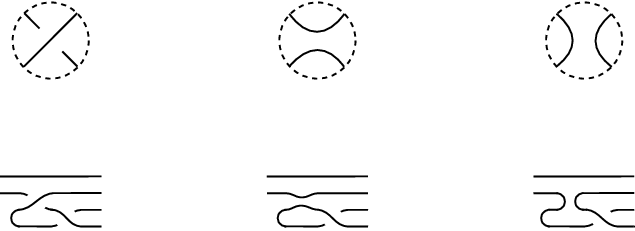}
  \caption{Top: Three tangles $\T_\infty$, $\T_0$, $\T_1$ form an unoriented skein triple if they are identical except near a point, as displayed. Bottom: A specific example of an unoriented skein triple.}
  \label{fig:T_inf01}
\end{figure}

\begin{ourtheorem}
  \label{thm:ourtheorem}
  There exists a type~$\DD$ homomorphism $F_0 \colon \CDTDu (\T_0, n) \to \CDTDu (\T_1, n)$ such that
  \[
    \CDTDu (\T_\infty, n) \simeq \Cone (F_0 \colon \CDTDu (\T_0, n) \to \CDTDu (\T_1, n))
  \]
  as ungraded type~$\DD$ structures. Analogous statements hold for type~$\DA$, $\AD$, and $\AA$ structures.
\end{ourtheorem}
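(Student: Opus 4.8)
The plan is to localize the problem near the crossing and then transport the result by the pairing theorem of \cite{pv}. Since $\T_\infty$, $\T_0$, and $\T_1$ agree outside a ball $B$, we may write $\T_i = \T' \cup_B \eT_i$, where $\T'$ is the common complementary tangle and $\eT_\infty$, $\eT_0$, $\eT_1$ are the three elementary tangles supported in $B$: a single crossing and its two planar smoothings (each of the latter a composition of elementary cup, cap, and identity tangles). By the pairing theorem, $\CDTDu(\T_i, n)$ is homotopy equivalent to a box tensor product $M_{\T'} \boxtimes \CDTDu(\eT_i, n')$ of the appropriate flavor, where $M_{\T'}$ is the tangle Floer module of the fixed tangle $\T'$ and $n'$ depends only on $n$. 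Now box tensor product is functorial, sends type~$\DD$ morphisms to type~$\DD$ morphisms and homotopy equivalences to homotopy equivalences, and satisfies $\Cone(f) \boxtimes M \simeq \Cone(\Id \boxtimes f)$ whenever the relevant boundedness holds; granting this, it suffices to construct a type~$\DD$ morphism $F_0^{el} \colon \CDTDu(\eT_0, n') \to \CDTDu(\eT_1, n')$ with $\CDTDu(\eT_\infty, n') \simeq \Cone(F_0^{el})$, and then set $F_0 = \Id_{M_{\T'}} \boxtimes F_0^{el}$. The passage to the type~$\DA$, $\AD$, and $\AA$ statements is then effected by box-tensoring with the relevant type-changing bimodules of \cite{pv}, which again preserves mapping cones.

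\textbf{The local model and the map.} The three elementary bimodules $\CDTDu(\eT_\infty, n')$, $\CDTDu(\eT_0, n')$, $\CDTDu(\eT_1, n')$ are small and explicit: they are recorded in \cite{pv} (and recalled in Section~\ref{sec:background}), each computed from the rectangles in a fixed nice Heegaard diagram. Choosing these diagrams to coincide away from the part of $B$ where the tangles differ, the crossing diagram carries a distinguished pair of nearby intersection points, and I would define $F_0^{el}$ on generators by the natural map counting the ``skein'' rectangles that straddle the two smoothings, following Manolescu \cite{mskein} and its grid-homology refinement \cite{wong}. Because we are working with ungraded structures there are no grading shifts to track. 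Verifying that $F_0^{el}$ satisfies the type~$\DD$ structure equation is then a finite check, reducing to a local count of rectangles near the crossing.

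\textbf{Identifying the cone and concluding.} It remains to produce an explicit homotopy equivalence $\Cone(F_0^{el}) \simeq \CDTDu(\eT_\infty, n')$. The mapping cone contains an acyclic type~$\DD$ summand supported on the crossing-local generators coming from $\eT_0$ and $\eT_1$; cancelling it by Gaussian elimination (the cancellation lemma for type~$\DD$ structures) leaves, after a change of basis, exactly the module $\CDTDu(\eT_\infty, n')$. Combining this with the reduction step gives $\CDTDu(\T_\infty, n) \simeq M_{\T'} \boxtimes \Cone(F_0^{el}) \simeq \Cone(\Id_{M_{\T'}} \boxtimes F_0^{el}) = \Cone(F_0)$, and the same argument with the type-changing bimodules yields the analogous statements for the type~$\DA$, $\AD$, and $\AA$ structures.

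\textbf{Main obstacle.} The crux is the final identification: the careful bookkeeping of idempotents and algebra elements needed to show that the cancelled mapping cone is \emph{literally} $\CDTDu(\eT_\infty, n')$, together with the verification that $F_0^{el}$ (hence $F_0$) genuinely solves the type~$\DD$ structure equation on the nose rather than merely up to homotopy. A secondary point that needs care is confirming that the pairing theorem and the ``box tensor product preserves mapping cones'' property hold in precisely the (ungraded, possibly unbounded) setting at hand, so that the reduction to elementary tangles is fully justified.
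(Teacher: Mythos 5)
Your overall architecture — localize to the elementary crossing tangle, establish the local cone identification, and transport via box tensor product — is indeed the paper's strategy. The reduction step (pairing theorem, functoriality of $\boxtimes$, passing to $\DA$/$\AD$/$\AA$ by tensoring with appropriate bimodules) is sound and matches the paper.

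However, your third step — ``Identifying the cone and concluding'' — contains a genuine gap. You propose to show $\Cone(F_0^{el}) \simeq \CDTDu(\eT_\infty, n')$ by finding an acyclic summand and cancelling it via Gaussian elimination. This does not work, because the three elementary type~$\DD$ structures $\CDTDu(\HD_\infty)$, $\CDTDu(\HD_0)$, $\CDTDu(\HD_1)$ are built from Heegaard diagrams with genuinely different $\beta$-curves (they share all but one $\beta$-circle), so their generator sets are different and there is no basis in which $\Cone(F_0^{el})$ decomposes as $\CDTDu(\eT_\infty, n') \oplus (\text{acyclic})$. The correct mechanism — which is what the paper (following Ozsv\'ath--Szab\'o, Manolescu, and Wong) actually uses — is the \emph{triangle detection lemma} (Lemma~\ref{lem:hom_alg} here): one must construct, for all three indices $k \in \set{\infty, 0, 1}$ cyclically, not only the maps $f_k \colon \DDm{M}_k \to \DDm{M}_{k+1}$ but also null-homotopies $\phi_k$ of $f_{k+1} \circ f_k$ and homotopies $\psi_k$ exhibiting $f_{k+2} \circ \phi_k + \phi_{k+1} \circ f_k \simeq \Id_k$. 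The homotopy equivalence $\DDm{M}_k \simeq \Cone(f_{k+1})$ is then assembled explicitly from $f_k, \phi_k, \psi_k$; it is not obtained by cancellation. Your proposal names none of $\phi_k$, $\psi_k$, or the structure that replaces cancellation, so the key identification is unproven.

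A secondary imprecision: the map $f_k$ is not simply a count of ``skein rectangles.'' In the paper's combined diagram it is $\TT_k + \PP_k$, a count of \emph{triangles and pentagons} (with $\phi_k$ counting quadrilaterals and hexagons, and $\psi_k$ counting heptagons). Verifying the three conditions of the triangle detection lemma then requires a substantial case analysis of how these polygons juxtapose and cancel, including ``special cases'' where distinct domains cancel against one another; this is where most of the work in the proof lies, not in a ``finite local check.''
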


In fact, we prove a strengthened version of Theorem~\ref{thm:ourtheorem} for oriented tangles, taking into account the $\delta$-grading. Suppose $\T_\infty$, $\T_0$, and $\T_1$ are three tangles as above, but oriented, and choose corresponding oriented planar diagrams that are identical (after forgetting the orientations) except near a point. Let $\Neg (\T_k)$ denote the number of negative crossings in the diagram for $\T_k$, and let $e_0 = \Neg (\T_1) - \Neg (\T_0)$ and $e_1 = \Neg (\T_\infty) - \Neg (\T_1)$.

\begin{ourtheorem}
  \label{thm:gradings}
  There exists a type~$\DD$ homomorphism $F_0 \colon \CDTDd (\T_0, n) \to \CDTDd (\T_1, n)$ of $\delta$-degree $(e_0 - 1)/2$ such that
  \[
    \CDTDd (\T_\infty, n) \simeq \Cone (F_0 \colon \CDTDd (\T_0, n) \to \CDTDd (\T_1, n)) \sqbrac{\frac{e_1-1}{2}}
  \]
  as $\delta$-graded type~$\DD$ structures. Analogous statements hold for type~$\DA$, $\AD$, and $\AA$ structures.
\end{ourtheorem}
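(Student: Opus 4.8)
The plan is to realize the three tangles $\T_\infty, \T_0, \T_1$ via Heegaard diagrams that differ only in a small disk around the relevant crossing, and to analyze the three local pictures simultaneously. Concretely, I would first reduce to the case where $\T_0$ is a trivial (identity) tangle near the marked point, so that the local picture for $\T_1$ is a single crossing and the local picture for $\T_\infty$ is the ``horizontal smoothing'' (a pair of cups and caps, or a $\partial$-parallel configuration). Near that disk one can draw three $\alpha$- and $\beta$-curves that produce, on the level of type~$\DD$ structures, the familiar local mapping cone: the generators of $\CDTDu(\T_\infty, n)$ split, according to which intersection point the generator uses near the crossing, into a subset isomorphic (as a module) to the generators of $\CDTDu(\T_0,n)$ and a subset isomorphic to those of $\CDTDu(\T_1,n)$. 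The differential on $\CDTDu(\T_\infty,n)$ then decomposes into the internal differentials $\partial_0$ and $\partial_1$ of the two pieces, plus an ``off-diagonal'' component which is precisely the homomorphism $F_0$; this off-diagonal term counts exactly those nice-diagram rectangles/polygons that cross the distinguished region in the local picture.

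Next I would verify that $F_0$ is indeed a type~$\DD$ homomorphism, i.e. that it intertwines the $\DD$ structure maps. This is where having nice diagrams (in the Sarkar--Wang sense) pays off: all domains that contribute are embedded rectangles (and, when we glue to the rest of $\T$, strips interacting with the algebra on the boundary), so the required identity $\partial_1 \circ F_0 + F_0 \circ \partial_0 = 0$ is a count of the ends of index-one moduli spaces that pass through the crossing region, broken into the two possible ways. The fact that $\CDTDu(\T_\infty, n)$ is itself a type~$\DD$ structure (so $\partial^2 = 0$ there) gives, after the block decomposition, exactly $\partial_0^2 = 0$, $\partial_1^2 = 0$, and the chain-map relation for $F_0$ for free. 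That $\CDTDu(\T_\infty,n)$ is the mapping cone of $F_0$ is then a tautology once the block form of the differential is established. The one-variable versions (type~$\DA$, $\AD$, $\AA$) follow by the same local analysis, since gluing with a trivial tangle on the relevant side converts the $D$ side into an $A$ side without touching the local picture; alternatively they follow by box-tensoring Theorem~\ref{thm:ourtheorem} for $\DD$ with the appropriate bimodule for the identity tangle, using that $\Cone$ commutes with $\boxtimes$.

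For Theorem~\ref{thm:gradings} I would upgrade the above to keep track of the $\delta$-grading. The content is bookkeeping: the local generators on the two sides of the cone carry $\delta$-gradings that differ from the ``intrinsic'' ones of $\CDTDd(\T_0,n)$ and $\CDTDd(\T_1,n)$ by an amount determined by the number of negative crossings, since the Maslov and Alexander normalizations in \cite{pv} depend on $\Neg(\T_k)$. Computing the $\delta$-grading change of the local rectangle that defines $F_0$ shows $F_0$ has $\delta$-degree $(e_0-1)/2$, and comparing the normalization of $\CDTDd(\T_\infty,n)$ to that of the cone (again via the difference $\Neg(\T_\infty) - \Neg(\T_1) = e_1$) produces the overall shift $[(e_1-1)/2]$. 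I expect the main obstacle to be precisely this grading computation: one must pin down the absolute $\delta$-grading conventions near the crossing consistently across all three diagrams, and check that the local contribution of $F_0$'s defining rectangle is independent of the choice of nice diagram for the rest of $\T$ — this is where a careful comparison with the grid-homology skein map of \cite{wong}, and with the knot Floer skein triangle of \cite{mskein}, serves as a sanity check on the signs and degree shifts.
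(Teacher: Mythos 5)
Your core strategy---that the generators of $\CDTDu(\T_\infty,n)$ (resp.\ $\CDTDd$) split into a subset matching those of $\T_0$'s bimodule and a subset matching those of $\T_1$'s, so that $\CDTDu(\T_\infty,n)$ is literally a mapping cone with $F_0$ the off-diagonal block of the differential---is the mechanism for the \emph{oriented} skein relation, not the unoriented one. The paper uses precisely that block/filtration structure in Section~\ref{sec:oriented}, where $\SS(\HD_+)$ partitions as $\II(\HD_+)\cup\NN(\HD_+)$ according to whether a generator uses the distinguished point $c$, and $\CDTDm(\HD_+)=\Cone(\delta^1_{\II,\NN})$. But for the unoriented triple $(\T_\infty,\T_0,\T_1)$ the three Heegaard diagrams $\HD_\infty,\HD_0,\HD_1$ of Section~\ref{sec:proof} share all $\alpha$-arcs and all $\beta$-circles except one; they differ by \emph{replacing} a $\beta$-circle, not by repositioning basepoints, and the three generator sets $\SS(\HD_\infty),\SS(\HD_0),\SS(\HD_1)$ all have the same cardinality. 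No one of them is a disjoint union of the other two, so there is no block form of the differential, and the chain map $F_0$ cannot be read off as ``those rectangles that cross the distinguished region.'' What the paper actually does is run an Ozsv\'ath--Szab\'o/Manolescu-style exact-triangle argument: Lemma~\ref{lem:hom_alg} produces the homotopy equivalence $\DDm M_k\simeq\Cone(f_{k+1})$ \emph{abstractly} from a collection of morphisms $f_k,\phi_k,\psi_k$ satisfying $\partial f_k=0$, $f_{k+1}\circ f_k\simeq 0$, and $f_{k+2}\circ\phi_k+\phi_{k+1}\circ f_k\simeq\Id_k$; Proposition~\ref{prop:main} supplies these by counting triangles, pentagons, quadrilaterals, hexagons, and heptagons in the combined diagram (with $f_k=\TT_k+\PP_k$, $\phi_k=\Q_k+\HH_k$, $\psi_k=\K_k$), and Lemmas~\ref{lem:cond1}--\ref{lem:cond3} verify the identities by enumerating all juxtapositions and special cancellations. (As a smaller point, you also have the roles reversed: per Figure~\ref{fig:T_inf01}, $\T_\infty$ is the crossing and $\T_0,\T_1$ its two resolutions, not the other way around.)

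Because the underlying mechanism is different, the grading step you sketch also misses where the work lies. There is no single ``local rectangle defining $F_0$'' whose $\delta$-shift you can compute; $F_0$ is built from triangle and pentagon counts plus two reorientation isomorphisms. The paper's computation proceeds via Lemma~\ref{lem:deltarect} (a general formula for the $\delta$-grading change of any rectangle), Lemma~\ref{lem:compat} (the degrees of $f_k,\phi_k,\psi_k$ when the elementary tangles carry compatible orientations), Lemma~\ref{lem:iota_gr} (the degree shift of the reorientation isomorphism $\iota$ is $-e/2$), and then Propositions~\ref{prop:smallgradings}--\ref{prop:biggradings} assemble these into the degrees of $F_k,\Phi_k,\Psi_k$, which are finally threaded through the explicit mapping-cone maps $G_\infty,G'_\infty,H_\infty,H'_\infty$ from Lemma~\ref{lem:hom_alg} to extract the shift $\bigl[\tfrac{e_1-1}{2}\bigr]$. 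The observation that the shifts should involve $\Neg(\T_k)$ is correct in spirit, but to make it rigorous you need the surgery-triangle framework, not the block-decomposition one.
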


\begin{rmk}
  Following \cite{OSSbook, pv}, our $\delta$-gradings differ from those in \cite{quasi, wong} by a factor of $-1$.
\end{rmk}

By taking the box tensor product, we immediately obtain a combinatorially computable unoriented skein exact triangle for knot Floer homology analogous to those in \cite{mskein, wong}. Suppose $L_\infty$, $L_0$, and $L_1$ are three oriented links that are identical (after forgetting the orientations) except near a point, so that they form an unoriented skein triple. Let $\ell_\infty$, $\ell_0$, and $\ell_1$ be the number of components of $L_\infty$, $L_0$, and $L_1$ respectively, and define $\Neg (L_k)$, $e_0$, and $e_1$ in a fashion analogous to $\Neg (\T_k)$, $e_0$, and $e_1$ above.

\begin{ourcorollary}
  \label{cor:hfk}
  For sufficiently large $m$, there exists a $\delta$-graded exact triangle
  \begin{align*}
    \dotsb & \to \HFKh_* (L_1; \F{2}) \otimes V^{m-\ell_1} \otimes W \to \HFKh_{*+\frac{e_1-1}{2}} (L_\infty; \F{2}) \otimes V^{m-\ell_\infty} \otimes W\\
    & \to \HFKh_{*-\frac{e_0+1}{2}} (L_0; \F{2}) \otimes V^{m-\ell_0} \otimes W \to \HFKh_{*-1} (L_1; \F{2}) \otimes V^{m-\ell_1} \otimes W \to \dotsb,
  \end{align*}
  where $V$ is a vector space of dimension $2$ with grading $0$, and $W$ is a vector space of dimension $2$ with grading $-1$.
\end{ourcorollary}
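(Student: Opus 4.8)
The plan is to deduce Corollary~\ref{cor:hfk} from Theorem~\ref{thm:gradings} by a pairing argument. First I would present the three links as closures of tangles agreeing outside the skein ball: choose a decomposition of $L_k$ into elementary tangles in which the skein modification is confined to the single tangle $\T_k$ of Figure~\ref{fig:T_inf01}, and lump the remaining pieces into one \emph{complementary} tangle $\T^c$, independent of $k$, so that $L_k = \T_k \cup \T^c$ and $(\T_\infty, \T_0, \T_1)$ form a skein triple in the sense of Theorem~\ref{thm:gradings}. By the pairing theorem of \cite{pv}, for $m$ large enough (large enough that all three invariants $\CDTDd (\T_k, m)$ are defined with a common $m$ and the glued diagrams are \emph{nice}) one has
\[
  H_* \brac{ \CDTDd (\T_k, m) \boxtimes \module{N} } \cong \HFKh_* (L_k; \F{2}) \otimes V^{m - \ell_k} \otimes W,
\]
where $\module{N} = \CATAd (\T^c, m)$ is the fixed $\AA$ bimodule of the complement and the box tensor product is taken over both pairs of boundary algebras (closing up the diagram). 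Here the rank-two factors $V$ in $\delta$-grading $0$ record the excess markers exactly as in grid homology, while $W$ (rank two, $\delta$-grading $-1$) is a correction term determined by $\T^c$ and the closure; crucially $\module{N}$, hence $W$, does not depend on $k$, and the only $k$-dependence of the correction space is the exponent $m - \ell_k$, governed by a universal count of the closed components of the glued diagram.

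Next I would box-tensor the statement of Theorem~\ref{thm:gradings} with $\module{N}$. Over $\F{2}$ the functor $-\boxtimes \module{N}$ on (finitely generated) type~$\DD$ structures is exact and satisfies $\Cone(f) \boxtimes \module{N} \cong \Cone(f \boxtimes \id_{\module{N}})$, so Theorem~\ref{thm:gradings} yields
\[
  \CDTDd (\T_\infty, m) \boxtimes \module{N} \simeq \Cone \brac{ F_0 \boxtimes \id_{\module{N}} } \sqbrac{ \tfrac{e_1 - 1}{2} }
\]
as $\delta$-graded complexes, where $F_0 \boxtimes \id_{\module{N}} \colon \CDTDd (\T_0, m) \boxtimes \module{N} \to \CDTDd (\T_1, m) \boxtimes \module{N}$ has $\delta$-degree $(e_0 - 1)/2$. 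Taking homology, the long exact sequence of this mapping cone — after substituting the identifications from the previous paragraph — is exactly the claimed triangle: the rotation $\HFKh (L_1) \to \HFKh (L_\infty) \to \HFKh (L_0) \to \HFKh (L_1)$ matches $H_*(Y) \to H_*(\Cone) \to H_*(X)\sqbrac{-1}$ with $X = \CDTDd (\T_0, m) \boxtimes \module{N}$ and $Y = \CDTDd (\T_1, m) \boxtimes \module{N}$, the map $\HFKh (L_0) \to \HFKh (L_1)$ being the induced $(F_0 \boxtimes \id_{\module{N}})_*$ of $\delta$-degree $(e_0-1)/2$, and the shift on the $L_\infty$ term being the cone shift $(e_1-1)/2$; one checks the three resulting shifts $\tfrac{e_1-1}{2}$, $-\tfrac{e_0+1}{2}$, $-1$ compose to $-1$ around the triangle, as they must.

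The only step that is not purely formal — and the one I would take most care over — is the pairing statement with the \emph{precise} correction space $V^{m-\ell_k} \otimes W$. Here one must unwind the grading conventions of \cite{pv}, together with the $\delta$-normalization fixed in the Remark after Theorem~\ref{thm:gradings} (which differs from \cite{quasi, wong} by a sign), to confirm three points: the excess-marker generators assemble into rank-two summands sitting in $\delta$-grading $0$; the residual factor $W$ has rank two and $\delta$-grading $-1$; and the exponent $m - \ell_k$ depends on $k$ only through the component count $\ell_k$, with the $\delta$-degrees of the maps in Theorem~\ref{thm:gradings} surviving the tensor with $\module{N}$ without an extra constant shift. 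Once these are checked, the corollary is a formal consequence of Theorem~\ref{thm:gradings} and the exactness of the box tensor product.
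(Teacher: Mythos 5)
Your proposal takes essentially the same route as the paper: apply Theorem~\ref{thm:gradings} together with the gluing theorem of \cite{pv} to obtain a mapping-cone presentation of $\CFKh(L_\infty)\otimes V^{m-\ell_\infty}\otimes W$ as the cone of a chain map between the corresponding complexes for $L_0$ and $L_1$, then read off the long exact sequence on homology. The paper's actual proof is just a two-line version of this, so your more detailed unwinding (in particular your attention to the $\delta$-shifts and to the $k$-independence of $\module{N}$ and $W$) is a correct, if somewhat more verbose, rendering of the same argument.
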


\begin{rmk}
  Due to a difference in the orientation convention, the arrows in the exact triangle point in the opposite direction from those in \cite{mskein, quasi}. We follow the convention in \cite{hfk, mos, most, wong}, where the Heegaard surface is the oriented boundary of the $\alpha$-handlebody.
\end{rmk}

\begin{rmk}
  Technically, we do not show that the exact triangle in Corollary~\ref{cor:hfk} agrees with the ones in \cite{mskein} and \cite{wong}, which themselves are not known to coincide. However, we do expect all three to agree.
\end{rmk}

Parallel to the above, we also prove an oriented skein relation for tangle Floer homology in the second part of this paper, which can be viewed as a local analogue of the oriented skein relation for knot Floer homology proven by Ozs\'vath and Szab\'o \cite{hfk, oszskein}. While formally similar to the unoriented skein relation, we pursue this direction for a slightly different reason---we do so with a view towards the further development of knot Floer homology in the framework of categorification.

Precisely, tangle Floer homology has been shown by Ellis, V\'ertesi, and the first author \cite{epv} to categorify the Reshetikhin--Turaev invariant for the quantum group $U_q (\mathfrak{gl}_{1|1})$. This puts tangle Floer homology on a similar footing as the tangle formulation of Khovanov homology \cite{kh3, chkh, bs}, which categorifies the Reshetikhin--Turaev invariant for $U_q (\mathfrak{sl}_2)$. What is missing in the work of Ellis, V\'ertesi, and the first author is a construction of $2$-morphisms, corresponding to tangle cobordisms. For knot Floer homology, cobordism maps are defined by Juh\'asz \cite{hfkcob} using contact geometry, and independently by Zemke \cite{zemke} using elementary cobordisms, and together they \cite{jz} show that their definitions coincide. Juh\'asz and Marengon \cite{jm} prove that the cobordism maps in \cite{hfkcob} fit into a skein exact triangle, providing evidence that these cobordism maps are actually the maps in skein sequences. Thus, one approach to constructing the $2$-morphisms mentioned above is to study the skein relations of tangle Floer homology further.

To state our results, suppose $\eT_+$, $\eT_-$, and $\eT_0$ are three oriented elementary tangles identical except near a point, with the strands at which the tangles differ oriented from right to left, as in Figure~\ref{fig:Tel_ori_intro}.
\begin{figure}[h]
  \centering
  \includegraphics[scale=1.05]{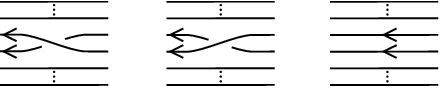}
  \caption{From left to right, the elementary tangles $\eT_+$, $\eT_-$, $\eT_0$.}
  \label{fig:Tel_ori_intro}
\end{figure}
There are corresponding Heegaard diagrams $\HD_+$, $\HD_-$, and $\HD_0$, which we describe explicitly in Section~\ref{sec:oriented}. Below, $U_1$ and $U_2$ are variables corresponding to the strands at which the tangles differ. 

\begin{ourtheorem}
  \label{thm:oriented}
  There exists a type~$\DD$ homomorphism $\Ppm \colon \CDTDm (\HD_+) \to \CDTDm (\HD_-)$ of $(M, A)$-degree $(0,0)$ such that
  \[
    \Cone (\Ppm) \simeq \Cone (\Id_{\CDTDm (\HD_0)} \otimes (U_2 - U_1) \colon \CDTDm (\HD_0) \to \CDTDm (\HD_0)) \sqbrac{1} \cbrac{\frac{1}{2}}
  \]
  as $(M, A)$-bigraded type~$\DD$ structures. Analogous statements hold for type~$\DA$, $\AD$, and $\AA$ structures.
\end{ourtheorem}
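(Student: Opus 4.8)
The plan is to work entirely at the level of Heegaard diagrams and the combinatorial definition of $\CDTDm$, imitating the proof strategy of the oriented skein exact triangle for knot Floer homology in \cite{hfk, oszskein}. First I would recall the explicit Heegaard diagrams $\HD_+$, $\HD_-$, $\HD_0$ constructed in Section~\ref{sec:oriented}: these agree outside a small disk, and inside the disk $\HD_+$ and $\HD_-$ differ by a handleslide of one $\beta$-curve over another (so that $\HD_-$ is obtained from $\HD_+$ by the elementary change of a crossing sign), while $\HD_0$ replaces the crossing by the oriented resolution and carries the two strand variables $U_1, U_2$ on the nearby regions. The key point is that, after the handleslide, the two $\beta$-curves $\beta_i$ and $\beta_i'$ in $\HD_+$ versus $\HD_-$ bound a small bigon region $\digamma$ together with exactly two intersection points, one of which I will call $\theta^+$ (the top generator of the corresponding small $\HFKh$) and one $\theta^-$.

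Next I would define the map $\Ppm \colon \CDTDm(\HD_+) \to \CDTDm(\HD_-)$ by counting holomorphic triangles in the triple diagram obtained by overlaying $\HD_+$, $\HD_-$, and an auxiliary diagram $\HD_\beta$ for a connected sum of standard pieces, with the triangle vertex at the top intersection point $\theta^+$; concretely $\Ppm(\x) = \sum_{\y}\sum_{\psi}\bigl(\#\eRect \text{ or higher polygon classes } \psi\bigr)\, U^{n_{U}(\psi)}\,\y$, exactly as the polygon maps are set up for $\CDTDm$ in Section~\ref{sec:background}. Because everything is combinatorial (nice/grid-like diagrams), "holomorphic" triangles are just embedded empty triangles, so this is a finite, explicit count. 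One then checks that $\Ppm$ is a type~$\DD$ homomorphism by the usual degeneration-of-boundary argument: the ends of the one-dimensional moduli space of triangles are exactly the terms of $\partial \Ppm + \Ppm \partial$, and the two-story ends cancel because $\theta^+$ is a cycle. The grading bookkeeping — that $\Ppm$ has $(M,A)$-degree $(0,0)$ — follows by computing the Maslov and Alexander gradings of $\theta^+$ relative to the reference generator and using the standard index formula for the triangle classes; this is where the normalization conventions of \cite{OSSbook, pv} for the $\delta$-grading enter.

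The heart of the argument is the identification $\Cone(\Ppm) \simeq \Cone\bigl(U_2 - U_1\bigr)\sqbrac{1}\cbrac{\tfrac12}$. I would set this up as a mapping-cone / exact-triangle argument in three diagrams: the triple $(\HD_+, \HD_-, \HD_0)$ fits together so that there is a tautological short exact sequence of complexes relating $\CDTDm(\HD_+)$, $\CDTDm(\HD_-)$, and a diagram $\HD_0'$ that is a stabilization of $\HD_0$ by a region carrying $U_1$ and $U_2$. Concretely, $\CDTDm(\HD_0')$ is quasi-isomorphic to the mapping cone of $U_2 - U_1$ acting on $\CDTDm(\HD_0)$, because stabilizing by that extra region inserts precisely a two-step complex $\CDTDm(\HD_0)\xrightarrow{U_2-U_1}\CDTDm(\HD_0)$ (this is the local model computation, done once by hand on the small piece and then spliced in by the pairing/gluing formalism). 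Then I would show the composite of $\Ppm$ with the count of triangles into $\HD_0'$ is null-homotopic — again a moduli-space argument, since the relevant rectangles/polygons in the quadruple diagram $(\HD_+, \HD_-, \HD_0', \HD_\beta)$ provide the null-homotopy — which upgrades the short exact sequence into the stated filtered quasi-isomorphism of mapping cones. Finally I would track the two grading shifts: the $\sqbrac{1}$ in Maslov grading and $\cbrac{\tfrac12}$ in Alexander grading come from comparing the basepoint/marker counts and the $U$-powers between the stabilized diagram $\HD_0'$ and $\HD_0$, and from the degree of the top generator $\theta^+$ used in defining $\Ppm$.

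The main obstacle I anticipate is the local model computation and the associated gluing/pairing argument: one must verify that stabilizing the oriented-resolution diagram by the bigon region carrying $U_1, U_2$ produces, up to quasi-isomorphism and exactly the claimed grading shifts, the mapping cone of $U_2 - U_1$, and that this identification is compatible with the box tensor product so that the triangle-counting maps on the closed-up/global diagrams restrict correctly. In the $\CDTDm$ (richer, $U$-weighted) setting this is more delicate than in the $\HFKh$ setting of \cite{mskein, wong}, because one cannot set the $U$-variables to zero; keeping careful track of which $U$-powers appear in each polygon class, and checking the signs/weights so that the homotopies actually close up, is the real work. Everything else — that $\Ppm$ is a chain map, the degree computations, and the final assembly into an isomorphism of mapping cones — is routine once the local model is pinned down.
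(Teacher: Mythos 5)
Your proposal follows the original Ozsv\'ath--Szab\'o analytic strategy for the oriented skein sequence of knot Floer homology \cite{hfk, oszskein}: define $\Ppm$ by a holomorphic-triangle count in a Heegaard triple with vertex at a top generator $\theta^+$, then build a short exact sequence whose third term is identified, via a local stabilization, with the mapping cone of $U_2 - U_1$. The paper takes a different, purely combinatorial route, modeled on \cite[Chapter~9]{OSSbook}, and never counts holomorphic triangles in a Heegaard triple. Instead it (i) splits each underlying module according to whether a generator contains $c \in \alpha_i^R \cap \beta_{i,+}$ or $c' \in \alpha_i^R \cap \beta_{i,-}$, giving decompositions $\II \oplus \NN$ and $\II' \oplus \NN'$; (ii) realizes each of $\CDTDm(\HD_+)$, $\CDTDm(\HD_-)$, $\CDTDm(\HD_0)$, $\CDTDm(\HD_0')$ as a two-step mapping cone of one of the edge maps $\delta^1_{\II,\NN}$, $\delta^1_{\NN,\II}$, $\delta^1_{\II',\NN'}$, $\delta^1_{\NN',\II'}$, each a count of empty rectangles crossing exactly one of the local basepoints; (iii) proves that the small-triangle bijection $\TT \colon \II' \to \II$ is an isomorphism of type~$\DD$ structures (Lemma~\ref{lem:T_isom}); and (iv) assembles these into the commutative square \eqref{eq:comm_diag}, defines $\Ppm$ by the row maps, identifies $\Cone(\Ppm)$ with the cone of the columns, and shows the columns compose (Lemma~\ref{lem:comm_diag}, then Lemmas~\ref{lem:vert_is_htpy} and \ref{lem:h_is_u}) to a map homotopic to $\Id \otimes (U_2 - U_1)$.

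There is a genuine gap in your plan as stated. The tangle Floer framework of \cite{pv} and Section~\ref{sec:background} does not come equipped with triangle (pairing) maps between the $\CDTDm$-structures of distinct Heegaard diagrams, nor with the quadrilateral-counting null-homotopies in a Heegaard quadruple that your argument needs; you would have to construct all of that from scratch. Every polygon map used in this paper is a count inside a single combined diagram whose $\alpha$-curves are shared and whose $\beta$-curves are drawn simultaneously, which is a different (and, here, self-contained) mechanism. Your description of $\HD_0'$ as a stabilization of $\HD_0$ is also not how the paper sets things up: the paper's $\HD_0'$ has the same marker sets $\YY, \OO$ as $\HD_0$ and differs only by the commutation move $\betas_+ \leftrightarrow \betas_-$, and the operator $U_2 - U_1$ arises not from a stabilization but from the explicit composite $\delta^1_{\NN,\II} \circ \delta^1_{\II,\NN} = \Id_\II \otimes (U_1 + U_2 - U_3 - U_4)$, corrected by a homotopy that trades $U_4$ for $U_1$ using \cite[Lemma~3.35]{pv}. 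Your instinct that the real difficulty is the $U$-variable bookkeeping is right, but the paper resolves it not via a local stabilization model but via the explicit decomposition of $h_{X_2} \circ h_Y + h_Y \circ h_{X_2}$ on $\II'$ and on $\NN'$ separately.
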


\begin{rmk}
  Since $\CDTDm$ is not yet known to be a tangle invariant \cite{pv}, Theorem~\ref{thm:oriented} is stated for the type~$\DD$ bimodules of Heegaard diagrams rather than for bimodules associated to a tangle.
\end{rmk}

\begin{rmk}
  Tangle Floer homology is currently only defined over $\F{2}$, and so the negative signs in Theorem~\ref{thm:oriented} could be replaced by positive signs. However, the stated signs are what one would expect for a theory defined over $\Z$. This remark applies also to Lemma~\ref{lem:comm_diag} and Lemma~\ref{lem:h_is_u}.
\end{rmk}

Restricting to $\CDTDt$, we also obtain a local oriented skein relation for that version. In this case, we have a proven tangle invariant, so the relation holds for general tangles. Suppose $\T_+$, $\T_-$, and $\T_0$ are three tangles that form an oriented skein triple, as in Figure~\ref{fig:T_+-0}.
\begin{figure}[h]
  \centering
  \labellist
  \pinlabel  $\T_+$ at 25 55
  \pinlabel $\T_-$ at 152 55
  \pinlabel $\T_0$ at 280 55
  \endlabellist
  \includegraphics[scale=0.97]{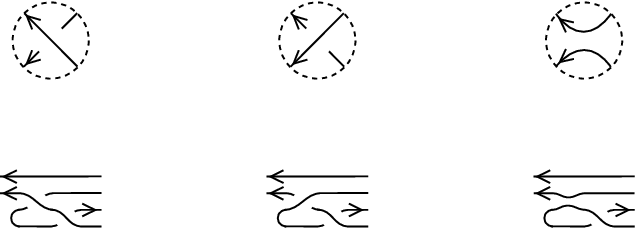}
  \caption{Top: Three tangles $\T_+$, $\T_-$, $\T_0$ form an oriented skein triple if they are identical except near a point, as displayed. Bottom: A specific example of an oriented skein triple.}
  \label{fig:T_+-0}
\end{figure}

\begin{ourtheorem}
  \label{thm:oriented-tilde}
  There exists a type~$\DD$ homomorphism $\Ppmt \colon \CDTDt (\T_+, n) \to \CDTDt (\T_-, n)$ of $(M, A)$-degree $(0,0)$ such that
  \[
    \Cone ( \Ppmt) \simeq  \CDTDt (\T_0, n) \sqbrac{0} \cbrac{-\frac{1}{2}} 
    \oplus \CDTDt (\T_0, n)\sqbrac{1} \cbrac{\frac{1}{2}}
  \]
  as $(M, A)$-bigraded type~$\DD$ structures. Analogous statements hold for type~$\DA$, $\AD$, and $\AA$ structures.
\end{ourtheorem}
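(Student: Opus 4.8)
The plan is to derive Theorem~\ref{thm:oriented-tilde} from Theorem~\ref{thm:oriented} by passing from the richer bimodule $\CDTDm$ to the hat-flavor bimodule $\CDTDt$. First I would recall that $\CDTDt (\T_0, n)$ is obtained from $\CDTDm (\HD_0)$ by setting all the variables $U_i$ (equivalently all the markers) to zero; more precisely, $\CDTDt$ is the box tensor product of $\CDTDm$ with the appropriate type~$A$ module that kills the $U_i$-action, and this operation is exact and compatible with mapping cones. Thus applying this specialization to the quasi-isomorphism produced by Theorem~\ref{thm:oriented} would yield
\[
  \Cone (\Ppmt) \simeq \Cone \brac{\Id_{\CDTDt (\T_0, n)} \otimes (U_2 - U_1) = 0 \colon \CDTDt (\T_0, n) \to \CDTDt (\T_0, n)} \sqbrac{1} \cbrac{\tfrac{1}{2}},
\]
where $\Ppmt$ is the induced map on the hat flavor. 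Since the map $U_2 - U_1$ becomes the zero map after setting $U_1 = U_2 = 0$, the mapping cone on the right splits as a direct sum of two copies of $\CDTDt (\T_0, n)$, one with a shift of $[1]\{\tfrac{1}{2}\}$ coming from the cone and one with no shift coming from the domain — but I must track the grading bookkeeping carefully, since the conventions in Theorem~\ref{thm:oriented} place the extra shift $\sqbrac{1}\cbrac{\tfrac{1}{2}}$ on the whole cone. Reconciling the two displayed shifts $\sqbrac{0}\cbrac{-\tfrac{1}{2}} \oplus \sqbrac{1}\cbrac{\tfrac{1}{2}}$ with the overall shift in Theorem~\ref{thm:oriented} is a short computation with the Maslov and Alexander grading conventions for the box tensor product, using that $\CDTDt$ is defined over the ground ring after killing the $U_i$'s (which carry degree $(-2,-1)$), so the domain copy of $\CDTDt(\T_0,n)$ inside $\Cone(\Id \otimes (U_2-U_1))$ inherits the shift $\sqbrac{1}\cbrac{\tfrac12}$ relative to the codomain copy, matching the claim once the global $\sqbrac{1}\cbrac{\tfrac12}$ is distributed.

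Next I would address the fact that Theorem~\ref{thm:oriented} is stated in terms of specific Heegaard diagrams $\HD_{\pm}, \HD_0$ rather than tangles, whereas Theorem~\ref{thm:oriented-tilde} is about the tangle invariants $\CDTDt(\T_{\pm}, n)$, $\CDTDt(\T_0, n)$. Here I would invoke the invariance statement recalled in Section~\ref{sec:background} (and in \cite{pv}): for the hat flavor, $\CDTDt$ depends only on the tangle and the number of markers $n$, not on the Heegaard diagram. So after applying the specialization in the previous step, I may replace $\CDTDt(\HD_{\pm})$ and $\CDTDt(\HD_0)$ by $\CDTDt(\T_{\pm}, n)$ and $\CDTDt(\T_0, n)$ respectively, provided the diagrams $\HD_{\pm}, \HD_0$ of Section~\ref{sec:oriented} have the same number $n$ of markers and represent the elementary tangles $\eT_{\pm}, \eT_0$ sitting inside the skein triple $\T_{\pm}, \T_0$. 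I would also need to promote the statement from elementary tangles to general tangles forming an oriented skein triple: this follows by gluing, since $\CDTDt$ of a general tangle is (by the pairing theorem of \cite{pv}) a box tensor product over the pieces, and the two tangles in the triple $\T_+, \T_-, \T_0$ differ from each other only inside one elementary piece where the local model $\eT_+, \eT_-, \eT_0$ applies; tensoring the local quasi-isomorphism with the (common) complement yields the global statement, using again that box tensor product is exact and commutes with taking mapping cones.

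Finally, the "analogous statements" for type~$\DA$, $\AD$, and $\AA$ structures would follow formally: each of these is obtained from the type~$\DD$ bimodule by box-tensoring with one of the standard type~$\AA$ identity-type bimodules (as recalled in Section~\ref{sec:background}), and this operation preserves quasi-isomorphisms, direct sums, and grading shifts. The main obstacle I anticipate is purely bookkeeping — getting the grading shifts $\sqbrac{0}\cbrac{-\tfrac12}$ and $\sqbrac{1}\cbrac{\tfrac12}$ exactly right, including the interaction between the cone shift $\sqbrac{1}\cbrac{\tfrac12}$ in Theorem~\ref{thm:oriented}, the degree $(-2,-1)$ of the $U_i$ variables, and the behavior of gradings under specialization $U_i \mapsto 0$ and under the box tensor product. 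There is no new geometric input needed beyond Theorem~\ref{thm:oriented}; the content is checking that the algebraic manipulations are grading-compatible and that the diagrams in Section~\ref{sec:oriented} have been chosen with the correct marker count $n$ so that the hat-flavor invariance applies.
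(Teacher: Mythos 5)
Your proposal follows essentially the same approach as the paper's proof: specialize Theorem~\ref{thm:oriented} by setting all $U_i$ to zero, observe that $\Id \otimes (U_2 - U_1)$ becomes the zero map so the mapping cone splits as a direct sum, track the grading shifts using that $U_i$ has $(M,A)$-degree $(-2,-1)$, and extend from the local model to general tangles by gluing (and to other bimodule flavors by box-tensoring). One small correction to the bookkeeping you sketch: the domain copy inside $\Cone(\Id \otimes (U_2-U_1))$ sits at relative shift $[-1]\cbrac{-1}$ to the codomain copy (so the underlying module is $\CDTDt(\HD_0)[-1]\cbrac{-1}\oplus\CDTDt(\HD_0)[0]\cbrac{0}$), and it is the outer $[1]\cbrac{1/2}$ shift from Theorem~\ref{thm:oriented} applied to both summands that then yields $[0]\cbrac{-1/2}\oplus[1]\cbrac{1/2}$ — not a relative shift of $[1]\cbrac{1/2}$ as stated; this is exactly the deferred short computation you flagged.
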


Again by taking the box tensor product, we obtain an oriented skein exact triangle for knot Floer homology analogous to those in \cite{hfk, oszskein}. Suppose $L_+$, $L_-$, and $L_0$ are three oriented links that are identical except near a point, so that they form an oriented skein triple.

\begin{ourcorollary}
  \label{cor:hfk-oriented}
  If the two strands of $L_0$ belong to the same component, then there exists an $(M, A)$-bigraded exact triangle
  \begin{align*}
    \dotsb & \to \HFKm_m (L_+, s) \otimes W \to \HFKm_m (L_-, s) \otimes W\\
    & \to (\HFKm (L_0) \otimes V \otimes W)_{m-1, s} \to \HFKm_{m-1} (L_+, s) \otimes W \to \dotsb,
  \end{align*}
  and if the two strands of $L_0$ belong to different components, then there exists an $(M, A)$-bigraded exact triangle
  \begin{align*}
    \dotsb & \to \HFKm_m (L_+, s) \otimes W \to \HFKm_m (L_-, s) \otimes W\\
    & \to H_{m-1} ((\CFKm (L_0) / (U_2 - U_1)) \otimes W, s) \to \HFKm_{m-1} (L_+, s) \otimes W \to \dotsb,
  \end{align*}
  where $m$ and $s$ are the Maslov and Alexander gradings, respectively,  $V$ is a module of rank $2$ with bigradings $(0,0)$ and $(1,1)$, and $W$ is a module of rank $2$ with bigradings $(0,0)$ and $(-1,0)$. Analogous statements hold for $\HFKh$.
\end{ourcorollary}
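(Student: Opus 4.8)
The plan is to derive Corollary~\ref{cor:hfk-oriented} from Theorem~\ref{thm:oriented} --- and, for the $\HFKh$ assertions, from Theorem~\ref{thm:oriented-tilde} --- by taking box tensor products, exactly as one passes from a local statement about tangle modules to a global statement about $\HFKm$ in \cite{pv}. Three ingredients are needed: the pairing theorem of \cite{pv}, which realizes $\CFKm(L_k)$ (up to a tensor factor $W$ recording the extra markers carried by the Heegaard diagrams of Section~\ref{sec:oriented}) as an iterated box tensor product one of whose factors is $\CDTDm(\HD_k)$; the fact that the box tensor product is exact, hence commutes with mapping cones and grading shifts; and an elementary analysis of the cone of multiplication by $U_2 - U_1$ on $\CFKm(L_0)$.

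First I would cut each of $L_+$, $L_-$, $L_0$ into elementary tangles agreeing outside a single piece, taken to be $\eT_+$, $\eT_-$, or $\eT_0$ with Heegaard diagram $\HD_+$, $\HD_-$, or $\HD_0$ respectively, and let $N$ be the box tensor product of the modules (of the appropriate mixed types) of all the remaining pieces; since the links agree there, $N$ is common to all three. The pairing theorem of \cite{pv} then gives $(M,A)$-bigraded homotopy equivalences
\[
  \CDTDm(\HD_k) \boxtimes N \simeq \CFKm(L_k) \otimes W, \qquad k \in \{+,-,0\},
\]
with $W$ of rank $2$ in bigradings $(0,0)$ and $(-1,0)$. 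Applying the exact functor $(-) \boxtimes N$ to the equivalence of Theorem~\ref{thm:oriented} and transporting $\Ppm \boxtimes \Id_N$ along these equivalences to a chain map $P_0 \colon \CFKm(L_+) \otimes W \to \CFKm(L_-) \otimes W$ of $(M,A)$-degree $(0,0)$, I obtain
\[
  \Cone(P_0) \simeq \Cone\bigl( (U_2 - U_1) \colon \CFKm(L_0) \otimes W \to \CFKm(L_0) \otimes W \bigr) \sqbrac{1} \cbrac{\tfrac{1}{2}}.
\]
The long exact sequence of the mapping cone of $P_0$ then produces an $(M,A)$-bigraded exact triangle among $\HFKm_*(L_+, s) \otimes W$, $\HFKm_*(L_-, s) \otimes W$, and $H_*(\Cone(P_0), s)$, so it remains only to identify the third term.

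The two basepoints attached to $U_1$ and $U_2$ lie on the same component of $L_0$ exactly when the two strands of $L_0$ at the resolution site do. If they lie on different components, then $U_2 - U_1$ is a nonzerodivisor acting injectively on the free module $\CFKm(L_0)$, so its cone is quasi-isomorphic to the cokernel $\CFKm(L_0)/(U_2 - U_1)$; after absorbing the shifts, this identifies $H_*(\Cone(P_0), s)$ with $H_{*-1}\bigl( (\CFKm(L_0)/(U_2-U_1)) \otimes W, s \bigr)$, yielding the second triangle. If they lie on the same component, then $U_1$ and $U_2$ act by chain-homotopic endomorphisms of $\CFKm(L_0)$, so $U_2 - U_1$ is null-homotopic and its cone splits, up to shift, as $\CFKm(L_0) \otimes V$ with $V$ of rank $2$ in bigradings $(0,0)$ and $(1,1)$; hence $H_*(\Cone(P_0), s) \cong (\HFKm(L_0) \otimes V \otimes W)_{*-1, s}$, yielding the first triangle. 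The $\HFKh$ statements follow in the same way from Theorem~\ref{thm:oriented-tilde} --- and since $\CDTDt$ is a genuine tangle invariant one may there box-tensor the tangle modules directly, with no need to fix Heegaard diagrams --- or alternatively by setting $U_1 = U_2 = 0$.

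I expect the only real work to be the grading bookkeeping. One must verify that the shifts $\sqbrac{1}$ and $\cbrac{1/2}$ of Theorem~\ref{thm:oriented}, the $(M,A)$-degree of $U_1$, the bigradings of the marker-contributed tensor factors $V$ and $W$, the degree shift in the quasi-isomorphism $\Cone(\text{injection}) \simeq \text{cokernel}$, and the indexing convention for the mapping-cone long exact sequence all combine so that $H_*(\Cone(P_0))$ sits in Maslov grading exactly one below that of $\HFKm_*(L_\pm, s) \otimes W$ and in Alexander grading exactly $s$, as written. A secondary point is to confirm that the pairing theorem of \cite{pv} genuinely yields the factor $W$ --- equivalently, to track the extra markers of the diagrams $\HD_\pm$ and $\HD_0$ built in Section~\ref{sec:oriented} --- and that the various box tensor products are formed from compatible module types so that homotopy-associativity applies. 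Exactness of the box tensor product and the chain homotopy between the two $U$-actions on a single link component are standard and need no new argument.
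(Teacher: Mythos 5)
Your proposal is correct and follows essentially the same route as the paper: box-tensor against the bimodules for the remaining tangle pieces (using the gluing theorem from \cite{pv}), pass to the long exact sequence of the mapping cone, and then identify $\Cone(U_2-U_1)$ on $\CFKm(L_0)$ with $\CFKm(L_0)\otimes V$ or with $\CFKm(L_0)/(U_2-U_1)$ according to whether the two strands lie on the same or on different components of $L_0$. The paper's own proof is a one-paragraph sketch that defers to Corollary~\ref{cor:hfk} and does not spell out the case analysis of the third term, so your elaboration is a harmless (and welcome) expansion rather than a departure.
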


\begin{rmk}
  Similar to the unoriented skein exact sequence, we expect but do not prove that the sequence in Corollary~\ref{cor:hfk-oriented} coincides with the ones in \cite{hfk, oszskein}.
\end{rmk}

To put Theorem~\ref{thm:oriented-tilde} in the context of categorification discussed above, we briefly outline the Reshetikhin--Turaev construction, specializing to the case yielding the Alexander polynomial.  To the boundaries of an oriented tangle  one associates a tensor product of copies of the standard $U_q (\mathfrak{gl}_{1|1})$-representation $V$ and its dual $V^*$, and to the tangle a map between these representations. The construction is combinatorial, decomposing a diagram $D$ for a tangle $T$ into elementary pieces (cups, caps, and crossings), assigning morphisms to the elementary pieces and defining $Q(D)$ as the composition of these morphisms. The map $Q(D)$ is an isotopy invariant of the oriented tangle.  Further, for the triple of oriented elementary tangles $\T_+$, $\T_-$, and $\T_0$ from Figure~\ref{fig:Tel_ori_intro}, $Q$ satisfies the skein relation
\begin{equation}
  \label{eqn:gl11-skein}
  Q (\T_+) - Q (\T_-) = (q - q^{-1}) Q (\T_0).
\end{equation}
The ground ring here is $\mathbb C(q)$. If we present a link $L$ as a $(1,1)$-tangle $T_L$ and set $q^2=t$, then it turns out that $Q (T_L) = \Delta(L) \id_V$, where $\Delta(L)$ is the Alexander polynomial of $L$.

In \cite{epv}, it is shown that tangle Floer homology categorifies the construction described above---tensor products of copies $V$ and $V^*$ lift to categories of left type $D$ modules over the dg algebras associated to the boundaries of $T$, and the map $Q(T)$ lifts to the functor $\CDTAt (T)\boxtimes \blank$. In particular, $Q(\eT_+)$, $Q(\eT_-)$, and $Q(\eT_0)$  lift to  $\CDTAt(\eT_+)$, $\CDTAt(\eT_-)$, and $\CDTAt(\eT_0)$, respectively. With this set-up, we have:

\begin{ourcorollary}
  \label{cor:gl11}
  The homotopy equivalence in Theorem~\ref{thm:oriented-tilde} categorifies the skein relation in Equation~\ref{eqn:gl11-skein}.
\end{ourcorollary}

Since tangle Floer homology shares some similarities with grid homology, our approach to proving Theorems~\ref{thm:ourtheorem} and \ref{thm:gradings} is similar to that in \cite{wong}, and our approach to proving Theorem~\ref{thm:oriented} is similar to that in \cite[Chapter 9]{OSSbook}. In particular, all maps involved are combinatorially computable.

\subsection*{Organization}

We review the necessary algebraic background and the definition of tangle Floer homology in Section~\ref{sec:background}. We prove the ungraded unoriented skein relation, Theorem~\ref{thm:ourtheorem}, in Section~\ref{sec:proof}. We then determine the $\delta$-gradings in Section~\ref{sec:grading} to prove the graded skein relation, Theorem~\ref{thm:gradings}. Theorems~\ref{thm:oriented} and \ref{thm:oriented-tilde} are proven in Section~\ref{sec:oriented}.

\subsection*{Acknowledgments}

The authors thank Robert Lipshitz and Vera V\'ertesi for useful conversations. The authors are also grateful to Robert Lipshitz and the anonymous referee for many insightful comments and corrections on previous drafts. IP received support from an AMS-Simons travel grant and NSF Grant DMS-1711100. Part of the research was conducted while IP and MW were affiliated with Columbia University. IP thanks Louisiana State University, and MW thanks Rice University and Dartmouth College for their hospitality while this research was undertaken.



\section{Background} 
\label{sec:background}



\subsection{Algebraic structures}\label{ssec:alg}

We first review the underlying algebraic structures of tangle Floer homology. We will only define the immediately relevant structures here, and refer the interested reader to \cite[Section~2]{bimod}.

Let $A$ be a unital differential graded algebra (DGA) with differential $d$ and multiplication $\mu$ over a base ring $\k$ of characteristic $2$. In this paper, $\k$ will always be the ring of idempotents, which is a direct sum of copies of $\F{2} = \Z/2\Z$. We will also write $a \cdot b$ to denote $\mu (a, b)$ for algebra elements $a, b \in A$, whenever no confusion can arise.

A \emph{(left) type $D$ structure over $A$} is a graded $\k$-module $M$ equipped with a homogeneous map
\[
  \delta^1 \colon M \to (A \otimes M) [1]
\]
satisfying the compatibility condition
\[
  (d \otimes \id_M) \circ \delta^1 + (\mu \otimes \id_M) \circ (\id_A \otimes \delta^1) \circ \delta^1 = 0.
\]

It may be advantageous to represent this graphically:
\begin{equation*}
  \adjustimage{scale = 0.753, valign = m}{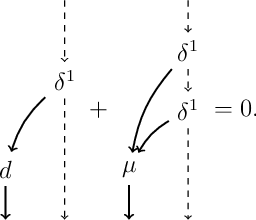}
\end{equation*}
The map $\delta^1$ is called the \emph{structure map} of $M$. Defining
\[
  \delta^i \colon M \to (A^{\otimes i} \otimes M) [i]
\]
inductively by
\[
  \delta^i =
  \begin{cases}
    \id_M & \text{for $i = 0$},\\
    (\id_A \otimes \delta^{i-1}) \circ \delta^1 & \text{for $i \geq 1$}
  \end{cases},
\]
we say that $M$ is \emph{bounded} if for all $x \in M$, there exists an integer $n = n (x)$ such that $\delta^i (x) = 0$ for all $i > n$. 

Let $A$ and $B$ be two unital differential graded algebras, with differentials $d_A$ and $d_B$, and multiplications $\mu_A$ and $\mu_B$, over the base rings $\k$ and $\j$ respectively. (Recall that the base rings have characteristic $2$.) A
\emph{(left-right) type $\DD$ structure over $(A, B)$} is a graded $(\k,
\j)$-bimodule $M$ equipped with a homogeneous structure map
\[
  \delta^1 \colon M \to (A \otimes M \otimes B) [1]
\]
satisfying the compatibility condition
\[
  (d_A \otimes \id_M \otimes \id_B) \circ \delta^1 + (\id_A \otimes \id_M
  \otimes d_B) \circ \delta^1 + (\mu_A \otimes \id_M \otimes \mu_B) \circ
  (\id_A \otimes \delta^1 \otimes \id_B) \circ \delta^1 = 0.
\]
Graphically, this can be represented as:
\begin{equation*}
  \adjustimage{scale = 0.753, valign = m}{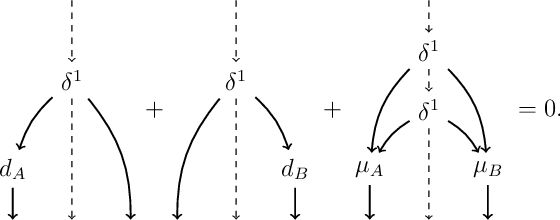}
\end{equation*}
Like for type~$D$ structures, we can define $\delta^i$ and the notion of \emph{boundedness} analogously. Type~$\DD$ structures are the main objects of study in this paper.  We will denote type~$\DD$ structures by calligraphic letters (e.g.\ $\DDm{M, N}$), and reserve $M, N$ for the underlying $(\k, \j)$-bimodules.

A \emph{morphism $f \colon \DDm{M} \to \DDm{N}$ of degree $\ell$} is simply a module homomorphism
\[
  f \colon M \to (A \otimes N \otimes B) [-\ell].
\]
(By abuse of notation, we use $f$ to denote both maps above.) Given a morphism, we can define its \emph{boundary} $\partial f \colon M \to (A \otimes N \otimes B) [- \ell + 1]$ by
\begin{align*}
  \partial f = & (\mu_A \otimes \id_N \otimes \mu_B) \circ (\id_A \otimes \delta_N^1 \otimes \id_B) \circ f + (\mu_A \otimes \id_N \otimes \mu_B) \circ (\id_A \otimes f \otimes \id_B) \circ \delta_M^1\\*
  & + (d_A \otimes \id_N \otimes \id_B) \circ f + (\id_A \otimes \id_N \otimes d_B) \circ f,
\end{align*}
or graphically,
\begin{equation*}
  \adjustimage{scale = 0.753, valign = m}{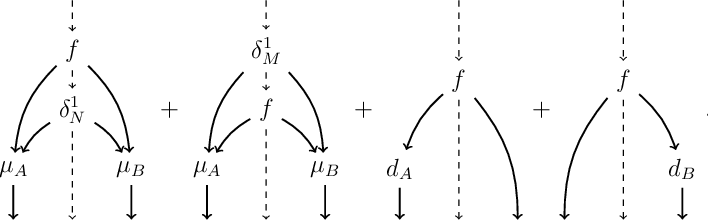}
\end{equation*}
For convenience, although this is not found in the literature, we will use the notation $d f \colon M \to (A \otimes N \otimes B) [-\ell + 1]$ to represent the last two terms above:
\[
  d f = (d_A \otimes \id_N \otimes \id_B) \circ f + (\id_A \otimes \id_N \otimes d_B) \circ f.
\]

Given two morphisms $f \colon \DDm{M} \to \DDm{N}$ of degree $\ell_1$ and $g \colon \DDm{N} \to \DDm{P}$ of degree $\ell_2$, where $\DDm{M}$, $\DDm{N}$, $\DDm{P}$ are type~$\DD$ structures over $(A, B)$, their composition $g \circ f \colon \DDm{M} \to \DDm{P}$, of degree $\ell_1 + \ell_2$, is defined as the map
\[
  g \circ f \colon M \to A \otimes P \otimes B
\]
given by
\[
  g \circ f = (\mu_A \otimes \id_P \otimes \mu_B) \circ (\id_A \otimes g \otimes \id_B) \circ f,
\]
or graphically,
\begin{equation*}
  \adjustimage{scale = 0.753, valign = m}{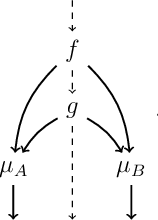}
\end{equation*}
Note that the structure map $\delta_M^1 \colon M \to (A \otimes M \otimes B)[1]$ can be thought of as a morphism $\delta_M^1 \colon \DDm{M} \to \DDm{M}$, and so we can consider the morphisms $f \circ \delta_M^1 \colon \DDm{M} \to \DDm{N}$ and $\delta_N^1 \circ f \colon \DDm{M} \to \DDm{N}$ also. In this notation, we can write
\[
  \partial f = \delta_N^1 \circ f + f \circ \delta_M^1 + d f.
\]

The above operations make type~$\DD$ structures over $(A, B)$ a differential graded category.

A \emph{type $\DD$ homomorphism} (or simply a \emph{homomorphism}) \emph{from $\DDm{M}$ to $\DDm{N}$ of degree $\ell$} is a morphism satisfying $\partial f = 0$. Graphically, this can be represented as
\begin{equation*}
  \adjustimage{scale = 0.753, valign = m}{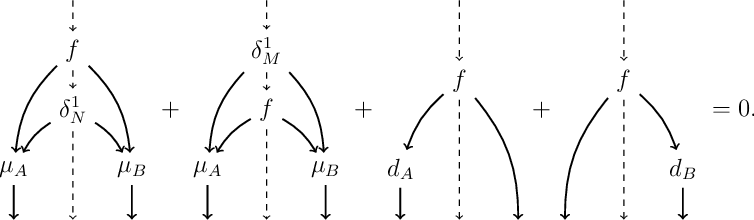}
\end{equation*}
For example, the \emph{identity morphism} $\Idm{N} \colon \DDm{N} \to \DDm{N}$ of a type~$\DD$ structure $\DDm{N}$ is the map that sends $x \in N$ to $I_A \otimes x \otimes I_B$, where $I_A$ (resp.\ $I_B$) is the unit of the algebra $A$ (resp.\ $B$). In the context of tangle Floer homology, $I_A$ and $I_B$ will be the sum of all primitive idempotents.

Given a homomorphism $f \colon \DDm{M} \to \DDm{N}$ of degree $\ell$ between two type~$\DD$ structures over $(A, B)$,  the \emph{mapping cone $\Cone (f)$ of $f$} is the type $\DD$ structure with underlying $(\k, \j)$-bimodule $M[\ell+1] \oplus N$ and structure map $\delta_f^1$ given by
\[
  \delta_f^1 (m,n) = (\delta_M^1 (m), f (m) + \delta_N^1 (n)).
\]

Let $f,g \colon \DDm{M} \to \DDm{N}$ be two homomorphisms of degree $l$. A \emph{homotopy
  between $f$ and $g$} is a morphism $h \colon \DDm{M} \to \DDm{N}$ of degree $l+1$ such that
\[
  \partial h = f + g,
\]
or graphically,
\begin{equation*}
  \adjustimage{scale = 0.753, valign = m}{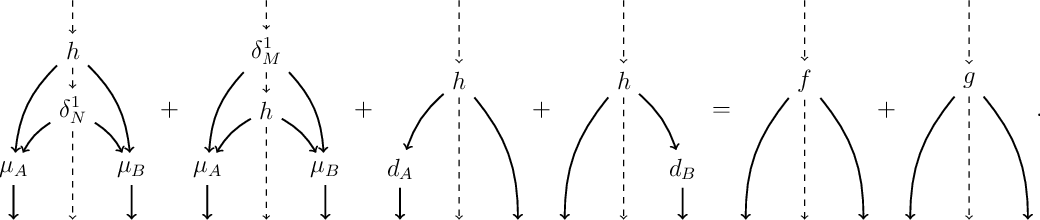}
\end{equation*}
Note that $h$ is a morphism, but not a homomorphism unless $f = g$. We write $f \simeq g$ if $f$ and $g$ are homotopic. We say that two type~$\DD$ structures $\DDm{M, N}$ are \emph{homotopy equivalent}, and write $\DDm{M} \simeq \DDm{N}$, if there exist grading-preserving type~$\DD$ homomorphisms $f \colon \DDm{M} \to \DDm{N}$ and $g \colon \DDm{N} \to \DDm{M}$ such that $g \circ f$ is homotopic to $\Idm{M}$ and $f \circ g$ is homotopic to $\Idm{N}$ via homotopies of degree $1$.  In the full subcategory of type~$\DD$ structures that are homotopy equivalent to bounded ones, the notion of homotopy equivalence coincides with an appropriate notion of \emph{quasi-isomorphism} \cite[Corollary~2.4.4]{bimod}. All algebraic structures in bordered Heegaard Floer homology and tangle Floer homology are homotopy equivalent to bounded ones; this can be seen by choosing an \emph{admissible} Heegaard diagram that defines the same bordered $3$-manifold or tangle \cite[Lemma~6.6]{bimod}.

Although we will only focus on type~$D$---in fact, type~$\DD$ structures---we should mention that there are also \emph{type~$A$ structures} over a differential graded algebra $A$, which (in the present context) are just differential graded modules over $A$. By extension, there are also \emph{type~$\DA$, $\AD$, and $\AA$ structures}.  There is a \emph{box product} (or \emph{box tensor}) operation $\boxtimes$ between a right (resp.\ left) type~$A$ structure $\DDm{M}$ and a left (resp.\ right) type~$D$ structure $\DDm{N}$ (at least one of which is bounded), resulting in a chain complex $\DDm{M} \boxtimes \DDm{N}$ (resp.\ $\DDm{N} \boxtimes \DDm{M}$) over $\F{2}$.  The box tensor is defined also for bimodules; for example, box-tensoring a type~$\AD$ structure $\DDm{M}$ and a type~$\AA$ structure $\DDm{N}$ yields a type~$\AA$ structure $\DDm{M} \boxtimes \DDm{N}$. We refer the interested reader to \cite[\S 2.3.2]{bimod}.

We may treat $A$ itself as a type~$\AA$ structure; box-tensoring with $A$ then turns a type~$\DD$ structure $\DDm{M}$ into a type~$\DA$ structure $\DDm{M} \boxtimes A$. In fact, this defines a differential graded functor from the full subcategory of type~$\DD$ structures that are homotopy equivalent to bounded ones to the full subcategory of type~$\DA$ structures that are homotopy equivalent to bounded ones. This functor is actually a \emph{quasi-equivalence} \cite[Proposition~2.3.18]{bimod}, implying that it preserves quasi-isomorphisms. Corresponding statements hold for type~$\AD$ and type~$\AA$ structures. Since the notions of quasi-isomorphism and homotopy equivalence coincide for structures of any type given that they are homotopy equivalent to bounded ones \cite[Corollary~2.4.4]{bimod}, to prove Theorem~\ref{thm:ourtheorem}, we need only prove it for type~$\DD$ structures.

In our proof of Theorem~\ref{thm:ourtheorem}, we will need to adapt to the setting of type~$\DD$ structures a lemma in homological algebra, whose version for chain complexes first appeared in \cite{bdc}.

\begin{lem}
  \label{lem:hom_alg}
  Let $\DDm{M}_k = \{(M_k, \delta^1_k)\}_{k\in \cycgrp{3}}$ be a collection of type~$\DD$ structures over $(A,B)$, where $A$ and $B$ are both unital differential graded algebras over a base ring $\k$ of characteristic $2$, and let $f_k \colon \DDm{M}_k \to \DDm{M}_{k+1}$, $\phi_k \colon \DDm{M}_k \to \DDm{M}_{k+2}$, and $\psi_k \colon \DDm{M}_k \to \DDm{M}_k$ be morphisms satisfying the following conditions for each $k$:
  \begin{enumerate}
    \item The morphism $f_k \colon \DDm{M}_k\to \DDm{M}_{k+1}$ is a type~$\DD$ homomorphism, i.e.
      \[
        \partial f_k = 0;
      \]
    \item The morphism $f_{k+1} \circ f_k$ is homotopic to zero via the homotopy $\phi_k$, i.e.
      \[
        f_{k+1} \circ f_k + \partial \phi_k = 0;
      \]
    \item The morphism $f_{k+2} \circ \phi_k + \phi_{k+1} \circ f_k$ is homotopic to the identity $\Id_k$ via the homotopy $\psi_k$, i.e.
      \[
        f_{k+2} \circ \phi_k + \phi_{k+1} \circ f_k + \partial \psi_k = \Id_k.
      \]
  \end{enumerate}
  (A graphical representation of the conditions above is given in Figure~\ref{fig:hom_alg}.) Then for each $k$, the type~$\DD$ structure $\DDm{M}_k$ is homotopy equivalent to the mapping cone $\Cone (f_{k+1})$.
\end{lem}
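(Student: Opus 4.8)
The plan is to regard Lemma~\ref{lem:hom_alg} as a statement of pure homological algebra in the differential graded category of type~$\DD$ structures over $(A,B)$, using the notions of morphism, homomorphism, homotopy, composition, and mapping cone recalled above; then the argument parallels the chain-complex version in \cite{bdc}. The three hypotheses are imposed for every $k$, so they are preserved under cyclically relabeling the index set $\{\infty, 0, 1\}$; it therefore suffices to establish one homotopy equivalence, say $\DDm{M}_\infty \simeq \Cone(f_0)$, the other two cases following by relabeling. To do this I would produce an explicit homomorphism $P \colon \Cone(f_0) \to \DDm{M}_\infty$ and show that $\Cone(P)$ is contractible; a morphism in a differential graded category whose mapping cone is contractible is a homotopy equivalence---a two-sided homotopy inverse, together with the required homotopies, can be extracted block by block from any contracting homotopy of the cone---so this finishes the proof.

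\textbf{The map $P$ and the cone.} First I would define $P$ on the underlying bimodule $M_0 \oplus M_1$ of $\Cone(f_0)$ by $P(m_0, m_1) = \phi_0(m_0) + f_1(m_1)$, where $\phi_0 \colon \DDm{M}_0 \to \DDm{M}_\infty$ and $f_1 \colon \DDm{M}_1 \to \DDm{M}_\infty$ (recall $0+2 = \infty$ and $1+1 = \infty$ in the cyclic index set). Unwinding $\partial P$ from the definitions of $\partial$, $\circ$, and the cone structure map $\delta^1_{f_0}$ gives $\partial P = \partial \phi_0 + \partial f_1 + f_1 \circ f_0$; this vanishes because $\partial f_1 = 0$ by hypothesis~(1) with $k=1$, while $f_1 \circ f_0 = \partial \phi_0$ by hypothesis~(2) with $k=0$ (everything over $\F{2}$). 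Hence $P$ is a type~$\DD$ homomorphism. Reading off its structure map, $\Cone(P)$ is the \emph{twisted complex} $\DDm{Y}$ with underlying bimodule $M_0 \oplus M_1 \oplus M_\infty$ whose structure map has diagonal part $\delta^1_0, \delta^1_1, \delta^1_\infty$ and off-diagonal part $f_0 \colon M_0 \to M_1$, $f_1 \colon M_1 \to M_\infty$, $\phi_0 \colon M_0 \to M_\infty$; its $\DD$ compatibility holds since $\DDm{Y} = \Cone(P)$ with $P$ a homomorphism (concretely, the only nonobvious relation is $f_1 \circ f_0 + \partial\phi_0 = 0$, again hypothesis~(2) with $k=0$).

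\textbf{Contracting $\DDm{Y}$.} The crux is to show $\DDm{Y}$ is contractible. I would first write down the candidate homotopy $K \colon \DDm{Y} \to \DDm{Y}$ with diagonal block entries $\psi_0, \psi_1, \psi_\infty$, off-diagonal entries $\phi_1 \colon M_1 \to M_0$, $\phi_\infty \colon M_\infty \to M_1$, $f_\infty \colon M_\infty \to M_0$, and zero in the remaining three slots, and then compute $\partial K$ block by block. Ordering the summands $M_0, M_1, M_\infty$ as positions $0, 1, 2$: the diagonal $M_k \to M_k$ block equals $\partial\psi_k + f_{k+2} \circ \phi_k + \phi_{k+1} \circ f_k$, which is $\Id_k$ by hypothesis~(3); the position-lowering blocks $M_\infty \to M_0$, $M_1 \to M_0$, $M_\infty \to M_1$ equal $\partial f_\infty$, $\partial\phi_1 + f_\infty \circ f_1$, $\partial\phi_\infty + f_0 \circ f_\infty$, which vanish by hypothesis~(1) with $k=\infty$ and hypothesis~(2) with $k=1$ and $k=\infty$; and what remains is an error morphism $E$ supported on the strictly position-raising blocks $M_0 \to M_1$, $M_1 \to M_\infty$, $M_0 \to M_\infty$, hence nilpotent with $E^3 = 0$. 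Thus $\partial K = \Idm{Y} + E$. Since $\partial^2 = 0$ and $\partial \Idm{Y} = 0$, we get $\partial E = 0$, so $\Idm{Y} + E$ is an invertible homomorphism with inverse $\Idm{Y} + E + E^2$; setting $K' = K \circ (\Idm{Y} + E + E^2)$, the Leibniz rule gives $\partial K' = (\partial K) \circ (\Idm{Y} + E + E^2) = (\Idm{Y} + E)(\Idm{Y} + E + E^2) = \Idm{Y}$. Hence $\Idm{Y} \simeq 0$, so $\Cone(P) = \DDm{Y}$ is contractible, $P$ is a homotopy equivalence, and $\DDm{M}_\infty \simeq \Cone(f_0)$; cyclic relabeling yields $\DDm{M}_k \simeq \Cone(f_{k+1})$ for every $k$.

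\textbf{Main obstacle.} The step I expect to require the most care is the block-by-block computation of $\partial K$ in the type~$\DD$ setting: one must keep track of the algebra multiplications and differentials concealed inside $\partial$ and $\circ$, verify that hypotheses~(1)--(3) are exactly what forces the diagonal blocks to be identities and the position-lowering blocks to vanish, and check that the residual error $E$ is strictly position-raising, so that it is nilpotent and the correction $K \mapsto K \circ (\Idm{Y} + E)^{-1}$ is legitimate. Everything else---that $P$ is a homomorphism, the identification $\Cone(P) = \DDm{Y}$, and the passage from contractibility of $\Cone(P)$ to $P$ being a homotopy equivalence---is formal, given the differential graded category structure recalled in the excerpt. (Alternatively, one could deduce the contractibility of $\DDm{Y}$ from the spectral sequence of its position filtration, whose $E_2$-differential is induced by $\phi_0$ and which degenerates to $0$ by hypothesis~(3); but the explicit homotopy above keeps all maps combinatorially computable, as elsewhere in the paper.)
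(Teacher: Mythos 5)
Your proof is correct, and it takes a genuinely different route from the paper's. The paper produces explicit mutually homotopy-inverse homomorphisms $G_k\colon \DDm{M}_k \to \Cone(f_{k+1})$ and $G_k'\colon \Cone(f_{k+1}) \to \DDm{M}_k$; the composition $G_k'\circ G_k$ is directly homotopic to $\Id_k$ via $\psi_k$, but $G_k\circ G_k'$ is only homotopic to an endomorphism $J_k$ of the cone whose off-identity part is the error term $\eta_{k+1} = \phi_k\circ\phi_{k+1} + f_{k+1}\circ\psi_{k+1} + \psi_{k+2}\circ f_{k+1}$, and the paper finishes with the observation that $J_k\circ J_k = \Id$ and a squeeze $J_k \simeq G_k\circ G_k' \simeq J_k\circ J_k = \Id$. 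You instead construct only the single homomorphism $P$ --- which is the paper's $G_\infty'$ --- and show the total twisted complex $\Cone(P)$ on $M_0\oplus M_1\oplus M_\infty$ is contractible, using the fact that contractibility of the cone forces a morphism in a dg category to be a homotopy equivalence. Your block computation $\partial K = \Idm{Y} + E$ is precisely what is hiding behind the paper's $H_k$, $H_k'$, and $J_k$: the three strictly position-raising blocks of $E$ are the three cyclic instances of the paper's $\eta$. Where the paper corrects the error by the identity $J_k \simeq J_k^2$, you correct it by the nilpotence/perturbation trick $K \mapsto K\circ(\Idm{Y}+E)^{-1}$, which is legitimate since $\partial E = 0$ (so $\Idm{Y}+E$ is a homomorphism), $E^3 = 0$ (strict triangularity on a three-block sum), and composition with a homomorphism commutes with $\partial$ by the Leibniz rule. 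Your approach has the advantage that a single uniform $3\times 3$ block calculation replaces the separate verifications that $G_k$, $G_k'$ are homomorphisms and that $H_k$, $H_k'$ are the requisite homotopies, and the perturbation step is arguably cleaner than the paper's $J_k\simeq J_k^2$ maneuver; the paper's approach is more explicit about the homotopy inverse and the homotopies themselves, which can be useful downstream when gradings are tracked (as in Section~\ref{sec:grading}).
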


\vspace{-0.2cm}

\begin{figure}[h]
  \centering
  \begin{equation*}
    \adjustimage{scale = 0.75, valign = m}{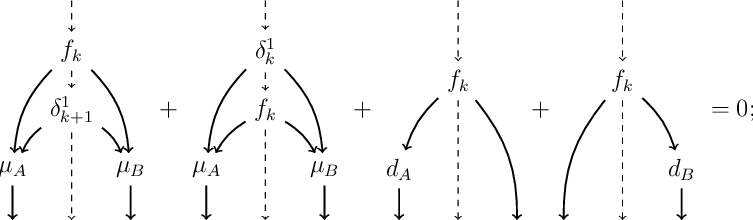}\tag{1}
  \end{equation*}
  \begin{equation*}
    \adjustimage{scale = 0.75, valign = m}{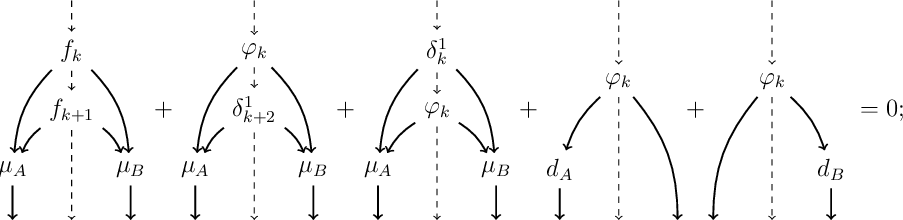}\tag{2}
  \end{equation*}
  \begin{equation*}
    \adjustimage{scale = 0.75, valign = m}{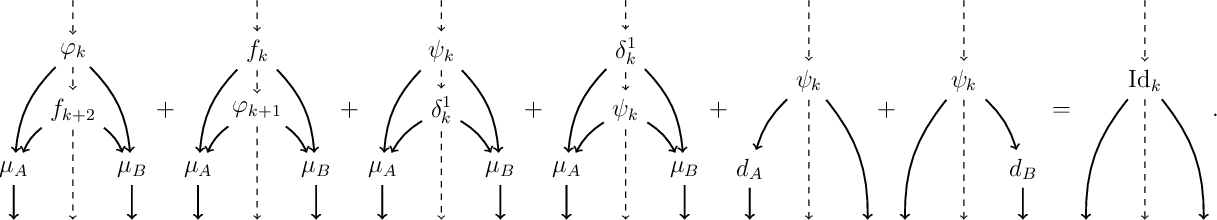}\tag{3}
  \end{equation*}
  \vspace{-0.2cm}
  \caption{Graphical representations of the conditions in Lemma~\ref{lem:hom_alg}.}
  \label{fig:hom_alg}
\end{figure}

\begin{proof}
  Observe that the mapping cone $\Cone (f_{k+1})$ is defined because $f_{k+1}$ is a type~$\DD$ homomorphism, using Condition~(1). It has underlying module $M_{k+1} \oplus M_{k+2}$, and structure map
  \[
    \delta_{f_{k+1}}^1 (m_{k+1}, m_{k+2}) = (\delta_{k+1}^1 (m_{k+1}), f_{k+1} (m_{k+1}) + \delta_{k+2}^1 (m_{k+2})).
  \]
  
  \begin{figure}[h]
    \centering
    \begin{tikzpicture}
      \node at (0,0) (m1) {$\DDm{M}_k$};
      \node at (3,0) (m2) {\phantom{\fbox{$\DDm{M}_{k+1}$}}};
      \node at (6,0) (m3) {\phantom{\fbox{$\DDm{M}_{k+2}$}}};
      \node at (4.5,.15) (c3) {\fbox{$\DDm{M}_{k+1}$\raisebox{.5cm}{\phantom{a}}\vphantom{$a=\frac{\frac{a^a}{a^a}}{a}$} \hspace{1.3cm} $\DDm{M}_{k+2}$}};
      \node at (13.5,.15) (c2) {\fbox{$\DDm{M}_{k+1}$\raisebox{.5cm}{\phantom{a}}\vphantom{$a=\frac{\frac{a^a}{a^a}}{a}$} \hspace{1.3cm} $\DDm{M}_{k+2}$}};
      \node at (9,0) (m4) {$\DDm{M}_k$};
      \node at (12,0) (m5) {\phantom{\fbox{$\DDm{M}_{k+1}$}}};
      \node at (15,0) (m6) {\phantom{\fbox{$\DDm{M}_{k+2}$}}};
      \draw[al, red] (m1) to node[below] {$\phi_k$} (m3);
      \draw[al, red] (m4) to node[below] {$\phi_k$} (m6);
      \draw[al, blue] (m2) to node[below] {$\phi_{k+1}$} (m4);
      \draw[al, YellowOrange] (m3) to node[below] {$\phi_{k+2}$} (m5);
      \draw[arb, OliveGreen] (m1) to node[above] {$\psi_k$} (m4);
      \draw[arb, YellowOrange] (m2) to node[above] {$\psi_{k+1}$} (m5);
      \draw[arb, YellowOrange] (m3) to node[above] {$\psi_{k+2}$} (m6);
      \draw[algarrow, red] (m1) to node[above] {$f_k$} (m2);
      \draw[algarrow, red] (m4) to node[above] {$f_k$} (m5);
      \draw[algarrow] (m2) to  node[above] {$f_{k+1}$} (m3);
      \draw[algarrow, blue] (m3) to node[above] {$f_{k+2}$} (m4);
      \draw[algarrow] (m5) to (m6);
    \end{tikzpicture}		
    \caption{A diagram for the maps discussed in the proof of Lemma~\ref{lem:hom_alg}. The mapping cone $\Cone (f_{k+1})$ is boxed. The maps $G_k$, $G_k'$, $H_k$, and $H_k'$ are shown in red, blue, green, and orange, respectively.}
    \label{fig:cone}
  \end{figure}

  To show that $\DDm{M}_k \simeq \Cone (f_{k+1})$, we will define homomorphisms $G_k \colon \DDm{M}_k \to \Cone (f_{k+1})$ and $G_k' \colon \Cone (f_{k+1}) \to \DDm{M}_k$, and homotopies $H_k \colon \DDm{M}_k \to \DDm{M}_k$ and $H_k' \colon \Cone (f_{k+1}) \to \Cone (f_{k+1})$. For ease of reading, we provide a schematic diagram in Figure~\ref{fig:cone}.

  We define $G_k \colon \DDm{M}_k \to \Cone (f_{k+1})$ and $G_k' \colon \Cone (f_{k+1}) \to \DDm{M}_k$ as follows:
  \begin{align*}
    G_k (m_k) & = (f_k (m_k), \phi_k (m_k)),\\*
    G_k' (m_{k+1}, m_{k+2}) & = \phi_{k+1} (m_{k+1}) + f_{k+2} (m_{k+2}).
  \end{align*}
  We first claim that $G_k$ and $G_k'$ are type~$\DD$ homomorphisms. Indeed,
  \begin{align*}
    \partial G_k (m_k) & = \delta_{f_{k+1}}^1 \circ G_k (m_k) + G_k \circ \delta_k^1 (m_k) + d G_k (m_k)\\
    & = (\delta_{k+1}^1 \circ f_k (m_k), f_{k+1} \circ f_k (m_k) + \delta_{k+2}^1 \circ \phi_k (m_k))\\*
    & \quad + (f_k \circ \delta_k^1 (m_k), \phi_k \circ \delta_k^1(m_k)) + (d f_k (m_k), d \phi_k (m_k))\\
    & = (\partial f_k (m_k), f_{k+1} \circ f_k (m_k) + \partial \phi_k (m_k))\\
    & = (0, 0),
  \end{align*}
  where the last equality follows from Conditions~(1) and (2). Similarly,
  \begin{align*}
    \partial G_k' (m_{k+1}, m_{k+2}) & = \delta_k^1 \circ G_k' (m_{k+1}, m_{k+2}) + G_k' \circ \delta_{f_{k+1}}^1 (m_{k+1}, m_{k+2}) + d G_k' (m_{k+1}, m_{k+2})\\
    & = \delta_k^1 \circ \phi_{k+1} (m_{k+1}) + \delta_k^1 \circ f_{k+2} (m_{k+2})\\*
    & \quad + \phi_{k+1} \circ \delta_{k+1}^1 (m_{k+1}) + f_{k+2} \circ f_{k+1} (m_{k+1}) + f_{k+2} \circ \delta_{k+2}^1 (m_{k+2})\\*
    & \quad + d \phi_{k+1} (m_{k+1}) + d f_{k+2} (m_{k+2})\\
    & = (f_{k+2} \circ f_{k+1} (m_{k+1}) + \partial \phi_{k+1} (m_{k+1})) + \partial f_{k+2} (m_{k+2})\\
    & = 0,
  \end{align*}
  where again the last equality follows from Conditions~(1) and (2).

  We next claim that $G_k' \circ G_k \simeq \Id_k$ and $G_k \circ G_k' \simeq \Id_{\Cone (f_{k+1})}$. To show this, we define the homotopy morphisms $H_k \colon \DDm{M}_k \to \DDm{M}_k$ and $H_k' \colon \Cone (f_{k+1}) \to \Cone (f_{k+1})$ as follows:
  \begin{align*}
    H_k (m_k) & = \psi_k (m_k),\\*
    H_k' (m_{k+1}, m_{k+2}) & = (\psi_{k+1} (m_{k+1}) + \phi_{k+2} (m_{k+2}), \psi_{k+2} (m_{k+2})).
  \end{align*}
  Then
  \begin{align*}
    \partial H_k (m_k) + G_k' \circ G_k (m_k) & = \partial \psi_k (m_k) + \phi_{k+1} \circ f_k (m_k) + f_{k+2} \circ \phi_k (m_k) \\
    & = \Id_k (m_k),
  \end{align*}
  where the last equality follows from Condition~(3). The homotopy $G_k \circ G_k' \simeq \Id_{\Cone (f_{k+1})}$ is a little more tedious.
  \begin{align*}
    & \quad \, \, \partial H_k' (m_{k+1}, m_{k+2}) + G_k \circ G_k' (m_{k+1}, m_{k+2})\\
    & = \delta_{f_{k+1}}^1 \circ H_k' (m_{k+1}, m_{k+2}) + H_k' \circ \delta_{f_{k+1}}^1 (m_{k+1}, m_{k+2}) + d H_k' (m_{k+1}, m_{k+2})\\*
    & \quad + G_k \circ G_k' (m_{k+1}, m_{k+2})\\
    & = (\delta_{k+1}^1 \circ \psi_{k+1} (m_{k+1}) + \delta_{k+1}^1 \circ \phi_{k+2} (m_{k+2}),\\*
    & \qquad \qquad f_{k+1} \circ \psi_{k+1} (m_{k+1}) + f_{k+1} \circ \phi_{k+2} (m_{k+2}) + \delta_{k+2}^1 \circ \psi_{k+2} (m_{k+2}))\\*
    & \quad + (\psi_{k+1} \circ \delta_{k+1}^1 (m_{k+1}) + \phi_{k+2} \circ f_{k+1} (m_{k+1}) + \phi_{k+2} \circ \delta_{k+2}^1 (m_{k+2}),\\*
    & \qquad \qquad \psi_{k+2} \circ f_{k+1} (m_{k+1}) + \psi_{k+2} \circ \delta_{k+2}^1 (m_{k+2}))\\*
    & \quad + (d \psi_{k+1} (m_{k+1}) + d \phi_{k+2} (m_{k+2}), d \psi_{k+2} (m_{k+2}))\\*
    & \quad + (f_k \circ \phi_{k+1} (m_{k+1}) + f_k \circ f_{k+2} (m_{k+2}), \phi_k \circ \phi_{k+1} (m_{k+1}) + \phi_k \circ f_{k+2} (m_{k+2}))\\
    & = ((f_k \circ \phi_{k+1} (m_{k+1}) + \phi_{k+2} \circ f_{k+1} (m_{k+1}) + \partial \psi_{k+1} (m_{k+1}))\\*
    & \qquad \qquad \quad \; \; + (f_k \circ f_{k+2} (m_{k+2}) + \partial \phi_{k+2} (m_{k+2})),\\*
    & \qquad \qquad (f_{k+1} \circ \phi_{k+2} (m_{k+2}) + \phi_k \circ f_{k+2} (m_{k+2}) + \partial \psi_{k+2} (m_{k+2}))\\*
    & \qquad \qquad \quad \; \; + (\phi_k \circ \phi_{k+1} (m_{k+1}) + f_{k+1} \circ \psi_{k+1} (m_{k+1}) + \psi_{k+2} \circ f_{k+1} (m_{k+1})))\\*
    & = (\Id_{k+1} (m_{k+1}), \Id_{k+2} (m_{k+2}) + \eta_{k+1} (m_{k+1})),
  \end{align*}
  where the last equality uses Conditions~(2) and (3), and $\eta_{k+1} \colon \DDm{M}_{k+1} \to \DDm{M}_{k+2}$ is the morphism
  \[
    \eta_{k+1} = \phi_k \circ \phi_{k+1} + f_{k+1} \circ \psi_{k+1} + \psi_{k+2} \circ f_{k+1}.
  \]
  Letting $J_k \colon \Cone (f_{k+1}) \to \Cone (f_{k+1})$ be the morphism
  \[
    J_k (m_{k+1}, m_{k+2}) = (\Id_{k+1} (m_{k+1}), \Id_{k+2} (m_{k+2}) + \eta_{k+1} (m_{k+1})),
  \]
  we see that $G_k \circ G_k' \simeq J_k$. Observe also that $J_k \circ J_k = \Id_{\Cone (f_{k+1})}$. But then
  \[
    J_k \simeq G_k \circ G_k' = G_k \circ \Id_k \circ G_k' \simeq G_k \circ (G_k' \circ G_k) \circ G_k' = (G_k \circ G_k') \circ (G_k \circ G_k') \simeq J_k \circ J_k = \Id_{\Cone (f_{k+1})}.
  \]
  This shows that $G_k \comp G_k' \simeq \Id_{\Cone (f_{k+1})}$, as desired.
\end{proof}

\begin{rmk}
  In \cite{bdc}, a proof is given for the chain-complex version of this lemma. That proof does not translate to the case of type~$\DD$ structures, since it involves taking the homology of the chain complexes. Instead, the proof we have presented here is the type~$\DD$ version of an alternative proof for the lemma in \cite{bdc, kmos, mskein, wong}, which is known in the community but not found in the literature.
\end{rmk}

In our proof of Theorem~\ref{thm:oriented}, we will also use another lemma in homological algebra, which is the analogue of a well-known lemma for chain complexes.

\begin{lem}
  \label{lem:hom_alg_2}
  Let $\DDm{M}, \DDm{N}_1, \DDm{N}_2$ be type~$\DD$ structures over $(A, B)$, where $A$ and $B$ are both unital differential graded algebras over a base ring $\k$ of characteristic $2$. Let $\PP \colon \DDm{N}_1 \to \DDm{N}_2$ be a homotopy equivalence of type~$\DD$ structures, and for $i = 1, 2$, let $f_i \colon \DDm{M} \to \DDm{N}_i$ be type~$\DD$ homomorphisms such that
  \[
    \PP \circ f_1 = f_2.
  \]
  Then $\Cone (f_1)$ is homotopy equivalent to $\Cone (f_2)$.
\end{lem}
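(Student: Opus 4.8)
The plan is to build an explicit homotopy equivalence $\Phi \colon \Cone(f_1) \to \Cone(f_2)$ and to produce a homotopy inverse for it directly from the homotopy-equivalence data of $\PP$. First I would fix a homotopy inverse $\Q \colon \DDm{N}_2 \to \DDm{N}_1$ of $\PP$ and homotopies $K_1, K_2$ with $\partial K_1 = \Q \circ \PP + \Id_{\DDm{N}_1}$ and $\partial K_2 = \PP \circ \Q + \Id_{\DDm{N}_2}$, and set $L = K_1 \circ f_1 \colon \DDm{M} \to \DDm{N}_1$; using $\partial f_1 = 0$ and $\PP \circ f_1 = f_2$ one gets $\partial L = \Q \circ f_2 + f_1$, so $L$ is a homotopy from $f_1$ to $\Q \circ f_2$. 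It is also convenient to record a standard auxiliary fact, the type~$\DD$ analog of the corresponding statement for chain complexes: if $g, g' \colon \DDm{M} \to \DDm{N}$ are homotopic homomorphisms, say $\partial h = g + g'$, then the map sending a generator $(m, n)$ of $\Cone(g)$ to $(I_A \otimes m \otimes I_B,\; I_A \otimes n \otimes I_B + h(m))$ is an isomorphism of type~$\DD$ structures $\Cone(g) \to \Cone(g')$ (in characteristic $2$ it has an inverse of the same form); I will call it the \emph{shift isomorphism} of $h$.

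Next, define $\Phi \colon \Cone(f_1) \to \Cone(f_2)$ by $\Phi(m, n_1) = (I_A \otimes m \otimes I_B,\; \PP(n_1))$ and $\Psi \colon \Cone(f_2) \to \Cone(f_1)$ by $\Psi(m, n_2) = (I_A \otimes m \otimes I_B,\; \Q(n_2) + L(m))$. Expanding $\partial$ of a morphism of mapping cones (as in the proof of Lemma~\ref{lem:hom_alg}) and cancelling in characteristic $2$, one checks that $\Phi$ is a homomorphism --- this uses $\partial \PP = 0$ and $\PP \circ f_1 = f_2$ --- and that $\Psi$ is a homomorphism, using $\partial \Q = 0$ and $\partial L = \Q \circ f_2 + f_1$. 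Then the morphism $(m, n_1) \mapsto (0, K_1(n_1))$ is a homotopy from $\Psi \circ \Phi$ to $\Id_{\Cone(f_1)}$; this is immediate from $\partial K_1 = \Q \circ \PP + \Id_{\DDm{N}_1}$ together with the identity $K_1 \circ f_1 = L$. Thus $\Phi$ has a left homotopy inverse. Note that this construction uses only the hypotheses of the lemma, hence applies verbatim to any data satisfying them.

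The one subtle point, which I expect to be the main obstacle, is the other composite $\Phi \circ \Psi$. The analogous expansion gives $\Phi \circ \Psi + \Id_{\Cone(f_2)} = \partial T_0 + R$, where $T_0$ is the morphism $(m, n_2) \mapsto (0, K_2(n_2))$ and $R$ is the morphism $(m, n_2) \mapsto (0,\; (\PP \circ K_1 + K_2 \circ \PP) \circ f_1(m))$. The composite $\PP \circ K_1 + K_2 \circ \PP$ is a homomorphism $\DDm{N}_1 \to \DDm{N}_2$, but it need not be null-homotopic, so $R$ cannot simply be absorbed into $T_0$; this is the familiar phenomenon that the two ``triangle'' homotopies of a homotopy equivalence need not be compatible on the nose. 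Rather than upgrade the data of $\PP$ to force $\PP \circ K_1 = K_2 \circ \PP$, I would bootstrap.

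Apply the construction of the second paragraph to the data $(\Q \colon \DDm{N}_2 \to \DDm{N}_1,\ f_2 \colon \DDm{M} \to \DDm{N}_2,\ \Q \circ f_2 \colon \DDm{M} \to \DDm{N}_1)$, which again satisfies the hypotheses of the lemma, with $\Q$ in the role of $\PP$. This produces a homomorphism $\Theta \colon \Cone(f_2) \to \Cone(\Q \circ f_2)$, namely $\Theta(m, n_2) = (I_A \otimes m \otimes I_B,\; \Q(n_2))$, which has a left homotopy inverse. Composing $\Theta$ with the shift isomorphism $\Cone(\Q \circ f_2) \to \Cone(f_1)$ of $L$ recovers exactly $\Psi$, so $\Psi$ too has a left homotopy inverse. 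Finally, a homomorphism with both a left and a right homotopy inverse is a homotopy equivalence (the two one-sided inverses are then homotopic, and either serves as a two-sided inverse); since $\Psi$ has the right homotopy inverse $\Phi$ and also a left homotopy inverse, $\Psi$ --- and hence $\Phi$ --- is a homotopy equivalence. Therefore $\Cone(f_1) \simeq \Cone(f_2)$, as claimed.
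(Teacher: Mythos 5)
Your proof is correct, and it defines the same homomorphisms as the paper's: your $\Phi$ and $\Psi$ coincide with the paper's $F_1$ and $F_2$ (under $\Q = \PP'$, $K_1 = \HH_1$, $L = \HH_1 \circ f_1$), and your homotopy $(m, n_1) \mapsto (0, K_1(n_1))$ for $\Psi \circ \Phi \simeq \Id_{\Cone(f_1)}$ is the paper's $\Phi_1$. Where the two arguments genuinely diverge is in the harder composition $\Phi \circ \Psi \simeq \Id_{\Cone(f_2)}$. The paper expands $\partial \Phi_2 + F_1 \circ F_2$ explicitly (with $\Phi_2(m, n_2) = (0, \HH_2(n_2))$), finds it equal to $J(m, n_2) = (m, n_2 + \eta(m))$ with $\eta = (\HH_2 \circ \PP + \PP \circ \HH_1) \circ f_1$, and then invokes the idempotent trick from the final step of the proof of Lemma~\ref{lem:hom_alg}: since the off-diagonal piece $(m, n_2) \mapsto (0, \eta(m))$ squares to zero, $J \circ J = \Id$ in characteristic $2$, so $J \simeq F_1 \circ F_2 = F_1 \circ \Id \circ F_2 \simeq F_1 \circ (F_2 \circ F_1) \circ F_2 = (F_1 \circ F_2)^2 \simeq J^2 = \Id$. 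You never confront $\Phi \circ \Psi$ head-on: you factor $\Psi$ as the shift isomorphism of $L$ composed with the map $\Theta$ obtained by rerunning the same construction on the reversed data $(\Q, f_2, \Q \circ f_2)$, so $\Psi$ inherits a left homotopy inverse, and together with its right homotopy inverse $\Phi$ this forces $\Psi$ (hence $\Phi$) to be a two-sided equivalence. Both routes correctly avoid the trap you flag --- one cannot expect $\PP \circ K_1 + K_2 \circ \PP$ to be null-homotopic for arbitrary choices of homotopy data. The paper's version is more computational and exploits the explicit upper-triangular form of the error term $\eta$, whereas yours is more structural: the shift isomorphism and the ``reverse-and-reapply'' bootstrap are general, reusable moves that never require writing $\eta$ down at all.
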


\begin{proof}
  By definition, there is a type~$\DD$ homomorphism $\PP' \colon \DDm{N}_2 \to \DDm{N}_1$ such that $\PP' \circ \PP = \Id_{\DDm{N}_1} + \bdy \HH_1$ and $\PP \circ \PP' = \Id_{\DDm{N}_2} + \bdy \HH_2$ for some morphisms $\HH_i \colon \DDm{N}_i \to \DDm{N}_i$. The mapping cone $\Cone (f_i)$ has underlying module $M \oplus N_i$, and structure map
  \[
    \delta_{f_i}^1 (m, n_i) = (\delta_{\DDm{M}}^1 (m), f_i (m) + \delta_{\DDm{N}_i}^1 (n_i)).
  \]
  We will define homomorphisms $F_1 \colon \Cone (f_1) \to \Cone (f_2)$ and $F_2 \colon \Cone (f_2) \to \Cone (f_1)$ and homotopies $\Phi_i \colon \Cone (f_i) \to \Cone (f_i)$; see Figure~\ref{fig:cone-2} for a schematic diagram.

  \begin{figure}[h]
    \centering
    \begin{tikzpicture}
      \node at (0,3) (m1) {$\DDm{M}$};
      \node at (3,3) (m2) {$\DDm{M}$};
      \node at (6,3) (m3) {$\DDm{M}$};
      \node at (9,3) (m4) {$\DDm{M}$};
      \node at (0,0) (n1) {$\DDm{N}_1$};
      \node at (3,0) (n2) {$\DDm{N}_2$};
      \node at (6,0) (n3) {$\DDm{N}_1$};
      \node at (9,0) (n4) {$\DDm{N}_2$};
      \draw[al, OliveGreen] (n1) to node[below] {$\HH_1$} (n3);
      \draw[al, YellowOrange] (n2) to node[below] {$\HH_2$} (n4);
      \draw[algarrow] (m1) to node[right] {$f_1$} (n1);
      \draw[algarrow] (m2) to node[right] {$f_2$} (n2);
      \draw[algarrow] (m3) to node[right] {$f_1$} (n3);
      \draw[algarrow] (m4) to node[right] {$f_2$} (n4);
      \draw[algarrow, red] (m1) to node[above] {$\Id$} (m2);
      \draw[algarrow, blue] (m2) to  node[above] {$\Id$} (m3);
      \draw[algarrow, red] (m3) to node[above] {$\Id$} (m4);
      \draw[algarrow, red] (n1) to node[above] {$\PP$} (n2);
      \draw[algarrow, blue] (n2) to  node[above] {$\PP'$} (n3);
      \draw[algarrow, red] (n3) to node[above] {$\PP$} (n4);
      \draw[algarrow, blue] (m2) to node[midway, sloped, above] {$\HH_1\circ f_1$} (n3);
    \end{tikzpicture}		
    \caption{A diagram for the maps discussed in the proof of Lemma~\ref{lem:hom_alg_2}. The vertical pieces form the mapping cones. The maps $F_1$, $F_2$, $\Phi_1$, and $\Phi_2$ are shown in red, blue, green, and orange, respectively.}
    \label{fig:cone-2}
  \end{figure}

  We define the morphisms $F_1 \colon \Cone (f_1) \to \Cone (f_2)$ and $F_2 \colon \Cone (f_2) \to \Cone (f_1)$ by
  \begin{align*}
    F_1 (m, n_1) & = (\Id_{\DDm{M}} (m), \PP (n_1)),\\
    F_2 (m, n_2) & = (\Id_{\DDm{M}} (m), \HH_1 \circ f_1 (m) + \PP' (n_2)).
  \end{align*}
  We first claim that $F_1$ and $F_2$ are type~$\DD$ homomorphisms. Indeed,
  \begin{align*}
    \partial F_1 (m, n_1) & = \delta_{f_2}^1 \circ F_1 (m, n_1) + F_1 \circ \delta_{f_1}^1 (m, n_1) + d F_1 (m, n_1)\\
    & = (\delta_{\DDm{M}}^1 (m), f_2 (m) + \delta_{\DDm{N}_2}^1 \circ \PP (n_1)) + (\delta_{\DDm{M}}^1 (m), \PP \circ f_1 (m) + \PP \circ \delta_{\DDm{N}_1}^1 (n_1))\\
    & \quad + (d \Id_{\DDm{M}} (m), d \PP (n_1))\\
    & = (0, 0),
  \end{align*}
  where the last equality follows from $\PP \circ f_1 = f_2$, the fact that $\PP$ is a homomorphism, and $d_A I_A = d_B I_B = 0$. Similarly,
  \begin{align*}
    \partial F_2 (m, n_2) & = \delta_{f_1}^1 \circ F_2 (m, n_2) + F_2 \circ \delta_{f_2}^1 (m, n_2) + d F_2 (m, n_2)\\
    & = (\delta_{\DDm{M}}^1 (m), f_1 (m) + \delta_{\DDm{N}_1}^1 \circ \HH_1 \circ f_1 (m) + \delta_{\DDm{N}_1}^1 \circ \PP' (n_2))\\
    & \quad + (\delta_{\DDm{M}}^1 (m), \HH_1 \circ f_1 \circ \delta_{\DDm{M}}^1 (m) + \PP' \circ f_2 (m) + \PP' \circ \delta_{\DDm{N}_2}^1 (n_2))\\
    & \quad + (d \Id_{\DDm{M}} (m), d (\HH_1 \circ f_1) (m) + d \PP' (n_2))\\
    & = (0, f_1 (m) + \delta_{\DDm{N}_1}^1 \circ \HH_1 \circ f_1 (m) + \HH_1 \circ f_1 \circ \delta_{\DDm{M}}^1 (m) + \PP' \circ f_2 (m)\\
    & \qquad \qquad \quad + d \HH_1 \circ f_1 (m) + \HH_1 \circ d f_1 (m)).\\
    & = (0, \PP' \circ f_2 (m) + f_1 (m) + \delta_{\DDm{N}_1}^1 \circ \HH_1 \circ f_1 (m) + d \HH_1 \circ f_1 (m) + \HH_1 \circ \delta_{\DDm{N}_1}^1 \circ f_1 (m))\\
    & = (0, \PP' \circ f_2 (m) + \PP' \circ \PP \circ f_1 (m))\\
    & = (0, 0),
  \end{align*}
  where the last four equalities follow, respectively, from the fact $\PP'$ is a homomorphism, that $f_1$ is a homomorphism, that $\HH_1$ is a homotopy between $\Id_{\DDm{N}_1}$ and $\PP' \circ \PP$, and that $\PP \circ f_1 = f_2$.

  We next claim that $F_2 \circ F_1 \simeq \Id_{\Cone (f_1)}$ and $F_1 \circ F_2 \simeq \Id_{\Cone (f_2)}$. To show this, we define the homotopy morphisms $\Phi_i \colon \Cone (f_i) \to \Cone (f_i)$ for $i = 1, 2$, as follows:
  \begin{align*}
    \Phi_1 (m, n_1) & = (0, \HH_1 (n_1)),\\
    \Phi_2 (m, n_2) & = (0, \HH_2 (n_2)).
  \end{align*}
  Then
  \begin{align*}
    & \quad \, \, \bdy \Phi_1 (m, n_1) + F_2 \circ F_1 (m, n_1)\\
    & = \delta_{f_1}^1 \circ \Phi_1 (m, n_1) + \Phi_1 \circ \delta_{f_1}^1 (m, n_1) + d \Phi_1 (m, n_1) + F_2 \circ F_1 (m, n_1)\\
    & = (0, \delta_{\DDm{N}_1}^1 \circ \HH_1 (n_1)) + (0, \HH_1 \circ f_1 (m) + \HH_1 \circ \delta_{\DDm{N}_1}^1 (n_1)) + (0, d \HH_1 (n_1))\\
    & \quad + (\Id_{\DDm{M}} (m), \HH_1 \circ f_1 (m) + \PP' \circ \PP (n_1))\\
    & = (\Id_{\DDm{M}} (m), \Id_{\DDm{N}_1} (n_1))\\
    & = \Id_{\Cone (f_1)} (m, n_1),
  \end{align*}
  where the third equality follows from the fact that $\PP' \circ \PP$ is homotopic to $\Id_{\DDm{N}_1}$ via $\HH_1$. Similarly,
  \begin{align*}
    & \quad \, \, \bdy \Phi_2 (m, n_2) + F_1 \circ F_2 (m, n_2)\\
    & = \delta_{f_2}^1 \circ \Phi_2 (m, n_2) + \Phi_2 \circ \delta_{f_2}^1 (m, n_2) + d \Phi_2 (m, n_2) + F_1 \circ F_2 (m, n_2)\\
    & = (0, \delta_{\DDm{N}_2}^1 \circ \HH_2 (n_1)) + (0, \HH_2 \circ f_2 (m) + \HH_2 \circ \delta_{\DDm{N}_2}^1 (n_2)) + (0, d \HH_2 (n_2))\\
    & \quad + (\Id_{\DDm{M}} (m), \PP_1 \circ \HH_1 \circ f_1 (m) + \PP \circ \PP' (n_2))\\
    & = (\Id_{\DDm{M}} (m), \Id_{\DDm{N}_2} (n_2) + \HH_2 \circ f_2 (m) + \PP_1 \circ \HH_1 \circ f_1 (m)),
  \end{align*}
  where the last equality follows from the fact that $\PP \circ \PP'$ is homotopic to $\Id_{\DDm{N}_2}$ via $\HH_2$. An argument similar to the last part of the proof of Lemma~\ref{lem:hom_alg} finishes the proof.
\end{proof}

\subsection{Tangle Floer homology}\label{ssec:tf}

Tangle Floer homology is an invariant of tangles, which takes the form of a (bi)module such as the ones discussed in Section \ref{ssec:alg}; see \cite{pv}. In this section, we review the combinatorial construction of tangle Floer homology for tangles in  $I\times \mathbb R^2$, with special focus on those tangles that are relevant to our proof. 

An $(m,n)$-\emph{tangle} (or simply a \emph{tangle}) $\T$ is a properly, smoothly embedded, oriented $1$-manifold in  $I\times \mathbb R^2$, with boundary $\bdy \T = \bdy^L\T\sqcup \bdy^R\T$, where $\bdy^L\T = \{0\}\times\{1, \ldots, m\}\times \{0\}$ and $\bdy^R\T = \{1\}\times\{1, \ldots, n\}\times \{0\}$, treated as oriented sequences of points.  A planar diagram of a tangle is a projection to the $I\times\mathbb R$ subset of the $(x,y)$-plane, with no triple intersections, self-tangencies, or cusps, and with over- and under-crossing data preserved (as viewed from the positive $z$ direction).  The boundaries of $\T$ can be thought of as sign sequences
\begin{equation*}
  -\bdy^L\T\in\{+,-\}^m, \quad \bdy^R\T\in\{+,-\}^n,
\end{equation*}
according to the orientation of each point ($+$ if the tangle is oriented left-to-right, $-$ if the tangle is oriented right-to-left at that point).
Given two tangles $\T$ and $\T'$ with  $\bdy^R\T=-\bdy^L\T'$, we can concatenate them to obtain a new tangle $\T\circ\T'$, by placing $\T'$ to the right of $\T$ and scaling in the $x$ direction by $1/2$. We also consider unoriented tangles, and think of their boundaries as sequences of (unoriented) points. 

In \cite{pv}, to a sign sequence one associates a DGA, and to a tangle a left-right bimodule over the DGAs for the respective boundaries. First, we recall the definition of the algebra. For more details, see \cite[Section 3]{pv}.

Let $P = (p_1, \ldots, p_n) \in \{+, -\}^n$ be a sign sequence and let $[n]=\{0,1,\ldots, n\}$. One associates to $P$ a differential graded algebra $\am{P}$ over $\F{2}[U_1, \ldots, U_t]$, where the variables $U_1, \ldots, U_t$ correspond to the positively oriented points in $P$. The algebra is generated over $\F{2}[U_1, \ldots, U_t]$ by partial bijections $[n] \to [n]$ (i.e. bijections $\sss\to \ttt$ for $\sss, \ttt\subset [n]$), which can be drawn as strand diagrams (up to isotopy and Reidemeister III moves), as follows. 

Represent each $p_i$ by a horizontal orange strand  $[0,1]\times \{i-\frac{1}{2}\}$ oriented left-to-right if $p_i=+$  and right-to-left if $p_i = -$ (in \cite{pv}, those are dashed green strands and double orange strands, respectively). Represent a bijection $\phi:\sss\to \ttt$ by black strands connecting $(0,i)$ to $(1, \phi i)$ for $i\in \sss$. We further require that there are no triple intersection points and there are a minimal number of intersection points between strands. 

Let $a:\sss_1\to \ttt_1, b:\sss_2\to \ttt_2$ be generators. If $\ttt_1 \neq\sss_2$, define the product $ab$ to be $0$. If $\ttt_1 = \sss_2$, consider the concatenation of a diagram for $a$ to the left and a diagram for $b$ to the right.  If  there is a black strand that crosses a left-oriented orange strand or another black strand twice, define $ab=0$.
Otherwise define $ab = (\prod_i U_i^{n_i}) b\circ a$ where $n_i$ is the number of black strands that double cross the $i^{\text{th}}$ right-oriented orange strand. See Figure \ref{fig:alg}.

For a generator $a$, define its differential $d a$ as the sum of all ways of smoothing one black-black crossing in a diagram for $a$ locally, subject to the following rules. Any resulting diagram with a black-black double crossing, or a double crossing between a black strand and a left-oriented orange strand, is discarded. If a resulting diagram has a double crossing between the $i^{\text{th}}$ right-oriented orange strand and a black strand, it represents $U_i b$, where the diagram for $b$ is obtained from this diagram by performing a Reidemeister~II move to remove the aforementioned double crossing. (This process may have to be iterated a number of times before we obtain a diagram without double crossings.) See Figure \ref{fig:alg}.

\begin{figure}[h]
  \centering
  \labellist
  \pinlabel $d$ at 292 50
  \pinlabel $U_1$ at 522 40
  \endlabellist
  \includegraphics[scale=.72]{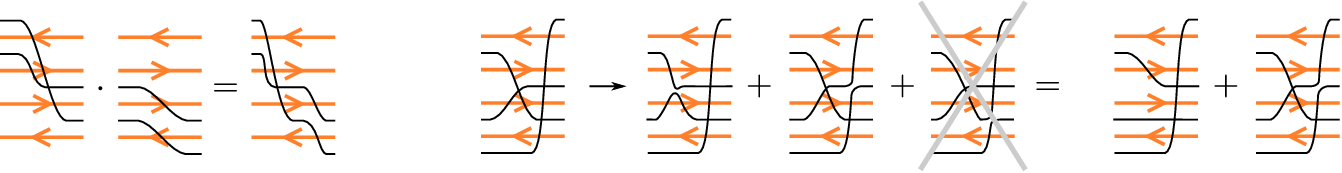}
  \caption{The algebra $\am{P}$ for $P=(-,+,+,-)$. Left: an example of the multiplication. Right: an example of the differential.}
  \label{fig:alg}
\end{figure}

The subalgebra of idempotents $\im{P}$ is generated by the identity bijections $e_{\sss}:\sss\to \sss$.

The algebra has a differential grading $M$ called the \emph{Maslov} grading, and an internal grading $A$ called the \emph{Alexander} grading. They are defined on generators by counting crossings, as follows:
\begin{eqnarray*}
  2A(a) &=& \diagup \hspace{-.4cm}{\color{orange2}{\nwarrow}}(a) + \diagdown \hspace{-.4cm}{\color{orange2}{\swarrow}}(a)
  -\diagup \hspace{-.4cm}{\color{orange2}{\searrow}}(a) - \diagdown \hspace{-.4cm}{\color{orange2}{\nearrow}}(a),\\
  M(a) & =& \diagup \hspace{-.37cm}\diagdown(a) - \diagup \hspace{-.4cm}{\color{orange2}{\searrow}}(a) -\diagdown \hspace{-.4cm}{\color{orange2}{\nearrow}}(a).
\end{eqnarray*}
Further, \begin{eqnarray*}
  A(U_i a) & =& A(a) -1,\\
  M(U_i a) &=& M(a)-2.
\end{eqnarray*}

Setting all $U_i$ to zero, we get a bigraded quotient algebra $\ah{P}= \am{P}/(U_i=0)$ over $\F{2}$. 

Further collapsing the bigrading on $\ah{P}$ to a single grading $\delta = M-A$, we obtain the $\delta$-graded algebra $\ad{P}$. Observe that the orientation of the orange strands is not seen by this algebra, since all types of double crossings are set to zero, and the $\delta$-grading on generators is given by
\begin{eqnarray*}
  \delta (a) & =&  \diagup \hspace{-.37cm}\diagdown(a) - \frac{\diagup \hspace{-.4cm}{\color{orange2}{\searrow}}(a) +\diagdown \hspace{-.4cm}{\color{orange2}{\nearrow}}(a) + \diagup \hspace{-.4cm}{\color{orange2}{\nwarrow}}(a) + \diagdown \hspace{-.4cm}{\color{orange2}{\swarrow}}(a)}{2}\\
  &=& \diagup \hspace{-.35cm}\diagdown(a) - \frac{\diagup \hspace{-.37cm}{\color{orange2}{\diagdown}}(a) +\diagdown \hspace{-.37cm}{\color{orange2}{\diagup}}(a)}{2},
\end{eqnarray*}
i.e. it counts the number of black-black crossings minus one half the number of black-orange crossings. So if $n=|P|$,  we use the shorter notation $\as{n}$ for $\ad{P}$. See Figure \ref{fig:algd}. As we just pointed out,  this algebra already does not detect the orientation on the sequence of points. However, when studying unoriented tangles, we will work with the ungraded version of this algebra, $\au{P}$, also denoted $\aus{n}$.

\begin{figure}[h]
  \centering
  \labellist
  \pinlabel $d$ at 292 50
  \endlabellist
  \includegraphics[scale=.72]{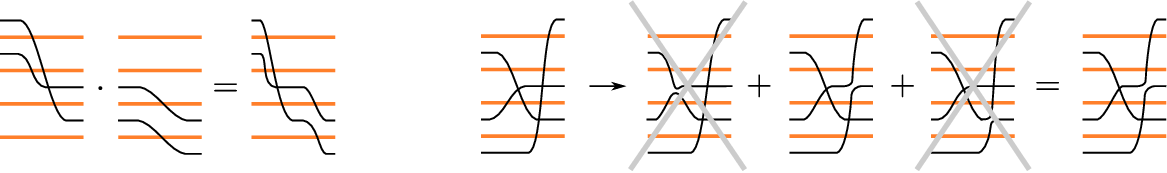}
  \caption{The algebra $\as{4}$. Left: an example of the multiplication. Right: an example of the differential.}
  \label{fig:algd}
\end{figure}

Given a tangle $\T$, we can define various bigraded bimodules over $(\am{\pm\bdy^L\T}, \am{\pm\bdy^R\T})$, where again the homological and internal gradings are denoted $M$ and $A$, respectively. These descend to $\delta$-graded bimodules over $(\as{|\bdy^L\T|}, \as{|\bdy^R\T|})$, for $\delta = A - M$, as well as to ungraded bimodules over  $(\aus{|\bdy^L\T|}, \aus{|\bdy^R\T|})$. The latter are also invariants of the underlying unoriented tangles.
In \cite{pv}, an explicit description was given only of a type~$\DA$ bimodule associated to a multipointed bordered Heegaard diagram. However, one could similarly define a type~$\AA$, $\AD$, or $\DD$ bimodule. Here we explicitly define the type~$\DD$ bimodule in the special cases of interest; see \cite[Section 4]{pv} for more details. 

Let $\T$ be an $(n,n)$-tangle consisting of straight strands, or of one crossing, or of a cap followed by a cup at the same height, possibly with straight strands on either side. Then $\T$ can be represented by a genus-one multipointed Heegaard diagram $\HD=(\Sigma, \alphas, \betas, \XX, \OO)$ such as the diagrams in Figure~\ref{fig:hd_elem_ori}; see \cite{pv} for a complete definition. If we cut $\HD$ through the middle along a vertical plane, all relevant data is contained in the two resulting \emph{bordered grids}. We may occasionally refer to the two grids as the \emph{left grid} and the \emph{right grid}, based on where they stand relative to each other when the diagram is drawn as in  Figure~\ref{fig:hd_elem_ori}. Here, $\Sigma$ is a genus-one surface with two boundary components, $\betas$ is a set of $n+1$ circles in $\Sigma$, $\alphas$ is a set of $2n+2$ arcs, and $\XX$ and $\OO$ are sets of $n$ points labeled $X_1, \ldots, X_n$ and $O_1, \ldots, O_n$, respectively (we often omit the indices, both in figures and in writing). One can see the tangle by connecting $X$ to $O$ markings in the complement of the $\beta$ curves and pushing the interior of the resulting arcs below the Heegaard surface, and connecting $X$ and $O$ markings to the boundary of the diagram in the complement of the $\alpha$ curves so that the $X$'s are endpoints and the $O$'s are starting points. The tangle is oriented so that the arcs in the complement of the $\beta$ curves flow into the $O$'s, and the arcs in the complement of the $\alpha$ curves flow away from the $O$'s.  See the first diagram in Figure~\ref{fig:hd_elem_ori}.

\begin{figure}[h]
  \centering
  \includegraphics[scale=0.89]{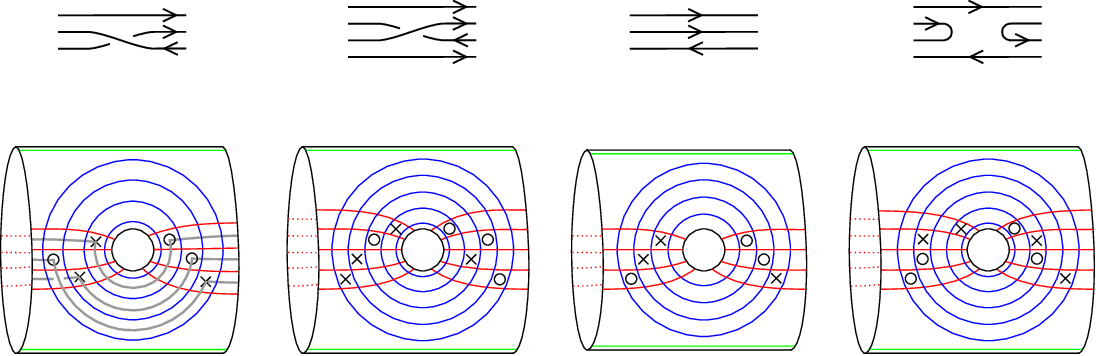}
  \caption{Examples of $(n,n)$-tangles that can be represented by genus-one Heegaard diagrams, and their respective Heegaard diagrams.}
  \label{fig:hd_elem_ori}
\end{figure}

Similarly, one can represent unoriented tangles by multipointed Heegaard diagrams with only $X$ markings (to get from a diagram for an oriented tangle to a diagram for the underlying unoriented tangle, simply replace all $O$'s  with $X$'s). See Figure~\ref{fig:hd_012} for example.  

As seen in Figure~\ref{fig:hd_elem_ori}, label the curves on $\HD$ as follows.  Label the $\alpha$ arcs touching the left boundary by $\alpha_0^L, \ldots, \alpha_n^L$, and those touching the right boundary by $\alpha_0^R, \ldots, \alpha_n^R$, indexed by their relative height, starting from the bottom. Label the $\beta$ circles by $\beta_0, \ldots, \beta_n$, indexed from the outermost to the innermost.

One defines a left-right type~$\DD$ structure $\CDTDm (\HD)$ over $(\am{-\bdy^L\T}, \am{-\bdy^R\T})$ as follows. As a module,  $\CDTDm (\HD)$  is freely generated over $\F{2}[U_1, \ldots, U_{2n}]$ (where each $U_i$ either corresponds to an $O_i\in \OO$ or is a variable in the ground ring for $\am{-\bdy^L\T}$ or $\am{-\bdy^R\T}$) by the set $\SS(\HD)$ consisting of tuples of intersection points $\x$ in $\alphas\cap \betas$ such that there is exactly one point on each $\beta$ and at most one point on each $\alpha$. For $\x \in \SS(\HD)$, let $\ocl{\x}=\setc{i}{\alpha_i^L \textrm{ is occupied by } \x}$,  $\ocr{\x}=\setc{i}{\alpha_i^R \textrm{ is occupied by } \x}$, $\unocl{\x} = [n]\setminus \ocl{\x}$, and $\unocr{\x} = [n]\setminus \ocr{\x}$.  Define an $(\im{-\bdy^L\T}, \im{-\bdy^R\T})$-bimodule structure on $\CDTDm (\HD)$ by \begin{equation*}
  e_{\sss} \x e_{\ttt} = \begin{cases}
    \x & \text{if } \sss =  \unocl{\x} \text{ and } \ttt = \unocr{\x}, \\
    0 & \text{otherwise}.
  \end{cases}
\end{equation*}
Denote  $e_{\unocl{\x}}$ and $e_{\unocr{\x}}$ by $\ideml{\x}$ and $\idemr{\x}$, respectively. 

We next describe a structure map
\[
  \delta^1 \colon \CDTDm (\HD)\to \am{-\bdy^L\T}\otimes_{\im{-\bdy^L\T}}\CDTDm (\HD) \otimes_{\im{-\bdy^R\T}}\am{-\bdy^R\T}
\]
by counting the following types of embedded $2$-chains in $\HD$:

\begin{enumerate}
  \item \label{case:delta_1} A rectangle $r$ with boundary on $\alphas\cup \betas$. Given generators $\x$ and $\y$, $r$ \emph{connects $\x$ to $\y$} if the two corners where $\bdy r$ jumps from an arc in $\betas$ to an arc in $\alphas$ are points in $\x$, the other two corners are points in $\y$, and $\x$ and $\y$ coincide elsewhere.  Define $\algl{\x,r}= \ideml{\x}$ and $\algr{\x,r} = \idemr{\x}$.
  \item \label{case:delta_2} A rectangle $r$ such that along its oriented boundary we see $\bdy^L\Sigma$, followed by $\alpha_i^L$, followed by $\beta_m$, followed by $\alpha_j^L$. We say that $r$ \emph{connects $\x$ to $\y$} if $\alpha_j^L\cap \beta_m = \x\setminus \y$ and $\alpha_i^L\cap \beta_m = \y\setminus\x$. Define $\algl{\x,r}$ as the bijection from $\unocl{\x}$ to $\unocl{\y}$ that sends $i$ to $j$ and is the identity elsewhere, and define $\algr{\x,r}=\idemr{\x}$. 
  \item \label{case:delta_3} A rectangle $r$ such that along its oriented boundary we see $\bdy^R\Sigma$, followed by $\alpha_i^R$, followed by $\beta_m$, followed by $\alpha_j^R$. We say that $r$ \emph{connects $\x$ to $\y$} if $\alpha_j^R\cap \beta_m = \x\setminus \y$ and $\alpha_i^R\cap \beta_m = \y\setminus\x$. Define $\algr{\x,r}$ as the bijection from $\unocr{\y}$ to $\unocr{\x}$ that sends $j$ to $i$ and is the identity elsewhere, and define $\algl{\x,r}=\ideml{\x}$. 
  \item \label{case:delta_4} A rectangle $r$ with boundary  two entire arcs $\alpha_i^L$ and $\alpha_j^L$ and two arcs in $\bdy^L\Sigma$. We say that $r$ \emph{connects $\x$ to $\y$} if $\x=\y$ and $i,j\notin \ocl{\x}$. Define $\algl{\x,r}$ as the bijection with domain $\unocl{\x}$ that exchanges $i$ and $j$ and is the identity elsewhere, and $\algr{\x,r}=\idemr{\x}$. 
  \item \label{case:delta_5} Defined analogously to \ref{case:delta_4}, but interchanging the superscripts $L$ and $R$ throughout.
    \end{enumerate}
For types \ref{case:delta_1}--\ref{case:delta_5}, define $U^r$  as the product of all  $U_s$ with corresponding $O_s$ in the interior of $r$.
\begin{enumerate}[resume]
  \item \label{case:delta_6} For $i<j$ and $m<n$, the union $r$ of two rectangles of the second type, such that one has boundary on $\alpha_i^L$, $\beta_m$, $\alpha_j^L$, $\bdy^L\Sigma$, and the other has boundary on  $\alpha_j^L$, $\beta_n$, $\alpha_i^L$, $\bdy^L\Sigma$. We say that $r$ \emph{connects $\x$ to $\y$} if $\{\alpha_j^L\cap \beta_m, \alpha_i^L\cap \beta_n\} = \x\setminus \y$ and $\{\alpha_i^L\cap \beta_m, \alpha_j^L, \beta_n\} = \y\setminus\x$.  \item \label{case:delta_7} For $i<j$ and $m<n$, the union $r$ of two rectangles of the third type, such that one has boundary on $\alpha_j^R$, $\beta_m$, $\alpha_i^R$, $\bdy^L\Sigma$, and the other has boundary on  $\alpha_i^R$, $\beta_n$, $\alpha_j^R$, $\bdy^R\Sigma$. We say that $r$ \emph{connects $\x$ to $\y$} if $\{\alpha_i^R\cap \beta_m, \alpha_j^R\cap \beta_n\} = \x\setminus \y$ and $\{\alpha_j^R\cap \beta_m, \alpha_i^R, \beta_n\} = \y\setminus\x$.
  \end{enumerate}
For types \ref{case:delta_6} and \ref{case:delta_7},  define $\algl{\x,r}= \ideml{\x}$ and $\algr{\x,r} = \idemr{\x}$. Also define $U^r$ as the product of all $U_s$ with corresponding $O_s$ in the interior of $r$, and all $U_t$ corresponding to positively oriented points above the $i^{\textrm th}$ and below the $j^{\mathrm th}$ point in $-\bdy^L\T$ (if type \ref{case:delta_6}) or in $-\bdy^R\T$ (if type \ref{case:delta_7}).

A $2$-chain of one of the first five types is \emph{empty} if $\Int r \cap \x = \emptyset$ and $\Int r \cap \XX = \emptyset$. A $2$-chain of the sixth or seventh type is \emph{empty} if, in addition to the requirement that  $\Int r \cap \x = \emptyset$ and $\Int r \cap \XX = \emptyset$, the interior of its complement in the strip bounded by $\alpha_i^L$ and $\alpha_j^L$, or  $\alpha_i^R$ and $\alpha_j^R$ respectively, contains $j-i-1$ points in $\XX$ and $j-i-1$ points in $\x$. From now on, we abuse notation and call each of the seven types of $2$-chains ``rectangles'', even though the latter two are, strictly speaking, unions of such.
Define
\[
  \delta^1(\x) = \sum_{\y\in \SS(\HD)} \sum_{\substack{r \textrm{ empty}\\ \textrm{$r$ connects $\x$ to $\y$}}} \algl{\x,r} \otimes U^r \y \otimes \algr{\x,r}.
\]

\begin{figure}[h]
  \centering
  \includegraphics[scale=0.89]{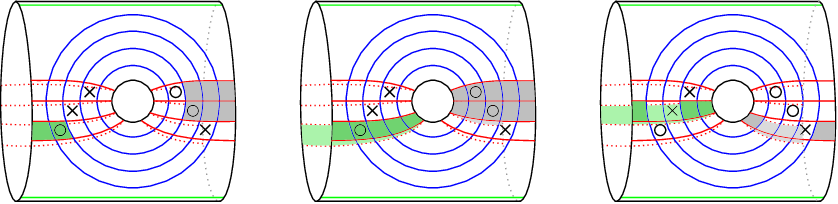}
  \caption{Left: Rectangles of types~\ref{case:delta_2} (green) and \ref{case:delta_3} (grey). Center: Rectangles of types~\ref{case:delta_4} (green) and \ref{case:delta_5} (grey). Right: Rectangles of types~\ref{case:delta_6} (green) and \ref{case:delta_7} (grey). All generators are omitted. Note the shading at the back of the center and right diagrams.}
  \label{fig:hd-dd-rects}
\end{figure}

The above types of $2$-chains are the projections onto $\Sigma$ of certain embedded curves in $\Sigma \times I\times \mathbb R$  that would appear in a holomorphic interpretation of (minus) tangle Floer homology. 
In this perspective, the algebra elements $\algl{\x,r}$ and $\algr{\x,r}$ correspond to Reeb chords that arise as the intersection of the $2$-chains with the left and right boundaries of $\HD$ respectively. Note that, as in bordered Floer homology \cite{bfh2}, the Reeb chords on the left boundary get the reverse orientation while those on the right boundary inherit the usual orientation; the definitions of $\algl{\x,r}$ and $\algr{\x,r}$  for types~\ref{case:delta_2}--\ref{case:delta_7} above take into account the resulting subtle asymmetry. For further details, we refer the reader to \cite{pv}.

Next, we restate the definition of the bigrading on generators from \cite[Section 3.4]{pv} in terms of the Heegaard diagrams described above. Gradings will not be used until Section~\ref{sec:grading}, so the reader only interested in the ungraded version of the skein relation should feel free to skip to the end of the proof of Lemma~\ref{lem:AAandADgr}.

Let  $\HD=(\Sigma, \alphas, \betas, \XX, \OO)$ be a Heegaard diagram as above. Let $\XX^R$ and $\XX^L$ be the subsets of $\XX$  that lie in the right or left grid, respectively. Define $\OO^R$ and $\OO^L$ similarly, and for $\x\in \SS(\HD)$ define $\x^R$ and $\x^L$ similarly.  We say that $\alpha_j^R\cap \beta_i$ has coordinates $(i,j)$. For a point $p\in \XX^R\cup\OO^R$, we say that $p$ has coordinates $(i+\frac 1 2, j+ \frac 1 2)$ if it lies between $\beta_i$ and $\beta_{i+1}$ and between $\alpha_j^R$ and $\alpha_{j+1}^R$.  Given two finite sets $S, T\subset \mathbb R^2$, let  $\inv(S, T)$ be the number of pairs $(s_1, s_2)\in S$ and $(t_1, t_2)\in T$ with $s_1 < t_1$ and $s_2 > t_2$, or $s_1 > t_1$ and $s_2 < t_2$. For a set $S\subset \mathbb R^2$, define $\inv(S) = \frac 1 2 \inv(S, S)$. Thinking of points in the right grid in terms of their coordinates, define
\begin{eqnarray*}
  M(\x^R) &=& \inv(\x^R)- \inv(\x^R, \OO^R)+ \inv(\OO^R),\\
  2 A(\x^R) &=& \inv(\x^R, \XX^R)- \inv(\x^R, \OO^R)+ \inv(\OO^R) - \inv(\XX^R) - |\XX^R|.
\end{eqnarray*}
Define coordinates for points in the left grid analogously, and define
\begin{eqnarray*}
  M(\x^L) &=& -\inv(\x^L)+\inv(\x^L, \OO^L)- \inv(\OO^L) - |\OO^L|,\\
  2 A(\x^L) &=& -\inv(\x^L, \XX^L)+ \inv(\x^L, \OO^L)- \inv(\OO^L) + \inv(\XX^L) - |\OO^L|.
\end{eqnarray*}
The \emph{Maslov} grading of $\x$ is given by $M(\x)=M(\x^R)+M(\x^L)$, and the \emph{Alexander} grading of $\x$ is given by $A(\x) = A(\x^R) + A(\x^L)$.
By further defining
\begin{eqnarray*}
  A(U_i \x) & =& A(\x) -1,\\
  M(U_i \x) &=& M(\x)-2.
\end{eqnarray*}
we get a bigrading compatible with the structure map on  $\CDTDm (\HD)$. 

\begin{lem}\label{lem:DDgr}
  For the left-right type $\DD$ structure $\CDTDm(\HD)$ defined above, $\delta^1$ lowers the Maslov grading by one, and preserves the Alexander grading.
\end{lem}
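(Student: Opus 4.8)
Since the Maslov and Alexander gradings on the target $\am{-\bdy^L\T}\otimes\CDTDm(\HD)\otimes\am{-\bdy^R\T}$ of $\delta^1$ are additive across the three tensor factors, the plan is to reduce to showing that for every empty rectangle $r$ connecting $\x$ to $\y$, the summand $\algl{\x,r}\otimes U^r\y\otimes\algr{\x,r}$ has Maslov grading $M(\x)-1$ and Alexander grading $A(\x)$. Throughout we would use that multiplication by any $U_i$ lowers $M$ by $2$ and $A$ by $1$ (for algebra elements and for generators alike), and that $\numXX{r}=0$ for an empty rectangle. We then run a case analysis over the seven types of rectangle, decomposing in each case the total grading shift into a ``grid'' contribution, coming from the bordered grid(s) in which $r$ lies, and an ``algebra'' contribution, coming from the strand diagrams defining $\algl{\x,r}$ and $\algr{\x,r}$.

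For a rectangle $r$ of the first type, $r$ lies in the interior of one of the two grids, and $\algl{\x,r}$ and $\algr{\x,r}$ are idempotents, which have $M=A=0$; so the claim reduces to the grid-homology identities $M(\x)-M(\y)=1-2\numOO{r}$ and $A(\x)-A(\y)=-\numOO{r}$ for an empty rectangle. For $r$ in the right grid these follow from the stated $\inv$-formulas for $M(\x^R)$ and $A(\x^R)$ exactly as in grid homology \cite{OSSbook,wong}, with $\x^L=\y^L$; that some $\alpha$ curves are arcs rather than circles is immaterial, since $r$ avoids $\bdy\Sigma$. For $r$ in the left grid the same computation applies using the left-grid formulas, whose overall sign is reversed --- and this reversal is exactly what is needed, because in the conventions of \cite{pv} a left-grid rectangle connecting $\x$ to $\y$ for the purposes of $\delta^1$ is read as a rectangle from $\y$ to $\x$ in the grading formula, so the same two identities result.

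For rectangles of types~2 through~5, exactly one of $\algl{\x,r}$ and $\algr{\x,r}$ is nontrivial: for types~2 and~3 it is the bijection consisting of a single black strand from height $i$ to height $j$ together with identity strands elsewhere, and for types~4 and~5 it is the transposition of $i$ and $j$. We would compute its $M$- and $A$-gradings directly from the crossing-count formulas on the algebra: the nontrivial strand(s) cross precisely the orange strands at the heights strictly between $i$ and $j$ --- whose signs in $\pm\bdy^L\T$ or $\pm\bdy^R\T$ determine their contributions --- together with the occupied black strands at those heights. Adding this algebra contribution to the corresponding ``partial-rectangle'' grid contribution (again sign-reversed on the left) and to the contribution of $U^r$ should give the claim. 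Types~6 and~7 will be handled similarly but need more care: there $\algl{\x,r}$ and $\algr{\x,r}$ are idempotents once more, but the relevant emptiness condition forces the complement of $r$ in the strip between $\alpha_i$ and $\alpha_j$ to contain exactly $j-i-1$ points of $\XX$ and exactly $j-i-1$ points of $\x$, while $U^r$ carries, besides the $U_s$ for the $O_s$ in the interior of $r$, an extra factor $U_t$ for each positively oriented point strictly between the $i$-th and $j$-th points of $\pm\bdy^L\T$ (respectively $\pm\bdy^R\T$); feeding all of these into the $\inv$-count should once more produce the net shift $(-1,0)$ in $(M,A)$.

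The main obstacle will not be any individual computation but the bookkeeping: one has to keep straight, simultaneously, the sign reversal between the left and right grids, the effect of the orientation reversal in passing from $\bdy\T$ to $\pm\bdy\T$ on which orange strands see which grading, the half-integer coordinates of the $\XX$- and $\OO$-markings, and the way all of these combine across the two tensor factors and the two grids --- with the type~6 and~7 counts being the most delicate point. These verifications are the type~$\DD$ analogues of the ones underlying the bigrading in \cite[Sections~3.4 and~4]{pv}, and could alternatively be deduced from there.
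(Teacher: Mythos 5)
Your proposal takes a genuinely different route from the paper. The paper's proof is a reduction: it observes that $\CDTDm(\HD)$ and $\CDTAm(\HD)$ have the same underlying bigraded module, notes that the same rectangle $r$ that produces the algebra element $a = \algr{\x,r}\in\am{-\bdy^R\T}$ in the type~$\DD$ structure map also feeds the algebra element $a'\in\am{\bdy^R\T}$ into the type~$\DA$ structure map $\delta^1_1$, computes the clean relations $M(a') = -M(a)-1$ and $A(a') = -A(a)$ directly from the strand-count formulas for the algebra, and then imports the grading compatibility $M(\x)+M(a')=M(\y)$, $A(\x)+A(a')=A(\y)$ for $\CDTAm$ from \cite{pv} to conclude (the paper spells this out for right-bordered rectangles, with the other types implicitly analogous or trivial). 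You instead propose a bottom-up verification from the $\inv$-formulas, rectangle type by rectangle type, with no appeal to the $\CDTAm$ result. What the paper's route buys is brevity and robustness against bookkeeping errors, since the hard part (compatibility of $\CDTAm$ with the bigrading) is already proved elsewhere; the only new computation is the grading relation between $a$ and $a'$, which is a one-line count of crossings. What your route buys is self-containedness, and it would make the half-integer $\delta$-shift bookkeeping in Section 4 more transparent if actually carried out — but it is substantially longer, and you are right that the type-6 and type-7 rectangles (the double half-rectangles with the extra $U_t$ factors and the more involved emptiness condition) are where the bookkeeping is most delicate. Both approaches are sound; yours is a legitimate plan, not yet a proof, and you have in fact identified the paper's shortcut yourself in your closing sentence.
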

\begin{proof}
  Suppose $\x$ and $\y$ are connected by an empty rectangle $r$ of the third type above (such that along its oriented boundary we see $\bdy^R\Sigma$, followed by $\alpha_i^R$, followed by $\beta_m$, followed by $\alpha_j^R$), let $a$ be the corresponding algebra element $\algr{\x,r}\in \am{-\bdy^R\T}$, and let $U^r$ be the corresponding power of $U_s$ variables. The rectangle $r$ contributes to the map $\delta^1(\x) =  \ideml{\y}\otimes U^r \y \otimes a$. 
We compute the bigradings of $a$, $\x$, and $\y$ below. 
  
  Assume $i<j$; the proof when $j>i$ is analogous. 
  Let $t$ be the number of $\alpha$-arcs between $\alpha_i^R$ and $\alpha_j^R$ unoccupied by $\y$, and let $s$ be the number of $O$'s between $\alpha_i^R$ and $\alpha_j^R$ (so the number of $X$'s between $\alpha_i^R$ and $\alpha_j^R$ is $j-i-s$). Then $s$ and $j-i-s$ are the number of the coordinates $i+1$ through $j$ in $-\bdy^R\T$ that are positive and negative, respectively, so 
  \begin{eqnarray*}
    \diagup \hspace{-.37cm}\diagdown(a)  &=& t,\\
    \diagup \hspace{-.4cm}{\color{orange2}{\searrow}}(a) +\diagdown \hspace{-.4cm}{\color{orange2}{\nearrow}}(a) &=& j-i-s,\\
    \diagup \hspace{-.4cm}{\color{orange2}{\nwarrow}}(a) + \diagdown \hspace{-.4cm}{\color{orange2}{\swarrow}}(a) &=& s,
  \end{eqnarray*}
 and we have
  \begin{eqnarray*}
    M(a) &=& t-j+i+s,\\
    A(a) &=& \frac{i-j}{2}+s.
  \end{eqnarray*}

Next, we compare the inversions used in the definition of the bigrading for $\x$ and for $\y$.
For example, $\inv(\y^R)-\inv(\x^R)$ is given by counting the points in $\x\cap \y$ that are in the interior of the strip bounded by $\alpha_i^R$ and $\alpha_j^R$ with negative sign if they are in $r$ (there are no such points, as $r$ is empty) and with positive sign if they are not in $r$. So 
\[\inv(\y^R)-\inv(\x^R) = j-i-t-1.\]
Similarly, letting $p$ be  the number of $O$'s in $r$, we obtain
  \begin{eqnarray*}
    \inv(\y^R, \OO^R) - \inv(\x^R, \OO^R) &=& (s-p)-p = s-2p,\\
    \inv(\y^R, \XX^R) - \inv(\x^R, \XX^R) &=& j-i-s.
  \end{eqnarray*}
All other counts in the definition of the bigrading are the same for $\x$ and for $\y$, therefore
  \begin{eqnarray*}
    	M(\x) - M(U^r\y) &=& M(\x) - M(\y)+2p = -j+i+t+1+s = M(a)+1,\\
 	A(\x) - A(U^r\y) &=& A(\x) - A(\y)+ p  = \frac{i-j+2s-2p}{2} + p = A(a).
  \end{eqnarray*}
  This completes the proof of the lemma.
\end{proof}
As an immediate consequence, it follows that $\delta^1$ lowers the $\delta$-grading by one. 

Similarly, one could define a type~$\AA$ structure $\CATAm(\HD)$ over $(\am{\bdy^L\T}, \am{\bdy^R\T})$, or a  type~$\AD$ structure $\CATDm(\HD)$ over $(\am{\bdy^L\T}, \am{-\bdy^R\T})$, with the same underlying bigraded module as for $\CDTDm(\HD)$ and $\mathrm{CDTA}^-(\HD)$. 

\begin{lem}
  \label{lem:AAandADgr}
  For the left-right type $\AA$ structure $\CATAm(\HD)$, the multiplication maps are compatible with the Maslov grading, and they preserve the Alexander grading. 

  For the left-right type $\AD$ structure $\CATDm(\HD)$, the structure map $\delta_0^1$ lowers the Maslov grading by one and preserves the Alexander grading, and the structure map $\delta_1^1$ preserves the bigrading.
\end{lem}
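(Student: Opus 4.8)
The plan is to adapt the proof of Lemma~\ref{lem:DDgr}, with the roles of the left and right boundaries exchanged. The four bimodules $\CATAm(\HD)$, $\CATDm(\HD)$, $\CDTAm(\HD)$, and $\CDTDm(\HD)$ share the same underlying bigraded $\F{2}[U_1, \dotsc, U_n]$-module, with $M$ and $A$ defined on the generators $\x \in \SS(\HD)$ by the inversion formulas above, and all four are assembled from the same rectangles. Passing from $\CDTAm(\HD)$ to $\CATAm(\HD)$ --- and likewise from $\CDTDm(\HD)$ to $\CATDm(\HD)$ --- changes only the treatment of the left boundary: a rectangle meeting $\bdy^L\Sigma$ that contributes an algebra element of $\am{-\bdy^L\T}$ as a type~$D$ output, namely the bijection on the unoccupied left arcs $\unocl{\cdot}$, instead contributes an algebra element of $\am{\bdy^L\T}$ as a type~$A$ input, namely the complementary bijection on the occupied left arcs $\ocl{\cdot}$ (with the orange orientations reversed, since $\am{-\bdy^L\T}$ is replaced by $\am{\bdy^L\T}$). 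Because the gradings of $\CDTAm(\HD)$ are established in \cite{pv} and those of $\CDTDm(\HD)$ in Lemma~\ref{lem:DDgr}, everything reduces to comparing these two algebra elements and then bookkeeping which component of the relevant structure map each rectangle type feeds into.

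For the comparison I would run the crossing count from the proof of Lemma~\ref{lem:DDgr} on a rectangle meeting the left boundary: if $a \in \am{-\bdy^L\T}$ is the unoccupied-arc bijection and $a' \in \am{\bdy^L\T}$ the complementary occupied-arc bijection of such a rectangle, then
\[
  M(a') = -M(a) - 1, \qquad A(a') = -A(a).
\]
This identity involves only the algebras $\am{\pm P}$, whose Maslov and Alexander gradings are computed by the same crossing counts independently of left/right placement, so it holds verbatim; the degenerate rectangles of the fourth and fifth types and the doubled rectangles of the sixth and seventh types are dealt with by reading off the relevant algebra element directly from the definitions. The Alexander half of the identity is immediate in each case, while the Maslov half requires carrying through the extra $-|\OO^L|$ term that appears in the left-grid Maslov formula.

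Granting the identity, the two assertions follow by arithmetic exactly as in Lemma~\ref{lem:DDgr}. For $\CATAm(\HD)$: interior rectangles contribute to the internal differential, which lowers $M$ by one and preserves $A$ by Lemma~\ref{lem:DDgr}; right-boundary rectangles are unchanged from $\CDTAm(\HD)$, so the corresponding right multiplications are compatible with $M$ and preserve $A$; and for a left-boundary rectangle, the relations $M(\x) - 1 = M(a) + M(\y)$ and $A(\x) = A(a) + A(\y)$ that hold in $\CDTAm(\HD)$, combined with the displayed identity, give $M(\x) + M(a') = M(\y)$ and $A(\x) + A(a') = A(\y)$, which is precisely compatibility with $M$ and preservation of $A$ for the multiplication by the left algebra element $a'$. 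For $\CATDm(\HD)$ the same computation shows that the component $\delta_0^1$ with no left input lowers $M$ by one and preserves $A$, while the component $\delta_1^1$ with one left input preserves both gradings. I expect the only genuine work to be the case analysis of which component of the structure map each of the seven rectangle types feeds into after the type~$D$-to-type~$A$ change on the left boundary --- especially for the degenerate and doubled types --- together with keeping the left-grid sign conventions straight; the genuine left-boundary rectangles are a direct mirror of the single case treated explicitly in the proof of Lemma~\ref{lem:DDgr}.
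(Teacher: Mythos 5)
Your proposal is correct and follows essentially the same route as the paper, which simply declares the proof analogous to Lemma~\ref{lem:DDgr}: the crossing-count identity $M(a')=-M(a)-1$, $A(a')=-A(a)$ relating a type~$D$ output bijection on unoccupied arcs to the complementary type~$A$ input bijection on occupied arcs holds on the left boundary exactly as on the right, since it is intrinsic to the strand-diagram gradings of the algebra, and your bookkeeping of which component of each structure map a given rectangle type feeds into gives the stated shifts. One small point: the $-|\OO^L|$ term is not actually something you need to carry through the left-boundary computation, since it is a constant that appears identically in $M(\x^L)$ and $M(\y^L)$ and therefore cancels in the relation between initial and terminal generators.
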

\begin{proof}
  The proof is analogous to that of Lemma~\ref{lem:DDgr}.
\end{proof}

One can also define $\CDTDm(\HD)$, as well as $\CDTAm(\HD)$, $\CATAm(\HD)$, and $\CATDm(\HD)$,  for a more general combinatorial bordered Heegaard diagram $\HD$ for a tangle, see \cite[Section 4]{pv}. 

Setting all $U_i$ variables to zero yields an  $(M,A)$-bigraded  type~$\DD$ structure $\CDTDt(\HD)$  over $(\ah{-\bdy^L\T}, \ah{-\bdy^R\T})$. This corresponds to only counting rectangles which do not contain any $O$'s and are of the first three types above.   Further collapsing the bigrading to a single grading $\delta = A - M$ yields a $\delta$-graded type~$\DD$ structure $\CDTDd(\HD)$ over $(\as{|\bdy^L\T|}, \as{|\bdy^R\T|})$.

\begin{prop}
  The structure $\CDTDd(\HD)$ is a type $\DD$ structure. Moreover, the structure map lowers the $\delta$-grading of homogeneous generators by one.
\end{prop}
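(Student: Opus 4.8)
The plan is to reduce both assertions to facts already in hand for $\CDTDm(\HD)$, since $\CDTDd(\HD)$ is obtained from $\CDTDt(\HD)$ by collapsing the $(M,A)$-bigrading to the single grading $\delta$, and $\CDTDt(\HD)$ is in turn obtained from $\CDTDm(\HD)$ by setting every $U_i$ to zero.

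For the claim that $\CDTDd(\HD)$ is a type~$\DD$ structure, recall that $\CDTDt(\HD)$ is an $(M,A)$-bigraded type~$\DD$ structure over $(\ah{-\bdy^L\T}, \ah{-\bdy^R\T})$; in particular its structure map $\delta^1$ satisfies the type~$\DD$ compatibility condition. That condition is purely algebraic: it refers only to the underlying $\F{2}$-module, to $\delta^1$, and to the multiplications and differentials of the two algebras, not to their gradings. As noted above, the orientation of the orange strands is invisible to $\ad{P}$, so $\ad{P}$ and $\ah{P}$ have the same underlying differential algebra. Hence $\CDTDd(\HD)$ has the same underlying module and the same structure map as $\CDTDt(\HD)$, over algebras with the same underlying differential algebra structure, so the compatibility condition holds verbatim and $\CDTDd(\HD)$ is a type~$\DD$ structure over $(\as{|\bdy^L\T|}, \as{|\bdy^R\T|})$.

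For the grading claim, I would invoke Lemma~\ref{lem:DDgr} and the remark following its proof: the structure map of $\CDTDm(\HD)$ lowers the $\delta$-grading by one, i.e., for every empty rectangle $r$ connecting generators $\x$ and $\y$, the term $\algl{\x,r} \otimes U^r \y \otimes \algr{\x,r}$ of $\delta^1(\x)$ has total $\delta$-grading $\delta(\x) - 1$. Passing to $\CDTDd(\HD)$ discards exactly the rectangles with $U^r \neq 1$ (keeping those of the first three types with no $O$ in the interior) and forgets the refinement of $\delta$ into $(M,A)$; neither operation affects the displayed identity for a surviving term. Therefore the structure map of $\CDTDd(\HD)$ lowers the $\delta$-grading of homogeneous generators by one.

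I do not expect a serious obstacle: the proposition amounts to the statement that collapsing the bigrading is harmless, and all of the substantive grading bookkeeping has already been done in the proof of Lemma~\ref{lem:DDgr}. The only points requiring care are recording explicitly that the underlying differential algebra of $\ah{P}$ depends only on $|P|$, so that the purely algebraic compatibility condition transfers without change when $\ah{P}$ is replaced by $\as{|P|}$, and keeping the shift conventions for $\delta$ straight so that ``lowers by one'' comes out with the correct sign --- both handled by reusing the computation from Lemma~\ref{lem:DDgr}.
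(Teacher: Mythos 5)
Your proposal is correct and follows the same route as the paper: the paper's proof simply cites Lemma~\ref{lem:DDgr} for the $(M,A)$-grading behavior of the structure map of $\CDTDt(\HD)$ and then notes that the claim for $\CDTDd(\HD)$ follows from the definition. You are a bit more explicit than the paper in observing that the type~$\DD$ compatibility condition is purely algebraic and that $\ad{P}$ and $\ah{P}$ share the same underlying differential algebra, but this is filling in a detail the paper leaves implicit rather than a different argument.
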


\begin{proof}
  By Lemma~\ref{lem:DDgr}, $\CDTDt(\HD)$ is a type~$\DD$ structure for which $\delta^1$ lowers the Maslov grading by one, and preserves the Alexander grading. The claim now follows directly from the definition of $\CDTDd(\HD)$.
\end{proof}

Similarly, one can represent an unoriented tangle $\T$ by a Heegaard diagram $\HD$ that only has $X$ markings and no $O$ markings. Given a Heegaard diagram $\HD$ for an oriented or an unoriented tangle,  define an ungraded  type $\DD$ structure $\CDTDu (\HD)$ over $(\aus{|\bdy^L\T|}, \aus{|\bdy^R\T|})$ by counting rectangles  in the same way as  in the definition of $\CDTDt(\HD)$. 

\begin{prop}
  The structure $\CDTDu(\HD)$ is a type $\DD$ structure.
\end{prop}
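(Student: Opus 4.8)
The plan is to deduce the statement from the fact, recalled above, that $\CDTDt(\HD)$ is a type~$\DD$ structure, by noting that passing from $\CDTDt$ to $\CDTDu$ only discards structure. Two observations underlie this. First, the type~$\DD$ compatibility condition
\[
  (d_A \otimes \id_M \otimes \id_B) \circ \delta^1 + (\id_A \otimes \id_M \otimes d_B) \circ \delta^1 + (\mu_A \otimes \id_M \otimes \mu_B) \circ (\id_A \otimes \delta^1 \otimes \id_B) \circ \delta^1 = 0
\]
is an identity involving only the underlying module, the idempotent structure, the differentials $d_A, d_B$, and the multiplications $\mu_A, \mu_B$; it refers to no grading. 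Second, $\aus{n}$ is obtained from $\ah{P}$ (with $|P| = n$) by collapsing the $(M,A)$-bigrading to $\delta = M - A$ and then forgetting it, an operation that changes neither the underlying module nor $d$ nor $\mu$; so $\ah{-\bdy^L\T}$ and $\ah{-\bdy^R\T}$, with gradings forgotten, are precisely $\aus{|\bdy^L\T|}$ and $\aus{|\bdy^R\T|}$.

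I would first treat a Heegaard diagram $\HD$ representing an oriented tangle $\T$. Then $\CDTDu(\HD)$ has exactly the same underlying module, idempotent structure, and structure map $\delta^1$ as $\CDTDt(\HD)$: by definition both count the same (empty, $O$-free) rectangles, and for these $U^r = 1$, so in both cases $\delta^1(\x) = \sum_{\y} \sum_{r} \algl{\x,r} \otimes \y \otimes \algr{\x,r}$. The only difference between $\CDTDu(\HD)$ and $\CDTDt(\HD)$ is that the $(M,A)$-bigrading on the module and on the two algebras has been discarded. By the two observations above, the compatibility equation for $\CDTDu(\HD)$ is the very equation that holds because $\CDTDt(\HD)$ is a type~$\DD$ structure, and $\delta^1$ lands where it should for the same reason; hence $\CDTDu(\HD)$ is a type~$\DD$ structure.

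I would then handle a Heegaard diagram $\HD$ representing an unoriented tangle $\T$. Pick an orientation of the components of $\T$; this yields an oriented tangle $\T'$ from which $\HD$ is recovered, via the recipe recalled above, by relabelling every $O$-marking of the Heegaard diagram $\HD'$ of $\T'$ as an $X$-marking. The structure map of $\CDTDu(\HD)$ counts rectangles whose interiors avoid $\x$ and $\XX$, while that of $\CDTDu(\HD')$ counts rectangles whose interiors avoid $\x$, $\XX$, and $\OO$; since $\XX$ for $\HD$ and $\XX \cup \OO$ for $\HD'$ are the same subset of the Heegaard surface — only the labels differ — and since the algebra elements $\algl{\x, r}$, $\algr{\x, r}$ and the idempotents depend on $r$ and $\x$ but not on marking labels, these two structure maps are literally equal. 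Therefore $\CDTDu(\HD) = \CDTDu(\HD')$, which is a type~$\DD$ structure by the previous paragraph. The main point that needs care here — and the only nonroutine step — is verifying that this reduction is legitimate: that every unoriented-tangle Heegaard diagram of the form under consideration does arise from an oriented one by the $O \mapsto X$ relabelling, and that, as claimed, the labels $X$ versus $O$ are invisible to the count defining $\delta^1$ once the $U_i$ have been set to zero. Once that is checked, no fresh run of the combinatorial "$\delta^1 \circ \delta^1$ cancels against the differential terms" argument is required.
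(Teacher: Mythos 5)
Your proposal is correct and follows essentially the same route as the paper: both reduce to the known fact that $\CDTDt(\HD')$ is a type~$\DD$ structure for an oriented diagram $\HD'$, and both exploit that the $X$-versus-$O$ labelling is invisible to the structure map once the $U_i$ are set to zero, so that relabelling gives an isomorphism (of ungraded bimodules) between $\CDTDu(\HD)$ and $\CDTDt(\HD')$. You merely organize the argument in two explicit steps (oriented case, then reduction from unoriented to oriented) where the paper handles both at once.
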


\begin{proof}

  For any Heegaard diagram $\HD$ for an oriented or an unoriented tangle $\T$, there is some choice of replacing some $X$'s with $O$'s to obtain a Heegaard diagram $\HD'$ for an oriented tangle $\T'$ that is the same as $\T$ as an unoriented manifold.  By \cite{pv}, $\CDTDt(\HD')$ is a type~$\DD$ structure. But $\CDTDu(\HD)$ and $\CDTDt(\HD')$ are clearly (ungraded)  isomorphic, so $\CDTDu(\HD')$ is a type~$\DD$ structure too.
\end{proof}

\begin{prop}\label{prop:eq}
  Let $\HD_1 =(\Sigma_1, \alphas_1, \betas_1, \XX_1, \OO_1)$ and $\HD_2 = (\Sigma_2, \alphas_2, \betas_2, \XX_2, \OO_2)$ be Heegaard diagrams for the same oriented tangle, with $|\XX_1\cup \OO_1|-|\XX_2\cup \OO_2| = 2k\geq 0$.  Then
  \[
    \CDTDd(\HD_1)\simeq \CDTDd(\HD_2)\otimes_{\F{2}} V^{\otimes k}
  \]
  where $V = \F{2}\oplus\F{2}$ is supported in $\delta$-grading $0$.

  Similarly, let  $\HD_1 =(\Sigma_1, \alphas_1, \betas_1, \XX_1, \OO_1)$ and $\HD_2 = (\Sigma_2, \alphas_2, \betas_2, \XX_2, \OO_2)$ be Heegaard diagrams for tangles $\T_1$ and $\T_2$, where the tangles may be oriented or unoriented. Suppose that $\T_1$ and $\T_2$ are the same as unoriented tangles and $|\XX_1\cup \OO_1|-|\XX_2\cup \OO_2| = 2k\geq 0$.  Then
  \[
    \CDTDu(\HD_1)\simeq \CDTDu(\HD_2)\otimes_{\F{2}} (\F{2}\oplus\F{2})^{\otimes k}.
  \]
\end{prop}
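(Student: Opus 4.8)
The plan is to reduce to the effect of a single stabilization and then quote the invariance of tangle Floer homology. First I would transfer the invariance statement from the type~$\DA$ to the type~$\DD$ setting: as recalled in Section~\ref{ssec:alg}, box-tensoring with the algebra is a quasi-equivalence from type~$\DD$ structures (homotopy equivalent to bounded ones) to type~$\DA$ structures, and quasi-isomorphism coincides with homotopy equivalence for all of these; hence the $\delta$-graded invariance of $\CDTAt$ proven in \cite{pv} gives: if $\HD, \HD'$ are Heegaard diagrams for the same oriented tangle with $|\XX \cup \OO| = |\XX' \cup \OO'|$, then $\CDTDd(\HD) \simeq \CDTDd(\HD')$ as $\delta$-graded type~$\DD$ structures, and likewise $\CDTDu(\HD) \simeq \CDTDu(\HD')$ for Heegaard diagrams of the same unoriented tangle with equal marker counts.

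The main step is a stabilization lemma: for any Heegaard diagram $\HD$ for an oriented tangle $\T$, performing the grid-diagram stabilization move inside one of the bordered grids of $\HD$ — within a small disk disjoint from $\partial\Sigma$, adding a new $\beta$-circle, a new $\alpha$-circle, and a new $X$–$O$ marker pair — produces a Heegaard diagram $\HD^+$ for $\T$ with $\CDTDd(\HD^+) \simeq \CDTDd(\HD) \otimes_{\F{2}} V$, where $V = \F{2}\oplus\F{2}$ is supported in $\delta$-grading $0$. I would prove this by the local analysis of \cite{mos, OSSbook}: the new $\alpha$- and $\beta$-circles meet the rest of $\HD$ only inside the disk and meet each other in exactly two points, so every generator of $\HD^+$ is a generator $\x \in \SS(\HD)$ together with one of these two points; since, for the $\CDTDt$-flavor, the empty rectangles contain neither $X$- nor $O$-markers, the two bigons inside the disk never contribute and no counted rectangle crosses the new circles, so the structure map of $\CDTDt(\HD^+)$ is — possibly after the change of basis used to handle the relevant stabilization type in \cite{OSSbook} — that of $\CDTDt(\HD)$ tensored with $\id_V$. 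A short computation with the grading formulas preceding Lemma~\ref{lem:DDgr} places the two local generators in the same $\delta$-grading, which we normalize to $0$. The obvious unoriented variant — stabilizing a diagram with only $X$-markers, adding one new $X$-marker — gives $\CDTDu(\HD^+) \simeq \CDTDu(\HD) \otimes_{\F{2}} (\F{2}\oplus\F{2})$, with no grading to track.

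With these in hand the proposition follows. Given $\HD_1, \HD_2$ for the same oriented tangle $\T$ with $|\XX_1 \cup \OO_1| - |\XX_2 \cup \OO_2| = 2k \geq 0$, apply the stabilization lemma $k$ times to $\HD_2$, producing Heegaard diagrams $\HD_2 = \HD_2^{(0)}, \dotsc, \HD_2^{(k)}$ for $\T$ with $\CDTDd(\HD_2^{(j)}) \simeq \CDTDd(\HD_2) \otimes_{\F{2}} V^{\otimes j}$, using that $- \otimes_{\F{2}} V$ preserves homotopy equivalence. Since $\HD_2^{(k)}$ and $\HD_1$ are diagrams for $\T$ with the same marker count, the first paragraph gives $\CDTDd(\HD_1) \simeq \CDTDd(\HD_2^{(k)}) \simeq \CDTDd(\HD_2) \otimes_{\F{2}} V^{\otimes k}$. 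For the unoriented statement, first replace every $O$-marker by an $X$-marker in $\HD_1$ and $\HD_2$; this does not change $\CDTDu$, as in the proof of the preceding proposition, and yields diagrams with only $X$-markers for the common underlying unoriented tangle, to which the same argument applies, now with the unoriented stabilization lemma and the unoriented invariance.

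The hard part is the stabilization lemma: fixing a precise interior stabilization model in a bordered grid, verifying that the resulting $\delta$-graded type~$\DD$ structure genuinely splits off a rank-$2$ factor in $\delta$-grading $0$ (which, in the stabilization type where the splitting is not on the nose, needs the change-of-basis argument familiar from \cite{OSSbook}), and confirming that the invariance quoted from \cite{pv} is the $\delta$-graded one and transfers to type~$\DD$ via Section~\ref{ssec:alg}. Everything else is bookkeeping.
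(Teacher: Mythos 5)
Your proposal is correct in spirit and would prove the statement, but it takes a more laborious route than the paper's, which is essentially a direct citation. The paper simply quotes the $(M,A)$-bigraded stabilization result from \cite{pv}: $\CDTDt(\HD_1) \simeq \CDTDt(\HD_2) \otimes (\F{(0,0)} \oplus \F{(-1,-1)})^{\otimes k}$, where $\F{(i,j)}$ is $\F{2}$ in $(M,A)$-grading $(i,j)$. Since $\delta = A - M$ sends both $(0,0)$ and $(-1,-1)$ to $0$, collapsing immediately gives the $\delta$-graded statement with $V$ in grading $0$. For the unoriented case, the paper replaces markers so that the two diagrams are for the same \emph{oriented} tangle, applies the first case, and uses the ungraded isomorphism $\CDTDt(\HD_i') \cong \CDTDu(\HD_i')$.

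Your route instead extracts only the equal-marker-count consequence of \cite{pv}'s invariance theorem, then re-derives the $V^{\otimes k}$ factor by proving an interior stabilization lemma in a bordered grid, adapted from the grid-homology analysis of \cite{mos, OSSbook}. This would work, but notice that the piece you are reconstructing — the effect of stabilization on the $\delta$-graded type~$\DD$ bimodule, including the rank-$2$ splitting after a change of basis — is precisely what \cite{pv} already establishes in the bigraded setting when proving the invariance you quote; so the stabilization lemma is redundant once you are willing to use \cite{pv} at all. In effect your plan re-proves half of \cite{pv}'s invariance theorem in order to avoid citing the other half. Also, where you say you would compute that the two local generators sit in the same $\delta$-grading ``which we normalize to $0$'': there is nothing to normalize, and the cleaner path is the one the paper takes, namely to record the bigraded shifts $(0,0)$ and $(-1,-1)$ from \cite{pv} and note that both collapse to $\delta = 0$. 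Your handling of the unoriented case (replace $O$'s by $X$'s, observe $\CDTDu$ is unchanged, reduce to the oriented/bigraded case) is essentially identical to the paper's.
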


\begin{proof}
  The first case follows directly from the fact that by \cite{pv}, $\CDTDt(\HD_1)\simeq \CDTDt(\HD_2) \otimes (\F{(0,0)}\oplus \F{(-1,-1)})^{\otimes k}$, where $\F{(i,j)}$ is the vector space $\F{2}$ in $(M,A)$-grading $(i,j)$.

  For the second case, replace some $X$'s with $O$'s, or $O$'s  with $X$'s, if necessary, to obtain diagrams $\HD_i'$ for tangles $\T_i'$ from $\HD_i$, so that $\T_1'$ and $\T_2'$ are the same as oriented tangles.  Then $\CDTDt(\HD_1')\simeq \CDTDt(\HD_2') \otimes (\F{(0,0)}\oplus \F{(-1,-1)})^{\otimes k}$, so $\CDTDu(\HD_1')\simeq \CDTDu(\HD_2')\otimes V^{\otimes k}$, since $\CDTDt(\HD_i')$ and $\CDTDu(\HD_i')$ are ungraded isomorphic. Since  $\CDTDd(\HD_i)\cong  \CDTDd(\HD_i')$, the statement follows.
\end{proof}

Proposition~\ref{prop:eq} implies that if $\HD_1$ and $\HD_2$ are two diagrams for $\T$ with the same number of markers $n$, then $ \CDTDd(\HD_1)\simeq \CDTDd(\HD_2)$ and $\CDTDu(\HD_1)\simeq \CDTDu(\HD_2)$. In view of this, here and afterwards we use $\CDTDd(\T, n)$ and $\CDTDu(\T, n)$ to denote the homotopy types of the structures arising from a diagram with $n$ markers associated to a tangle $\T$.

We end this section by stating a version of the gluing theorem for tangle Floer homology:

\begin{prop}
  If $\T = \T' \circ \T''$, then
  \begin{align*}
    \CDTDd (T, n' + n'') & \simeq \CDTAd (\T', n') \boxtimes \CDTDd (\T'', n''),\\
    \CDTDu (T, n' + n'') & \simeq \CDTAu (\T', n') \boxtimes \CDTDu (\T'', n'').
  \end{align*}
  An analogous equivalence holds for any other pair of bimodules for which the box tensor product is defined (e.g.\ a type~$\AA$ and a type~$\DA$ bimodule).
\end{prop}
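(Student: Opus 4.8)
The plan is to reduce every case to the pairing theorem of \cite{pv}, which is stated there for the type~$\DA$ bimodule. Its essential chain-level content is that gluing two bordered Heegaard diagrams $\HD'$ and $\HD''$ along their common boundary circles to form $\HD$ produces a homotopy equivalence $\CDTAt(\HD)\simeq\CDTAt(\HD')\boxtimes\CDTAt(\HD'')$, and likewise for the bigraded version $\CDTAm$ carrying the $U$ variables: a rectangle in $\HD$ either lies entirely in $\HD'$, entirely in $\HD''$, or crosses the gluing seam, and in the last case it records the matching of an output algebra element on one side with an input algebra element on the other, which is precisely the combinatorial content of the differential on a box tensor product. Since every structure here is homotopy equivalent to a bounded one --- using an admissible diagram, as in \cite[Lemma~6.6]{bimod} --- all box tensor products and their associativity are well-defined, and I would work throughout up to homotopy equivalence.

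\textbf{Changing the bimodule type.} To get from the type~$\DA$ statement to the type~$\DD$ statement, I would box-tensor on the right with the relevant boundary algebra $\alg{A}$, viewed as an $\AA$ bimodule: this carries $\CDTAt(\HD')\boxtimes\CDTDt(\HD'')$ to $\CDTAt(\HD')\boxtimes\bigl(\CDTDt(\HD'')\boxtimes\alg{A}\bigr)\simeq\CDTAt(\HD')\boxtimes\CDTAt(\HD'')$ by associativity of $\boxtimes$, and it carries $\CDTDt(\HD)$ to $\CDTAt(\HD)$. These images are homotopy equivalent by the type~$\DA$ pairing theorem, and since $-\boxtimes\alg{A}$ is a quasi-equivalence (Section~\ref{ssec:alg}) it reflects homotopy equivalences, so $\CDTDt(\HD)\simeq\CDTAt(\HD')\boxtimes\CDTDt(\HD'')$. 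The same trick --- box-tensoring on the appropriate side(s) with the appropriate algebra and then invoking associativity --- handles type~$\AD$, type~$\AA$, and the ``analogous equivalence'' for any other pair of bimodule types for which $\boxtimes$ is defined.

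\textbf{Gradings and the passage to tangles.} I would next note that the pairing theorem of \cite{pv} respects the $(M,A)$-bigrading (with no shift, in the conventions used there): the box tensor product of $(M,A)$-bigraded bimodules is again $(M,A)$-bigraded with both gradings additive, which is immediate from the definition of $\boxtimes$ and the grading formulas recalled in Section~\ref{ssec:tf}. Hence collapsing to $\delta = A - M$ commutes with $\boxtimes$, and rerunning the previous step with the $\delta$-collapsed bimodules gives $\CDTDd(\HD)\simeq\CDTAd(\HD')\boxtimes\CDTDd(\HD'')$. Proposition~\ref{prop:eq} then upgrades this to a statement about tangles, since both sides depend up to homotopy equivalence only on the underlying tangle and the number of markers, giving the first displayed equivalence. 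For the ungraded (unoriented) case, I would take a diagram $\HD=\HD'\cup\HD''$ for $\T=\T'\circ\T''$, replace every $X$ marker by an $O$ marker compatibly on $\HD$, $\HD'$, and $\HD''$ to obtain diagrams for oriented tangles with the same underlying unoriented tangle, and use that $\CDTDu(\HD)$ is ungraded isomorphic to $\CDTDt$ of the oriented diagram (and similarly for the other types) together with the fact that the ungraded algebra $\aus{n}$ depends only on $n$; the ungraded reduction of the equivalence just proved then reads $\CDTDu(\HD)\simeq\CDTAu(\HD')\boxtimes\CDTDu(\HD'')$, and Proposition~\ref{prop:eq} again yields the second displayed equivalence.

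\textbf{Main obstacle.} All of the above is formal bookkeeping with $\boxtimes$ once one has the chain-level identification ``gluing of Heegaard diagrams $=$ box tensor product'' for the rectangle-counting bimodules of Section~\ref{ssec:tf}, and this identification is the one genuinely substantive step. The hard part will be verifying that the rectangles of $\HD$ which cross the seam are exactly the concatenations of a boundary-adjacent rectangle of $\HD'$ (contributing an output algebra element for $\T'$) with a boundary-adjacent rectangle of $\HD''$ (contributing an input algebra element for $\T''$), that this exhausts all such terms, and that the $U$-powers match up --- the delicate cases being the compound rectangles of types~(6)--(7). The analogous verification for the type~$\DA$ bimodule is exactly the pairing theorem of \cite{pv}, so in practice I would simply cite it, leaving only the routine observations that setting the $U$ variables to zero, collapsing the $(M,A)$-bigrading, and replacing $X$'s by $O$'s each commute with the box tensor product.
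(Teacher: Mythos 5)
Your proposal is correct and takes essentially the same approach as the paper: the paper's entire proof is ``This is essentially [PV,~Corollary~12.5],'' i.e.\ it reduces directly to the pairing theorem of \cite{pv}, and your argument is an unpacking of why that citation suffices (changing bimodule type via $-\boxtimes\alg{A}$, collapsing gradings, replacing $X$'s by $O$'s, and invoking Proposition~\ref{prop:eq} to pass from diagrams to tangles).
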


\begin{proof}
  This is essentially \cite[Corollary~12.5]{pv}.
\end{proof}

Table~\ref{tab:all_flavors} summarizes the notation from this section.  

\begin{table}[h]
  \captionsetup{belowskip=10pt}
  \centering
  {
    \setlength{\extrarowheight}{3pt}%
    \renewcommand{\arraystretch}{2}%
    \begin{tabular}{|p{7.2cm}|c||c|c|}
      \hline
      \multicolumn{1}{|c|}{Description} & \makecell{Type~$\DD$\\ Bimodule} & \makecell{Associated\\ Algebras} & \makecell{Gradings\\ Endowed}\\[10pt]
      \hline
      \hline
      \makecell{Unblocked\footnote{The unblocked version of tangle Floer homology is not yet proven to be an invariant, as remarked in \cite[Section 1]{pv}.}, bigraded tangle Floer\\ bimodule of an oriented tangle $\T$} & $\CDTDm (\T)$ & \makecell{$\am{-\bdyL \T}$\\ $\am{-\bdyR \T}$} & $M, A$\\[10pt]
      \hline
      \makecell{Fully blocked, bigraded tangle Floer\\ bimodule of an oriented tangle $\T$} & $\CDTDt (\T, n)$ & \makecell{$\ah{-\bdyL \T}$\\ $\ah{-\bdyR \T}$} & $M, A$\\[10pt]
      \hline
      \makecell{Fully blocked, $\delta$-graded tangle Floer\\ bimodule of an oriented tangle $\T$} & $\CDTDd (\T, n)$ & \makecell{$\ad{-\bdyL \T} = \ads{|\bdyL \T|}$\\ $\ad{-\bdyR \T} = \ads{|\bdyR \T|}$} & $\delta$\\[10pt]
      \hline
      \makecell{Fully blocked, ungraded tangle Floer\\ bimodule of an unoriented tangle $\T$} & $\CDTDu (\T, n)$ & \makecell{$\au{\bdyL \T} = \aus{|\bdyL \T|}$\\ $\au{\bdyR \T} = \aus{|\bdyR \T|}$} & None\\[10pt]
      \hline
    \end{tabular}
  }
  \caption{A summary of the notation relevant to the four flavors of tangle Floer homology discussed in this section.}
  \label{tab:all_flavors}
\end{table}



\section{The unoriented skein relation} 
\label{sec:proof}



Our strategy for proving Theorem~\ref{thm:ourtheorem} is to prove it first for the simplest case, where the skein triple has one crossing, and then to apply a gluing theorem. 

More precisely, fix integers $n$ and $i$ with $n \geq 2$ and $1\leq i\leq n$. Let $\eT_\infty$ be the (unoriented) elementary $(n, n)$-tangle that consists of one crossing where the strand with the higher slope crosses over the strand with the lower slope, and there are $i-1$ horizontal strands running below the crossing and $n-i-1$ horizontal strands running above the crossing;  let $\eT_0$  be the  resolution of $\eT_\infty$ that results in only horizontal strands, and let $\eT_1$ be the resolution of $\eT_\infty$ that results in a cup and a cap, as in Figure~\ref{fig:hd_012}. Up until the very end of this section, we will  be working with the type~$\DD$ structures associated to these three tangles.

\begin{figure}[h]
  \centering
  \includegraphics[scale=1.05]{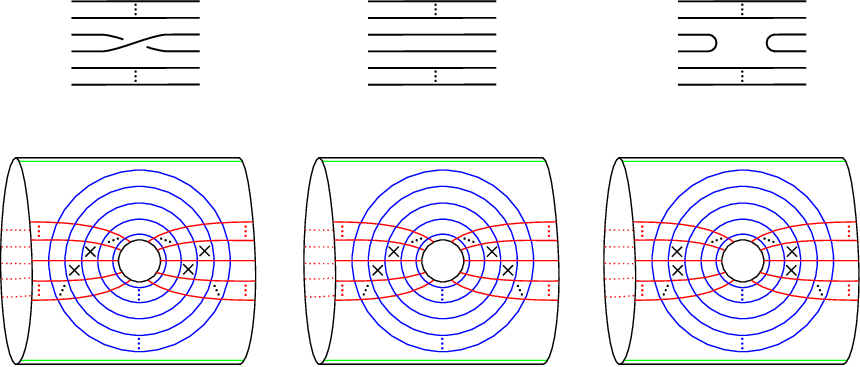}
  \caption{Top: From left to right, the three elementary tangles $\eT_\infty$, $\eT_0$, and  $\eT_1$. Bottom: The corresponding Heegaard diagrams $\HD_\infty$, $\HD_0$, and  $\HD_1$.}
  \label{fig:hd_012}
\end{figure}

We draw three Heegaard diagrams $\HD_\infty, \HD_0, \HD_1$ associated to $\eT_\infty, \eT_0, \eT_1$ respectively, with all marked points being $X$'s (since these are unoriented tangles); see Figure~\ref{fig:hd_012}. In Section ~\ref{sec:grading}, when we endow these tangles with orientations, we will be working with the same kind of diagrams, but with both $O$'s and $X$'s.  We label the $\alpha$ and $\beta$ curves for each diagram as in Section~\ref{ssec:tf}. The number of $\alpha$ arcs is $2 n + 2$ and the number of $\beta$ circles is $n + 1$ in each diagram.  Next, we combine all three diagrams into one diagram to obtain Figure~\ref{fig:hd_comb}.  Note that $\HD_\infty, \HD_0, \HD_1$ share the same $\alpha$ arcs ($2 n + 2$ in total) and marked points (i.e.~the $X$'s), and also all $\beta$ circles but one.  We label by $\beta_{i, \infty}$ (dark blue), $\beta_{i, 0}$ (green), $\beta_{i, 1}$ (purple) the three different circles corresponding to $\HD_\infty, \HD_0, \HD_1$ respectively.

\begin{figure}[h]
  \centering
  \labellist
  \pinlabel  \textcolor{red}{$c^{\mathit{FR}}_0$} at 148 35
  \pinlabel  \textcolor{red}{$c^{\mathit{FR}}_{n}$} at 147 87
  \pinlabel  \textcolor{red}{$c^{\mathit{FL}}_0$} at 10 35
  \pinlabel  \textcolor{red}{$c^{\mathit{FL}}_{n}$} at 9 85
  \pinlabel  \textcolor{red}{$c^{\mathit{BL}}_0$} at -5 37
  \pinlabel  \textcolor{red}{$c^{\mathit{BL}}_{n}$} at -5 85
  \pinlabel \rotatebox{90}{\textcolor{red}{$\dots$}} at -5 61
  \pinlabel \rotatebox{90}{\textcolor{red}{$\dots$}} at 12 61
  \pinlabel \rotatebox{90}{\textcolor{red}{$\dots$}} at 148 62
  \endlabellist
  \includegraphics[scale=1.7]{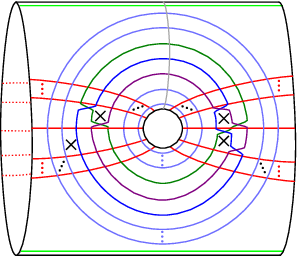}
  \caption{The diagram obtained by ``combining" $\HD_\infty$, $\HD_0$, and  $\HD_1$ so that they share the same $\alpha$ arcs, marked points, and all $\beta$ circles but one. Choosing the dark blue, green, or purple circle, and forgetting the other two, gives $\HD_\infty$, $\HD_0$, or  $\HD_1$, respectively.}
  \label{fig:hd_comb}
\end{figure}

For ease of visualization, we cut open the Heegaard diagram along the indicated grey circle in Figure~\ref{fig:hd_comb}, and also delete the non-combinatorial regions (``the forbidden regions'' with the light green arcs) to obtain Figure~\ref{fig:hd_cut}. What we call the ``right grids" in Section~\ref{ssec:tf} combine to give the right half of the diagram as drawn in Figure~\ref{fig:hd_comb}, or equivalently the top half of the diagram as drawn in Figure~\ref{fig:hd_cut}. The ``left grids" combine to give the left half of  Figure~\ref{fig:hd_comb}, or equivalently the bottom half of Figure~\ref{fig:hd_cut}.

We denote the underlying surface for the combined diagram by $\Sigma$, and let the common $\alpha$ and $\beta$ curves inherit their labels from $\HD_{\infty}$, $\HD_0$, and $\HD_1$. Recall, for example,  that the $\alpha_j^L$'s are the $\alpha$ arcs that intersect the left boundary in Figure~\ref{fig:hd_comb}, and the $\alpha_j^R$'s the right boundary.  Note also the positions of $\alpha_i^L, \alpha_i^R$ and $\beta_{i,k}$; in particular, $\beta_{i, \infty}, \beta_{i, 0}, \beta_{i, 1}$ are between $\beta_{i-1}$ and $\beta_{i+1}$. We write $\alphas[L] = \set{\alpha_j^L}_{j=0}^n, \alphas[R] = \set{\alpha_j^R}_{j=0}^n$, and $\alphas = \alphas[L] \cup \alphas[R]$.  Likewise, for $k \in \set{\infty, 0, 1}$, we write $\betas[k] = \set{\beta_0, \dotsc, \beta_{i-1}, \beta_{i, k}, \beta_{i+1}, \dotsc, \beta_n}$, and $\betas = \betas[\infty] \cup \betas[0] \cup \betas[1]$.

We introduce a couple of more labels that we will use later. The front half and back half of $\bdy^R\Sigma$, as seen on Figure~\ref{fig:hd_comb}, are denoted $\bdy^{\mathit{FR}}\Sigma$ and $\bdy^{\mathit{BR}}\Sigma$ respectively, and translate to the top right edge and top left edge of the diagram in Figure~\ref{fig:hd_cut}, respectively. Similarly, we denote the front and back sides of $\bdy^L\Sigma$ by $\bdy^{\mathit{FL}}\Sigma$ and $\bdy^{\mathit{BL}}\Sigma$, respectively. We let \begin{align*}
  c^{\mathit{FL}}_i &= \alpha_i^L\cap \bdy^{\mathit{FL}}\Sigma, \qquad & c^{\mathit{FR}}_i &= \alpha_i^R\cap \bdy^{\mathit{FR}}\Sigma,\\
  c^{\mathit{BL}}_i &= \alpha_i^L\cap \bdy^{\mathit{BL}}\Sigma, \qquad & c^{\mathit{BR}}_i &= \alpha_i^R\cap \bdy^{\mathit{BR}}\Sigma.
\end{align*}
Last let $u_k$ and $v_k$ be the two intersection points in $\beta_{i,k}\cap \beta_{i,k+1}$, so that $u_k$ is to the left of $v_k$ as seen in Figure~\ref{fig:hd_cut}; in other words, $u_k$ lies on the boundary of the unique annulus in $\Sigma\setminus \betas$ with no $X$'s in it.

\begin{figure}
  \centering
  \labellist
  \pinlabel  \textcolor{red}{$c^{\mathit{FR}}_0$} at 295 280
  \pinlabel  \textcolor{red}{$c^{\mathit{FR}}_i$} at 295 414
  \pinlabel  \textcolor{red}{$c^{\mathit{FR}}_{n}$} at 295 554
  \pinlabel  \textcolor{red}{$c^{\mathit{FL}}_0$} at 295 242
  \pinlabel  \textcolor{red}{$c^{\mathit{FL}}_i$} at 295 172
  \pinlabel  \textcolor{red}{$c^{\mathit{FL}}_{n}$} at 295 35
  \pinlabel  \textcolor{red}{$c^{\mathit{BR}}_0$} at -20 280
  \pinlabel  \textcolor{red}{$c^{\mathit{BR}}_i$} at -20 414
  \pinlabel  \textcolor{red}{$c^{\mathit{BR}}_{n}$} at -20 554
  \pinlabel  \textcolor{red}{$c^{\mathit{BL}}_0$} at -20 242
  \pinlabel  \textcolor{red}{$c^{\mathit{BL}}_i$} at -20 172
  \pinlabel  \textcolor{red}{$c^{\mathit{BL}}_{n}$} at -20 35
  \pinlabel \rotatebox{90}{$\underbrace{\hspace{4.4cm}}$} at 335 136
  \pinlabel $\bdy^{\mathit{FL}}\Sigma$ at 375 136
  \pinlabel \rotatebox{90}{$\underbrace{\hspace{5.9cm}}$} at 335 410
  \pinlabel $\bdy^{\mathit{FR}}\Sigma$ at 375 410
  \pinlabel \rotatebox{90}{$\overbrace{\hspace{4.4cm}}$} at -55 136
  \pinlabel $\bdy^{\mathit{BL}}\Sigma$ at -95 136
  \pinlabel \rotatebox{90}{$\overbrace{\hspace{5.9cm}}$} at -55 410
  \pinlabel $\bdy^{\mathit{BR}}\Sigma$ at -95 410
  \pinlabel $v_0$ at 177 428
  \pinlabel $u_1$ at 140 503
  \pinlabel $u_{\infty}$ at 142 395
  \pinlabel $u_0$ at 140 154
  \pinlabel $v_1$ at 177 322
  \pinlabel $v_{\infty}$ at 181 79
  \pinlabel \textcolor{blue!60!}{$\beta_{n}\ldots$} at 50 -15
  \pinlabel \textcolor{blue}{$\beta_{i, \infty}$} at 145 -15
  \pinlabel \textcolor{Plum}{$\beta_{i, 1}$} at 107 -15
  \pinlabel \textcolor{OliveGreen}{$\beta_{i, 0}$} at 180 -15
  \pinlabel \textcolor{blue!60!}{$\ldots\beta_{0}$} at 232 -15
  \endlabellist
  \includegraphics[scale = .6]{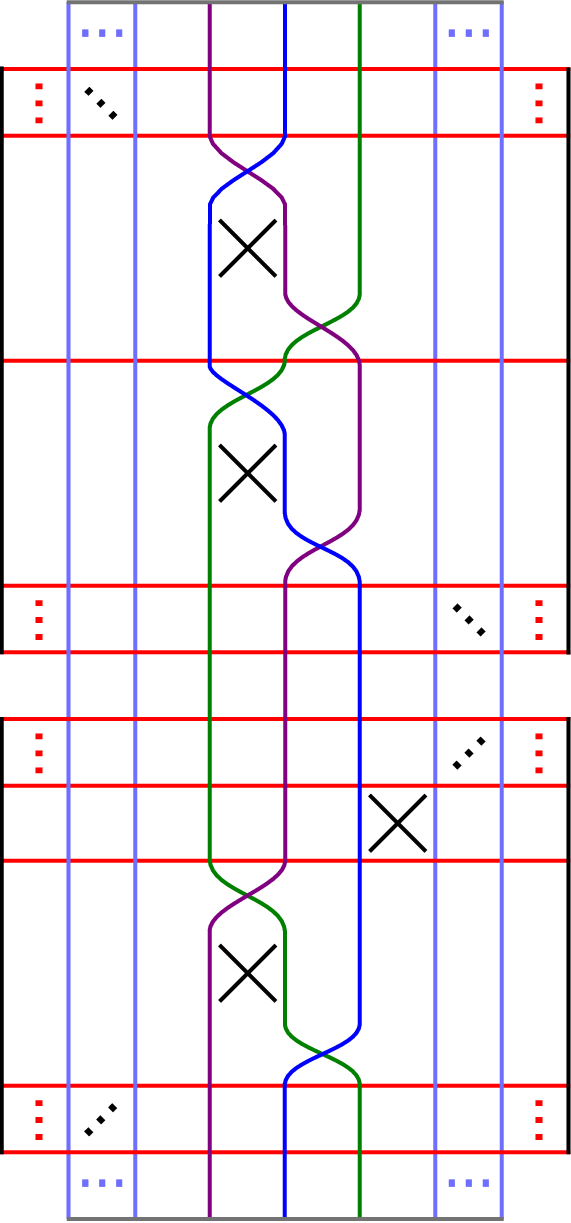}
  \vskip .5 cm
  \caption{The combined diagram for the three elementary tangles, obtained by cutting open the diagram in Figure~\ref{fig:hd_comb} along the indicated grey circle and deleting the non-combinatorial regions.}
  \label{fig:hd_cut}
\end{figure}

We let $\CDTDu (\HD_k)$ be the type~$\DD$ structure associated to $\HD_k$, for each $k \in \set{\infty, 0, 1}$; these are type~$\DD$ structures over $(\au{\bdyL\eT_k}, \au{\bdyR \eT_k}) = (\aus{n}, \aus{n})$. We also endow the set $\set{\infty, 0, 1}$ with an action by $\cycgrp{3}$ by identifying $\infty$ with $2$, so that $\infty + 1 = 0$ and $1 + 1 = \infty$.

In this setting, Theorem~\ref{thm:ourtheorem} will follow from the following proposition:

\begin{prop}
  \label{prop:main}
  There exists a type~$\DD$ homomorphism $f_0 \colon \CDTDu (\HD_0) \to \CDTDu (\HD_1)$ such that
  \[
    \CDTDu (\HD_\infty) \simeq \Cone (f_0 \colon \CDTDu (\HD_0) \to \CDTDu (\HD_1))
  \]
  as type~$\DD$ structures.
\end{prop}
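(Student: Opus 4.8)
The plan is to invoke Lemma~\ref{lem:hom_alg} with the three type~$\DD$ structures $\DDm{M}_k = \CDTDu (\HD_k)$, $k \in \set{\infty, 0, 1}$, using the $\cycgrp{3}$-action on $\set{\infty,0,1}$ in which $\infty + 1 = 0$. If we produce morphisms $f_k \colon \DDm{M}_k \to \DDm{M}_{k+1}$, $\phi_k \colon \DDm{M}_k \to \DDm{M}_{k+2}$, and $\psi_k \colon \DDm{M}_k \to \DDm{M}_k$ satisfying Conditions~(1)--(3) of that lemma, then its conclusion, read at $k = \infty$, is precisely that $\CDTDu (\HD_\infty) \simeq \Cone (f_0)$, which is Proposition~\ref{prop:main}; we then take $f_0$ to be the homomorphism furnished by the lemma. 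Following the strategy of \cite{mskein, wong}, the three families of morphisms will be defined combinatorially by counting empty polygons in the combined diagram of Figure~\ref{fig:hd_cut}, whose pieces we may take to be nice so that all such counts are finite.

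Concretely, $f_k$ will be defined by counting empty \emph{pentagons}: embedded regions in $\Sigma$ with boundary on $\alphas$, $\beta_{i,k}$, and $\beta_{i,k+1}$, having one distinguished corner at one of the two points of $\beta_{i,k} \cap \beta_{i,k+1}$ (chosen consistently as $k$ varies), the remaining corners at the points of a pair of generators $\x \in \SS(\HD_k)$ and $\y \in \SS(\HD_{k+1})$ in the usual way, and interior disjoint from $\x$ and from $\XX$. Just as the rectangles defining $\delta^1$ split into types according to how they meet $\bdy^L\Sigma$ and $\bdy^R\Sigma$, these pentagons split into analogous types, each carrying left and right algebra outputs $\algl{\x, P}, \algr{\x, P} \in \aus{n}$ defined by the same recipes; we set $f_k(\x) = \sum \algl{\x,P} \otimes \y \otimes \algr{\x,P}$, the sum running over all empty pentagons $P$ from $\x$ to some $\y$. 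Similarly, $\phi_k$ counts empty \emph{hexagons} with boundary on $\alphas$, $\beta_{i,k}$, $\beta_{i,k+1}$, $\beta_{i,k+2}$ and distinguished corners on consecutive pairs of these circles, and $\psi_k$ counts empty \emph{heptagons}, which pass through $\beta_{i,k}$, $\beta_{i,k+1}$, $\beta_{i,k+2}$ and back to $\beta_{i,k}$, each recording left and right algebra outputs by the same scheme.

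Conditions~(1)--(3) are then checked by the familiar argument of counting the ends of one-dimensional families of combinatorial domains. For~(1) one examines the composites of a rectangle and a pentagon; for~(2), the composites of two pentagons and of a rectangle and a hexagon; for~(3), the composites of a rectangle and a heptagon, of a pentagon and a hexagon, and so on. In each case a generic domain in the family has exactly two ends, and the total contribution of the ends, viewed as a morphism, reproduces the left-hand side of the corresponding identity in Lemma~\ref{lem:hom_alg}. For Conditions~(1) and~(2) every end is matched by exactly one other; the only potentially problematic configurations --- a pentagon breaking off a bigon, or two pentagons sharing a distinguished corner --- either do not occur or cancel in pairs, because $\beta_{i,k}$ meets $\beta_{i,k+1}$ in exactly two points and because of how the markings in $\XX$ lie relative to $u_k$. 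This yields $\partial f_k = 0$ and $f_{k+1} \circ f_k + \partial \phi_k = 0$.

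For Condition~(3) the same bookkeeping shows that all ends cancel in pairs \emph{except} for one distinguished family: the thin heptagon supported in a neighbourhood of the unique annulus of $\Sigma \setminus \betas$ that contains $u_k$ and no point of $\XX$. A direct inspection shows that for every generator $\x$ this exceptional configuration fixes all the intersection points of $\x$ and carries trivial left and right algebra outputs, so that it contributes exactly $\ideml{\x} \otimes \x \otimes \idemr{\x} = \Id_k(\x)$; hence the total contribution of the ends equals $f_{k+2} \circ \phi_k + \phi_{k+1} \circ f_k + \partial \psi_k = \Id_k$, and Lemma~\ref{lem:hom_alg} applies. I expect this last step to be the main obstacle: one must enumerate all the hexagon, pentagon, and heptagon degenerations carefully enough to confirm that exactly one end is unpaired, and that it produces the bare identity morphism --- with the correct idempotents and no extraneous algebra factor --- rather than a sum of terms or the identity twisted by an algebra element.
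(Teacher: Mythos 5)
Your high-level strategy matches the paper's exactly: prove the result for the three elementary diagrams $\HD_\infty, \HD_0, \HD_1$ by producing morphisms $f_k, \phi_k, \psi_k$ that satisfy Conditions (1)--(3) of Lemma~\ref{lem:hom_alg}, defined combinatorially by counting empty polygons in the combined diagram. However, your choice of polygons is missing two essential families, and this causes the verification of Conditions (2) and (3) to fail.

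The paper defines $f_k = \TT_k + \PP_k$ (empty triangles plus empty pentagons) and $\phi_k = \Q_k + \HH_k$ (empty quadrilaterals plus empty hexagons), whereas you define $f_k$ by counting only pentagons and $\phi_k$ by counting only hexagons. The triangle-like polygons (triangles from $\SS(\HD_k)$ to $\SS(\HD_{k+1})$ and quadrilaterals from $\SS(\HD_k)$ to $\SS(\HD_{k+2})$) cannot be dropped. They are not pathological degenerations of pentagons and hexagons; they are separate classes of domains, and they carry most of the load in producing the identity in Condition (3). Concretely, in the paper's cancellation scheme (Table~\ref{tab:cond3}), the identity term $\Id_k(\x)$ for a generator $\x$ whose $\beta_i$-component lies on the ``central'' line (i.e.\ $\x \in C_k$) is matched against a $\Q\circ\TT$ juxtaposition, and for $\x$ on the ``right'' line ($\x \in R_k$) it is matched against a $\TT\circ\Q$ juxtaposition. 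Without triangles and quadrilaterals there is nothing to match $\Id_k$ on these generators, and the homotopy $\psi_k$ cannot make up the difference: heptagons are always interior domains of type $\lan$ with path $L \to L$, so $\K_k$ and hence $\partial\psi_k$ only touch generators in $L_k$. Your ``thin heptagon'' story thus accounts for at most the identity on $L_k$ (and even there the exceptional term arises from canceling annular juxtapositions of shape~C, not from a lone heptagon contributing the bare identity); for $\x\in C_k$ or $R_k$ the proposed maps give $f_{k+2}\circ\phi_k + \phi_{k+1}\circ f_k + \partial\psi_k = 0 \neq \Id_k(\x)$.

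Condition (2) would also break: the paper's Table~\ref{tab:cond2} shows that shape-C annular domains arising from $\PP\circ\PP$, $\delta^1\circ\HH$, and $\HH\circ\delta^1$ are canceled precisely by the $\TT\circ\TT$ term of path $C\to R\to L$. Removing $\TT$ leaves these juxtapositions unmatched. So your claim that ``every end is matched by exactly one other'' in Conditions (1) and (2) is not correct once the triangle and quadrilateral counts are gone. The fix is to include $\TT_k$ in $f_k$ and $\Q_k$ in $\phi_k$, and then run the full case-by-case analysis of the paper (including the several ``special cases'' in which a juxtaposition of two polygons cancels against a juxtaposition whose underlying $2$-chain is different, and which may output nontrivial algebra elements).
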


From now on, we will write $\DDm{M}_k = \CDTDu (\HD_k)$. To prove Proposition~\ref{prop:main}, we will use Lemma~\ref{lem:hom_alg}. We shall define the morphisms to which we will apply Lemma~\ref{lem:hom_alg}, $f_k \colon \DDm{M}_k \to \DDm{M}_{k+1}, \phi_k \colon \DDm{M}_k \to \DDm{M}_{k+2}, \psi_k \colon \DDm{M}_k \to \DDm{M}_k$, by counting polygons.

\begin{defn}
  Given $\x \in \SS (\HD_k)$ and $\y \in \SS (\HD_\ell)$ (where $k, \ell \in \set{\infty, 0, 1}$), a \emph{polygon $p$ from $\x$ to $\y$} is an embedded disk in the surface $\Sigma$, which we also call $p$ by abuse of notation, satisfying the following conditions:
  \begin{enumerate}
    \item The boundary of $p$ lies on the $\alpha$ curves, $\beta$ curves and the boundary of $\Sigma$: $\bdy p \subset \alphas \cup \betas \cup \bdy \Sigma$;
    \item The interior angles of $p$ are all acute;
    \item If we write $\bdyb p = \bdy p \cap \betas$, then
      \[
        \bdy (\bdyb p) = \x - \y,
      \]
      where the orientation of $p$, and hence that of $\bdy p$, is inherited from $\Sigma$;
    \item Traversing each connected component of $\bdyb p$ in the inherited orientation, $\beta_{i, k}$ is always followed by $\beta_{i, k-1}$. In other words, if $u_k$ is a vertex of $p$, then the ``east-west" multiplicity of $p$ is greater than its ``north-south" multiplicity; similarly, if $v_k$ is a vertex of $p$, then the ``north-south" multiplicity of $p$ is greater than its ``east-west" multiplicity.
  \end{enumerate}
  A polygon $p$ from $\x$ and $\y$ is \emph{empty} if the interior of the embedded disk does not intersect any components of $\x$ (or equivalently $\y$), and also does not intersect $\XX$.
\end{defn}

It may be helpful to note here that in the proofs that follow in this section, $u_k$ and $v_k$ cannot arise as the shared corner of two polygons.

In Heegaard Floer homology, there is a more general notion of a \emph{domain}, which is a more general $2$-chain together with the initial and terminal generators.  In this paper, the domains that we investigate are always juxtapositions of multiple polygons: Given a polygon $p$ from $\x$ to $\y$, and a polygon $p'$ from $\y$ to $\z$, we can form the \emph{juxtaposition} $p * p'$, which is a domain from $\x$ to $\z$. The underlying $2$-chain of $p * p'$ is the sum of the underlying $2$-chains of $p$ and $p'$.

To clarify (following \cite{OSSbook}), when we speak of a domain, we always think of it as the underlying $2$-chain together with the initial and terminal generators $\x$ and $\y$. So if $(\x, \y) \neq (\x',\y')$, a domain from $\x$ to $\y$ is viewed as different from a domain from $\x'$ to $\y'$, even if the underlying $2$-chains are the same. The underlying $2$-chain is called the \emph{support} of the domain.

Fix a domain $p$. Like $\bdyb p$, we can similarly define $\bdya p = \bdy p \cap \alphas, \bdyL p = \bdy p \cap \bdy^L \Sigma$, and $\bdyR p = \bdy p \cap \bdy^R \Sigma$. Then $\bdya p, \bdyb p, \bdyL p, \bdyR p$ inherit an orientation from $\bdy p$ so that $\bdy p = \bdya p + \bdyb p + \bdyL p + \bdyR p$. We say that $p$ is a \emph{left-bordered domain} if $\bdyL p \neq \eset$, \emph{right-bordered domain} if $\bdyR p \neq \eset$, \emph{two-bordered domain} if it is both left-bordered and right-bordered, and \emph{interior domain} if $\bdyL p = \bdyR p = \eset$. (In \cite{bfh2, bimod, pv}, interior domains are called \emph{provincial domains} instead.)

As a warm-up example, we express the rectangles in the definition of $\delta_k^1$ (for $\CDTDu (\HD_k)$) as defined in Section~\ref{sec:background} in the present language. For $\CDTDm$ as in Section~\ref{sec:background}, the structure map counts rectangles of seven types; however, since we are only dealing with $\CDTDu$, only rectangles of the first three types are counted. In fact, in our present context, for $\x, \y \in \SS (\HD_k)$, a rectangle from $\x$ to $\y$ is just a polygon from $\x$ to $\y$ whose boundary consists of four oriented segments. The structure map $\delta_k^1$ then counts empty rectangles. Specifically, for $\x, \y \in \SS (\HD_k)$, denote the space of empty interior rectangles from $\x$ to $\y$ by $\eRectI_k (\x, \y)$, the space of empty left-bordered rectangles from $\x$ to $\y$ by $\eRectL_k (\x, \y)$, and the space of empty right-bordered rectangles from $\x$ to $\y$ by $\eRectR_k (\x, \y)$. Denote the union of these three spaces, the space of empty rectangles from $\x$ to $\y$, by $\eRect_k (\x, \y)$. Now
\begin{enumerate}
  \item if $r \in \eRectI_k (\x, \y)$, define $\algl{r} = \ideml{\x}$ and $\algr{r} = \idemr{\x}$;
  \item if $r \in \eRectL_k (\x, \y)$, then the oriented arc $\bdyL r$ is either an arc on $\bdyFL \Sigma$ or an arc on $\bdyBL \Sigma$. In the former case, it goes from $\frontleftbdy{c_{j_1}}$ to $\frontleftbdy{c_{j_2}}$, for some $j_1 > j_2$; in the latter case, the arc goes from $\backleftbdy{c_{j_1}}$ to $\backleftbdy{c_{j_2}}$, for some $j_1 < j_2$. In either case, define $\algl{r}$ to be the bijection from $\unocl{\x}$ to $\unocl{\y}$ that sends $j_2$ to $j_1$ and is the identity elsewhere. Define $\algr{r} = \idemr{\x}$;
  \item if $r \in \eRectR_k (\x, \y)$, then the oriented arc $\bdyR r$ is either an arc on $\bdyFR \Sigma$ or an arc on $\bdyBR \Sigma$. In the former case, it goes from $\frontrightbdy{c_{j_1}}$ to $\frontrightbdy{c_{j_2}}$, for some $j_1 < j_2$; in the latter case, the arc goes from $\backrightbdy{c_{j_1}}$ to $\backrightbdy{c_{j_2}}$, for some $j_1 > j_2$. In either case, define $\algr{r}$ to be the bijection from $\unocr{\y}$ to $\unocr{\x}$ that sends $j_1$ to $j_2$ and is the identity elsewhere. Define $\algl{r} = \ideml{\x}$.
\end{enumerate}
Then we can write $\delta_k^1 \colon M_k \to \an \otimes M_k \otimes \an$ as
\[
  \delta_k^1 (\x) = \sum_{\y \in \SS (\HD_k)} \sum_{r \in \eRect_k (\x, \y)} \algl{r}\otimes \y \otimes \algr{r}.
\]

We now turn to defining the polygons to be counted in our maps $f_k, \phi_k$ and $\psi_k$.

\begin{defn}
  Let $\x \in \SS (\HD_k)$.
  \begin{enumerate}
    \item For $\y \in \SS (\HD_{k+1})$, a \emph{triangle from $\x$ to $\y$} is a polygon from $\x$ to $\y$ whose boundary consists of three oriented segments. Note that triangles are always interior domains. 
    \item For  $\y \in \SS (\HD_{k+1})$, a \emph{pentagon from $\x$ to $\y$} is a polygon from $\x$ to $\y$ whose boundary consists of five oriented segments. Note that a pentagon can be a left-bordered, right-bordered, or interior domain. 
        \item For $\y \in \SS (\HD_{k+2})$, a \emph{quadrilateral from $\x$ to $\y$} is a polygon from $\x$ to $\y$ whose boundary consists of four oriented segments. Note that quadrilaterals are always interior domains, and always empty. 
    \item For $\y \in \SS (\HD_{k+2})$, a \emph{hexagon from $\x$ to $\y$} is a polygon from $\x$ to $\y$ whose boundary consists of six oriented segments. Note that a hexagon can only be a right-bordered or interior domain. 
    \item For $\y \in \SS (\HD_{k})$, a \emph{heptagon from $\x$ to $\y$} is a polygon from $\x$ to $\y$ whose boundary consists of seven oriented segments. Note that a heptagon can only be an interior domain. 
  \end{enumerate}
  We denote the respective spaces of each type of polygons  by $\Tri_k (\x, \y)$, $\Pent_k (\x, \y)$, $\Quad_k (\x, \y)$, $\Hex_k (\x, \y)$, and $\Hept_k (\x, \y)$.
  We also write, for example, $\ePent_k (\x, \y)$ for the space of empty pentagons from $\x$ to $\y$, and $\ePentL_k (\x, \y)$ and $\eHexR_k (\x, \y)$ for the obvious spaces of left-bordered and right-bordered domains. Triangles and quadrilaterals are called \emph{triangle-like} polygons; rectangles, pentagons, hexagons and heptagons are called \emph{rectangle-like} polygons.
\end{defn}

We emphasize here that $\x$ and $\y$ must be generators of the appropriate diagrams for these spaces to make sense; for example, to mention $\eHexI_k (\x, \y)$, $\x$ must be in $\SS (\HD_k)$ and $\y$ must be in $\SS (\HD_{k+2})$.

Like rectangles, other polygons have algebra elements associated to them.

\begin{defn}
  Let $p$ be a polygon from $\x$ to $\y$, where $\x$ and $\y$ are generators in their respective type~$\DD$ bimodules; then $\algl{p}$ and $\algr{p}$ are defined as follows.
  \begin{enumerate}
    \item If $p$ is interior, then define $\algl{p} = \ideml{\x}$ and $\algr{p} = \idemr{\x}$. (Recall that $\ideml{\x} = \ideml{\y}$ and $\idemr{\x} = \idemr{\y}$.)
    \item If $p$ is left-bordered, then the oriented arc $\bdyL p$ is either an arc on $\bdyFL \Sigma$ or an arc on $\bdyBL \Sigma$. In the former case, it goes from $\frontleftbdy{c_{j_1}}$ to $\frontleftbdy{c_{j_2}}$, for some $j_1 > j_2$; in the latter case, the arc goes from $\backleftbdy{c_{j_1}}$ to $\backleftbdy{c_{j_2}}$, for some $j_1 < j_2$. In either case, define $\algl{p}$ to be the bijection from $\unocl{\x}$ to $\unocl{\y}$ that sends $j_2$ to $j_1$ and is the identity elsewhere. Define $\algr{p} = \idemr{\x}$.
    \item If $p$ is right-bordered, then the oriented arc $\bdyR p$ is either an arc on $\bdyFR \Sigma$ or an arc on $\bdyBR \Sigma$. In the former case, it goes from $\frontrightbdy{c_{j_1}}$ to $\frontrightbdy{c_{j_2}}$, for some $j_1 < j_2$; in the latter case, the arc goes from $\backrightbdy{c_{j_1}}$ to $\backrightbdy{c_{j_2}}$, for some $j_1 > j_2$. In either case, define $\algr{p}$ to be the bijection from $\unocr{\y}$ to $\unocr{\x}$ that sends $j_1$ to $j_2$ and is the identity elsewhere. Define $\algl{p} = \ideml{\x}$.
  \end{enumerate}
\end{defn}

With these definitions, we can now define the following polygon counts, which are morphisms of type~$\DD$ bimodules:
\begin{enumerate}
  \item The triangle count $\TT_k \colon \DDm{M}_k \to \DDm{M}_{k+1}$ is defined by
    \[
      \TT_k (\x) = \sum_{\y \in \SS (\HD_{k+1})} \sum_{p \in \eTri_k (\x, \y)} \algl{p} \otimes \y \otimes \algr{p}.
    \]
  \item The pentagon count $\PP_k \colon \DDm{M}_k \to \DDm{M}_{k+1}$ is defined by
    \[
      \PP_k (\x) = \sum_{\y \in \SS (\HD_{k+1})} \sum_{p \in \ePent_k (\x, \y)} \algl{p} \otimes \y \otimes \algr{p}.
    \]
  \item The quadrilateral count $\Q_k \colon \DDm{M}_k \to \DDm{M}_{k+2}$ is defined by
    \[
      \Q_k (\x) = \sum_{\y \in \SS (\HD_{k+2})} \sum_{p \in \eQuad_k (\x, \y)} \algl{p} \otimes \y \otimes \algr{p}.
    \]
  \item The hexagon count $\HH_k \colon \DDm{M}_k \to \DDm{M}_{k+2}$ is defined by
    \[
      \HH_k (\x) = \sum_{\y \in \SS (\HD_{k+2})} \sum_{p \in \eHex_k (\x, \y)} \algl{p} \otimes \y \otimes \algr{p}.
    \]
  \item The heptagon count $\K_k \colon \DDm{M}_k \to \DDm{M}_k$ is defined by
    \[
      \K_k (\x) = \sum_{\y \in \SS (\HD_k)} \sum_{p \in \eHept_k (\x, \y)} \algl{p} \otimes \y \otimes \algr{p}.
    \]
\end{enumerate}

We can finally define the morphisms $f_k, \phi_k$ and $\psi_k$:
\begin{enumerate}
  \item The morphism $f_k \colon \DDm{M}_k \to \DDm{M}_{k+1}$ is defined by
    \[
      f_k = \TT_k + \PP_k.
    \]
  \item The morphism $\phi_k \colon \DDm{M}_k \to \DDm{M}_{k+2}$ is defined by
    \[
      \phi_k = \Q_k + \HH_k.
    \]
  \item The morphism $\psi_k \colon \DDm{M}_k \to \DDm{M}_k$ is defined by
    \[
      \psi_k = \K_k.
    \]
\end{enumerate}

\begin{lem}
  \label{lem:cond1}
  The morphisms $f_k$ are type~DD homomorphisms, i.e. they satisfy Condition~(1) of Lemma~\ref{lem:hom_alg}. In fact, $\TT_k$ and $\PP_k$ are both type~DD homomorphisms.
\end{lem}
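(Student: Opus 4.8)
The plan is to establish the stronger statement that $\partial \TT_k$ and $\partial \PP_k$ each vanish; the assertion for $f_k = \TT_k + \PP_k$ is then immediate. Recall that $\partial g = \delta^1 \circ g + g \circ \delta^1 + dg$ for a morphism $g$, where $dg$ applies the differential of the algebra $\an$ to the boundary algebra elements attached to $g$. Since triangles are interior domains, the elements $\algl{p}, \algr{p}$ attached to a triangle $p$ are idempotents, hence cycles in $\an$; so $d\TT_k = 0$, and it remains to show $\delta_{k+1}^1 \circ \TT_k + \TT_k \circ \delta_k^1 = 0$. Pentagons, on the other hand, may be left- or right-bordered, in which case $\algl{p}$ or $\algr{p}$ is a nontrivial partial bijection whose strand diagram can carry black–black crossings, so $d\PP_k$ need not vanish, and we must show $\delta_{k+1}^1 \circ \PP_k + \PP_k \circ \delta_k^1 + d\PP_k = 0$.

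For each identity I would run the familiar ``ends of $1$-dimensional moduli spaces'' argument in its combinatorial guise. Every term of $\delta_{k+1}^1 \circ \TT_k$ (resp.\ $\TT_k \circ \delta_k^1$) records a pair consisting of an empty triangle in $\eTri_k(\x,\y)$ followed by an empty rectangle in $\eRect_{k+1}(\y,\z)$ (resp.\ an empty rectangle followed by an empty triangle), weighted by the product of its left and right boundary algebra elements; likewise for $\PP_k$ with pentagons, and the terms of $d\PP_k$ record a bordered empty pentagon together with a choice of black–black crossing in one of its algebra elements to smooth. Fixing the initial generator $\x$, the terminal generator $\z$, and the boundary decoration, I would show that the total count of these objects is even by exhibiting a fixed-point-free involution: given such an object, pass to the underlying $2$-chain and re-present it in the unique other admissible way. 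This requires a case analysis on the relative position of the two polygons (and, for $\PP_k$, of the pentagon and the smoothed crossing). The interior cases are standard: either the supports are essentially disjoint, in which case the two polygons occur in the opposite order (after, if necessary, sliding the rectangle across the thin annulus between $\beta_{i,k}$ and $\beta_{i,k+1}$, which meets no point of $\XX$); or their union is a single rectangle- or pentagon-like region admitting exactly two cuts into a triangle (resp.\ pentagon) and a rectangle; or a corner of the rectangle coincides with a corner of the other polygon, which forces the complementary decomposition. One checks that in each case the two presentations carry the same boundary decoration, so they cancel over $\F{2}$.

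The heart of the proof is the behaviour along $\bdy^L\Sigma$ and $\bdy^R\Sigma$, where the rectangle (and, for $\PP_k$, the pentagon) can be bordered and the multiplication and differential of $\an$ enter. For $\partial \TT_k$ one verifies that when a bordered rectangle abuts the triangle along the boundary, the product of the relevant algebra elements is nonzero exactly when the second admissible presentation exists, and agrees with it; when the product vanishes because two black strands (or a black and a left-oriented orange strand) would cross twice, no such index-$2$ object arises, so nothing is lost. For $\partial \PP_k$ the same analysis produces boundary-degenerate configurations in which a bordered rectangle meets the border edge of a bordered pentagon so that, after concatenating strand diagrams, two black strands cross twice and the product is zero — and these configurations are precisely matched, one for one, against the smoothing terms of $d\PP_k$, where resolving a black–black crossing of $\algl{p}$ or $\algr{p}$ in $\an$ reproduces the missing piece. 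Setting up this bijection — using the strand-diagram description of $\an$ to track exactly which crossing is created or destroyed in each boundary configuration — is the step I expect to be the main obstacle; it is the combinatorial shadow of comparing broken flowlines with boundary degenerations in the holomorphic setting. Once it is in place, together with the observation that the acuteness and emptiness hypotheses (and the placement of $\XX$) forbid any wrapping or periodic contributions on the genus-one surface $\Sigma$, the three types of terms cancel in pairs, giving $\partial \TT_k = \partial \PP_k = 0$ and hence $\partial f_k = 0$.
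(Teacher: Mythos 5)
Your overall strategy matches the paper's proof: show $\partial\TT_k = 0$ and $\partial\PP_k = 0$ separately, note that $d\TT_k = 0$ because triangles are interior domains whose attached algebra elements are idempotents, and for the remaining juxtaposition terms exhibit a pairing by re-cutting the underlying $2$-chain, with a case analysis on whether the two polygons are disjoint, have overlapping interiors, share a corner, or (only in $\partial\PP_k$) share an edge and two corners along $\bdy\Sigma$.

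However, you have the cancelation mechanism for the shared-edge case backwards. You claim that in such a configuration ``two black strands cross twice and the product is zero,'' and that $d\PP_k$ then ``reproduces the missing piece.'' In fact the opposite is true: when a bordered pentagon $p$ from $\x$ to $\y$ and a bordered rectangle $r$ from $\y$ to $\z$ share an edge along, say, $\bdy^R\Sigma$, the union $p' = p * r$ is itself a pentagon from $\x$ to $\z$, and $\algr{p'}$ has a single jumping strand from height $j_1$ to $j_3$, with the shared $\alpha$-edge at some intermediate height $m$, $j_1 < m < j_3$. Since that strand is monotone, it crosses each horizontal strand at most once, so the product $\algr{r}\cdot\algr{p}$ (or $\algr{p}\cdot\algr{r}$) is a \emph{nonzero} element of $\aun$, and it equals the term of $d_{\aun}\algr{p'}$ obtained by smoothing the crossing with the horizontal strand at height $m$. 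Thus the nonzero term of $\delta^1\circ\PP_k$ (or $\PP_k\circ\delta^1$) cancels \emph{directly} against the corresponding summand of $d\PP_k$; nothing ``goes missing'' because a product vanishes. As $m$ ranges over the intermediate unoccupied heights, the decompositions of $p'$ are in bijection with the summands of $d_{\aun}\algr{p'}$. For $\partial\TT_k$ the issue does not arise at all: since the triangle contributes only idempotents, multiplying by the rectangle's algebra element cannot introduce a double crossing, so a product can vanish only for the trivial reason of idempotent mismatch. Once this bookkeeping is corrected---this is exactly the content of Figures~\ref{fig:pent_d} and \ref{fig:pent_d_mult}---the rest of your argument is sound.
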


\begin{proof}
  The proof is similar to that of Lemma~3.3 of \cite{wong}, which is in turn inspired by Lemma~3.1 of \cite{most}. In fact, we shall see that
  \begin{gather}
    \mathcenter{
      \begin{tikzpicture}
        \node at (6,0) (from) {};
        \node at (6,-2) (delta) {$\delta_{k+1}^1$};
        \node at (6,-1) (action) {$\TT_k$};
        \node at (5,-3) (lprod) {$\mu_{\aun}$};
        \node at (7,-3) (rprod) {$\mu_{\aun}$};
        \node at (6,-4) (to) {};
        \node at (5,-4) (ablank) {};
        \node at (7,-4) (bblank) {};
        \draw[als] (delta) to (lprod);
        \draw[al] (action) to (lprod);
        \draw[algarrow] (lprod) to (ablank);
        \draw[ars] (delta) to (rprod);
        \draw[ar] (action) to (rprod);
        \draw[algarrow] (rprod) to (bblank);
        \draw[modarrow] (from) to (action);
        \draw[modarrow] (action) to (delta);
        \draw[modarrow] (delta) to (to);
      \end{tikzpicture}
    }
    +
    \mathcenter{
      \begin{tikzpicture}
        \node at (6,0) (from) {};
        \node at (6,-1) (delta) {$\delta_k^1$};
        \node at (6,-2) (action) {$\TT_k$};
        \node at (5,-3) (lprod) {$\mu_{\aun}$};
        \node at (7,-3) (rprod) {$\mu_{\aun}$};
        \node at (6,-4) (to) {};
        \node at (5,-4) (ablank) {};
        \node at (7,-4) (bblank) {};
        \draw[al] (delta) to (lprod);
        \draw[als] (action) to (lprod);
        \draw[algarrow] (lprod) to (ablank);
        \draw[ar] (delta) to (rprod);
        \draw[ars] (action) to (rprod);
        \draw[algarrow] (rprod) to (bblank);
        \draw[modarrow] (from) to (delta);
        \draw[modarrow] (delta) to (action);
        \draw[modarrow] (action) to (to);
      \end{tikzpicture}
    } = 0. \label{eqn:tri_homo}\\
    \mathcenter{
      \begin{tikzpicture}
        \node at (6,0) (from) {};
        \node at (6,-2) (delta) {$\delta_{k+1}^1$};
        \node at (6,-1) (action) {$\PP_k$};
        \node at (5,-3) (lprod) {$\mu_{\aun}$};
        \node at (7,-3) (rprod) {$\mu_{\aun}$};
        \node at (6,-4) (to) {};
        \node at (5,-4) (ablank) {};
        \node at (7,-4) (bblank) {};
        \draw[als] (delta) to (lprod);
        \draw[al] (action) to (lprod);
        \draw[algarrow] (lprod) to (ablank);
        \draw[ars] (delta) to (rprod);
        \draw[ar] (action) to (rprod);
        \draw[algarrow] (rprod) to (bblank);
        \draw[modarrow] (from) to (action);
        \draw[modarrow] (action) to (delta);
        \draw[modarrow] (delta) to (to);
      \end{tikzpicture}
    }
    +
    \mathcenter{
      \begin{tikzpicture}
        \node at (6,0) (from) {};
        \node at (6,-1) (delta) {$\delta_k^1$};
        \node at (6,-2) (action) {$\PP_k$};
        \node at (5,-3) (lprod) {$\mu_{\aun}$};
        \node at (7,-3) (rprod) {$\mu_{\aun}$};
        \node at (6,-4) (to) {};
        \node at (5,-4) (ablank) {};
        \node at (7,-4) (bblank) {};
        \draw[al] (delta) to (lprod);
        \draw[als] (action) to (lprod);
        \draw[algarrow] (lprod) to (ablank);
        \draw[ar] (delta) to (rprod);
        \draw[ars] (action) to (rprod);
        \draw[algarrow] (rprod) to (bblank);
        \draw[modarrow] (from) to (delta);
        \draw[modarrow] (delta) to (action);
        \draw[modarrow] (action) to (to);
      \end{tikzpicture}
    }
    +
    \mathcenter{
      \begin{tikzpicture}
        \node at (6,0) (from) {};
        \node at (6,-1.5) (action) {$\PP_k$};
        \node at (5,-3) (lprod) {$d_{\aun}$};
        \node at (6,-4) (to) {};
        \node at (5,-4) (ablank) {};
        \node at (7,-4) (bblank) {};
        \draw[ar] (action) to (bblank);
        \draw[als] (action) to (lprod);
        \draw[algarrow] (lprod) to (ablank);
        \draw[Amodar] (from) to (action);
        \draw[Amodar] (action) to (to);
      \end{tikzpicture}
    }
    +
    \mathcenter{
      \begin{tikzpicture}
        \node at (6,0) (from) {};
        \node at (6,-1.5) (action) {$\PP_k$};
        \node at (7, -3) (rprod) {$d_{\aun}$};
        \node at (6,-4) (to) {};
        \node at (5,-4) (ablank) {};
        \node at (7,-4) (bblank) {};
        \draw[al] (action) to (ablank);
        \draw[ars] (action) to (rprod);
        \draw[algarrow] (rprod) to (bblank);
        \draw[Amodar] (from) to (action);
        \draw[Amodar] (action) to (to);
      \end{tikzpicture}
    } = 0. \label{eqn:pent_homo}
  \end{gather}

  We first prove Equation~\ref{eqn:tri_homo}. Fix a domain that can be written as a juxtaposition $p * r$ (resp.\ $r * p$), where $p$ is a triangle and $r$ is a rectangle. Recall that triangles are always interior polygons; this means that $\algl{p} = \ideml{\x}$ and $\algr{p} = \idemr{\x}$, and so the algebra elements that a juxtaposition $p * r$ (resp.\ $r * p$) outputs are always $\algl{r}$ and $\algr{r}$. At least one of these is an idempotent element, since the rectangle $r$ cannot be two-bordered. Focusing on $p * r$ (resp.\ $r * p$), there are three cases; the polygons may be disjoint, their interiors may overlap, or they may share a common corner.

  If the polygons are disjoint or if their interiors overlap, the domain can be alternatively decomposed as $r' * p'$ (resp.\ $p' * r'$), where $r$ and $r'$ have the same support, and so do $p$ and $p'$. Thus, the domain contributes twice to the sum in the first equation above.  Since the base rings are of characteristic $2$, the total contribution is $0$.  The output algebra elements are obviously the same for both juxtapositions, since the underlying rectangles are the same in the two canceling juxtapositions. All possibilities of $p * r$ and $r * p$, where $p$ and $r$ have overlapping interiors, are listed in Figure~\ref{fig:tri_overlap}.  In this and following figures, all possibilities of composite domains are to be understood up to rotation by $\pi$. The reader may verify that these lists are complete by examining Figure~\ref{fig:hd_cut} and using basic planar geometry.

  \begin{figure}[h]
    \centering
    \includegraphics[scale = .41]{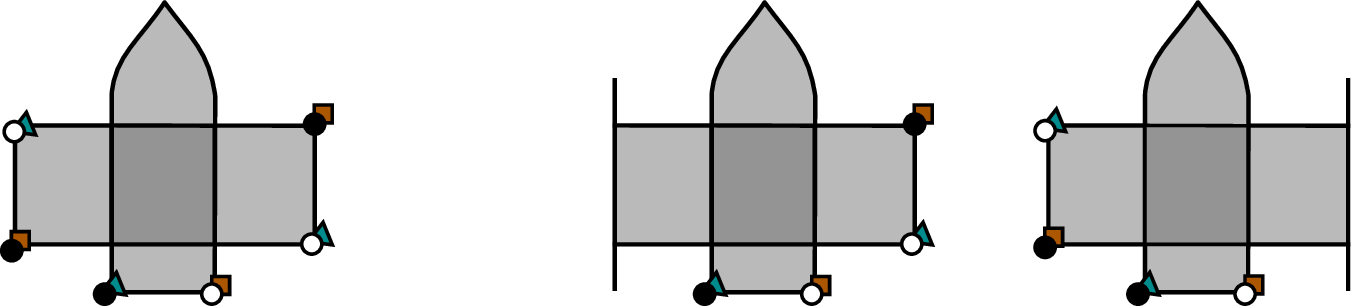}
    \caption{All possibilities of $p * r$ and $r * p$, where $p$ and $r$ are a triangle and a rectangle that have overlapping interiors, along with the alternate decomposition. In each figure, the black dots represent a generator $\x$, the brown squares a generator $\y$, the teal triangles a generator $\gt$, and the white dots a generator $\z$; the domain can be decomposed as $p * r$, where $p \in \eTri (\x, \y)$ and $p \in \eRectI (\y, \z)$, or as $r' * p'$, where $r' \in \eRectI (\x, \gt)$ and $p' \in \eTri (\gt, \z)$.}
    \label{fig:tri_overlap}
  \end{figure}

  If, instead, the polygons $p$ and $r$ share a common corner, $p * r$ (resp.\ $r * p$) always has exactly one alternative decomposition as $r' * p'$ (resp.\ $p' * r'$), where $p$ and $p'$ are triangles with distinct supports, and $r$ and $r'$ are rectangles with distinct supports. See Figure~\ref{fig:tri_corner_sp}. Again, this domain does not contribute to the sum. It is also apparent from the same figure that the intersections of the domain in question with both $\bdyL \Sigma$ and $\bdyR \Sigma$ (which may or may not be empty) are the same in both decompositions, and so again the output algebra elements are the same. All possibilities where $p$ and $r$ share a common corner are listed in Figure~\ref{fig:tri_corner}.

  \begin{figure}[h]
    \centering
    \includegraphics[scale = .41]{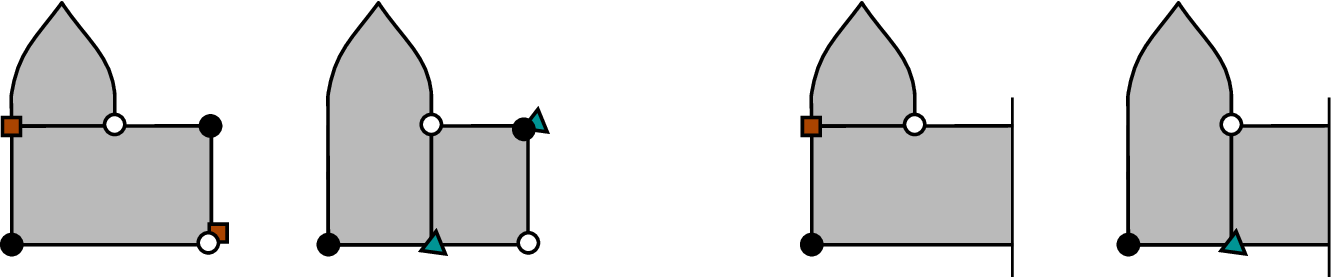} \caption{The two figures on the left show the two decompositions of the same interior domain; the first figure decomposes the domain into $r * p$, where $r \in \eRectI (\x, \y)$ and $p \in \eTri (\y, \z)$, while the second figure decomposes the domain into $p' * r'$, where $r' \in \eTri (\x, \gt)$ and $p' \in \eRectI  (\gt, \z)$. The two figures on the right show a similar decomposition for a bordered domain.}
    \label{fig:tri_corner_sp}
  \end{figure}

  \begin{figure}[h]
    \centering
    \includegraphics[scale = .41]{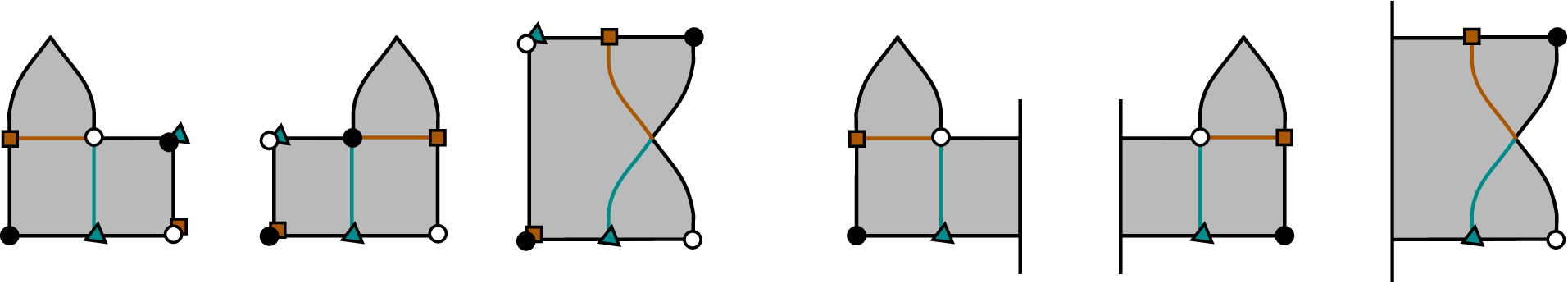} \caption{All possibilities of $p * r$ or $r * p$, where $p$ and $r$ are a triangle and a rectangle that share a common corner, along with the alternate decomposition. In each example, the teal cut gives one decomposition of the domain, and the brown cut gives the canceling decomposition.}
    \label{fig:tri_corner}
  \end{figure}

  We now turn to proving Equation~\ref{eqn:pent_homo}. Fix a domain that can be written as a juxtaposition $p * r$ (resp.\ $r * p$), where $p$ is a pentagon and $r$ is a rectangle. There are four cases this time: The two polygons may be disjoint, their interiors may overlap, they may share exactly one common corner, or they may share exactly one edge and two corners. The first three cases are similar to the cases with triangles. Note, however, that there is an additional possibility in the case where $p$ and $r$ share exactly one common corner: the alternative decomposition is not necessarily $r' * p'$ (resp.\ $p' * r'$), but is sometimes $p' * r'$ (resp.\ $r' * p'$); see the fourth figure from the left in the top row of Figure~\ref{fig:pent_corner}. All possibilities for the second case and the third case are listed in Figures~\ref{fig:pent_overlap} and \ref{fig:pent_corner} respectively.

  \begin{figure}
    \centering
    \includegraphics[scale = .41]{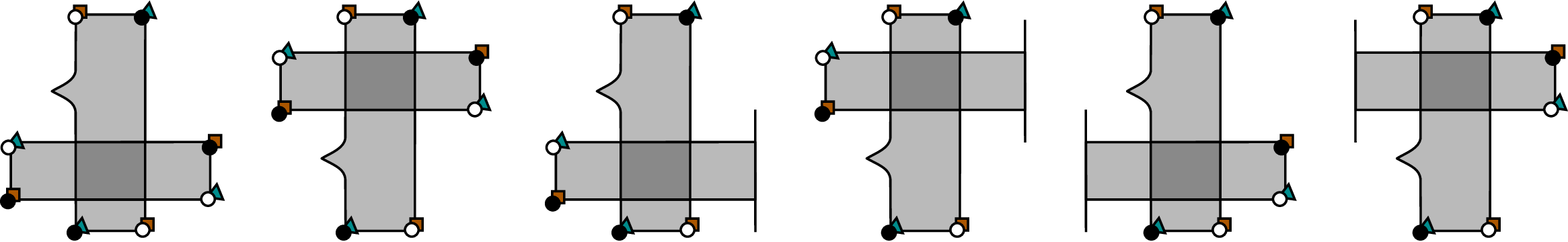} \caption{All possibilities of $p * r$ and $r * p$, where $p$ and $r$ are a pentagon and a rectangle that have overlapping interiors, along with the alternate decomposition.}
    \label{fig:pent_overlap}
  \end{figure}

  \begin{figure}
    \centering
    \includegraphics[scale = .37]{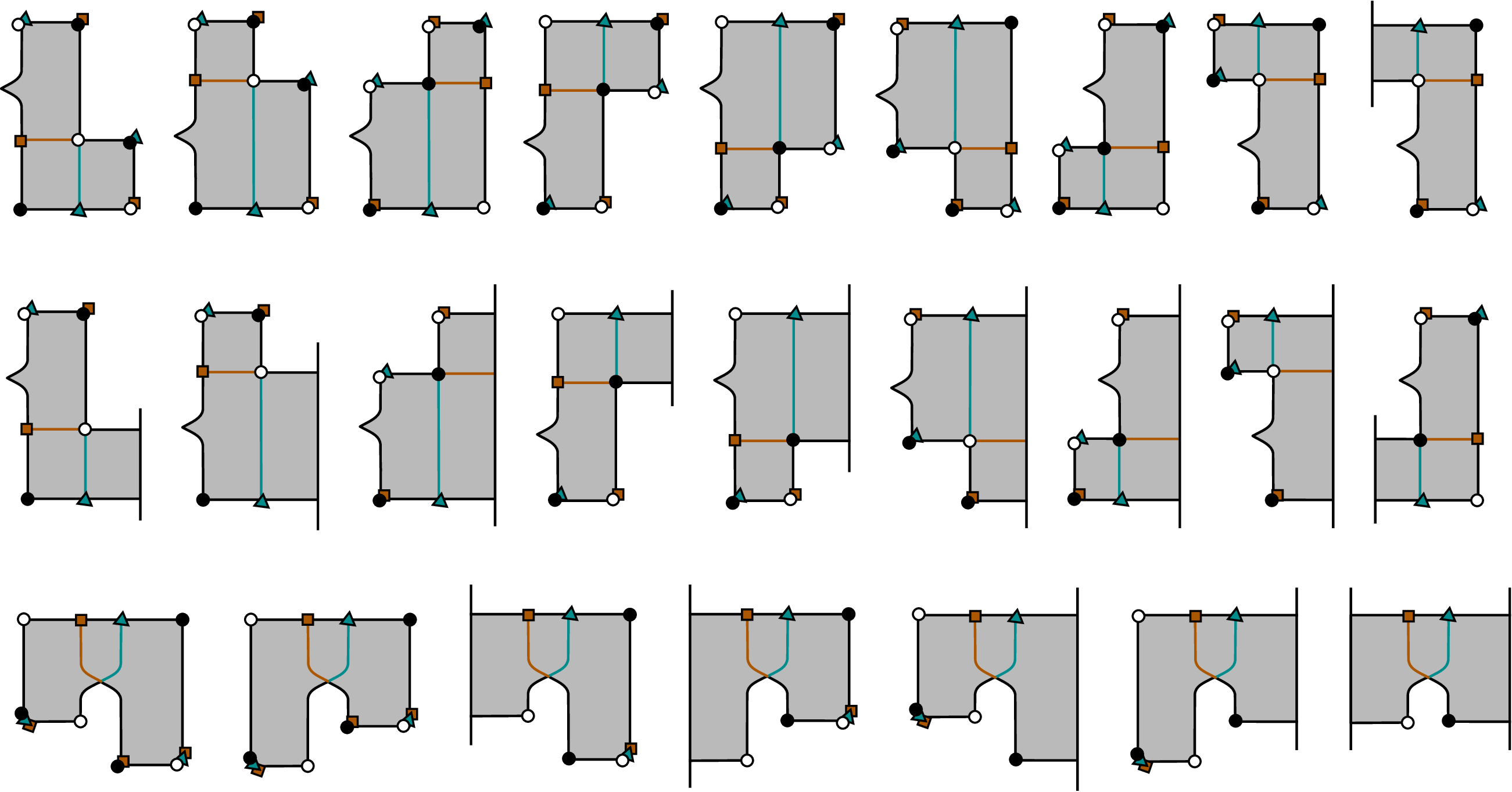} \caption{All possibilities of $p * r$ and $r * p$, where $p$ and $r$ are a pentagon and a rectangle that share exactly one common corner, along with the alternate decomposition.}
    \label{fig:pent_corner}
  \end{figure}

  The last case, where the two polygons share exactly one edge and two corners, can only occur if both $p$ and $r$ are right-bordered (or left-bordered). Let us illustrate this case more closely by the following example. 

  \begin{figure}[h]
    \centering
    \labellist
    \pinlabel $r$ at 42 150
    \pinlabel $p$ at 42 80
    \pinlabel $p'$ at 470 108
    \pinlabel $\xrightarrow{d_{\an}}$ at 595 100
    \pinlabel $\cdot$ at 150 100
    \pinlabel $=$ at 238 100
    \pinlabel \small{$\algr{r}$} at 112 -15
    \pinlabel \small{$\algr{p}$} at 185 -15
    \pinlabel \small{$\algr{r}\cdot\algr{p}$} at 285 -15
    \pinlabel \small{$\algr{p'}$} at 539 -15
    \pinlabel \small{$d_{\an}\algr{p'}$} at 657 -15
    \pinlabel $\alpha_{j_1}^R$ at -10 15
    \pinlabel $\alpha_{j_2}^R$ at -10 125
    \pinlabel $\alpha_{j_3}^R$ at -10 165
    \pinlabel $\alpha_{j_1}^R$ at 418 15
    \pinlabel $\alpha_{j_2}^R$ at 418 125
    \pinlabel $\alpha_{j_3}^R$ at 418 165
    \endlabellist
    \includegraphics[scale = .5]{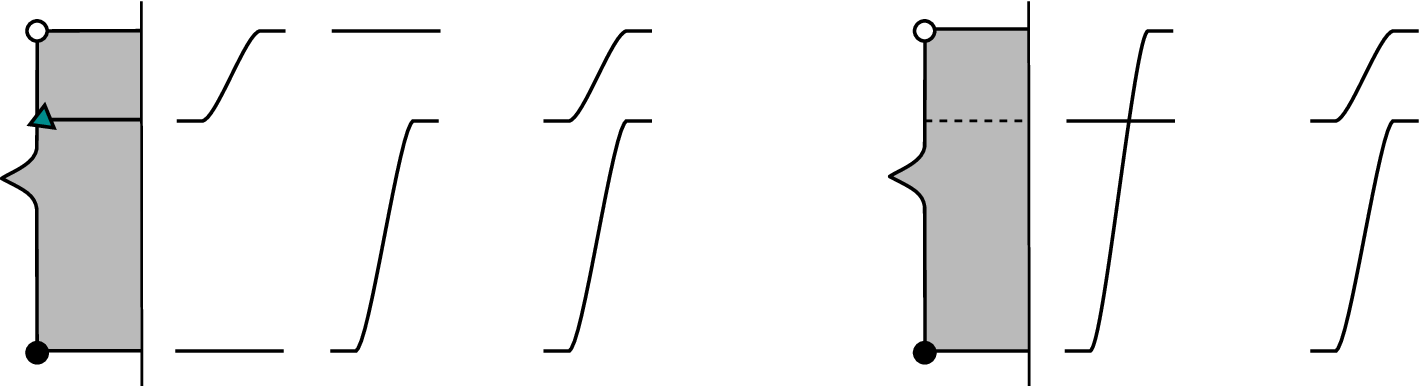} \vskip .3 cm
    \caption{Left: The domain $p * r$. The two algebra elements $\algr{r}$ and $\algr{p}$ in $\aun$ multiply to a non-zero algebra element. Right: The same domain, considered as a single pentagon $p'$. The differential of the algebra element $\algr{p'}$ is the same algebra element as $\algr{r} \cdot \algr{p}$. The generator $\x$ is represented by a black dot, $\y$ by a teal triangle, and $\z$ by a white dot.}
    \label{fig:pent_d}
  \end{figure}

  Consider the left of Figure~\ref{fig:pent_d}. Here, the generators are $\x \in \SS (\HD_k)$ and $\y, \z \in \SS (\HD_{k+1})$, and the relevant components of $\x, \y, \z$ lie on $\alpha_{j_1}^R, \alpha_{j_2}^R, \alpha_{j_3}^R$ respectively. There is a pentagon $p$ from $\x$ to $\y$, and a rectangle $r$ from $\y$ to $\z$. This gives us a term in $\bdy \PP_k (\x)$ that is illustrated in Figure~\ref{fig:pent_d_tree_1}. At first glance, the domain $p * r$ seems to contribute no other terms in $\bdy \PP_k (\x)$; however, upon closer inspection, we notice that the domain $p' = p * r$ is itself a pentagon! This means that $\ideml{\x} \otimes \z \otimes \algr{p'}$ is a term in $\PP_k (\x)$. Furthermore, note that since there is a pentagon $p$ from $\x$ to $\y$, $\alpha_{j_2}^R$ must be unoccupied by $\x$, and so we must have $j_2 \in \unocr{\x}$. For simplicity, let us assume that there are no other $j$ with $j_1 < j < j_3$ such that $j \in \unocr{\x}$. Then the algebra element $\algr{p'}$ is exactly as shown in the right hand side of Figure~\ref{fig:pent_d}, and we can take its differential to obtain $d_{\aun} \algr{p'}$. This gives us another term in $\bdy \PP_k (\x)$, as in Figure~\ref{fig:pent_d_tree_2}. Note that $d_{\aun} \algr{p'} = \algr{r} \cdot \algr{p}$, and so we see that the two terms above cancel out in this simple case.

  \begin{figure}[h]
    \centering
    \begin{subfigure}[b]{0.45\textwidth}
      \centering
      \[
        \mathcenter{
          \begin{tikzpicture}
            \node at (6,0) (from) {$\x$};
            \node at (6,-1) (action1) {$\PP_k$};
            \node at (6,-2) (first) {$\y$};
            \node at (6,-3) (action2) {$\delta_{k+1}^1$};
            \node at (6,-6) (to) {$\z$};
            \node at (4,-2) (lfirst) {$\ideml{\x}$};
            \node at (8,-2) (rfirst) {$\algr{p}$};
            \node at (5,-4) (lsecond) {$\ideml{\x}$};
            \node at (7,-4) (rsecond) {$\algr{r}$};
            \node at (4,-5) (lprod) {$\mu_{\an}$};
            \node at (8,-5) (rprod) {$\mu_{\an}$};
            \node at (4,-6) (lto) {$\ideml{\x}$};
            \node at (8,-6) (rto) {$\algr{r}\cdot\algr{p}$};
            \draw[modarrow] (from) to (action1);
            \draw[modarrow] (action1) to (first);
            \draw[modarrow] (first) to (action2);
            \draw[modarrow] (action2) to (to);
            \draw[als] (action1) to (lfirst);
            \draw[ars] (action1) to (rfirst);
            \draw[als] (action2) to (lsecond);
            \draw[ars] (action2) to (rsecond);
            \draw[algarrow] (lfirst) to (lprod);
            \draw[algarrow] (rfirst) to (rprod);
            \draw[als] (lsecond) to (lprod);
            \draw[ars] (rsecond) to (rprod);
            \draw[algarrow] (lprod) to (lto);
            \draw[algarrow] (rprod) to (rto);
          \end{tikzpicture}
        }.
      \]
      \subcaption{The term in $\bdy \PP_k (\x)$ arising from $p * r$.}
      \label{fig:pent_d_tree_1}
    \end{subfigure}
    \begin{subfigure}[b]{0.45\textwidth}
      \centering
      \[
        \mathcenter{
          \begin{tikzpicture}
            \node at (6,0) (from) {$\x$};
            \node at (6,-1) (action1) {$\PP_k$};
            \node at (6,-4) (to) {$\z$};
            \node at (8,-2) (rfirst) {$\algr{p'}$};
            \node at (8,-3) (rprod) {$d_{\an}$};
            \node at (4,-4) (lto) {$\ideml{\x}$};
            \node at (8,-4) (rto) {$d_{\aun} \algr{p'}$};
            \draw[modarrow] (from) to (action1);
            \draw[modarrow] (action1) to (to);
            \draw[al] (action1) to (lto);
            \draw[ars] (action1) to (rfirst);
            \draw[algarrow] (rfirst) to (rprod);
            \draw[algarrow] (rprod) to (rto);
          \end{tikzpicture}
        }.
      \]
      \subcaption{The term in $\bdy \PP_k (\x)$ arising from $p'$.}
      \label{fig:pent_d_tree_2}
    \end{subfigure}
    \caption{Two terms in $\bdy \PP_k (\x)$ that cancel each other.}
    \label{fig:pent_d_tree}
  \end{figure}

  In general, there may be other $j$'s with $j_1 < j < j_3$ such that $j \in \unocr{\x}$. If there are $m$ such $j$'s, then $d_{\aun} \algr{p'}$ is a sum of $m$ terms, one for each $j$. See Figure~\ref{fig:pent_d_mult} for an illustration. This means that Figure~\ref{fig:pent_d_tree_2} now represents $m$ terms in $\bdy \PP_k (\x)$. Each of these terms corresponds to a decomposition of the domain $p'$ into some $p * r$ or $r * p$, as in Figure~\ref{fig:pent_d}, and consequently cancels with a term as in Figure~\ref{fig:pent_d_tree_1}. The situation when $p'$ is left-bordered is completely analogous.

  \begin{figure}[h]
    \centering
    \labellist
    \pinlabel $p'$ at 42 295
    \pinlabel $\xrightarrow{d_{\an}}$ at 118 97
    \pinlabel $+$ at 270 88
    \pinlabel $+$ at 391 88
    \endlabellist
    \includegraphics[scale = .48]{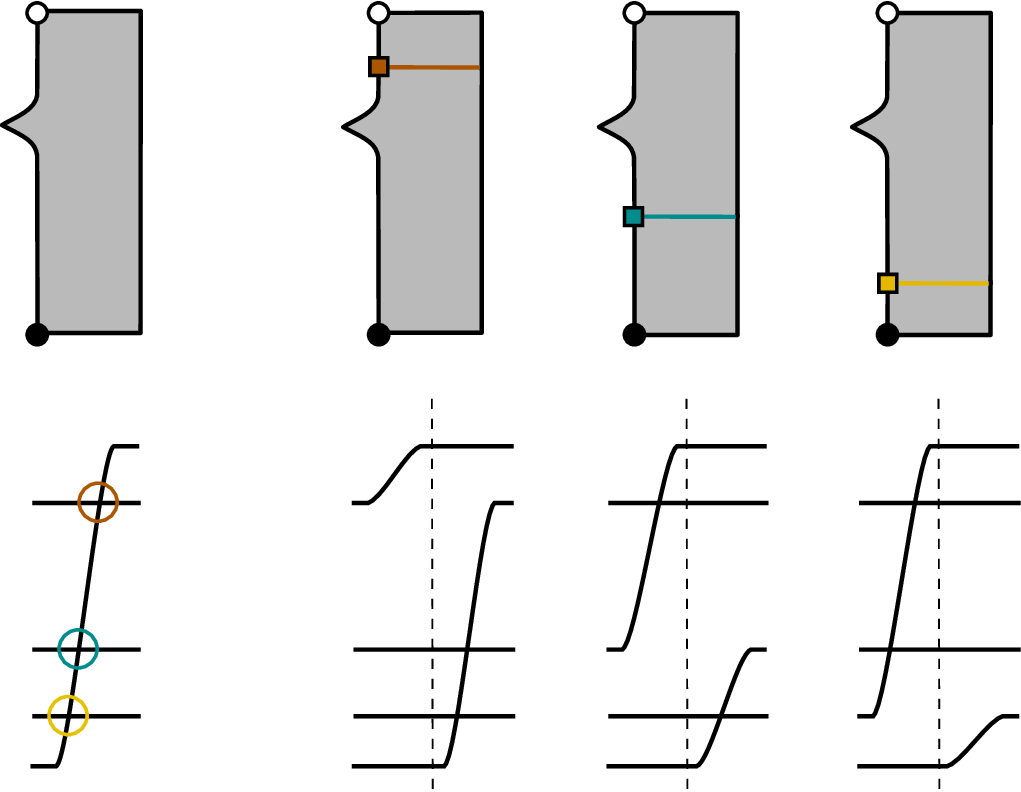} \caption{Left: A pentagon $p'$  from the black dot $\x$ to the white dot  $\z$. The algebra element $\algr{p'}$ is displayed below. Right: Decompositions of $p'$ into $p * r$ or $r * p$, with the corresponding algebra elements $\algr{p} \cdot \algr{r}$ or $\algr{r} \cdot \algr{p}$. Note that $d_{\aun} \algr{p'}$ is equal to the sum of these elements.}
    \label{fig:pent_d_mult}
  \end{figure}

  We have thus proven Equations~\ref{eqn:tri_homo} and \ref{eqn:pent_homo}. Since Equation~\ref{eqn:pent_homo} immediately shows that $\bdy \PP_k = 0$, we see that $\PP_k$ is a type~$\DD$ homomorphism. The left-hand side of \ref{eqn:tri_homo} differs from $\bdy \TT_k$ by two terms involving $d_{\aun}$. However, since all triangles $p$ are interior, $\algl{p}$ and $\algr{p}$ are both idempotents, and so $d_{\aun} \algl{p}$ and $d_{\aun} \algr{p}$ are necessarily zero. Therefore, we see that $\bdy \PP_k = 0$, and $\TT_k$ is also a type~$\DD$ homomorphism.
\end{proof}

In the proofs of the following lemmas, in which we prove that our maps satisfy the remaining conditions of Lemma~\ref{lem:hom_alg}, we will again be considering domains of the form $p * p'$, where $p$ and $p'$ are different polygons, showing that such juxtapositions cancel each other. Often, as in the previous proof, $p$ and $p'$ may be disjoint, or they may have overlapping interiors. In these cases, the domain can also be decomposed as $p' * p$, and so does not contribute a term to the morphisms.

The case where $p$ and $p'$ share an edge and two corners also arises frequently. In this case, $p$ and $p'$ are both bordered polygons. We can always handle such domains as in the proof of the previous lemma, canceling terms as in Figures~\ref{fig:pent_d_tree} and \ref{fig:pent_d_mult}.

Therefore, from now on, we shall omit all domains described in the previous two paragraphs, and only focus on the case where $p$ and $p'$ share exactly one common corner.

Most of the time, as before, a domain that can be written as a juxtaposition $p * p'$ can always be written as exactly one alternative juxtaposition $p'' * p'''$, and the two terms cancel out. However, a new situation arises in the proofs of the following lemmas that was not present in the proof of Lemma~\ref{lem:cond1}. There are now some \emph{special cases}, in which a juxtaposition resulting in one domain may cancel a juxtaposition resulting in a different domain! We shall discuss all special cases and provide figures in each lemma. We shall also provide a table in each lemma that shows all cancelations, including the special cases. For the expert reader, the existence of such special cases is not uncommon in multipointed Heegaard Floer theory.

In each upcoming proof, we will enumerate all relevant juxtapositions of polygons. To facilitate the enumeration, we now set up some notation to help us categorize polygons.

\begin{figure}[h]
  \centering
  \includegraphics[scale=0.44]{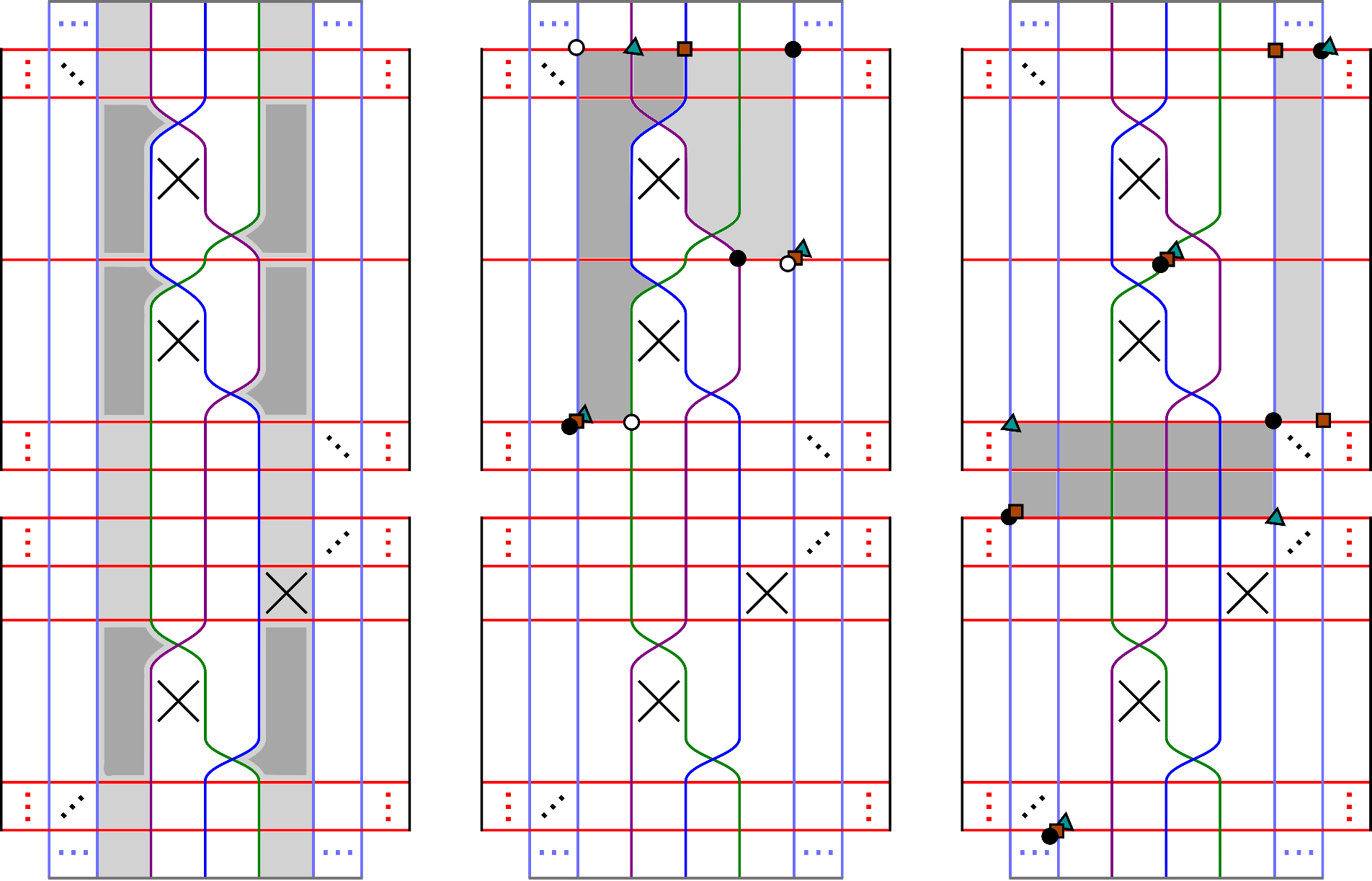}
  \caption{Left: The annuli $\lan$ and $\ran$ are lightly shaded. The $\lan$-height (resp.\ $\ran$-height) of a domain $p$ is the sum of the multiplicities of the dark pentagonal regions on the left (resp.\ right) in the support of $p$. Center: An $\lran$-domain of $\lan$-height $2$ and $\ran$-height $1$. It admits two decompositions as $p * p' \in \PP_\infty \circ \PP_1$ with path $R_1 \htd{0}{1} C_\infty \htd{2}{0} L_0$ (as reflected in the shading), and as $r * p'' \in \HH_1 \circ \delta_1^1$ with path $R_1 \htd{0}{1} L_1 \htd{2}{0} L_0$. Right: The light domain is a rectangle of type~$\nan$, and the dark one is a rectangle of type~$\lran$. Both rectangles have path $C_0 \htdnm C_0$, and their $\lan$- and $\ran$-heights are both $0$.}
  \label{fig:annuli}
\end{figure}

We define $\lan$ (resp.\ $\ran$) to be the unique annulus in $\Sigma \setminus \betas$ whose boundary contains the points $u_k$ (resp.\ $v_k$). See the left of Figure~\ref{fig:annuli}. Observe that $\lan \setminus (\lan \cap \alphas)$ has $2n + 2$ connected components, three of which are pentagonal regions. Given a domain $p$, we define the \emph{$\lan$-height} of $p$ to be the sum of the multiplicities of each of these pentagonal regions in the support of $p$.  Likewise, $\ran \setminus (\ran \cap \alphas)$ has three connected components that are pentagonal regions, and we define the \emph{$\ran$-height} of a domain $p$ to be the sum of the multiplicities of each of these pentagons in the support of $p$.

\begin{figure}[h]
  \centering
  \includegraphics[scale=0.44]{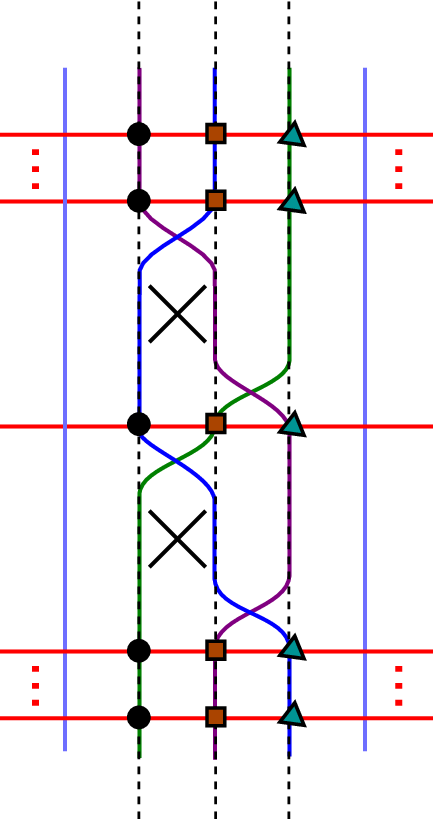}
  \caption{Suppose we have a generator $\x \in \SS (\HD_k)$. If $\x$ has a component on one of the black dots, then $\x \in L_k$. The brown squares correspond to $C_k$, and the teal triangles to $R_k$. Note that $\x$ has exactly one component on $\beta_{i,\infty} \cup \beta_{i,0} \cup \beta_{i,1}$, so these three cases are mutually exclusive.}
  \label{fig:lcr}
\end{figure}

For a fixed $k\in \set{\infty, 0, 1}$, the module $M_k$ splits as a direct sum of modules $M_k = L_k\oplus C_k\oplus R_k$, where $L_k$ is spanned by those generators in $\SS(\HD_k)$ whose $\beta_{i,k}$-component lies on the boundary of the annulus $\lan$, $R_k$ is spanned by the generators whose $\beta_{i,k}$-component lies on the boundary of the annulus $\ran$, and $C_k$ is spanned by the remaining generators. Visually, the points in $\alphas \cap (\beta_{i,\infty} \cup \beta_{i,0} \cup \beta_{i,1})$ lie on three distinct vertical lines, as seen in Figure~\ref{fig:lcr}; then the modules $L_k$, $C_k$, and $R_k$ are spanned by the generators that intersect the left, central, and right line, respectively.

Furthermore, we say that a domain $p$ is of
\begin{enumerate}
  \item \emph{type~$\nan$} if $p \cap (\lan \cup \ran) = \emptyset$;
  \item \emph{type~$\lan$} if $p \cap \lan \neq \emptyset$ and $p \cap \ran = \emptyset$;
  \item \emph{type~$\ran$} if $p \cap \lan = \emptyset$ and $p \cap \ran \neq \emptyset$; and \item \emph{type~$\lran$} if $p \cap \lan \neq \emptyset$ and $p \cap \ran \neq \emptyset$.
\end{enumerate}
We will also use the words \emph{$\lan$-domains}, \emph{$\ran$-domains}, etc.\ to describe domains as appropriate.

If $p$ is a polygon from $\x \in M_k$ to $\y \in M_{k+*}$, we define its \emph{path} to be the following information:
\begin{enumerate}
  \item whether $\x$ belongs to $L_k$, $C_k$, or $R_k$;
  \item whether $\y$ belongs to $L_{k+*}$, $C_{k+*}$, or $R_{k+*}$; and
  \item the $\lan$- and $\ran$-heights of $p$.
\end{enumerate}
For example, in the central figure of Figure~\ref{fig:annuli}, the light pentagon $p$ from $\x \in R_1$ (the black dots) to $\y \in C_\infty$ (the brown squares) has $\lan$-height $0$ and $\ran$-height $1$. We use the notation $R_1 \htd{0}{1} C_\infty$ to denote the path of $p$. Likewise, the dark pentagon $p'$ from $\y \in C_\infty$ to $\z \in L_0$ (the white dots) has $\lan$-height $2$ and $\ran$-height $0$, so its path is $C_\infty \htd{2}{0} L_0$.

We can extend this to composite domains $p * p'$, where $p$ and $p'$ are both polygons. In the example above, the path of the composite domain $p * p'$ from $\x$ to $\z$ is $R_1 \htd{0}{1} C_\infty \htd{2}{0} L_0$. The $\lan$-height of the domain is $2$, and the $\ran$-height is $1$. Note that $\lan$- and $\ran$-heights are additive under composition of domains.

It is also useful to note whether the support of a polygon $p$ has any corners on $\beta_{i,\infty} \cup \beta_{i,0} \cup \beta_{i,1}$ or not. The latter case is possible only if $p$ is a rectangle. See the right figure of Figure~\ref{fig:annuli} for two examples. The support of the light rectangle $r$ from $\x \in C_0$ to $\y \in C_0$ does not have any corners on $\beta_{i,\infty} \cup \beta_{i,0} \cup \beta_{i,1}$.  Note that this implies that the components of $\x$ and $\y$ on $\beta_{i,\infty} \cup \beta_{i,0} \cup \beta_{i,1}$ are exactly the same, i.e. the component of $\x$ on $\beta_{i,\infty} \cup \beta_{i,0} \cup \beta_{i,1}$ is not moved by $p$. In this case, instead of writing $C_0 \htd{0}{0} C_0$ for the path of $r$, we will write $C_0 \htdnm C_0$ (where $\NM$ stands for ``not moving''). No information is lost by doing so, since the $\lan$- and $\ran$-heights of such rectangles are always $0$.

Figure~\ref{fig:annuli} also highlights the following fact. It may seem at first glance that the type of a domain $p$ is a redundant piece of information because it seems to be available from the $\lan$- and $\ran$-heights of $p$; however, this is not true. In the right figure, the light rectangle is of type~$\nan$, while the dark rectangle is of type~$\lran$, even though both rectangles have $\lan$ and $\ran$--heights $0$, because they both have the label $\NM$. In fact, rectangles with the label $\NM$ are the only polygons that can have different types, and they are always $\nan$-domains or $\lran$-domains. This little piece of information eases our enumerations greatly.

We can now give a complete list of all possible paths and types for the defined polygons, simply by inspecting Figure~\ref{fig:hd_cut}. For convenience, we suppress all subscripts.
\begin{itemize}
  \item Rectangles:
    \begin{itemize}
      \item type~$\lan$: $L \htd{0}{0} L, \enspace C \htd{1}{0} L, \enspace C \htd{0}{0} C, \enspace R \htd{0}{0} R$.
      \item type~$\ran$: $L \htd{0}{0} L, \enspace C \htd{0}{2} L, \enspace C \htd{0}{0} C, \enspace C \htd{0}{1} R, \enspace R \htd{0}{1} L, \enspace R \htd{0}{0} R$.
      \item labelled $\NM$: $L \htdnm L$ (type~$\nan$), $\enspace L \htdnm L$ (type~$\lran$), $\enspace C \htdnm C$ (type~$\nan$), $\enspace C \htdnm C$ (type~$\lran$), $\enspace R \htdnm R$ (type~$\nan$), $\enspace R \htdnm R$ (type~$\lran$).
    \end{itemize}
  \item Triangles:
    \begin{itemize}
      \item type~$\nan$: $C \htd{0}{0} R, \enspace R \htd{0}{0} L$.
    \end{itemize}
  \item Pentagons:
    \begin{itemize}
      \item type~$\lan$: $L \htd{1}{0} L, \enspace C \htd{2}{0} L$.
      \item type~$\ran$: $C \htd{0}{2} C, \enspace C \htd{0}{3} R, \enspace R \htd{0}{1} C, \enspace R \htd{0}{2} R, \enspace R \htd{0}{3} L$.
    \end{itemize}
  \item Quadrilaterals:
    \begin{itemize}
      \item type~$\nan$: $R \htd{0}{0} C$.
    \end{itemize}
  \item Hexagons:
    \begin{itemize}
      \item type~$\lan$: $L \htd{2}{0} L, \enspace C \htd{3}{0} L$.
      \item type~$\ran$: $R \htd{0}{3} C$.
    \end{itemize}
  \item Heptagons:
    \begin{itemize}
      \item type~$\lan$: $L \htd{3}{0} L$.
    \end{itemize}
    \label{list:poly}
\end{itemize}

Using this list, we will obtain a complete list of all possible paths and types for the relevant juxtapositions $p * p'$ in each lemma. In general, given the path and type of a given domain $p * p'$, we can almost always recover the general shape of the support of $p * p'$ on Figure~\ref{fig:hd_cut}, which will help us determine the canceling juxtaposition. The only ambiguity arises when $p * p'$ is of type~$\lan$ or type~$\ran$ (and not type~$\lran$), and $p$ and $p'$ are both rectangle-like polygons; we eliminate this ambiguity in the following paragraph.

Fix a domain $p * p'$ of type~$\lan$ or type~$\ran$ from $\x$ to $\z$, where $p$ and $p'$ are rectangle-like polygons that share exactly one common corner. Suppose that the support of $p * p'$ is not an annulus. This implies that the support of $p * p'$ has exactly one distinguished corner whose internal angle is reflex (i.e.\ larger than $\pi$). We say that $p * p'$ has \emph{shape~A} if $\x$ does not contain the distinguished corner as one of its components, and \emph{shape~B} otherwise. In the former case, $\z$ must contain the distinguished corner as a component.  If instead the support of $p * p'$ is an annulus, we say that $p * p'$ has \emph{shape~C}. Since there is a marker on the annulus $\ran$, this is possible only if $p * p'$ is of type~$\lan$, with $\lan$-height $3$; also, neither $p$ nor $p'$ can have label $\NM$. See Figure~\ref{fig:abc} for examples of each of the three shapes.

We remark here the notion of shapes is unnecessary for $\lran$-domains $p * p'$.

\begin{figure}
  \centering
  \includegraphics[scale=0.46]{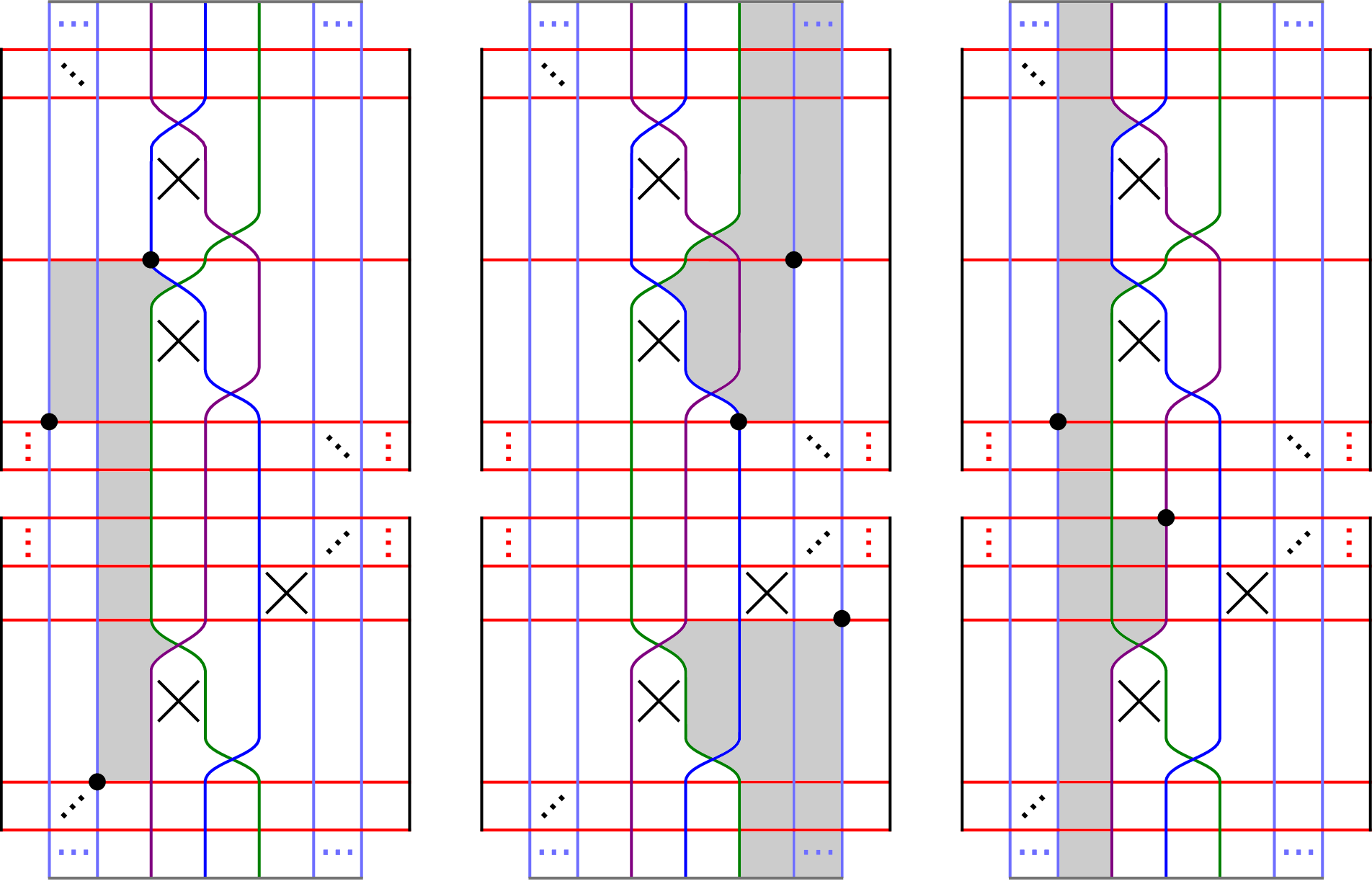}
  \caption{Left: A domain of shape~A. The black dots represent the relevant components of the initial generator $\x$. Center: A domain of shape~B. Right: A domain of shape~C. Note that, in each case, the domain is either of type~$\lan$ or of type~$\ran$, and is a juxtaposition of two rectangle-like polygons.}
  \label{fig:abc}
\end{figure}

\begin{lem}
  \label{lem:cond2}
  The composition $f_{k+1} \circ f_k$ is homotopic to zero via homotopy $\phi_k$. In other words, the morphisms $f_k$ and $\phi_k$ satisfy Condition~(2) of Lemma~\ref{lem:hom_alg}.
\end{lem}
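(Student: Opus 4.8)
The plan is to follow the blueprint of Lemma~\ref{lem:cond1} and of the corresponding arguments in \cite{wong, OSSbook}. Since each $f_k$ is a type~$\DD$ homomorphism by Lemma~\ref{lem:cond1}, the composite $f_{k+1}\circ f_k$ is automatically a homomorphism, so the content of the lemma is the single identity $f_{k+1}\circ f_k + \bdy\phi_k = 0$, which I would verify by interpreting every term as a weighted count of juxtapositions of polygons. First I would expand
\[
  f_{k+1}\circ f_k = \TT_{k+1}\circ\TT_k + \TT_{k+1}\circ\PP_k + \PP_{k+1}\circ\TT_k + \PP_{k+1}\circ\PP_k,
\]
which counts all juxtapositions $p*p'$ with $p\in\eTri_k\cup\ePent_k$ and $p'\in\eTri_{k+1}\cup\ePent_{k+1}$, and
\[
  \bdy\phi_k = \delta_{k+2}^1\circ(\Q_k+\HH_k) + (\Q_k+\HH_k)\circ\delta_k^1 + d(\Q_k+\HH_k),
\]
which counts all juxtapositions of a quadrilateral or hexagon with an empty rectangle (in either order), together with the terms obtained by applying $d_{\aun}$ to the algebra label of a hexagon; quadrilaterals are interior, so their labels are idempotents and contribute nothing to $d\phi_k$, and a hexagon contributes only when it is bordered. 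The goal is then to match up all these contributions so that they cancel in pairs over $\F{2}$.

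Next, exactly as in the proof of Lemma~\ref{lem:cond1}, I would dispose of the ``generic'' juxtapositions: those whose two polygons are disjoint or have overlapping interiors admit a second decomposition obtained by re-cutting with the same algebra output (as in Figures~\ref{fig:tri_overlap} and \ref{fig:pent_overlap}), and those in which two bordered polygons share one entire edge and two corners cancel against a ``$d_{\aun}$-of-the-label'' term of the composite bordered polygon (as in Figures~\ref{fig:pent_d_tree} and \ref{fig:pent_d_mult}); these latter account for all of the $d\HH_k$ terms. What remains, and what I would spend the bulk of the proof on, is the enumeration of juxtapositions $p*p'$ in which $p$ and $p'$ share a single corner.

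For this enumeration I would use the list of admissible paths and types on page~\pageref{list:poly}, the splitting $M_j = L_j\oplus C_j\oplus R_j$ of Figure~\ref{fig:lcr}, the additivity of $\lan$- and $\ran$-heights, and --- when a composite $p*p'$ is of type~$\lan$ or type~$\ran$ with both pieces rectangle-like --- the shape A/B/C trichotomy of Figure~\ref{fig:abc}. With this data one recovers the support of $p*p'$ on Figure~\ref{fig:hd_cut} up to the evident symmetry, hence its unique partner decomposition; in most cases the partner has the same underlying domain, and the two terms cancel, the equality of the algebra labels being checked as in Lemma~\ref{lem:cond1}, using that re-cutting does not change the intersection of the support with $\bdy^L\Sigma$ or $\bdy^R\Sigma$. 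The new feature, already flagged before the statement, is the \emph{special cases}, where a juxtaposition yielding a domain $D$ cancels one yielding a \emph{different} domain $D'$: these occur exactly when the shared corner lies on $\beta_{i,\infty}\cup\beta_{i,0}\cup\beta_{i,1}$, so that re-cutting forces a different intermediate generator on one of those three curves. Typical examples are a term of $\PP_{k+1}\circ\PP_k$ cancelling a term of $\HH_{k+1}\circ\delta_{k+1}^1$ or $\delta_{k+2}^1\circ\HH_k$ (as in the central example of Figure~\ref{fig:annuli}), and a term of $\PP_{k+1}\circ\TT_k$ or $\TT_{k+1}\circ\PP_k$ cancelling a term of $\Q_k\circ\delta_k^1$ or $\delta_{k+2}^1\circ\Q_k$. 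I would present each special case with a figure and then collect all cancellations, generic and special, in a table from which the desired identity can be read off directly.

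The main obstacle is not conceptual but organizational: one must be certain that the list of polygon paths and types is exhaustive for the diagram of Figure~\ref{fig:hd_cut}, that every corner-sharing juxtaposition has been assigned exactly one partner, that the shape A/B/C distinction resolves all remaining ambiguity for type~$\lan$ and type~$\ran$ composites, and that in every bordered case the two juxtapositions output the same element of $\aun$. The $\lan$- and $\ran$-height bookkeeping together with the $L/C/R$ splitting are precisely the devices that keep this finite and checkable, but the volume of cases --- and in particular the special cases mixing pentagon--pentagon products with hexagon--rectangle products and triangle--pentagon products with quadrilateral--rectangle products --- is where all the care is required.
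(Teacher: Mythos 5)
Your proposal follows essentially the same route as the paper: expand $f_{k+1}\circ f_k + \partial\phi_k$ as a count of polygon juxtapositions, dispose of the generic (disjoint, overlapping, shared-edge) cases exactly as in Lemma~\ref{lem:cond1}, and then tabulate all corner-sharing juxtapositions by path, type, and shape on Figure~\ref{fig:hd_cut}, pairing each term with a canceling partner and singling out the special cases where the partner's domain differs. The only minor discrepancy is in which cancellations actually turn out to be the special ones---in the paper these are the shape-C (annular, $\lan$-height $3$) $\TT\circ\TT$ terms canceling against $\PP\circ\PP$, $\delta^1\circ\HH$, or $\HH\circ\delta^1$ depending on a component of the initial generator, the $\PP\circ\TT$ vs.\ $\TT\circ\PP$ terms, and the $\TT\circ\PP$ vs.\ $\delta^1\circ\Q$ terms, rather than the examples you cite---but your recipe would surface these once the table is filled in.
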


\begin{proof}  

  The strategy of the proof is to enumerate all possible paths, types, and shapes for a domain appearing in $f_{k+1} \circ f_k + \delta_{k+2}^1 \circ \phi_k + \phi_k \circ \delta_k^1$, and find a cancelation for each case. We do so in Table~\ref{tab:cond2}, displayed at the end of this proof. The domains $p * p'$ in Table~\ref{tab:cond2} are ordered by
  \begin{enumerate}
    \item the polygons that $p$ and $p'$ belong to (Column~2);
    \item the path of $p$, following the order in the list of paths on p.~\pageref{list:poly} (Column~3);
    \item the path of $p'$ (Column~3); and
    \item the shape of $p * p'$ (Column~5), if applicable.
  \end{enumerate}

  As mentioned before, there are special cases, in which a juxtaposition resulting in one domain cancels a juxtaposition resulting in a different domain. Table~\ref{tab:cond2} includes all special cases. The special cases are further illustrated in Figures~\ref{fig:v1_tt}, \ref{fig:tp_pt}, \ref{fig:tp_pt_2}, \ref{fig:qr_pt}, and \ref{fig:qr_pt_2}, and discussed in more detail below, along with references to the corresponding rows of Table~\ref{tab:cond2}.

  Let us look at Figure~\ref{fig:v1_tt}, for example. If $\x\in \SS(\HD_1)$ occupies $\alpha_t\cap \beta_{i,1}$, for $\alpha_t\in \{\alpha_0^L, \ldots, \alpha_i^L\}\cup\{\alpha_0^R, \ldots, \alpha_{i-1}^R\}$, then $\TT_{\infty}\circ\TT_1(\x)$ cancels out with a term in $\HH_1\circ\delta^1_1(\x)$, $\PP_{\infty}\circ\PP_1(\x)$, or $\delta^1_0\circ \HH_1(\x)$, depending on the position of the $\beta_{i+1}$-component $x_{i+1}$ of $\x$, as seen on Figure~\ref{fig:hd_cut}. If $x_{i+1}$ is below $\alpha_t$ or above $\alpha_i^R$, the canceling term is in $\HH_1\circ\delta^1_1(\x)$; if $x_{i+1}$ is on $\alpha_i^R$, the canceling term is in $\PP_{\infty}\circ\PP_1(\x)$; if $x_{i+1}$ is below $\alpha_i^R$ and above $\alpha_t$, the canceling term is in $\delta^1_0\circ \HH_1(\x)$.  Analogous special cases occur when $\x\in \SS(\HD_0)$ occupies $\alpha_i^R\cap \beta_{i,0}$, and when $\x\in \SS(\HD_{\infty})$ occupies $\alpha_t\cap \beta_{i,\infty}$ for $\alpha_t\in \{\alpha_{i+1}^L, \ldots, \alpha_n^L\}\cup\{\alpha_{i+1}^R, \ldots, \alpha_n^R\}$. See Table~\ref{tab:cond2}, Rows~$14$, $40$, $60$, and $63$.

  \begin{figure}
    \centering
    \includegraphics[scale = .44]{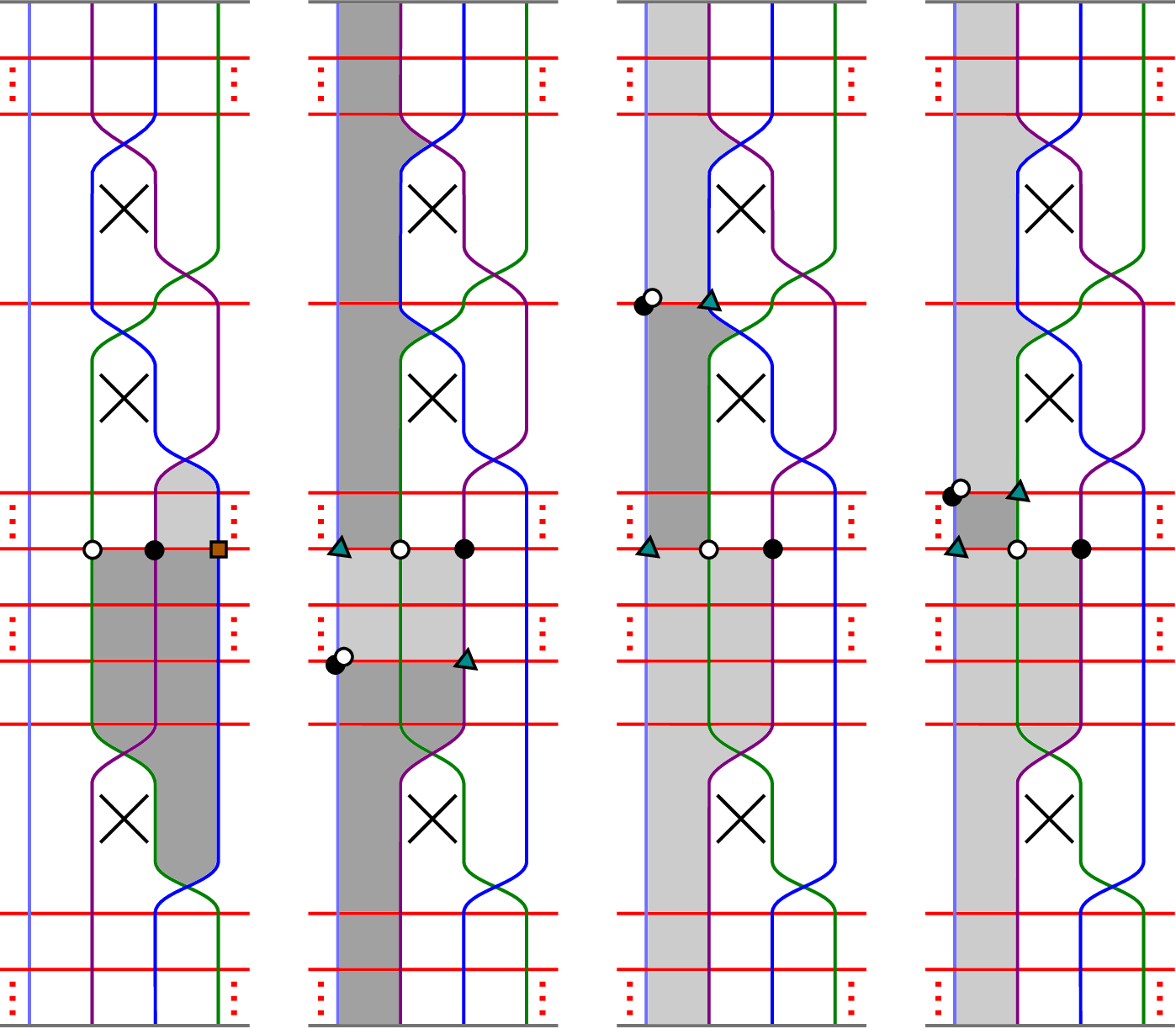}
    \caption{A special case of a term in  $\TT\circ \TT$ canceling out a term in $\HH\circ \delta^1$, $\PP\circ \PP$, or $\delta^1\circ \HH$.}
    \label{fig:v1_tt}
  \end{figure}

  The remaining special cases go as follows. 

  Figure~\ref{fig:tp_pt} illustrates a term in $\PP_{\infty}\circ \TT_1$ canceling out a term in $\TT_{\infty}\circ \PP_1$. Depending on the starting generator, some terms may output non-trivial algebra elements, corresponding to a domain juxtaposition that is bordered.  In an analogous special case, a term in $\PP_1\circ \TT_0$ cancels out a term in $\TT_1\circ \PP_0$. See Table~\ref{tab:cond2}, Rows~$3$ and $6$.

  \begin{figure}
    \centering
    \includegraphics[scale = .44]{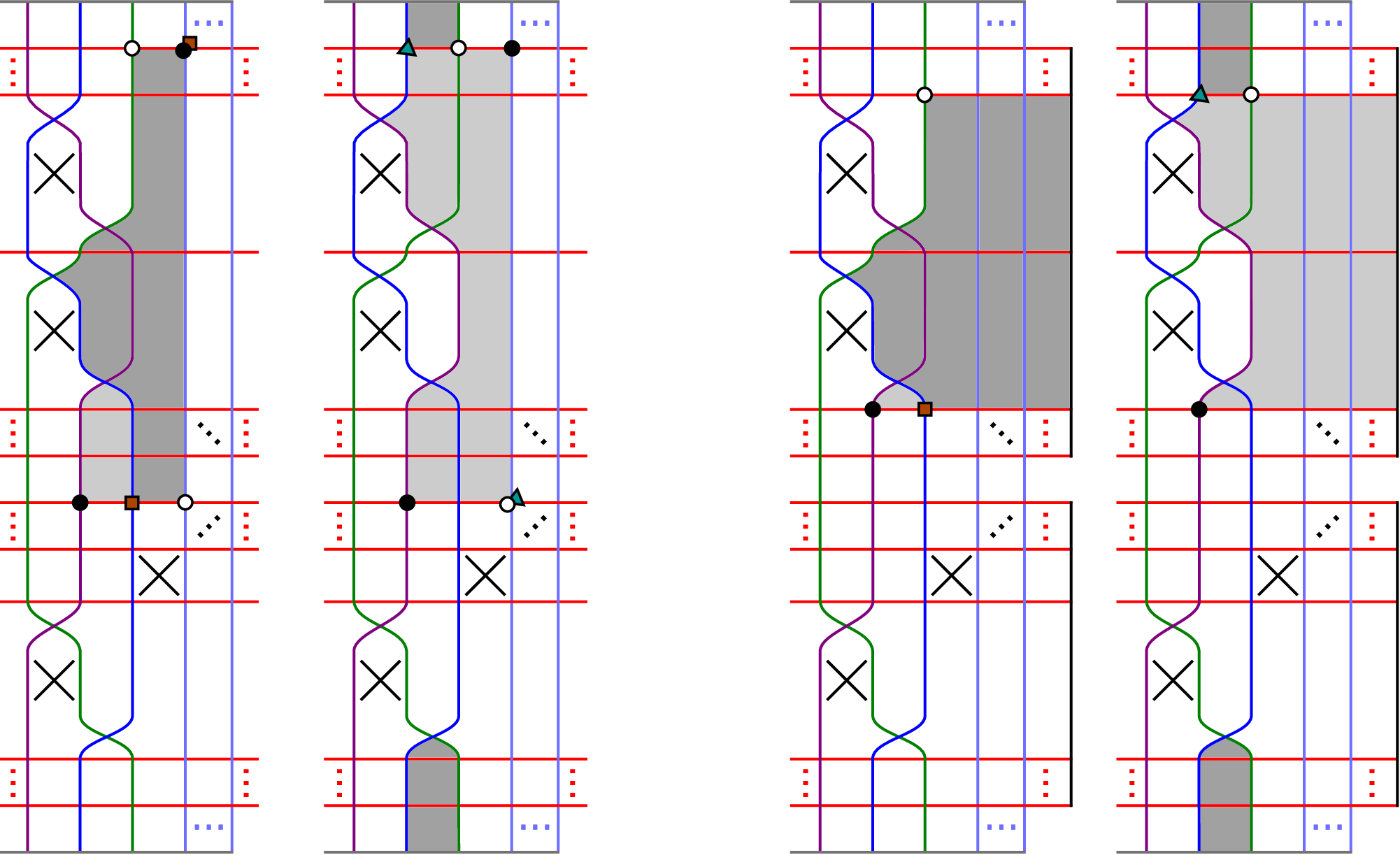}
    \caption{A special case of a term in $\PP\circ \TT$ canceling out a term in $\TT\circ \PP$.}
    \label{fig:tp_pt}
  \end{figure}

  Figure~\ref{fig:tp_pt_2} illustrates another term in $\PP_{\infty}\circ \TT_1$ canceling out a term in $\TT_{\infty}\circ \PP_1$, with paths different from those of the terms in Figure~\ref{fig:tp_pt}. See Table~\ref{tab:cond2}, Rows~$4$ and $7$.

  \begin{figure}
    \centering
    \includegraphics[scale = .44]{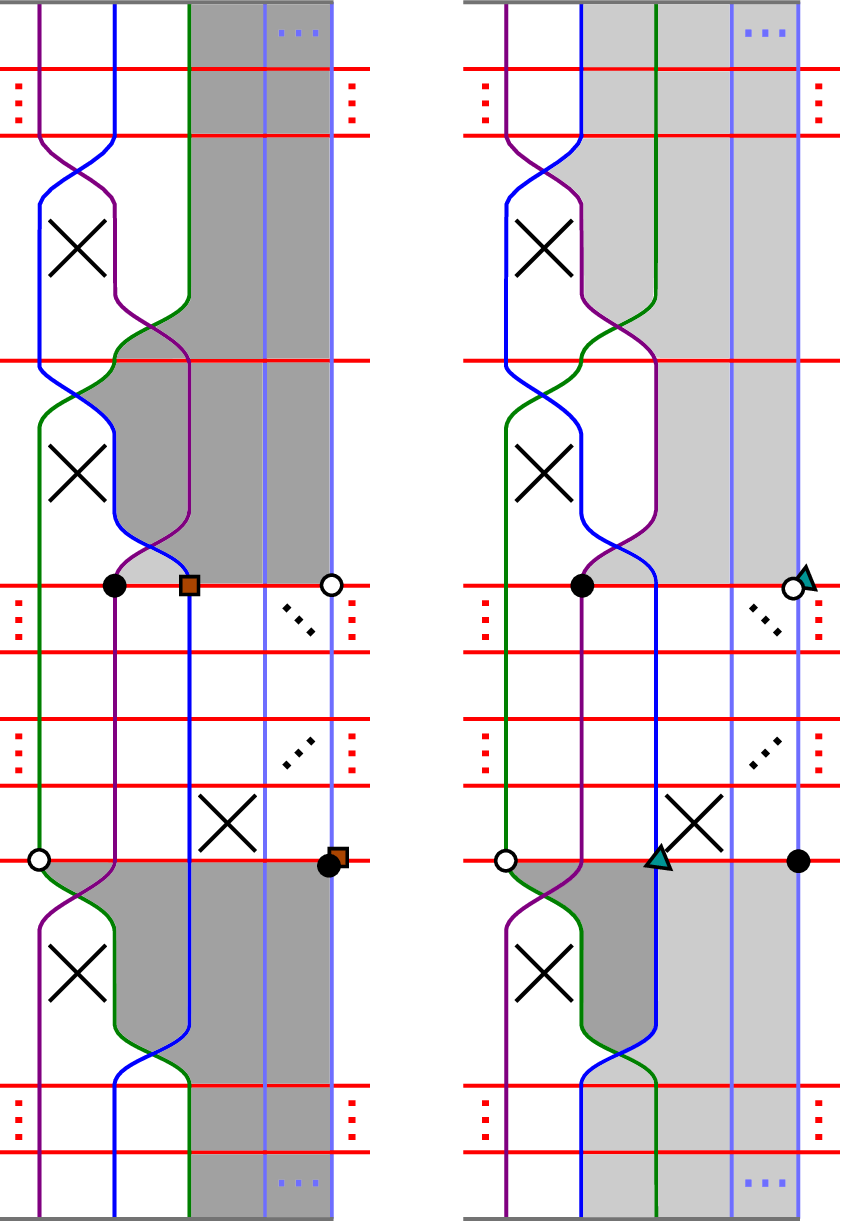}
    \caption{Another special case of a term in $\PP\circ \TT$ canceling out a term in $\TT\circ \PP$.}
    \label{fig:tp_pt_2}
  \end{figure}

  Figure~\ref{fig:qr_pt} illustrates a term in $\TT_0\circ \PP_{\infty}$ canceling out a term in $\delta^1_1\circ \Q_{\infty}$. Depending on the starting generator, some terms may output non-trivial algebra elements, corresponding to a domain juxtaposition that is bordered. See Table~\ref{tab:cond2}, Rows~$8$ and $31$.

  \begin{figure}
    \centering
    \includegraphics[scale = .44]{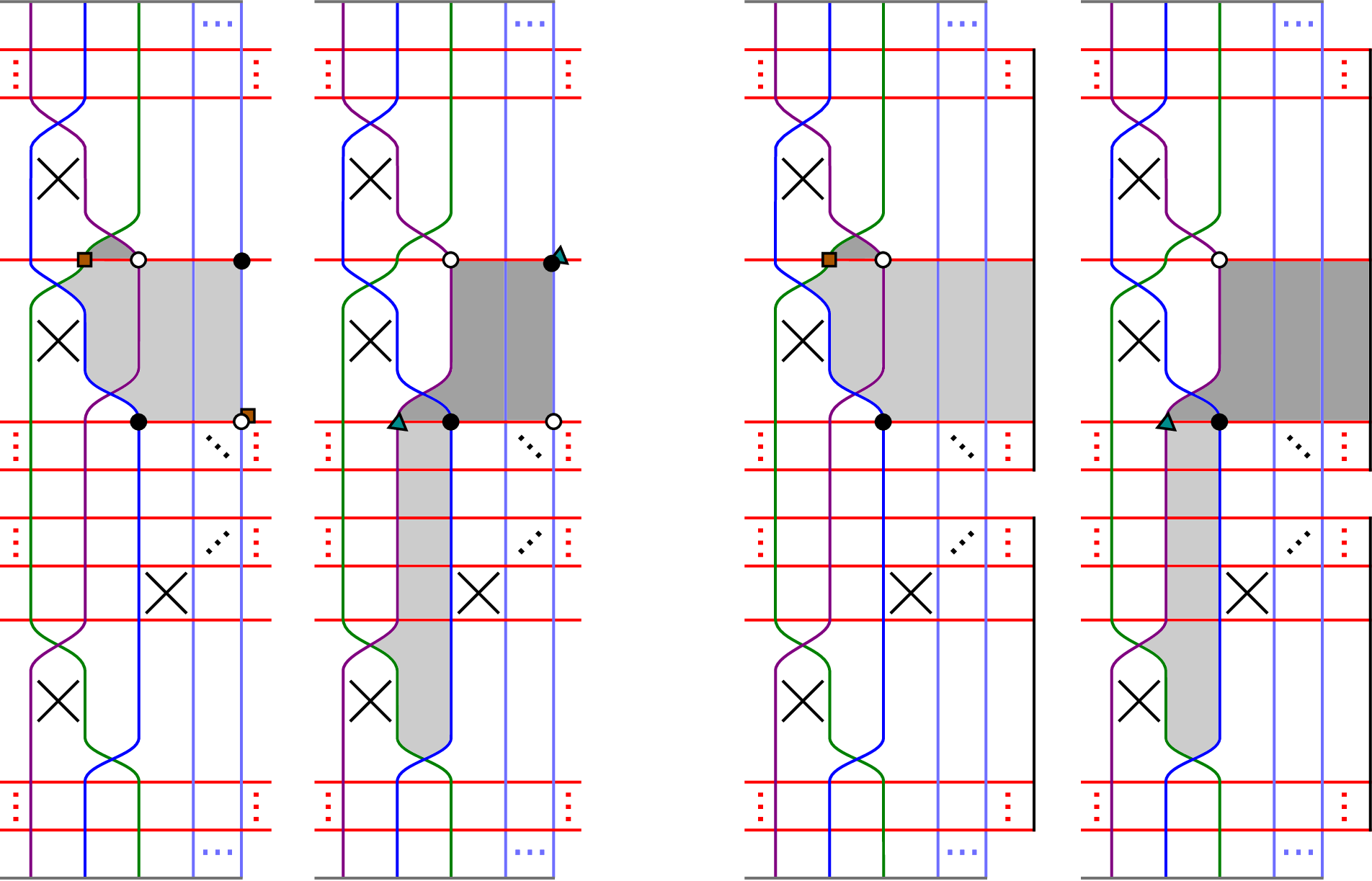}
    \caption{A special case of a term in $\TT\circ \PP$ canceling out a term in $\delta^1\circ \Q$.}
    \label{fig:qr_pt}
  \end{figure}

  Figure~\ref{fig:qr_pt_2} illustrates another term in $\TT_0\circ \PP_{\infty}$ canceling out a term in $\delta^1_1\circ \Q_{\infty}$, with paths different from those of the terms in Figure~\ref{fig:qr_pt}. Depending on the starting generator, some terms may output non-trivial algebra elements, corresponding to a domain juxtaposition that is bordered.

  \begin{figure}
    \centering
    \includegraphics[scale = .44]{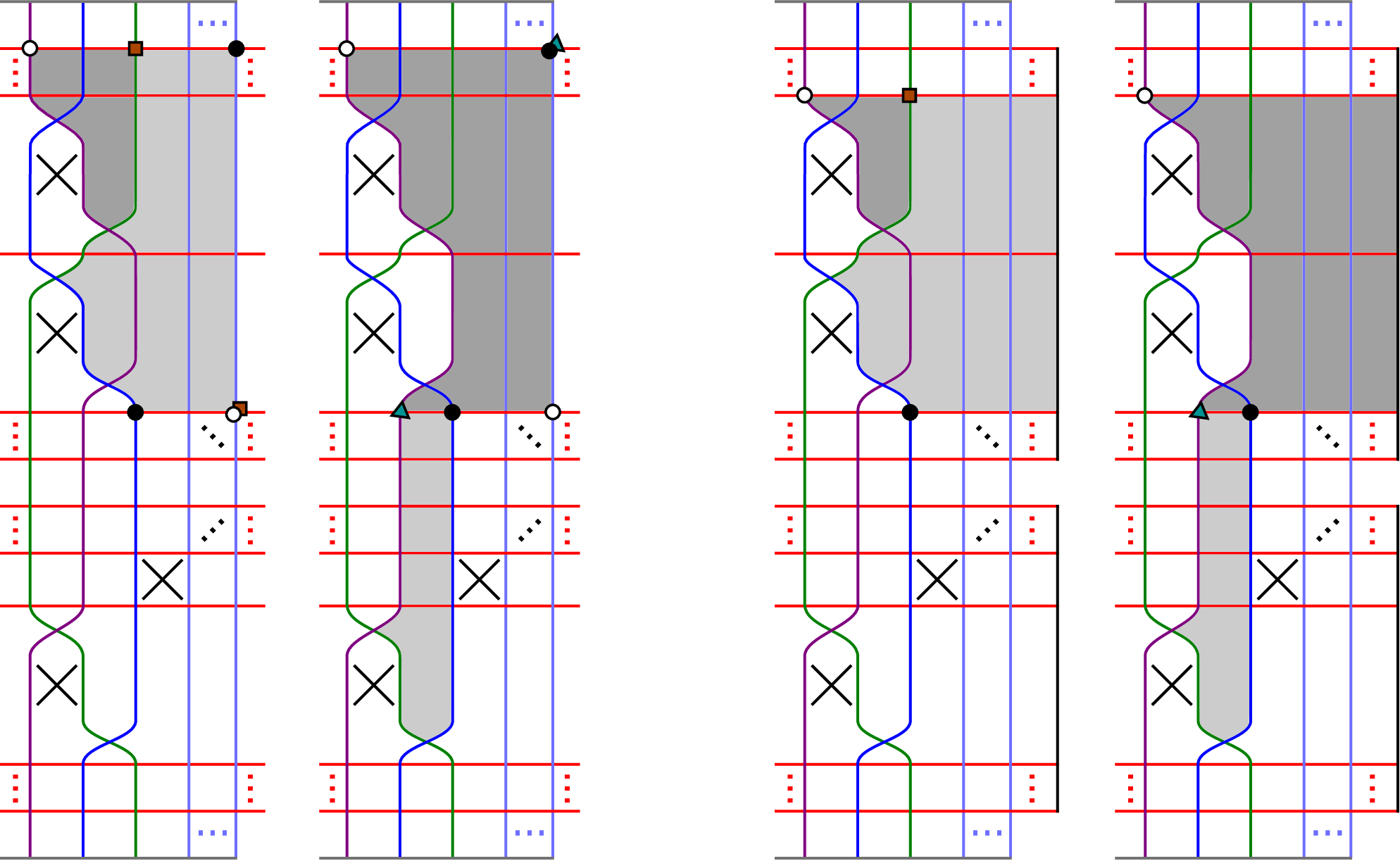}
    \caption{Another special case of a term in $\delta^1\circ \Q$ canceling out a term in $\TT\circ \PP$.}
    \label{fig:qr_pt_2}
  \end{figure}

  Table~\ref{tab:cond2} summarizes the cancelations of domains in this lemma. The reader is reminded that it only shows composite domains $p * p'$ such that $p$ and $p'$ share exactly one common corner.

  \clearpage

  \begin{ThreePartTable}
    \begin{TableNotes}
      \small
    \item[(i)] Special case.
    \item[(ii)] Depends on the position of certain components of $\x$.
    \item[(iii)] Does not exist because the $\ran$-height is larger than $3$.
    \item[(iv)] Does not exist because any such domain necessarily contains markers.
    \item[(v)] Figure~\ref{fig:v1_tt}.
    \item[(vi)] Figure~\ref{fig:tp_pt}.
    \item[(vii)] Figure~\ref{fig:tp_pt_2}.
    \item[(viii)] Figure~\ref{fig:qr_pt}.
    \item[(ix)] Figure~\ref{fig:qr_pt_2}.
    \end{TableNotes}
    \begin{longtable}{|c|c|c|c|c||c|c|}
      \captionsetup{belowskip=10pt}
      \hline
      No. & Term in & Path & Type & Shape & Cancels with & Notes\\
      \hline \hline
      \endfirsthead
      \hline
      No. & Term in & Path & Type & Shape & Cancels with & Notes\\
      \hline \hline
      \endhead
      \hline
      \insertTableNotes
      \endfoot
      \hline
      \insertTableNotes\\
      \caption{The cancelations of the terms in Lemma~\ref{lem:cond2}. The special cases are shown in Figures~\ref{fig:v1_tt}--\ref{fig:qr_pt_2}. For convenience, we suppress all subscripts.}
      \label{tab:cond2}
      \endlastfoot

      1 & $\TT \circ \TT$ & $C \htd{0}{0} R \htd{0}{0} L$ & $\nan$ & & 14, 40, 60, or 63 & (i), (ii), (v)\\
      \hline
      2 & \mra{4}{22}{$\PP \circ \TT$} & $C \htd{0}{0} R \htd{0}{1} C$ & $\ran$ & & 54 &\\
      \cline{1-1} \cline{3-7}
      3 & & $C \htd{0}{0} R \htd{0}{2} R$ & $\ran$ & & 6 & (i), (vi)\\
      \cline{1-1} \cline{3-7}
      4 & & $C \htd{0}{0} R \htd{0}{3} L$ & $\ran$ & & 7 & (i), (vii)\\
      \cline{1-1} \cline{3-7}
      5 & & $R \htd{0}{0} L \htd{1}{0} L$ & $\lan$ & & 27 &\\
      \hline
      6 & \mra{4}{22}{$\TT \circ \PP$} & $C \htd{0}{2} C \htd{0}{0} R$ & $\ran$ & & 3 & (i), (vi)\\
      \cline{1-1} \cline{3-7}
      7 & & $C \htd{0}{3} R \htd{0}{0} L$ & $\ran$ & & 4 & (i), (vii)\\
      \cline{1-1} \cline{3-7}
      8 & & $R \htd{0}{1} C \htd{0}{0} R$ & $\ran$ & & 31 & (i), (viii)\\
      \cline{1-1} \cline{3-7}
      9 & & $R \htd{0}{2} R \htd{0}{0} L$ & $\ran$ & & 29 & (i), (ix)\\
      \hline
      10 & \mra{17}{60}{$\PP \circ \PP$} & \mr{2}{$L \htd{1}{0} L \htd{1}{0} L$} & \mra{2}{13}{$\lan$} & A & 35 &\\
      \cline{1-1} \cline{5-7}
      11 & & & & B & 73 &\\
      \cline{1-1} \cline{3-7}
      12 & & \mr{3}{$C \htd{2}{0} L \htd{1}{0} L$} & \mra{3}{13}{$\lan$} & A & 42 &\\ \cline{1-1} \cline{5-7}
      13 & & & & B & 76 &\\
      \cline{1-1} \cline{5-7}
      14 & & & & C & 1 & (i), (v)\\
      \cline{1-1} \cline{3-7}
      15 & & $C \htd{0}{2} C \htd{2}{0} L$ & $\lran$ & & 66 &\\
      \cline{1-1} \cline{3-7}
      16 & & $C \htd{0}{2} C \htd{0}{2} C$ & $\ran$ & & None & (iii)\\
      \cline{1-1} \cline{3-7}
      17 & & $C \htd{0}{2} C \htd{0}{3} R$ & $\ran$ & & None & (iii)\\
      \cline{1-1} \cline{3-7}
      18 & & $R \htd{0}{1} C \htd{2}{0} L$ & $\lran$ & & 69 & \\
      \cline{1-1} \cline{3-7}
      19 & & \mr{2}{$R \htd{0}{1} C \htd{0}{2} C$} & \mra{2}{13}{$\ran$} & A & 50 & \\
      \cline{1-1} \cline{5-7}
      20 & & & & B & 79 &\\
      \cline{1-1} \cline{3-7}
      21 & & $R \htd{0}{1} C \htd{0}{3} R$ & $\ran$ & & None & (iii)\\
      \cline{1-1} \cline{3-7}
      22 & & \mr{2}{$R \htd{0}{2} R \htd{0}{1} C$} & \mra{2}{13}{$\ran$} & A & 50 &\\
      \cline{1-1} \cline{5-7}
      23 & & & & B & 79 &\\
      \cline{1-1} \cline{3-7}
      24 & & $R \htd{0}{2} R \htd{0}{2} R$ & $\ran$ & & None & (iii)\\
      \cline{1-1} \cline{3-7}
      25 & & $R \htd{0}{2} R \htd{0}{3} L$ & $\ran$ & & None & (iii)\\
      \cline{1-1} \cline{3-7}
      26 & & $R \htd{0}{3} L \htd{1}{0} L$ & $\lran$ & & 45 & \\
      \hline
      27 & \mra{5}{30}{$\delta^1 \circ \Q$} & $R \htd{0}{0} C \htd{1}{0} L$ & $\lan$ & & 5 &\\
      \cline{1-1} \cline{3-7}
      28 & & $R \htd{0}{0} C \htd{0}{0} C$ & $\lan$ & & 53 &\\
      \cline{1-1} \cline{3-7}
      29 & & $R \htd{0}{0} C \htd{0}{2} L$ & $\ran$ & & 9 & (i), (ix)\\
      \cline{1-1} \cline{3-7}
      30 & & $R \htd{0}{0} C \htd{0}{0} C$ & $\ran$ & & 55 &\\
      \cline{1-1} \cline{3-7}
      31 & & $R \htd{0}{0} C \htd{0}{1} R$ & $\ran$ & & 8 & (i), (viii)\\
      \hline
      32 & \mra{3}{20}{$\delta^1 \circ \HH$} & \mr{2}{$L \htd{2}{0} L \htd{0}{0} L$} & \mra{2}{13}{$\lan$} & A & 35 &\\
      \cline{1-1} \cline{5-7}
      33 & & & & B & 73 &\\
      \cline{1-1} \cline{3-7}
      34 & & $L \htd{2}{0} L \htd{0}{0} L$ & $\lran$ & & 74 &\\
      \cline{1-1} \cline{3-7}
      35 & \mra{18}{70}{$\delta^1 \circ \HH$} & \mr{2}{$L \htd{2}{0} L \htdnm L$} & \mra{2}{13}{$\lan$} & A & 10, 32, 56, or 72 & (ii)\\
      \cline{1-1} \cline{5-7}
      36 & & & & B & 73 &\\
      \cline{1-1} \cline{3-7}
      37 & & $L \htd{2}{0} L \htdnm L$ & $\lran$ & & 65 &\\
      \cline{1-1} \cline{3-7}
      38 & & \mr{3}{$C \htd{3}{0} L \htd{0}{0} L$} & \mra{3}{13}{$\lan$} & A & 42 &\\
      \cline{1-1} \cline{5-7}
      39 & & & & B & 76 &\\
      \cline{1-1} \cline{5-7} 40 & & & & C & 1 & (i), (v)\\
      \cline{1-1} \cline{3-7}
      41 & & $C \htd{3}{0} L \htd{0}{0} L$ & $\lran$ & & 77 &\\
      \cline{1-1} \cline{3-7}
      42 & & \mr{2}{$C \htd{3}{0} L \htdnm L$} & \mra{2}{13}{$\lan$} & A & 12, 38, 58, 61, or 75 & (ii)\\
      \cline{1-1} \cline{5-7}
      43 & & & & B & 76 &\\
      \cline{1-1} \cline{3-7}
      44 & & $C \htd{3}{0} L \htdnm L$ & $\lran$ & & 67 &\\
      \cline{1-1} \cline{3-7}
      45 & & $R \htd{0}{3} C \htd{1}{0} L$ & $\lran$ & & 26 &\\
      \cline{1-1} \cline{3-7}
      46 & & $R \htd{0}{3} C \htd{0}{0} C$ & $\lran$ & & None & (iv)\\
      \cline{1-1} \cline{3-7}
      47 & & $R \htd{0}{3} C \htd{0}{2} L$ & $\ran$ & & None & (iii)\\
      \cline{1-1} \cline{3-7}
      48 & & $R \htd{0}{3} C \htd{0}{0} C$ & $\ran$ & & None & (iv)\\
      \cline{1-1} \cline{3-7}
      49 & & $R \htd{0}{3} C \htd{0}{1} R$ & $\ran$ & & None & (iii)\\
      \cline{1-1} \cline{3-7}
      50 & & \mr{2}{$R \htd{0}{3} C \htdnm C$} & \mra{2}{13}{$\ran$} & A & 19, 22, 70, or 78 & (ii)\\
      \cline{1-1} \cline{5-7}
      51 & & & & B & 79 &\\
      \cline{1-1} \cline{3-7}
      52 & & $R \htd{0}{3} C \htdnm C$ & $\lran$ & & 64 &\\
      \hline
      53 & \mra{3}{20}{$\Q \circ \delta^1$} & $R \htd{0}{0} R \htd{0}{0} C$ & $\lan$ & & 28 &\\
      \cline{1-1} \cline{3-7}
      54 & & $C \htd{0}{1} R \htd{0}{0} C$ & $\ran$ & & 2 &\\
      \cline{1-1} \cline{3-7}
      55 & & $R \htd{0}{0} R \htd{0}{0} C$ & $\ran$ & & 30 &\\
      \hline
      56 & \mra{15}{50}{$\HH \circ \delta^1$} & \mr{2}{$L \htd{0}{0} L \htd{2}{0} L$} & \mra{2}{13}{$\lan$} & A & 35 &\\
      \cline{1-1} \cline{5-7}
      57 & & & & B & 73 &\\
      \cline{1-1} \cline{3-7}
      58 & & \mr{3}{$C \htd{1}{0} L \htd{2}{0} L$} & \mra{3}{13}{$\lan$} & A & 42 & \\
      \cline{1-1} \cline{5-7}
      59 & & & & B & 76 &\\
      \cline{1-1} \cline{5-7}
      60 & & & & C & 1 & (i), (v)\\
      \cline{1-1} \cline{3-7}
      61 & & \mr{3}{$C \htd{0}{0} C \htd{3}{0} L$} & \mra{3}{13}{$\lan$} & A & 42 & \\
      \cline{1-1} \cline{5-7}
      62 & & & & B & 76 &\\
      \cline{1-1} \cline{5-7}
      63 & & & & C & 1 & (i), (v)\\
      \cline{1-1} \cline{3-7}
      64 & & $R \htd{0}{0} R \htd{0}{3} C$ & $\lran$ & & 52 &\\
      \cline{1-1} \cline{3-7}
      65 & & $L \htd{0}{0} L \htd{2}{0} L$ & $\lran$ & & 37 &\\
      \cline{1-1} \cline{3-7}
      66 & & $C \htd{0}{2} L \htd{2}{0} L$ & $\lran$ & & 15 &\\
      \cline{1-1} \cline{3-7}
      67 & & $C \htd{0}{0} C \htd{3}{0} L$ & $\lran$ & & 44 &\\
      \cline{1-1} \cline{3-7}
      68 & & $C \htd{0}{1} R \htd{0}{3} C$ & $\ran$ & & None & (iii)\\
      \cline{1-1} \cline{3-7}
      69 & & $R \htd{0}{1} L \htd{2}{0} L$ & $\lran$ & & 18 &\\
      \cline{1-1} \cline{3-7}
      70 & & $R \htd{0}{0} R \htd{0}{3} C$ & $\ran$ & A & 50 &\\
      \cline{1-1} \cline{5-7}
      71 & \mra{10}{30}{$\HH \circ \delta^1$} & $R \htd{0}{0} R \htd{0}{3} C$ & $\ran$ & B & 79 &\\
      \cline{1-1} \cline{3-7}
      72 & & \mr{2}{$L \htdnm L \htd{2}{0} L$} & \mra{2}{13}{$\lan$} & A & 35 &\\
      \cline{1-1} \cline{5-7}
      73 & & & & B & 11, 33, 36, or 57 & (ii)\\
      \cline{1-1} \cline{3-7}
      74 & & $L \htdnm L \htd{2}{0} L$ & $\lran$ & & 34 &\\
      \cline{1-1} \cline{3-7}
      75 & & \mr{2}{$C \htdnm C \htd{3}{0} L$} & \mra{2}{13}{$\lan$} & A & 42 &\\
      \cline{1-1} \cline{5-7}
      76 & & & & B & 13, 39, 43, 59, or 62 & (ii)\\
      \cline{1-1} \cline{3-7}
      77 & & $C \htdnm C \htd{3}{0} L$ & $\lran$ & & 41 &\\
      \cline{1-1} \cline{3-7}
      78 & & \mr{2}{$R \htdnm R \htd{0}{3} C$} & \mra{2}{13}{$\ran$} & A & 50 &\\
      \cline{1-1} \cline{5-7}
      79 & & & & B & 20, 23, 51, or 71 & (ii)\\
      \cline{1-1} \cline{3-7}
      80 & & $R \htdnm R \htd{0}{3} C$ & $\lran$ & & None & (iv)\\
    \end{longtable}
  \end{ThreePartTable}
  This concludes the proof of the lemma.
\end{proof}

\begin{lem}
  \label{lem:cond3}
  The morphism $f_{k+2} \circ \phi_k + \phi_{k+1} \circ f_k$ is homotopic to the identity $\Id_k$ via homotopy $\psi_k$. In other words, the morphisms $f_k, \phi_k$ and $\psi_k$ satisfy Condition~(3) of Lemma~\ref{lem:hom_alg}.
\end{lem}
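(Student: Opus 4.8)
The plan is to adapt the strategy of the proof of Lemma~\ref{lem:cond2}. Since heptagons are always interior domains, $\algl{p}$ and $\algr{p}$ are idempotents for every heptagon $p$, so $d\psi_k = d\K_k = 0$, and hence $\partial \psi_k = \delta_k^1 \circ \K_k + \K_k \circ \delta_k^1$. Expanding $f_{k+2} = \TT_{k+2} + \PP_{k+2}$, $\phi_k = \Q_k + \HH_k$, $f_k = \TT_k + \PP_k$, $\phi_{k+1} = \Q_{k+1} + \HH_{k+1}$, and $\psi_k = \K_k$, the morphism $f_{k+2} \circ \phi_k + \phi_{k+1} \circ f_k + \partial \psi_k \colon \DDm{M}_k \to \DDm{M}_k$ becomes a sum of polygon-count compositions, namely $\TT \circ \Q$, $\TT \circ \HH$, $\PP \circ \Q$, $\PP \circ \HH$, $\Q \circ \TT$, $\Q \circ \PP$, $\HH \circ \TT$, $\HH \circ \PP$, $\delta^1 \circ \K$, and $\K \circ \delta^1$ (with the appropriate subscripts). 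Each summand counts a composite domain $p * p'$, a juxtaposition of two polygons, together with its associated pair of algebra elements in $\aun$. The goal is to show that, after all cancelations, the only surviving terms are $\ideml{\x} \otimes \x \otimes \idemr{\x}$, exactly one for each $\x \in \SS(\HD_k)$, so that the total equals $\Id_k$.

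As in Lemma~\ref{lem:cond2}, I would classify each composite domain $p * p'$ by its path, its type ($\nan$, $\lan$, $\ran$, or $\lran$), and, when $p$ and $p'$ are rectangle-like polygons of type~$\lan$ or~$\ran$, its shape (A, B, or C). Domains in which $p$ and $p'$ are disjoint or have overlapping interiors admit an alternative decomposition $p' * p$ and cancel over a base ring of characteristic $2$; domains in which $p$ and $p'$ share exactly one edge and two corners are handled by the $d_{\aun}$-cancelation of Figures~\ref{fig:pent_d_tree} and \ref{fig:pent_d_mult}, exactly as in Lemmas~\ref{lem:cond1} and \ref{lem:cond2}. This reduces us to domains in which $p$ and $p'$ share a single corner. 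Using the list of possible paths and types of the individual polygons on p.~\pageref{list:poly}, I would enumerate all composite domains $p * p'$ of this kind that can occur; for each, the general shape of its support is recovered from Figure~\ref{fig:hd_cut}, and this identifies a canceling juxtaposition --- sometimes one resulting in a domain of the same path and type, and sometimes, as in the special cases of Lemma~\ref{lem:cond2}, one resulting in a domain with a different decomposition. All of these, including the new special cases that involve heptagons and quadrilaterals, together with their figures, would be recorded in a table analogous to Table~\ref{tab:cond2}.

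The essential new phenomenon, and what I expect to be the main obstacle, is that one distinguished family of domains does not cancel: the empty \emph{annular domains} through the special region. Concretely, for each generator $\x \in \SS(\HD_k)$ there is exactly one empty interior domain connecting $\x$ to itself that wraps once around a thin annulus carrying the $\beta_{i,k}$-component of $\x$ and containing no $X$-markings (for $\x \in L_k$ this is the annulus $\lan$, of $\lan$-height $3$ and $\ran$-height $0$; the classes $C_k$ and $R_k$ require the corresponding annular strips and are treated separately). Since such a domain is interior and moves no occupied arc, its associated algebra elements are the idempotents $\ideml{\x}$ and $\idemr{\x}$. I would then show that each of these annular domains decomposes as a juxtaposition $p * p'$ of the relevant polygons --- as a heptagon followed by a rectangle, a rectangle followed by a heptagon, a pentagon followed by a hexagon, a hexagon followed by a pentagon, and so on --- in an \emph{odd} total number of ways, so that after all the pairwise cancelations above exactly one copy of $\ideml{\x} \otimes \x \otimes \idemr{\x}$ survives for each $\x$. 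Summing over $\x \in \SS(\HD_k)$ then yields $\Id_k$, which is Condition~(3) of Lemma~\ref{lem:hom_alg}. The bulk of the work --- and the delicate point --- is the bookkeeping: verifying that this annular family is precisely the set of uncanceled terms, i.e.\ that every other composite domain has a canceling partner and that the annular ones occur an odd number of times. As in Lemma~\ref{lem:cond2}, this is a long but elementary case analysis, organized by path, type, and shape, with each special case treated individually and illustrated by a figure.
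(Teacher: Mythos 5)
Your overall strategy --- expand $f_{k+2} \circ \phi_k + \phi_{k+1} \circ f_k + \partial\psi_k$ into polygon-count compositions, classify each composite domain by path, type, and shape, cancel in pairs (including the $d_{\aun}$-cancelation for shared-edge domains), and identify the surviving terms with $\Id_k$ --- is exactly the paper's approach, and your observation that $d\K_k = 0$ because heptagons are interior is correct.

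However, your description of the surviving terms is wrong for two of the three components of the splitting $M_k = L_k \oplus C_k \oplus R_k$. You claim that for every $\x$ the uncanceled term comes from an annular domain, with $L_k$ giving the annulus $\lan$ and ``the classes $C_k$ and $R_k$ requir[ing] the corresponding annular strips.'' This is only true for $\x \in L_k$: there, there is indeed a single empty shape-C domain with support the annulus $\lan$ ($\lan$-height $3$, $\ran$-height $0$), and it admits exactly one decomposition among $\delta_k^1 \circ \K_k$, $\K_k \circ \delta_k^1$, $\PP_{k+2} \circ \HH_k$, and $\HH_{k+1} \circ \PP_k$ (which one depends on the position of another component of $\x$); that single term pairs with $\Id_k$. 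But $\ran$ contains a marking, so there is no empty annular domain through the $\beta_{i,k}$-component of an $\x \in R_k$; and for $\x \in C_k$ that component does not lie on the boundary of either annulus. In both cases the relevant surviving domain is of type $\nan$, not annular: for $\x \in C_k$ the identity is canceled by the unique juxtaposition in $\Q_{k+1} \circ \TT_k$ with path $C \htd{0}{0} R \htd{0}{0} C$ (the small triangle into the small quadrilateral between the three $\beta_{i,*}$ circles), and for $\x \in R_k$ by the corresponding juxtaposition in $\TT_{k+2} \circ \Q_k$ with path $R \htd{0}{0} C \htd{0}{0} R$. If you literally searched for annular decompositions on $C_k$ and $R_k$, you would find none and the bookkeeping would appear to yield zero on those summands rather than $\Id_k$. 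The rest of the plan is sound and the fix is localized, but this mischaracterization is precisely at the ``essential new phenomenon'' you identified as the delicate point, so it is a genuine gap.
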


\begin{proof}
  The strategy is as before. Table~\ref{tab:cond3}, displayed at the end of this proof, shows all cancelations relevant to this lemma, including the special cases. The special cases are further illustrated in Figures~\ref{fig:pq_qp}, \ref{fig:pq_th_2}, \ref{fig:pq_th_1},  and \ref{fig:id}, and discussed in more detail below, along with references to the corresponding rows of Table~\ref{tab:cond3}.

  Figure~\ref{fig:pq_qp} illustrates a term in $\Q_0\circ \PP_{\infty}$ canceling out a term in $\PP_1\circ \Q_{\infty}$. Depending on the starting generator, some terms may output non-trivial algebra elements, corresponding to a domain juxtaposition that is bordered.  In an analogous special case, a term in $\Q_{\infty}\circ \PP_1$ cancels a term in $\PP_0\circ \Q_1$ (here only interior domains are possible).  See Table~\ref{tab:cond3}, Rows~$3$ and $17$.

  Figure~\ref{fig:pq_th_2} illustrates a term in $\TT_1\circ \HH_{\infty}$ canceling out a term in $\PP_1\circ \Q_{\infty}$. See Table~\ref{tab:cond3}, Rows~$4$ and $5$.

  Figure~\ref{fig:pq_th_1} illustrates a term in $\HH_{\infty}\circ \TT_1$ canceling out a term in $\Q_{\infty}\circ \PP_1$. See Table~\ref{tab:cond3}, Rows~$14$ and $16$.

  Figure~\ref{fig:id} illustrates the terms that $\Id_0$ cancels with. If $\x \in C_0$, then $\Id_0$ cancels with $\Q_1 \circ \TT_0$; if $\x \in R_0$, then $\Id_0$ cancels with $\TT_\infty \circ \Q_0$; if $\x \in L_0$, then $\Id_0$ cancels with $\delta_0^1 \circ \K_0$, $\K_0 \circ \delta_0^1$, $\PP_\infty \circ \HH_0$, or $\HH_1 \circ \PP_0$, depending on the position of the $\beta_{i+1}$-component of $\x$. There is an analogous special case for $\Id_1$ and $\Id_{\infty}$. See Table~\ref{tab:cond3}, Rows~$1$, $8$, $13$, $20$, $29$, $36$, $44$, $45$, and $46$.

  \begin{figure}[h]
    \centering
    \includegraphics[scale = .45]{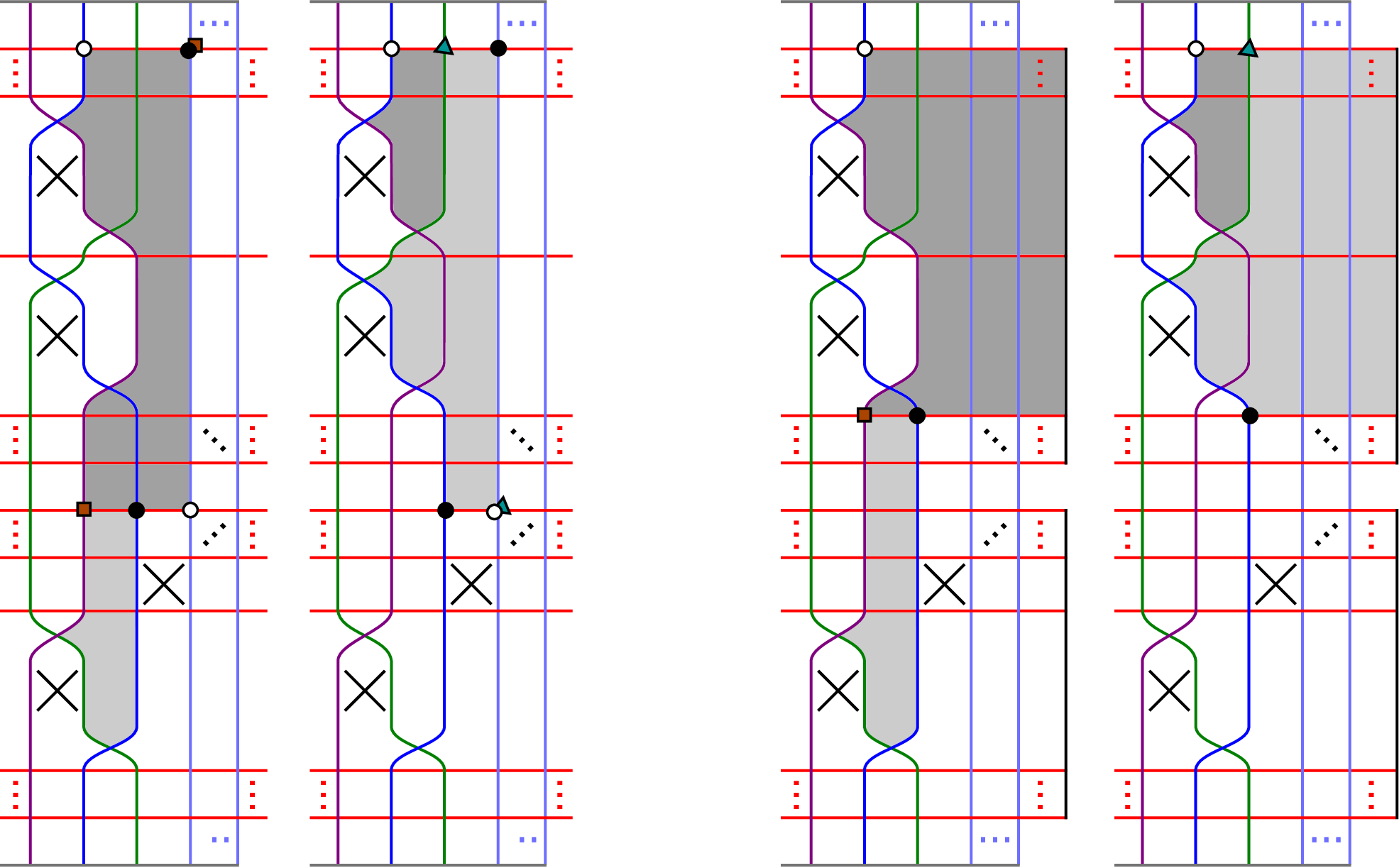}
    \caption{A special case of a term in $\Q\circ \PP$ canceling out a term in $\PP\circ \Q$.}
    \label{fig:pq_qp}
  \end{figure}

  \begin{figure}[h]
    \centering
    \includegraphics[scale = .45]{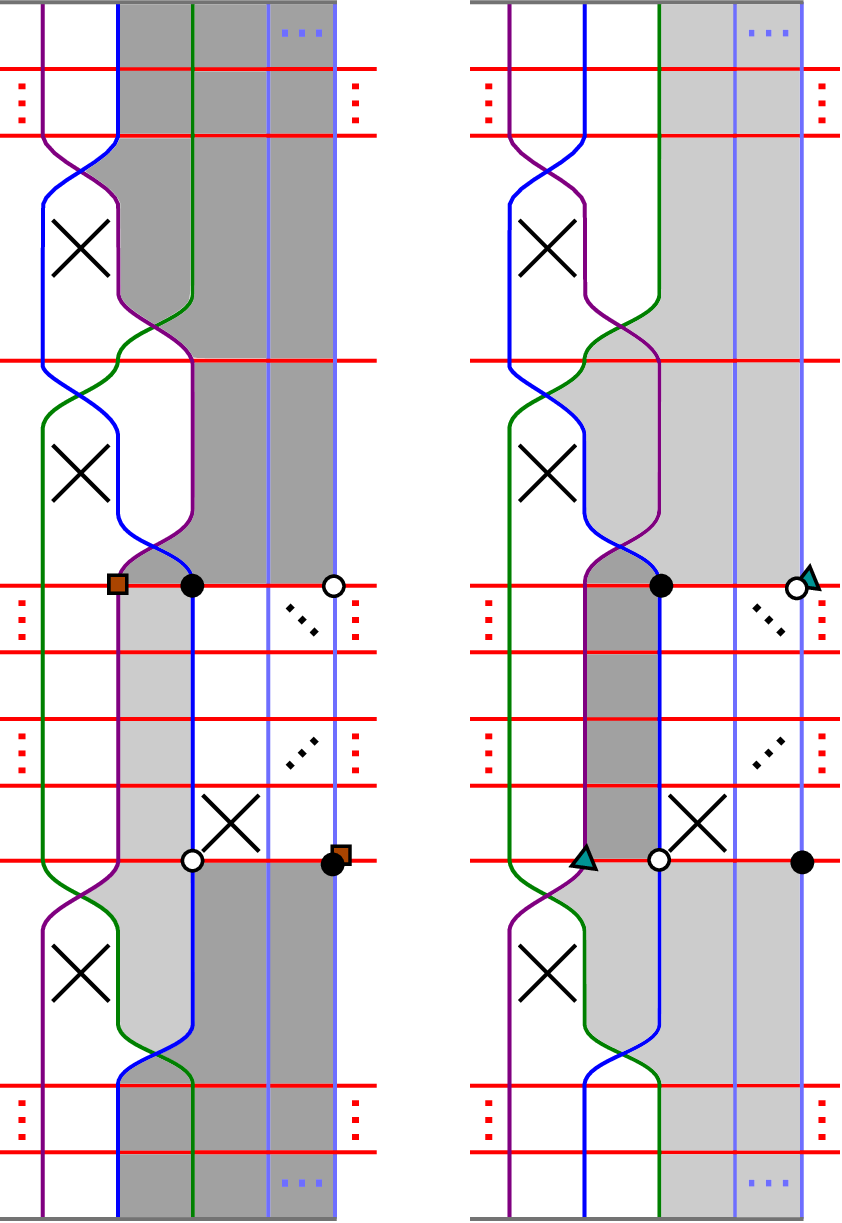}
    \caption{A special case of a term in $\TT\circ \HH$ canceling out a term in $\PP\circ \Q$.}
    \label{fig:pq_th_2}
  \end{figure}

  \begin{figure}[h]
    \centering
    \includegraphics[scale = .45]{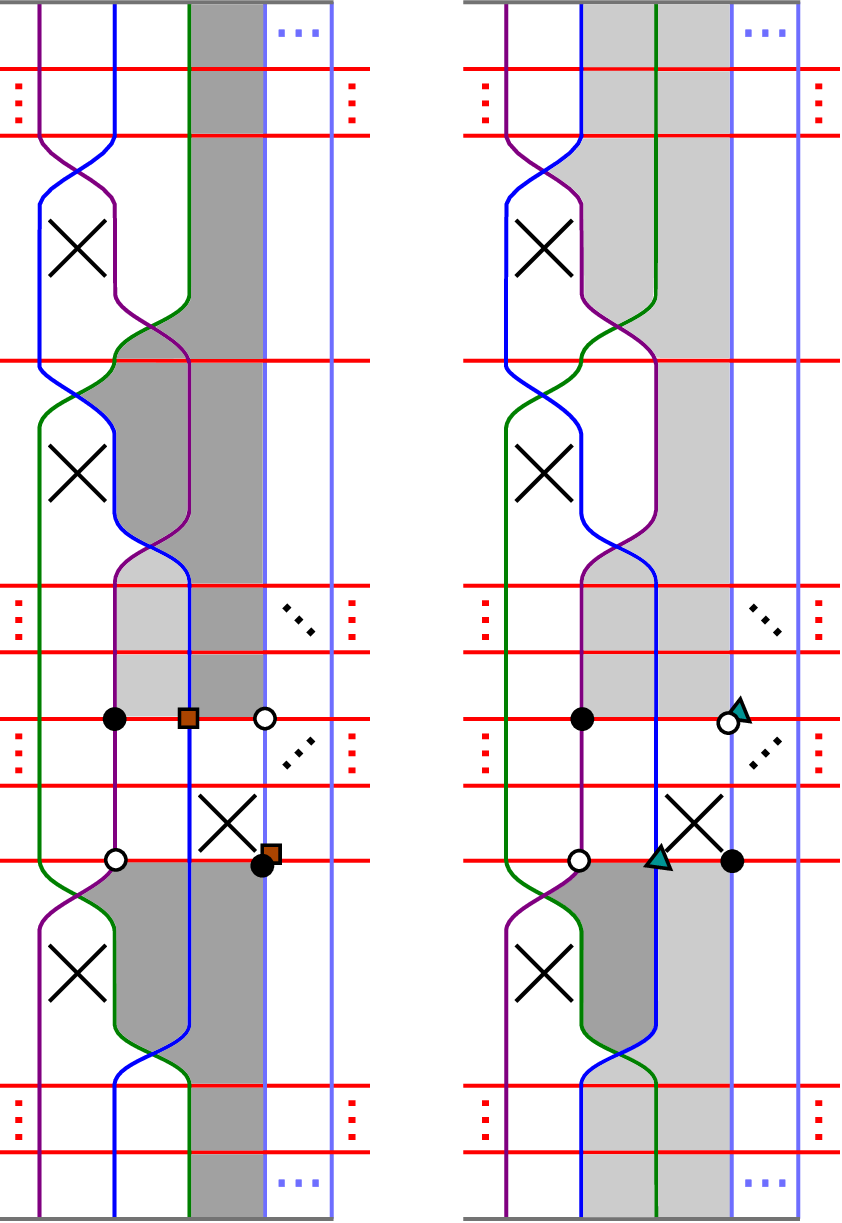}
    \caption{A special case of a term in $\HH\circ \TT$ canceling out a term in $\Q\circ \PP$.}
    \label{fig:pq_th_1}
  \end{figure}

  \begin{figure}[h]
    \centering
    \includegraphics[scale = .43]{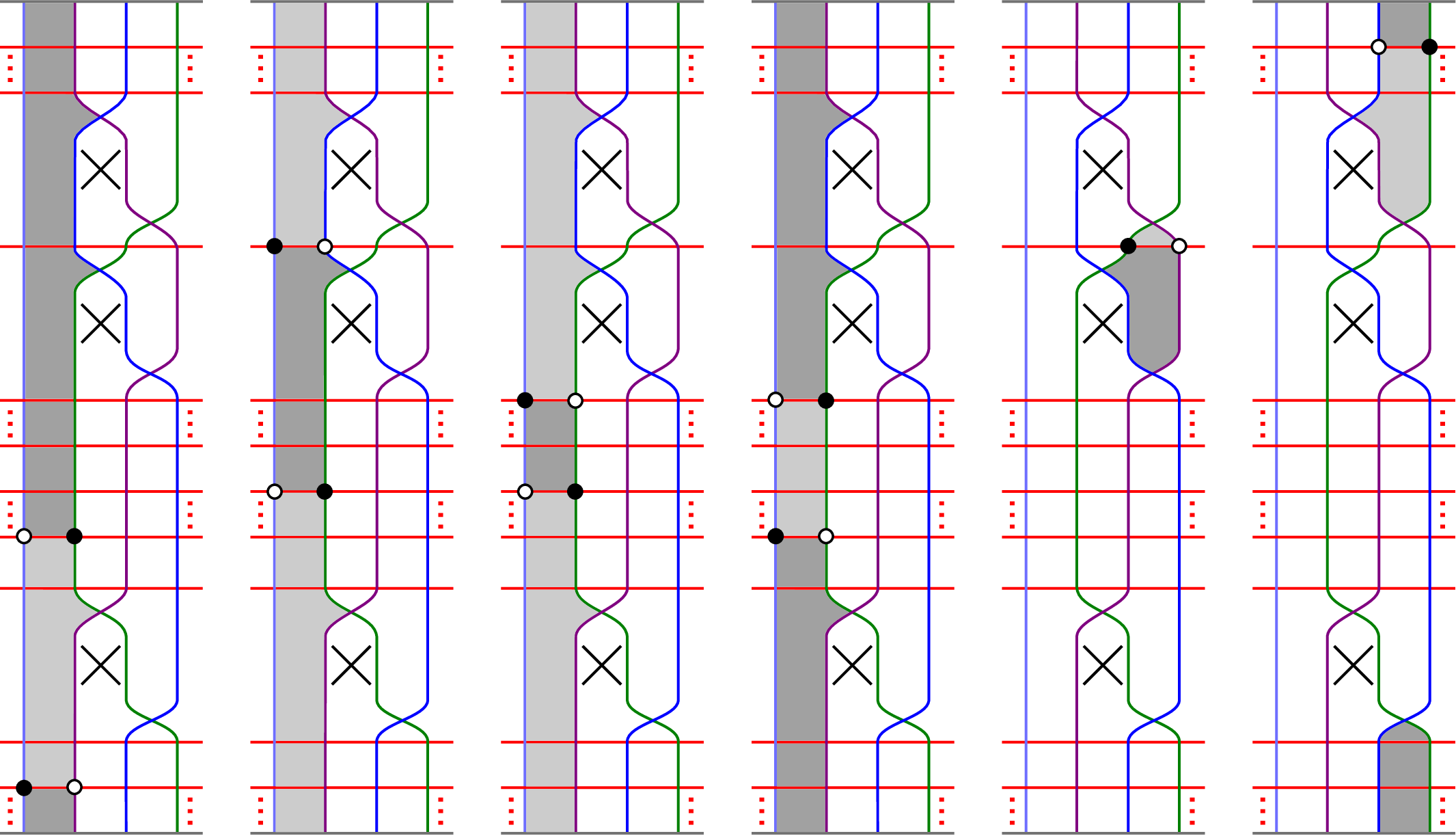}
    \vskip -.1 cm
    \caption{A special case of a term in $\Id$ canceling out a term in $\delta^1\circ \K$, $\delta^1\circ\K$, $\PP\circ \HH$, $\HH\circ \PP$, $\TT\circ \Q$, or $\Q\circ \TT$.}
    \label{fig:id}
  \end{figure}

  \clearpage

  \begin{ThreePartTable}
    \begin{TableNotes}
      \small
    \item[(i)] Special case.
    \item[(ii)] Depends on the position of certain components of $\x$.
    \item[(iii)] Does not exist because the $\lan$-height is larger than $3$.
    \item[(iv)] Does not exist because the $\ran$-height is larger than $3$.
    \item[(v)] Figure~\ref{fig:pq_qp}.
    \item[(vi)] Figure~\ref{fig:pq_th_2}.
    \item[(vii)] Figure~\ref{fig:pq_th_1}.
    \item[(viii)] Figure~\ref{fig:id}.
    \end{TableNotes}
    \begin{longtable}{|c|c|c|c|c||c|c|}
      \captionsetup{belowskip=10pt}
      \hline
      No. & Term in & Path & Type & Shape & Cancels with & Notes\\
      \hline \hline
      \endfirsthead
      \hline
      No. & Term in & Path & Type & Shape & Cancels with & Notes\\
      \hline \hline
      \endhead
      \hline
      \insertTableNotes
      \endfoot
      \hline
      \insertTableNotes\\
      \caption{The cancelations of the terms in Lemma~\ref{lem:cond3}. The special cases are shown in Figures~\ref{fig:pq_qp}--\ref{fig:id}. For convenience, we suppress all subscripts.} \label{tab:cond3}
      \endlastfoot

      1 & $\TT \circ \Q$ & $R \htd{0}{0} C \htd{0}{0} R$ & $\nan$ & & 46 & (i), (viii)\\
      \hline 2 & \mra{3}{20}{$\PP \circ \Q$} & $ R \htd{0}{0} C \htd{2}{0} L$ & $\lan$ & & 15 &\\
      \cline{1-1} \cline{3-7}
      3 & & $R \htd{0}{0} C \htd{0}{2} C$ & $\ran$ & & 17 & (i), (v)\\
      \cline{1-1} \cline{3-7}
      4 & & $R \htd{0}{0} C \htd{0}{3} R$ & $\ran$ & & 5 & (i), (vi)\\
      \hline
      5 & $\TT \circ \HH$ & $R \htd{0}{3} C \htd{0}{0} R$ & $\ran$ & & 4 & (i), (vi)\\
      \hline
      6 & \mra{7}{35}{$\PP \circ \HH$} & \mr{3}{$L \htd{2}{0} L \htd{1}{0} L$} & \mra{3}{13}{$\lan$} & A & 31 &\\
      \cline{1-1} \cline{5-7}
      7 & & & & B & 42 &\\
      \cline{1-1} \cline{5-7}
      8 & & & & C & 44 & (i), (viii)\\
      \cline{1-1} \cline{3-7}
      9 & & $C \htd{3}{0} L \htd{1}{0} L$ & $\lan$ & & None & (iii)\\
      \cline{1-1} \cline{3-7}
      10 & & $R \htd{0}{3} C \htd{2}{0} L$ & $\lran$ & & 26 &\\
      \cline{1-1} \cline{3-7}
      11 & & $R \htd{0}{3} C \htd{0}{2} C$ & $\ran$ & & None & (iv)\\
      \cline{1-1} \cline{3-7}
      12 & & $R \htd{0}{3} C \htd{0}{3} R$ & $\ran$ & & None & (iv)\\
      \hline
      13 & $\Q \circ \TT$ & $C \htd{0}{0} R \htd{0}{0} C$ & $\nan$ & & 45 & (i), (viii)\\
      \hline
      14 & \mra{2}{13}{$\HH \circ \TT$} & $C \htd{0}{0} R \htd{0}{3} C$ & $\ran$ & & 16 & (i), (vii)\\
      \cline{1-1} \cline{3-7}
      15 & & $R \htd{0}{0} L \htd{2}{0} L$ & $\lan$ & & 2 &\\
      \hline
      16 & \mra{2}{13}{$\Q \circ \PP$} & $C \htd{0}{3} R \htd{0}{0} C$ & $\ran$ & & 14 & (i) (vii)\\
      \cline{1-1} \cline{3-7}
      17 & & $R \htd{0}{2} R \htd{0}{0} C$ & $\ran$ & & 3 & (i), (v)\\
      \hline
      18 & \mra{9}{45}{$\HH \circ \PP$} & \mr{3}{$L \htd{1}{0} L \htd{2}{0} L$} & \mra{3}{13}{$\lan$} & A & 31 &\\
      \cline{1-1} \cline{5-7}
      19 & & & & B & 42 &\\
      \cline{1-1} \cline{5-7}
      20 & & & & C & 44 & (i), (viii)\\
      \cline{1-1} \cline{3-7}
      21 & & $C \htd{2}{0} L \htd{2}{0} L$ & $\lan$ & & None & (iii)\\
      \cline{1-1} \cline{3-7}
      22 & & $C \htd{0}{2} C \htd{3}{0} L$ & $\lran$ & & 39 &\\
      \cline{1-1} \cline{3-7}
      23 & & $C \htd{0}{3} R \htd{0}{3} C$ & $\ran$ & & None & (iv)\\
      \cline{1-1} \cline{3-7}
      24 & & $R \htd{0}{1} C \htd{3}{0} L$ & $\lran$ & & 40 &\\
      \cline{1-1} \cline{3-7}
      25 & & $R \htd{0}{2} R \htd{0}{3} C$ & $\ran$ & & None & (iv)\\
      \cline{1-1} \cline{3-7}
      26 & & $R \htd{0}{3} L \htd{2}{0} L$ & $\lran$ & & 10 &\\
      \hline
      27 & \mra{7}{22}{$\delta^1 \circ \K$} & \mr{3}{$L \htd{3}{0} L \htd{0}{0} L$} & \mra{3}{13}{$\lan$} & A & 31 &\\
      \cline{1-1} \cline{5-7}
      28 & & & & B & 42 &\\
      \cline{1-1} \cline{5-7}
      29 & & & & C & 44 & (i), (viii)\\
      \cline{1-1} \cline{3-7}
      30 & & $L \htd{3}{0} L \htd{0}{0} L$ & $\lran$ & & 43 &\\
      \cline{1-1} \cline{3-7}
      31 & & \mr{2}{$L \htd{3}{0} L \htdnm L$} & \mra{2}{13}{$\lan$} & A & 6, 18, 27, 34, or 41 & (ii)\\
      \cline{1-1} \cline{5-7}
      32 & & & & B & 42 &\\
      \cline{1-1} \cline{3-7}
      33 & & $L \htd{3}{0} L \htdnm L$ & $\lran$ & & 38 &\\
      \hline
      34 & $\K \circ \delta^1$ & $L \htd{0}{0} L \htd{3}{0} L$ & $\lan$ & A & 31 &\\
      \cline{1-1} \cline{5-7}
      35 & \mra{9}{45}{$\K \circ \delta^1$} & \mra{2}{10}{$L \htd{0}{0} L \htd{3}{0} L$} & \mra{2}{10}{$\lan$} & B & 42 &\\
      \cline{1-1} \cline{5-7}
      36 & & & & C & 44 & (i), (viii)\\
      \cline{1-1} \cline{3-7}
      37 & & $C \htd{1}{0} L \htd{3}{0} L$ & $\lan$ & & None & (iii)\\
      \cline{1-1} \cline{3-7}
      38 & & $L \htd{0}{0} L \htd{3}{0} L$ & $\lran$ & & 33 &\\
      \cline{1-1} \cline{3-7}
      39 & & $C \htd{0}{2} L \htd{3}{0} L$ & $\lran$ & & 22 &\\
      \cline{1-1} \cline{3-7}
      40 & & $R \htd{0}{1} L \htd{3}{0} L$ & $\lran$ & & 24 &\\
      \cline{1-1} \cline{3-7}
      41 & & \mr{2}{$L \htdnm L \htd{3}{0} L$} & \mra{2}{13}{$\lan$} & A & 31 &\\
      \cline{1-1} \cline{5-7}
      42 & & & & B & 7, 19, 28, 32, or 35 & (ii)\\
      \cline{1-1} \cline{3-7}
      43 & & $L \htdnm L \htd{3}{0} L$ & $\lran$ & & 30 &\\
      \hline
      44 & \mra{3}{18}{$\Id$} & $L \htdid L$ & & & 8, 20, 29, or 36 & (i), (ii), (viii)\\
      \cline{1-1} \cline{3-7}
      45 & & $C \htdid C$ & & & 13 & (i), (viii)\\
      \cline{1-1} \cline{3-7}
      46 & & $R \htdid R$ & & & 1 & (i), (viii)
    \end{longtable}
  \end{ThreePartTable}

  This concludes the proof of the lemma.
\end{proof}

\begin{proof}[Proof of Proposition~\ref{prop:main}.]
  This is now a straightforward application of Lemma~\ref{lem:hom_alg}. The conditions in Lemma~\ref{lem:hom_alg} are satisfied according to Lemmas~\ref{lem:cond1}, \ref{lem:cond2} and \ref{lem:cond3}.
\end{proof}

\begin{proof}[Proof of Theorem~\ref{thm:ourtheorem}.]
  We wish to prove that there exists a type~$\DD$ homomorphism
  \[
    F_0 \colon \CDTDu (\T_0, n) \to \CDTDu (\T_1, n)
  \]
  such that
  \[
    \CDTDu (\T_\infty, n) \simeq \Cone (F_0).
  \]
  Write $\T_\infty, T_0, T_1$ as
  \[
    \T_\infty = \T' \circ \eT_\infty \circ \T'', \quad \T_0 = \T' \circ \eT_0 \circ \T'', \quad \T_1 = \T' \circ \eT_1 \circ \T'',
  \]
  where $\eT_\infty, \eT_0, \eT_1$ are the elementary tangles in Proposition~\ref{prop:main}, and $\T'$ and $\T''$ are two other tangles. Then, for each $k \in \set{\infty, 0, 1}$, we have
  \[
    \CDTDu (\T_k, n) \simeq \CDTAu (\T', n_1) \boxtimes \CDTDu (\eT_k, n_2) \boxtimes \CATDu (\T'', n_3)
  \]
  as type~$\DD$ structures, where $n = n_1 + n_2 + n_3$. For convenience, let
  \[
    \DDm{M'} = \CDTAu (\T', n_1), \quad \DDm{M''} = \CATDu (\T'', n_3).
  \]
  Working in the homotopy category, we can now define the morphisms $F_k, \Phi_k, \Psi_k$:
  \begin{enumerate}
    \item The morphism $F_k \colon \CDTDu (\T_k, n) \to \CDTDu (\T_{k+1}, n)$ is defined by
      \[
        F_k = \Id_{\DDm{M'}} \boxtimes f_k \boxtimes \Id_{\DDm{M''}}.
      \]
    \item The morphism $\Phi_k \colon \CDTDu (\T_k, n) \to \CDTDu (\T_{k+2}, n)$ is defined by
      \[
        \Phi_k = \Id_{\DDm{M'}} \boxtimes \phi_k \boxtimes \Id_{\DDm{M''}}.
      \]
    \item The morphism $\Psi_k \colon \CDTDu (\T_k, n) \to \CDTDu (\T_k, n)$ is defined by
      \[
        \Psi_k = \Id_{\DDm{M'}} \boxtimes \psi_k \boxtimes \Id_{\DDm{M''}}.
      \]
  \end{enumerate}
  (Here, $\Id_{\DDm{M'}}$ is a type~$\DA$ isomorphism while $\Id_{\DDm{M''}}$ is a type~$\AD$ isomorphism.) By \cite[Lemma~2.3.13]{bimod}, homotopies are preserved under box-tensoring, and so $F_k, \Phi_k, \Psi_k$ also satisfy the conditions in Lemma~\ref{lem:hom_alg}.

  Taking the box tensor on either side with the left-right type~$\AA$ bimodule for a tangle consisting of only straight strands, we obtain the analogous statements for type~$\DA$, $\AD$, and $\AA$ bimodules.
\end{proof}

\begin{rmk}
  In the proof of Theorem~\ref{thm:ourtheorem}, we could have directly used Proposition~\ref{prop:main} and made no reference to Lemma~\ref{lem:hom_alg}. We chose to present the proof as it is now to streamline the discussion in Section~\ref{sec:grading}.
\end{rmk}



\section{Gradings} 
\label{sec:grading}



In this section, we prove Theorem~\ref{thm:gradings}.
Let $\T_\infty, \T_0$, and $\T_1$ be three oriented tangles such that, as unoriented tangles, they are identical except near a point $p$, as indicated in Figure~\ref{fig:T_inf01}.  The only difference between Theorems~\ref{thm:ourtheorem} and \ref{thm:gradings} is that Theorem~\ref{thm:gradings} contains information about the grading shifts (cf.\ Table~\ref{tab:all_flavors}). As such, Theorem~\ref{thm:gradings} follows from Theorem~\ref{thm:ourtheorem} and grading information. To begin, we modify the definitions of $F_k$, $\Phi_k$ and $\Psi_k$ from the proof of Theorem~\ref{thm:ourtheorem}, as follows.

Write $\T_\infty, \T_0, \T_1$ as
\[
  \T_\infty = \T_\infty' \circ \eT_\infty \circ \T_\infty'', \quad \T_0 = \T_0' \circ \eT_0 \circ \T_0'', \quad \T_1 = \T_1' \circ \eT_1 \circ \T_1'',
\]
where, as before, $\eT_\infty, \eT_0, \eT_1$ are the elementary tangles in Proposition~\ref{prop:main} (now endowed with  orientations). This time, $\T_\infty', \T_0', \T_1'$ are possibly different as oriented tangles, but are the same as unoriented tangles. The same statement is true for $\T_\infty'', \T_0'', \T_1''$. 

Fix oriented planar diagrams $D_\infty, D_0$, and $D_1$ for $\T_\infty, \T_0$, and $\T_1$ respectively, such that the diagrams, without their orientations, are identical except near $p$. If $\Neg (D)$ denotes the number of negative crossings in a diagram $D$, let
\[
  e_k = \Neg (D_{k+1}) - \Neg (D_k)
\]
for each $k \in \set{\infty, 0, 1}$. Note that $e_\infty, e_0$ and $e_1$ are independent of the choice of diagrams, and that
\[
  e_\infty + e_0 + e_1 = 0.
\]

Similarly, for each $k \in \set{\infty, 0, 1}$, fix oriented planar diagrams $D_k'$ and $D_k''$ for $\T_k'$ and $\T_k''$ respectively, and let $e_k'$ and $e_k''$ be defined analogously. Define isomorphisms
\begin{align*}
  \iota_k' & \colon \CDTAd (\T_k', n) \to \CDTAd (\T_{k+1}', n) \sqbrac{- \frac{e_k'}{2}}\\
  \iota_k'' & \colon \CATDd (\T_k'', n) \to \CATDd (\T_{k+1}'', n) \sqbrac{- \frac{e_k''}{2}}
\end{align*}
as follows. Let $\HD_k'$ and $\HD_{k+1}'$ be Heegaard diagrams for $\T_k'$ and $\T_{k+1}'$ that are identical if we do not distinguish the $X$ from the $O$ markings. Define $\iota_k'$ to be the map induced by the natural correspondence between generators and domains. Define $\iota_k''$ analogously. Indeed, these isomorphisms shift gradings as claimed:

\begin{lem}\label{lem:iota_gr}
  Let $\T_{o_1}$ and $\T_{o_2}$ be two oriented tangles that are the same after forgetting the orientation. Then
  \[
    \CTd(\T_{o_1}, n)\simeq \CTd(\T_{o_2}, n)\sqbrac{-\frac{e}{2}},
  \]
  where $e =   \Neg (D_{o_2}) - \Neg (D_{o_1})$ is the difference in the number of negative crossings between two diagrams for the two tangles that are the same without the orientation.  Here, $\CTd$ can stand for any one of the fully blocked, $\delta$-graded algebraic structures associated to the two tangles (same one for both tangles).
\end{lem}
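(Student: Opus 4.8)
The plan is as follows. The two tangles are represented by Heegaard diagrams that coincide once we forget which markers are $X$'s and which are $O$'s; in particular they have the same generating set, and --- since a fully blocked structure counts exactly the rectangles of the first three types that contain no markers at all, and since the $\delta$-graded algebras do not see the orientation of the sequence of points --- they yield the \emph{same} structure and multiplication maps over the \emph{same} algebras. Thus the ``natural correspondence between generators and domains'' is the identity on the underlying ungraded bimodule, simultaneously for the $\DD$, $\DA$, $\AD$, and $\AA$ flavors (which share one underlying bigraded module), so the only thing that can differ between $\CTd(\T_{o_1}, n)$ and $\CTd(\T_{o_2}, n)$ is the $\delta$-grading of the generators, and the content of the lemma is that it shifts by the constant $-e/2$ (the exact sign being fixed by the shift convention $\sqbrac{\cdot}$).

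First I would reduce to elementary tangles. A generic height function decomposes the underlying unoriented tangle into elementary pieces (straight strands, a single crossing, a cup--cap), which we may orient either by $o_1$ or by $o_2$; the two resulting families of oriented elementary tangles agree unorientedly piece by piece, and each family concatenates to the corresponding $\T_{o_j}$. By the gluing theorem, $\CTd(\T_{o_j}, n)$ is a box tensor product of the $\CTd$'s of these pieces in the appropriate flavors. Since $\sqbrac{\cdot}$-shifts add under $\boxtimes$, and $\Neg$ is additive over the decomposition, it suffices to prove the lemma for a single elementary tangle.

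For an elementary tangle I would use its explicit genus-one Heegaard diagram and the grading formulas of \cite[Section~3.4]{pv}. Collapsing $M$ and $A$ to $\delta$ gives, for instance,
\[
  2\delta(\x^R) = \inv(\x^R, \XX^R \cup \OO^R) - 2\inv(\x^R) - \inv(\XX^R) - \inv(\OO^R) - |\XX^R|,
\]
and an analogous expression for $2\delta(\x^L)$. Passing between the two orientations changes neither the surface, the curves, the full marker set, nor any generator, so every $\x$-dependent term is unchanged and $\delta_{o_2}(\x) - \delta_{o_1}(\x)$ is a constant $c$ --- a combination of the marker-only quantities $\inv(\XX^R)$, $\inv(\OO^R)$, $|\XX^R|$ and their left-grid versions. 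It remains to compute $c$. For a piece without a crossing, $\Neg = 0$ for every orientation, and the formulas show $\delta$ is in fact orientation-independent, so $c = 0 = -e/2$. For a one-crossing piece, the four orientations give two positive and two negative crossings, and passing between orientations is a specific swap of $X$'s and $O$'s on the small crossing diagram; a direct evaluation of the change in $\inv(\XX^R)$, $\inv(\OO^R)$, $|\XX^R|$, etc.\ shows $c = -\tfrac12$ times the change in $\Neg$, as claimed.

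The hard part is this last single-crossing computation: one must identify exactly which markers trade roles when a strand through the crossing is reversed, and then track the resulting changes in the $\inv(\cdot,\cdot)$ terms to see that the net $\delta$-shift is $-\tfrac12$ times the crossing-sign change. It is a small finite check --- the tangle analogue of the grading bookkeeping in \cite{wong} and \cite[Chapter~9]{OSSbook} --- but it is where the number $-e/2$ genuinely enters, and it is also the step demanding care with conventions (the $-1$ in the $\delta$-grading flagged in the Remark after Theorem~\ref{thm:gradings}, and the direction of the shift $\sqbrac{\cdot}$) to land on exactly $-e/2$ and not $+e/2$.
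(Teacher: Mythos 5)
Your approach is essentially the same as the paper's: you reduce to elementary tangles using additivity of the crossing count and of the grading shift under gluing, then compare $\delta$-gradings of corresponding generators directly via the explicit formulas on the shared genus-one Heegaard diagram, isolating the marker-only terms as the only orientation-dependent part. Your opening observation that the structure maps literally coincide (so only the grading can change) is a helpful framing that the paper leaves implicit.

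The one place you stop short is the single-crossing sign computation, which you assert (``a direct evaluation\ldots shows $c = -\tfrac12$ times the change in $\Neg$'') rather than carry out, and the sign you state is opposite to the one the paper finds. The paper computes $c = \delta_{o_2}(\x) - \delta_{o_1}(\x) = +e/2$: in the right grid, the orientation-dependent contribution $\tfrac12\inv(\XX^R)+\tfrac12\inv(\OO^R)$ equals $\tfrac12$ precisely when the two strands through the crossing run in the same horizontal direction, which is exactly when the crossing is \emph{negative}, and equals $0$ when it is positive (with the analogous computation, with opposite signs, for a left-grid crossing). Since a degree-$(+e/2)$ isomorphism $\CTd(\T_{o_1}) \to \CTd(\T_{o_2})$ is a degree-$0$ isomorphism onto $\CTd(\T_{o_2})[-e/2]$, this $+e/2$ is what produces the lemma's shift $[-e/2]$. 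Your displayed formula for $2\delta(\x^R)$ is the negative of the one the paper actually uses in this proof, which traces to using $\delta = A - M$ where the proof uses $\delta = M - A$ (the paper's own prose is not entirely consistent about this, so your caution is warranted). You correctly identify this as the delicate step, so this is a bookkeeping gap in an otherwise correct argument, but since the sign is the entire content of the lemma it does need to be pinned down, not asserted.
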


\begin{proof}
  Since concatenating corresponds to tensoring, and the grading is additive 
  under taking tensor product, it suffices to show this for elementary tangles. 

  Let $\HD_{o_1}$ and $\HD_{o_2}$ be genus-one Heegaard diagrams for elementary tangles $\T_{o_1}$ and $\T_{o_2}$ respectively, such as the ones in Figures~\ref{fig:hd_elem_ori} and \ref{fig:hd_012},  that are identical if we do not distinguish the $X$ from the $O$ markings, i.e., for $i=1,2$ we can write $\HD_{o_i} =(\Sigma, \alphas, \betas, \XX_{o_i}, \OO_{o_i})$ with $\XX_{o_1}\sqcup \OO_{o_1} = \XX_{o_2}\sqcup \OO_{o_2}$.

  Let $\iota:\CTd(\T_{o_1}, n)\to \CTd(\T_{o_2}, n)$ be the isomorphism induced by the natural correspondence between generators and domains. We discuss the degree of $\iota$ below.

  Let $\x_{o_1} = \x_{o_1}^L\sqcup \x_{o_1}^R\in \SS(\HD_{o_1})$ and $\x_{o_2} = \x_{o_2}^L\sqcup \x_{o_2}^R\in \SS(\HD_{o_2})$ be corresponding generators.  The $\delta$-grading is given by
  \begin{eqnarray*}
    \delta(\x_{o_i}^R) &=& \inv(\x_{o_i}^R)- \frac 1 2 \inv(\x_{o_i}^R, \XX_{o_i}^R\sqcup \OO_{o_i}^R)  + \frac 1 2 \inv(\XX_{o_i}^R) +\frac 1 2 \inv(\OO_{o_i}^R) + \frac 1 2 |\XX_{o_i}^R|,\\
    \delta(\x_{o_i}^L) &=& -\inv(\x_{o_i}^L)+\frac 1 2 \inv(\x_{o_i}^L, \XX_{o_i}^L\sqcup \OO_{o_i}^L)  - \frac 1 2 \inv(\XX_{o_i}^L) - \frac 1 2 \inv(\OO_{o_i}^L)-\frac 1 2  |\OO_{o_i}^L|.
  \end{eqnarray*}
  The first and second term in each formula do not depend on orientations, and neither does the sum of the fifth terms. In other words, $\inv(\x_{o_1}^R)=\inv(\x_{o_2}^R)$, $|\OO_{o_1}^L|-|\XX_{o_1}^R| = |\OO_{o_2}^L|-|\XX_{o_2}^R|$, and so on.

  The only points in $\XX_{o_i}\sqcup \OO_{o_i}$ that contribute to the third and fourth terms in each formula are those that correspond to a strand that runs over or under another strand in the respective half of the diagram. If there are no crossings, then corresponding generators have the same $\delta$-grading, $e=0$, and we are done. If   $T_{o_1}$ and $T_{o_2}$ are elementary tangles for a crossing, we discuss the contribution of the two relevant basepoints below.

  Suppose the strand with the higher slope crosses over the strand with the lower slope. Then the crossing is encoded in the right grid of the diagram. There are four possible orientations of the two relevant strands. If they point in the same horizontal direction, then $ \frac 1 2 \inv(\XX_{o_i}^R) +\frac 1 2 \inv(\OO_{o_i}^R) = \frac 1 2 $; if they point in opposite horizontal directions, then $ \frac 1 2 \inv(\XX_{o_i}^R) +\frac 1 2 \inv(\OO_{o_i}^R) = 0$. Observe that in the former case the crossing is negative, whereas in the latter case the crossing is positive.

  Suppose the strand with the lower slope crosses over the strand with the higher slope, i.e. the crossing is encoded in the left half of the diagram. If the two strands point in the same horizontal direction, then $-\frac 1 2 \inv(\XX_{o_i}^L) -\frac 1 2 \inv(\OO_{o_i}^L) =- \frac 1 2 $; if they point in opposite horizontal directions, then $- \frac 1 2 \inv(\XX_{o_i}^L) -\frac 1 2 \inv(\OO_{o_i}^R) = 0$. In the former case the crossing is positive, whereas in the latter case the crossing is negative.

  Thus, $\delta(\x_{o_2}) - \delta(\x_{o_1}) = e/2$, and so $\iota$ shifts degrees by  $e/2$.
\end{proof}

Working in the homotopy category, as we did in the proof of Theorem~\ref{thm:ourtheorem}, we now define the morphisms $F_k, \Phi_k, \Psi_k$:
\begin{enumerate}
  \item The morphism $F_k \colon \CDTDd (\T_k, n) \to \CDTDd (\T_{k+1}, n)$ is defined by
    \[
      F_k = \iota_k' \boxtimes f_k \boxtimes \iota_k''.
    \]
  \item The morphism $\Phi_k \colon \CDTDd (\T_k, n) \to \CDTDd (\T_{k+2}, n)$ is defined by
    \[
      \Phi_k = (\iota_{k+1}' \circ \iota_k') \boxtimes \phi_k \boxtimes (\iota_{k+1}'' \circ \iota_k'').
    \]
  \item The morphism $\Psi_k \colon \CDTDd (\T_k, n) \to \CDTDd (\T_k, n)$ is defined by
    \[
      \Psi_k = \Id_k' \boxtimes \psi_k \boxtimes \Id_k''.
    \]
\end{enumerate}
where $f_k$, $\phi_k$, $\psi_k$ are defined in the same way as in Section~\ref{sec:proof}, except that the Heegaard diagrams here contain both $X$ and $O$ markings. In the definitions above, we omit the degree shifts, since they depend on $k$. We discuss these shifts below.

We first establish the following lemma. 

\begin{lem}\label{lem:deltarect}
  Suppose $\HD$ is a genus $1$ Heegaard diagram for a tangle such as the ones in Figures~\ref{fig:hd_elem_ori} and \ref{fig:hd_012}, $\x$ and $\y$ are generators in $\SS(\HD)$, and $r$ is a (not necessarily empty) rectangle from $\x$ to $\y$ of one of the first three types discussed in Section \ref{ssec:tf}  such that $\Int r \cap \x = \emptyset$. Then
  \[
    \delta(\algl{r})+ \delta(\y)+ \delta(\algr{r})-\delta(\x) = n_{\OO}(r)+ n_{\XX}(r)-1.
  \]
\end{lem}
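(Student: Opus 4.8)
The plan is to prove the grading formula by a direct computation, comparing the $\delta$-gradings of $\x$ and $\y$ using the combinatorial description of $\delta$ in terms of inversion counts, and then matching the discrepancy against the $\delta$-grading of the algebra element produced by the rectangle. Since $\delta(\x) = \delta(\x^L) + \delta(\x^R)$ and similarly for $\y$, and since a rectangle $r$ of one of the first three types is supported entirely in either the left grid or the right grid, only one of the two halves changes; so I would split into cases according to which half contains $r$, and within each half further split according to whether $r$ is interior, left-bordered, or right-bordered. By symmetry between the left and right grids (with the usual sign flips in the grading formulas), it suffices to treat the right grid carefully and indicate the analogous statement for the left.

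First I would set up coordinates as in Section~\ref{ssec:tf}: write $\x^R, \y^R, \OO^R, \XX^R$ as finite subsets of $\mathbb{R}^2$, and recall
\[
  \delta(\x^R) = \inv(\x^R) - \tfrac12 \inv(\x^R, \XX^R \sqcup \OO^R) + \tfrac12 \inv(\XX^R) + \tfrac12 \inv(\OO^R) + \tfrac12 |\XX^R|.
\]
For an interior rectangle $r$ from $\x$ to $\y$ with $\Int r \cap \x = \emptyset$, the generators $\x$ and $\y$ agree outside the two $\beta$-corners of $r$, and a standard grid-homology computation (e.g.\ the argument behind \cite[Lemma~4.6.8]{OSSbook} or the Maslov/Alexander difference formulas of \cite{pv}) gives $\inv(\y^R) - \inv(\x^R) = -(\text{number of points of }\x^R\text{ in }\Int r) + \cdots$; combining all terms yields that the total change $\delta(\y) - \delta(\x)$ equals $n_{\OO}(r) + n_{\XX}(r) - 1$ (with $\algl{r} = \ideml{\x}$, $\algr{r} = \idemr{\x}$ having $\delta = 0$). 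This is essentially the known behavior that an empty rectangle drops $\delta$ by one, now allowing markers in the interior, each $O$ or $X$ contributing $+1$ via the $\inv(\x^R, \XX^R \sqcup \OO^R)$ term and the $\tfrac12 |\XX^R|$ term. Then for a bordered rectangle (of the second or third type), the only new feature is that $\algr{r}$ (or $\algl{r}$) is a nontrivial partial bijection rather than an idempotent; I would compute $\delta(\algr{r})$ from the formula in Section~\ref{ssec:tf} (number of black-black crossings minus half the number of black-orange crossings of the corresponding strand diagram) exactly as in the proof of Lemma~\ref{lem:DDgr}, where the analogous bookkeeping was carried out for empty rectangles of the third type, and check that the extra contribution of $\delta(\algr{r})$ precisely accounts for the shift caused by the component of $\x$ being moved along the boundary.

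Concretely, the cleanest route is to reduce the bordered case to the interior case: given a bordered rectangle $r$ of the third type going from $\x$ to $\y$ through $\bdy^R\Sigma$, the computation in the proof of Lemma~\ref{lem:DDgr} shows that the algebra element $a = \algr{r}$ satisfies $M(a) = t - s$ and $A(a) = \frac{j-i}{2} - s$, hence $\delta(a) = A(a) - M(a)$ in the $\delta$-graded setting; combining with the parallel inversion computation for how $\delta(\x^R)$ changes as the component moves from $\alpha_j^R$ to $\alpha_i^R$ across the markers between them gives exactly $n_{\OO}(r) + n_{\XX}(r) - 1$ after the $-s$ and $+\frac{j-i}{2}$ contributions cancel against the interior-marker count. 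I expect the main obstacle to be purely organizational: keeping the signs straight between the left-grid and right-grid grading conventions and ensuring the marker counts $n_{\OO}(r), n_{\XX}(r)$ are attributed correctly when $r$ is bordered (since then some markers lie "between the arcs" rather than strictly in $\Int r$), but no genuinely new idea beyond the inversion-counting techniques already deployed in Lemma~\ref{lem:DDgr} should be needed.
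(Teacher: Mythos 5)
Your proposed approach breaks down at the first structural step: the claim that a rectangle $r$ of one of the first three types ``is supported entirely in either the left grid or the right grid'' is false. The diagram $\HD$ is a genus-one surface with two boundary components, and the $\beta$-circles wrap around the handle, passing through both grids. Consequently, an \emph{interior} rectangle of the first type can intersect both the left and right halves of the cut-open diagram. This is exactly what the paper's proof treats in sub-cases (c)--(f) of Figure~\ref{fig:rect_gr}: rectangles that cross the middle edge (between $\alpha_0^L$ and $\alpha_0^R$), or cross the top/bottom identification, or do both by wrapping all the way around the vertical annulus. For these domains the formula for $\delta(\y)-\delta(\x)$ is not a ``standard grid-homology computation''; one must keep track of the inversions contributed by components of $\x\cap\y$, $\XX$, and $\OO$ in the four rectangular regions $A,B,C,D$ that the handle winding creates, and the argument uses the identities $a+b+c+d = w-1$ and $a'+b'+c'+d' = 2w$ (where $w$ is the width of the vertical annulus) in an essential way. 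By silently restricting to rectangles confined to one grid, you would only prove the formula for sub-cases (a) and (b), and the appeal to an analogue of \cite[Lemma~4.6.8]{OSSbook} would not cover the wrap-around terms, which do not arise in planar grid diagrams.

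Your treatment of the bordered case (reducing to the bookkeeping in Lemma~\ref{lem:DDgr} for the algebra element $a = \algr{r}$ with $M(a) = t-s$ and $A(a) = \frac{j-i}{2} - s$) is in the right spirit, and is essentially what the paper does; but a correct proof must still account separately for the contribution of occupied versus unoccupied $\alpha$-arcs between $\alpha_i^R$ and $\alpha_j^R$ to $\inv(\y^R)-\inv(\x^R)$, and for the markers in the strip that are \emph{not} in $r$ contributing with the opposite sign. That part is organizational, as you say. The genuine missing idea is the genus-one wrap-around phenomenon in the interior case, and no argument that assumes $r$ sits inside one grid can yield the full statement.
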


\begin{proof}
  Cut the Heegaard diagram $\HD$ open, and embed it on the plane as in Figure~\ref{fig:hd_cut}. 

  First we consider the case when $r$ is an interior rectangle. Then $\algl{r}$ and $\algr{r}$ are idempontents, hence in degree $0$, and
  \begin{align*}
    \delta(\y) - \delta(\x) &= \inv(\y^R) - \inv(\x^R)- \frac 1 2 \inv(\y^R, \XX^R\sqcup \OO^R)+ \frac 1 2 \inv(\x^R, \XX^R\sqcup \OO^R)\\
    &-\inv(\y^L)+ \inv(\x^L) +\frac 1 2 \inv(\y^L, \XX^L\sqcup \OO^L) -\frac 1 2 \inv(\x^L, \XX^L\sqcup \OO^L).
  \end{align*}

  There are six sub-cases, depending on which parts of the diagram $r$ occupies; see Figure~\ref{fig:rect_gr}. In all cases, the two $\beta$-circles containing the vertical edges of $r$ bound an annulus, and points in $\x\cap \y$, $\XX$, and $\OO$ outside that annulus contribute the same amount to $\delta(\y)$ and to $\delta(\x)$. The two horizontal edges of $r$, along with the top, middle (take any horizontal line between $\alpha^L_0$ and $\alpha^R_0$), and bottom edges of $\HD$, divide the annulus into four rectangular regions, which we denote $A$, $B$, $C$, and $D$, from top to bottom. Define
  \[
    \begin{array}{lll}
      a = |\x\cap \Int A|, && a' = |(\XX\sqcup \OO)\cap \Int A|,\\
      b = |\x\cap \Int B|, && b' = |(\XX\sqcup \OO)\cap \Int B|,\\
      c = |\x\cap \Int C|, && c' = |(\XX\sqcup \OO)\cap \Int C|,\\
      d = |\x\cap \Int D|, && d' = |(\XX\sqcup \OO)\cap \Int D|.
    \end{array}
  \]
  Let $w$ be the width of the annulus, i.e. the number of components of $\Sigma\setminus \betas$ contained in it. Observe that $a+b+c+d = w-1$, and $a'+b'+c'+d' = 2w$.

  \begin{figure}[h]
    \centering
    \labellist
    \pinlabel  $A$ at 125 490
    \pinlabel  $B$ at 125 400
    \pinlabel  $C$ at 125 310
    \pinlabel  $D$ at 125 150
    \pinlabel $(a)$ at 126 -50
    \pinlabel $(b)$ at 430 -50
    \pinlabel $(c)$ at 734 -50
    \pinlabel $(d)$ at 1038 -50
    \pinlabel $(e)$ at 1342 -50
    \pinlabel $(f)$ at 1646 -50
    \endlabellist
    \includegraphics[scale = .26]{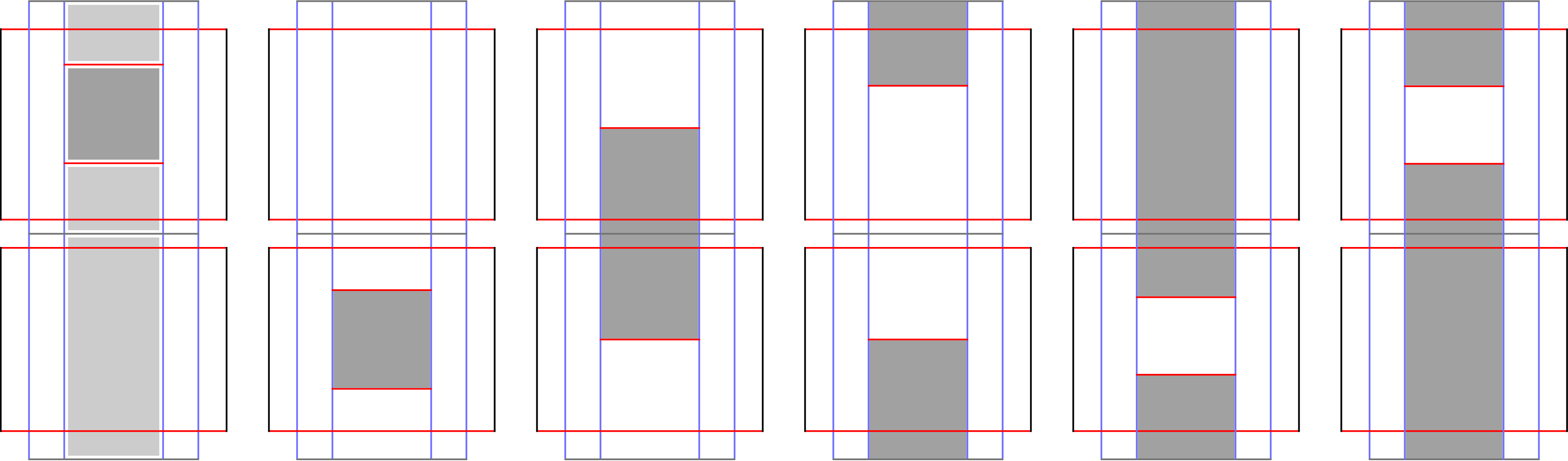}
    \vskip .6 cm
    \caption{The diagram $\HD$ on the plane. There are six sub-cases of an interior rectangle $r$ (depicted in dark grey). In the first case, the vertical annulus for $r$ is also illustrated; it is divided  by the top, middle, and bottom edges of $\HD$, along with the horizontal edges of $r$, into four rectangles, denoted $A$, $B$, $C$, and $D$ from top to bottom.}
    \label{fig:rect_gr}
  \end{figure}

  Suppose $r$ is entirely contained in the top half of the diagram; see Figure~\ref{fig:rect_gr}~(a). Then
  \[
    \delta(\y) - \delta(\x) = \inv(\y^R) - \inv(\x^R)- \frac 1 2 \inv(\y^R, \XX^R\sqcup \OO^R)+ \frac 1 2 \inv(\x^R, \XX^R\sqcup \OO^R).
  \]
  Since the support of $r$ coincide with $B$, we have $b=0$, so $\inv(\y^R) - \inv(\x^R) = -1$. If a point $p\in \XX^R\sqcup \OO^R$ contributes different amounts to  $\inv(\y^R, \XX^R\sqcup \OO^R)$ and  to $\inv(\x^R, \XX^R\sqcup \OO^R)$, it must be contained in the interior of $r$. Each such $p$ contributes two more times to the latter---when paired with the bottom-left and top-right corners of $r$---so $- \frac 1 2 \inv(\y^R, \XX^R\sqcup \OO^R)+ \frac 1 2 \inv(\x^R, \XX^R\sqcup \OO^R) = n_{\OO}(r)+ n_{\XX}(r)$.

  The computation when $r$ is entirely contained in the bottom half of the diagram  (see Figure~\ref{fig:rect_gr}~(b)) is analogous. 

  Suppose $r$ intersects both halves of the diagram, but not the top and bottom edges; see Figure ~\ref{fig:rect_gr}~(c). Note that in this case $b=c=0$. A point in $\x\cap \Int A = \y\cap \Int A$ contributes one more time to $\inv(\y^R)$ than to $ \inv(\x^R)$---when paired with the top left corner of $r$. Similarly, a point in $(\XX^R\sqcup \OO^R)\cap \Int A$  contributes one more time to $\inv(\y^R, \XX^R\sqcup \OO^R)$ than to $\inv(\x^R, \XX^R\sqcup \OO^R)$. Points inside $B$ contribute in the opposite way. Counting in the bottom half of the diagram is analogous. We see that
  \begin{align*}
    \inv(\y^R) - \inv(\x^R) &= a - b = a,\\
    -\inv(\y^R, \XX^R\sqcup \OO^R)+\inv(\x^R, \XX^R\sqcup \OO^R) &=  -a' + b',\\
    -\inv(\y^L)+ \inv(\x^L) &= -c+d = d,\\
    \inv(\y^L, \XX^L\sqcup \OO^L) - \inv(\x^L, \XX^L\sqcup \OO^L) &= c' - d',
  \end{align*}
  so
  \[
    \delta(\y) - \delta(\x) = a+d +\frac 1 2 (b'+c'-a'-d') = w - 1 + \frac 1 2 (2b'+2c'-2w) = b'+c'-1 = n_{\OO}(r)+ n_{\XX}(r)-1.
  \]

  The computation when $r$ intersects both halves of the diagram  but not the middle edge (see Figure~\ref{fig:rect_gr}~(d)) is analogous. In that case $a=d=0$, and we get
  \[
    \delta(\y) - \delta(\x) = b+c +\frac 1 2 (a'+d'-b'-c') = w - 1 + \frac 1 2 (2a'+2d'-2w) = a'+d'-1 = n_{\OO}(r)+ n_{\XX}(r)-1.
  \]

  Suppose $r$ intersects both halves of the diagram and has all four corners in the bottom half; see Figure~\ref{fig:rect_gr}~(e). In this case $a=b=d=0$, so $c=w-1$. If a point $p\in \x\cap \y$ contributes different amounts to $\inv(\y^L)$ and to $\inv(\x^L)$, it must be contained in rectangle $C$; each such $p$ contributes two more times to $\inv(\x)$---when paired with the two points in $\x\setminus \y$. The pair formed by the two points in $\y\setminus \x$ does not contribute to $\inv(\y)$, and the pair formed by the two points in $\x\setminus \y$ contributes once to $\inv(\x)$. Thus,
  \[
    -\inv(\y^L)+ \inv(\x^L) = 1+2c = 2w-1.
  \]
  Similarly,
  \[
    \inv(\y^L, \XX^L\sqcup \OO^L) - \inv(\x^L, \XX^L\sqcup \OO^L) = -2c' = 2(a'+b'+c'-2w).
  \]
  Since
  $\y^R = \x^R$, we have
  \[
    \delta(\y) - \delta(\x) = \delta(\y^L)- \delta(\x^L) = 2w - 1 +a'+b'+c'-2w  = n_{\OO}(r)+ n_{\XX}(r)-1.
  \]

  The computation when $r$  intersects both halves of the diagram and has all four corners in the top half (see Figure~\ref{fig:rect_gr}~(f)) is analogous. 

  There are two cases when $r$ intersects $\bdy \HD$, and they are similar to each other. We discuss the case when $r$ intersects $\bdy^R\HD$. Say that the boundary of $r$ intersects $\alpha_i^R$ and $\alpha_j^R$. Let $t$ be the number of arcs between $\alpha_i^R$ and $\alpha_j^R$ occupied by $\x\cap \y$. Since $r$ does not contains points in $\x\cap \y$, the $t$ points in $\x\cap\y$ at heights between  $\alpha_i^R$ and $\alpha_j^R$ are all outside $r$. Whether $r$ touches the left or the right edge of the diagram, these are exactly the points that contribute  differently to  $\inv(\y^R)$ and to $\inv(\x^R)$. Thus,
  \[
    \inv(\y^R)- \inv(\x^R) = t.
  \]
  The $|i-j|-t-1$ arcs between $\alpha_i^R$ and $\alpha_j^R$ that are unoccupied contribute to the horizontal black strands in $\algr{r}$ that intersect the unique non-horizontal strand. There are also $|i-j|$ orange strands that intersect the non-horizontal strand. So
  \[
    \delta(\algr{r}) =  \diagup \hspace{-.35cm}\diagdown(\algr{r}) - \frac{\diagup \hspace{-.37cm}{\color{orange2}{\diagdown}}(\algr{r}) +\diagdown \hspace{-.37cm}{\color{orange2}{\diagup}}(\algr{r})}{2} = |i-j|-t-1 - \frac{|i-j|}{2}.
  \]
  The $n_{\XX}(r) + n_{\OO}(r)$ points in $(\XX^R\sqcup\OO^R)\cap r$ contribute one more time to   $\inv(\x^R, \XX^R\sqcup \OO^R)$ than to $\inv(\y^R, \XX^R\sqcup \OO^R)$, and the remaning  $|i-j| - n_{\XX}(r) - n_{\OO}(r)$ points in $\XX^R\sqcup \OO^R$ at heights between $\alpha_i^R$ and $\alpha_j^R$  contribute one more time to   $\inv(\y^R, \XX^R\sqcup \OO^R)$ than to $\inv(\x^R, \XX^R\sqcup \OO^R)$. The remaining points in $\XX^R\sqcup \OO^R)$ contribute the same to both counts. Thus,
  \[
    -\inv(\y^R, \XX^R\sqcup \OO^R)+\inv(\x^R, \XX^R\sqcup \OO^R) = 2n_{\XX}(r) + 2n_{\OO}(r) - |i-j|.
  \]
  We see that
  \[
    \delta(\algl{r})+ \delta(\y)+ \delta(\algr{r})-\delta(\x) = n_{\OO}(r)+ n_{\XX}(r)-1.\qedhere
  \]
\end{proof}

Next, we set up some notation. 

Let $\HD_\infty, \HD_0, \HD_1$ be the Heegaard diagrams for $\eT_\infty, \eT_0, \eT_1$ from Section~\ref{sec:proof}. We can think of these three diagrams as the same diagram $(\Sigma, \alphas, \betas')$, but with three different choices of $\XX$ and $\OO$ markings, by identifying each of $\beta_{i, \infty}, \beta_{i, 0}, \beta_{i, 1}$ with a curve $\beta_i\in \betas'$; see Figure~\ref{fig:delta_diagram}. Denote the sets of $\XX$ and $\OO$ markings by $\XX_k$ and $\OO_k$ respectively, for $k\in \{\infty, 0, 1\}$;  Denote the four points of $\XX_k \sqcup \OO_k$ in regions borderding $\beta_i$ by  $X_{1,k}, X_{2,k}, X_{3,k}, X_{4,k}\in \XX_k\sqcup\OO_k$, indexed by relative height as seen in Figure~\ref{fig:delta_diagram}. 

\begin{figure}[h]
  \centering
  \labellist
  \pinlabel  \small{$X_{1, \infty}$ $X_{1,0}$} at 100 70
  \pinlabel  \small{$X_{1,1}$} at 250 70
  \pinlabel  \small{$X_{2, \infty}$ $X_{2,0}$ $X_{2,1}$} at 257 105
  \pinlabel  \small{$X_{3, \infty}$ $X_{3,1}$} at 100 235
  \pinlabel  \small{$X_{4,0}$ $X_{4,1}$} at 100 268
  \pinlabel  \small{$X_{3,0}$} at 250 235
  \pinlabel  \small{$X_{4, \infty}$} at 250 268
  \pinlabel \textcolor{blue!60!}{$\beta_{i}$} at 175 -15
  \pinlabel \textcolor{blue!60!}{$\beta_{i-1}$} at 340 -15
  \pinlabel \textcolor{blue!60!}{$\beta_{i+1}$} at 20 -15
  \pinlabel \textcolor{red}{$\alpha_{n}^L$} at 370 25
  \pinlabel \textcolor{red}{$\alpha_{i}^L$} at 370 89
  \pinlabel \textcolor{red}{$\alpha_{0}^L$} at 370 160
  \pinlabel \textcolor{red}{$\alpha_{0}^R$} at 370 190
  \pinlabel \textcolor{red}{$\alpha_{i}^R$} at 370 255
  \pinlabel \textcolor{red}{$\alpha_{n}^R$} at 370 320
  \endlabellist
  \includegraphics[scale = .61]{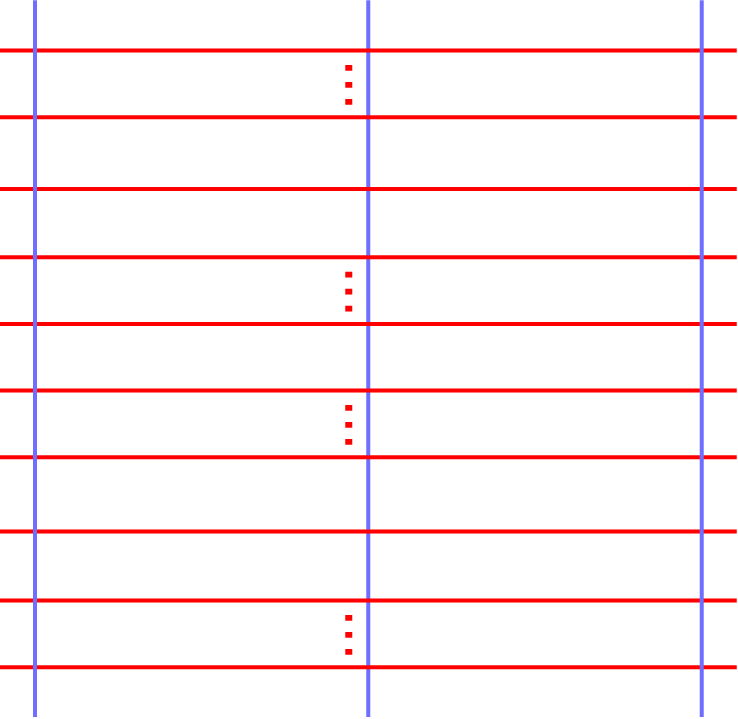}
  \vskip .4 cm
  \caption{The three diagrams $\HD_\infty, \HD_0, \HD_1$ seen as the same union of bordered grids, but with different markings.}
  \label{fig:delta_diagram}
\end{figure}

Given a set of intersection points $\x\in\alphas\cap\betas'$ with exactly one point on each $\beta\in \betas'$  and at most one point on each $\alpha\in \alphas$, there are corresponding generators $\x_{\infty}\in \SS(\HD_{\infty})$, $\x_0\in \SS(\HD_0)$, and $\x_1\in \SS(\HD_1)$. For $\set{k,l}\subset \set{\infty, 0, 1}$,  this induces a correspondence between generators of $\SS(\HD_k)$ and generators of $\SS(\HD_l)$, and we will use the notation $\x_k, \x_l$, or $\y_k, \y_l$, etc., to denote corresponding generators on different diagrams. To say this differently, we will think of a generator in any of the three sets $\SS(\HD_k)$ as a set of intersection points  $\x\in\alphas\cap\betas'$, and use the subscript $k$ to stress which generating set we are thinking of. 

In the following lemma, we compute the degree of any of $f_k$, $\phi_k$, and $\psi_k$ when the two elementary tangles associated to the given map have \emph{compatible orientations}, i.e. when they are identical as oriented tangles, except near a point; see Figure~\ref{lem:compat}. Note that since they are elementary tangles, $\eT_k$ and $\eT_l$ have compatible orientations exactly when they have the same oriented boundaries, so in particular the respective algebras are the same. 

\begin{figure}[ht]
  \centering
  \includegraphics[scale = .8]{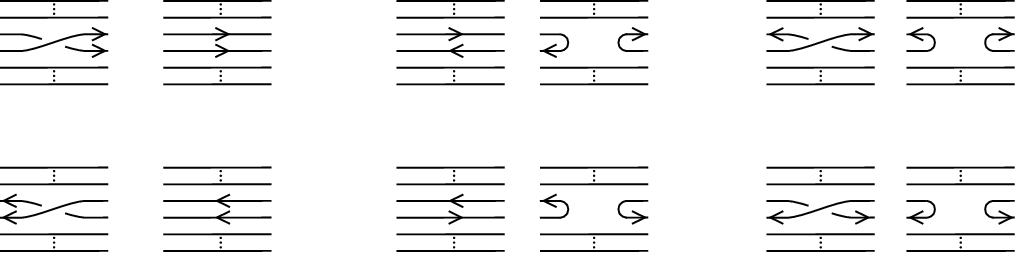}
  \vskip .4 cm
  \caption{The six pairs of elementary tangles with compatible orientations.}
  \label{fig:Tel_ori}
\end{figure}

\begin{lem}
  \label{lem:compat}
  If  $\eT_k$ and $ \eT_{k+1}$ have compatible orientations, then $\deg_{\delta} (f_k) = -1/2$. Similarly, if $\eT_k$ and $ \eT_{k+2}$ have compatible orientations, then $\deg_{\delta} (\phi_k) = 1/2$. For any orientation on $\eT_k$, $\deg_{\delta} (\psi_k) = 1$.
\end{lem}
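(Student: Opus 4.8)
The plan is to reduce the statement to a count of crossings/basepoints, exactly as in the proof of Lemma~\ref{lem:deltarect} and Lemma~\ref{lem:iota_gr}, since $f_k$, $\phi_k$, $\psi_k$ are defined by counting empty triangles-plus-pentagons, quadrilaterals-plus-hexagons, and heptagons respectively. Each of these polygons can be realized as a juxtaposition of rectangles of the first three types (think of a triangle from $\x\in\SS(\HD_k)$ to $\y\in\SS(\HD_{k+1})$ as a ``rectangle'' one of whose edges runs along $\beta_{i,k}$ then $\beta_{i,k+1}$), so the degree computation reduces to Lemma~\ref{lem:deltarect}. First I would observe that for a triangle $p\in\eTri_k(\x,\y)$ we have $n_\OO(p)+n_\XX(p)$ equal to exactly one $\XX$-point less than for the corresponding rectangle count, because emptiness of a triangle means the supported region contains no markings but the strip between the two relevant $\alpha$-arcs of the combined diagram in Figure~\ref{fig:delta_diagram} carries the single basepoint $X_{j,k}$ that determines whether the crossing is positive or negative.

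Concretely, the key steps are: (1) For each of the six compatible-orientation pairs of elementary tangles in Figure~\ref{fig:Tel_ori}, write the term $\algl{p}\otimes\y\otimes\algr{p}$ in $f_k(\x)$ corresponding to a minimal empty triangle or pentagon and compute $\delta(\algl{p})+\delta(\y_{k+1})+\delta(\algr{p})-\delta(\x_k)$ using the grading formulas for $\CDTDd$ combined with Lemma~\ref{lem:deltarect}, applied to the juxtaposition-of-rectangles decomposition of $p$. Since $f_k$ is a homomorphism of a definite $\delta$-degree (being a homomorphism between $\delta$-graded type~$\DD$ structures by Lemma~\ref{lem:cond1} together with the fact that $\CDTDd$ is $\delta$-graded), it suffices to compute this degree on a single generator and a single contributing polygon. (2) Track the basepoint contribution: going from $\eT_k$ to $\eT_{k+1}$ replaces one marking configuration by another, and the empty triangle $p$ realizing $f_k$ has $n_\OO(p)+n_\XX(p)=1$ (it covers exactly one of the four basepoints $X_{1,k},\dots,X_{4,k}$ adjacent to $\beta_i$), while for a minimal empty pentagon the analogous count is again controlled so that $\delta(\algl{p})+\delta(\y)+\delta(\algr{p})-\delta(\x)=-1/2$. (3) Repeat for $\phi_k$ (quadrilaterals/hexagons) and $\psi_k$ (heptagons), where the combinatorial types force $\lan$- or $\ran$-heights of $2$ and $3$ respectively, contributing extra ``dummy'' basepoints in the annular strips and shifting the degree up to $+1/2$ and $+1$.

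An alternative, and probably cleaner, route is to piggyback entirely on Lemma~\ref{lem:iota_gr}: fix one orientation, say the one making $\eT_k,\eT_{k+1},\eT_{k+2}$ all compatible, and observe that in that case all three $\iota$-type isomorphisms above are the identity, so $F_k$, $\Phi_k$, $\Psi_k$ are just $f_k$, $\phi_k$, $\psi_k$ box-tensored with identities, and the $\delta$-degree of the box tensor equals the $\delta$-degree of the middle factor. Then one computes $\deg_\delta(f_k)$ by comparing the $(M,A)$-bigraded statement: the ungraded polygon counts $\TT_k+\PP_k$, when performed in the bigraded diagrams with both $X$'s and $O$'s, count domains $r$ with $n_\OO(r)+n_\XX(r)=1$ (for triangles) or a fixed larger value whose $\delta$-contribution is still $-1/2$ (for pentagons, using that each extra annular wrap contributes a canceling pair to the $\delta$-grading). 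This reduces everything to the single identity $\delta(\algl{r})+\delta(\y)+\delta(\algr{r})-\delta(\x)=n_\OO(r)+n_\XX(r)-1$ of Lemma~\ref{lem:deltarect}, applied to each of the rectangle pieces of which $p$ is a juxtaposition, together with additivity of $\delta$ under juxtaposition.

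The main obstacle I anticipate is the bookkeeping for the pentagons, hexagons, and heptagons: unlike triangles and quadrilaterals, these are not juxtapositions of rectangles of the first three types in an obvious way, because they involve an arc running along all of $\beta_{i,k}$ and then $\beta_{i,k+1}$ and possibly wrapping through the annuli $\lan$ or $\ran$, so applying Lemma~\ref{lem:deltarect} directly requires first cutting such a polygon into pieces whose total $n_\OO+n_\XX$ is controlled. The resolution is the emptiness condition together with the explicit description of the annular regions and their unique markings from the paragraph defining $\lan$-height and $\ran$-height: an empty pentagon of, say, type~$\lan$ with $\lan$-height $1$ necessarily sweeps exactly one pentagonal region of $\lan$ and hence a definite number of basepoints, and similarly for hexagons ($\lan$-height $2$) and heptagons ($\lan$-height $3$). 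Carrying this out amounts to examining Figure~\ref{fig:delta_diagram} for each of the six compatible pairs, and the degree computation in each case is then a short arithmetic check; I would present one representative case in detail and assert the rest are analogous.
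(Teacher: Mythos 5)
Your proposal has the right general flavor---reduce the degree computation to Lemma~\ref{lem:deltarect}---but it contains a genuine gap that prevents the argument from closing, because Lemma~\ref{lem:deltarect} only applies to rectangles inside a \emph{single} Heegaard diagram, and the polygons that $f_k, \phi_k, \psi_k$ count are not such rectangles, nor are they juxtapositions of them. A triangle (or pentagon, etc.) from $\x \in \SS(\HD_k)$ to $\y \in \SS(\HD_{k+1})$ has a vertex at $u_k$ or $v_k$ where $\beta_{i,k}$ and $\beta_{i,k+1}$ cross; this corner is not an $\alpha$--$\beta$ intersection, so no cut along an $\alpha$- or $\beta$-curve can decompose the polygon into rectangles. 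Your ``obstacle'' paragraph recognizes that the pentagons, hexagons, heptagons are problematic, but it proposes cutting into ``pieces whose total $n_\OO+n_\XX$ is controlled,'' which is not actually possible within the class of rectangles of the first three types. What the paper does instead is introduce the \emph{straightening} $\str{p}$ of a rectangle-like polygon $p$ from $\x_k$ to $\y_l$: the unique rectangle in the single diagram $\HD_l$ with the same support, going from $\x_l$ to $\y_l$. One then writes
\[
  \delta(p) = \bigl[\delta(\algl{p}) + \delta(\y_l) + \delta(\algr{p}) - \delta(\x_l)\bigr] + \bigl[\delta(\x_l) - \delta(\x_k)\bigr],
\]
where the first bracket equals $\delta(\str{p}) = n_\OO(\str{p}) + n_\XX(\str{p}) - 1$ by Lemma~\ref{lem:deltarect}, and the second bracket compares the gradings of \emph{corresponding} generators in the two differently-marked diagrams. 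That second bracket is a nontrivial marking-change computation depending on the relative height of the $\beta_i$-component of $\x$ (the paper's Table~\ref{tab:delta_gen}); your proposal has no analogue of it, and without it the base-point bookkeeping cannot be completed. Also note that an empty triangle satisfies $n_\OO(p)+n_\XX(p)=0$, not $1$ as you claim, which is a symptom of conflating the polygon with its straightening.

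A second gap: you assert that because $f_k$ is a type~$\DD$ homomorphism ``it suffices to compute this degree on a single generator and a single contributing polygon.'' This is circular. Lemma~\ref{lem:cond1} establishes $\partial f_k = 0$ in the ungraded setting; a chain map need not be homogeneous. Homogeneity of $f_k$, $\phi_k$, $\psi_k$ is part of what Lemma~\ref{lem:compat} asserts, and the paper establishes it precisely by computing the $\delta$-degree of every contributing polygon type and confirming they all coincide (Table~\ref{tab:delta}). Finally, your ``cleaner route'' via Lemma~\ref{lem:iota_gr} is not well-posed: the isomorphisms $\iota_k', \iota_k''$ compare different orientations on the \emph{outer} tangles $\T'_k, \T''_k$ and say nothing about $\deg_\delta(f_k)$, which is the quantity Lemma~\ref{lem:compat} is about; reducing $\deg_\delta(F_k)$ to $\deg_\delta(f_k)$ just restates the question. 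That route belongs in the proof of Proposition~\ref{prop:smallgradings}/\ref{prop:biggradings}, not here.
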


\begin{proof}
  Suppose $\eT_k$ and $\eT_l$ have compatible orientations. Recall our maps are defined by counting polygons. For a polygon $p$ from $\x_k$ to $\y_l$, define
  \[
    \delta(p) = \delta(\algl{p})+ \delta(\y_l)+ \delta(\algr{p})-\delta(\x_k).
  \]

  Observe that triangle-like polygons always connect pairs of corresponding generators, so if $p$ is a triangle-like polygon from $\x_k$ to $\y_l$, then $\y_l=\x_l$, and we have
  \[
    \delta(p) =  \delta(\x_l)-\delta(\x_k).
  \]
  Computing the degrees of rectangle-like polygons will require more work. Given a rectangle-like polygon $p$ from $\x_k\in \SS(\HD_k)$ to $\y_l\in \SS(\HD_l)$, define the \emph{straightening} of $p$ to be the unique rectangle $\str{p}$ in $\HD_l$ from $\x_l$ to $\y_l$, and  observe that
  \[
    \delta(p) = [\delta(\algl{p})+ \delta(\y_l)+ \delta(\algr{p})-\delta(\x_l)] + [\delta(\x_l) - \delta(\x_k)].
  \]
  Note that $\algl{p} = \algl{\str{p}}$ and $\algr{p} = \algr{\str{p}}$, so the sum in the first set of brackets on the right hand side is $\delta(\str{p})$, which by Lemma~\ref{lem:deltarect} equals $ \numOO{\str{p}}+\numXX{\str{p}}-1$. The difference in the second set of brackets compares the degrees of corresponding generators, and can be reduced to comparing how the different $\XX$ and $\OO$ markings for $\HD_l$ and for $\HD_k$ affect the degree. In summary,
  \begin{equation*}
    \delta(p) =
    \begin{cases}
      \delta(\x_l)-\delta(\x_k) &  \text{if $p$ is triangle-like,}\\
      \delta(\str{p})+ \delta(\x_l)-\delta(\x_k) & \text{if $p$ is rectangle-like.}
    \end{cases}
  \end{equation*}

  For each point $q \in \alphas \cap \betas'$, define the \emph{relative height} $\hgt (q)$ as follows. We say that
  \begin{enumerate}
    \item $\hgt (q) = 1$ if $q$ is below $\leftbdy{\alpha_i}$ as seen in Figure~\ref{fig:delta_diagram};
    \item $\hgt (q) = 2$ if $q \in \leftbdy{\alpha_i}$;
    \item $\hgt (q) = 3$ if $q$ is above $\leftbdy{\alpha_i}$ and in the bottom grid  as seen in Figure~\ref{fig:delta_diagram} (this is what we called the \emph{left} grid in Section \ref{ssec:tf});
    \item $\hgt (q) = 4$ if $q$ is below $\rightbdy{\alpha_i}$ and in the top grid as seen in Figure~\ref{fig:delta_diagram}  (this is what we called the \emph{right} grid in Section \ref{ssec:tf});
    \item $\hgt (q) = 5$ if $q \in \rightbdy{\alpha_i}$; and
    \item $\hgt (q) = 6$ if $q$ is above $\rightbdy{\alpha_i}$ as seen in Figure~\ref{fig:delta_diagram}.
  \end{enumerate}
  Below, we provide a simple formula for  $\delta(\x_l) - \delta(\x_k)$ based on the relative height of the point of $\x$ that lies on $\beta_i$. Recall that
  \begin{align*}
    \delta(\x_l) - \delta(\x_k) &= \inv(\x_l^R)- \frac 1 2 \inv(\x_l^R, \XX_l^R\sqcup \OO_l^R)  + \frac 1 2 \inv(\XX_l^R) +\frac 1 2 \inv(\OO_l^R) + \frac 1 2 |\XX_l^R|\\
    & \quad -\inv(\x_l^L)+\frac 1 2 \inv(\x_l^L, \XX_l^L\sqcup \OO_l^L)  - \frac 1 2 \inv(\XX_l^L) - \frac 1 2 \inv(\OO_l^L)-\frac 1 2  |\OO_l^L|\\
    & \quad -\inv(\x_k^R)+ \frac 1 2 \inv(\x_k^R, \XX_k^R\sqcup \OO_k^R)  - \frac 1 2 \inv(\XX_k^R) -\frac 1 2 \inv(\OO_k^R) - \frac 1 2 |\XX_k^R|\\
    & \quad + \inv(\x_k^L)-\frac 1 2 \inv(\x_k^L, \XX_k^L\sqcup \OO_k^L)  + \frac 1 2 \inv(\XX_k^L) +\frac 1 2 \inv(\OO_k^L)+\frac 1 2  |\OO_k^L|.
  \end{align*}
  We will group up some terms on the right hand side to simplify. Since $\x_k$ and $\x_l$ are the same set $\x\in\alphas\cap\betas'$, then
  \begin{align*}
    \inv(\x_l^R) &= \inv(\x_k^R) = \inv(\x^R),\\
    \inv(\x_l^L) & = \inv(\x_k^L) = \inv(\x^L),
  \end{align*}
  and since the two tangles have compatible orientations, $|\XX_l^R| = |\XX_k^R|$ and $|\OO_l^L| = |\OO_k^L|$. Since the sets of basepoints for the two diagrams only differ in their subsets $\{X_{1,k}, X_{2,k}, X_{3,k}, X_{4,k}\}$ and $\{X_{1,l}, X_{2,l}, X_{3,l}, X_{4,l}\}$, and further $X_{t,k}$ and $X_{t,l}$ are both $X$s or both $O$s for $1\leq t \leq 4$, comparing $\inv(\XX_l^R)$ to $\inv(\XX_k^R)$ reduces to comparing $\inv(\XX_l^R\cap \{X_{3,l}, X_{4,l}\})$ to $\inv(\XX_k^R\cap\{X_{3,k}, X_{4,k}\})$. Each of the latter counts is nonzero exactly when the corresponding tangle is a negative crossing with both strands at the crossing oriented to the left. So
  \begin{equation*}
    \inv(\XX_l^R) - \inv(\XX_k^R) =
    \begin{cases}
      1 &  \text{if $\eT_l$ is a crossing, $X_{3,l}, X_{4,l}$ are both $X$s, and $k\neq l$,}\\
      -1 &  \text{if $\eT_k$ is a crossing, $X_{3,k}, X_{4,k}$ are both $X$s, and $k\neq l$,}\\
      0 & \text{otherwise.}
    \end{cases}
  \end{equation*}
  Similarly,
  \begin{equation*}
    \inv(\OO_l^R) - \inv(\OO_k^R) =
    \begin{cases}
      1 &  \text{if $\eT_l$ is a crossing, $X_{3,l}, X_{4,l}$ are both $O$s, and $k\neq l$,}\\
      -1 &  \text{if $\eT_k$ is a crossing, $X_{3,k}, X_{4,k}$ are both $O$s, and $k\neq l$,}\\
      0 & \text{otherwise.}
    \end{cases}
  \end{equation*}
  Note that since we are assuming compatible orientations, the non-zero counts in the displayed equations above occur exactly when $\{k,l\} = \{\infty, 0\}$. More precisely,
  \begin{equation*}
    \inv(\XX_l^R) + 	\inv(\OO_l^R) - \inv(\XX_k^R) - \inv(\OO_k^R) =
    \begin{cases}
      1 &  \text{if $(k,l) = (0,\infty)$,}\\
      -1 &  \text{if $(k,l) = (\infty, 0)$,}\\
      0 & \text{otherwise.}
    \end{cases}
  \end{equation*}
  In the elementary tangles that we consider, crossings only appear in the right grid, so it follows that $\inv(\XX_l^L) - \inv(\XX_k^L)=0$ and $\inv(\OO_l^L) - \inv(\OO_k^L)=0$.

  Now we look at the four terms that count inversions between generators and basepoints.  Since $\x_k$ and $\x_l$ are the same set of points on the common diagram, and $\XX_k\sqcup \OO_k$ and $\XX_l\sqcup \OO_l$ only differ in their subsets $\{X_{1,k}, X_{2,k}, X_{3,k}, X_{4,k}\}$ and $\{X_{1,l}, X_{2,l}, X_{3,l}, X_{4,l}\}$, we get
  \[
    \frac 1 2 \inv(\x_k^R, \XX_k^R\sqcup \OO_k^R) - \frac 1 2 \inv(\x_l^R, \XX_l^R\sqcup \OO_l^R) = \frac 1 2 \inv(\x_k^R, \{X_{3,k}, X_{4,k}\}) - \frac 1 2 \inv(\x_l^R, \{X_{3,l}, X_{4,l}\})),
  \]
  and
  \[
    \frac 1 2 \inv(\x_l^L, \XX_l^L\sqcup \OO_l^L)  -\frac 1 2 \inv(\x_k^L, \XX_k^L\sqcup \OO_k^L) =\frac 1 2  \inv(\x_l^L, \{X_{1,l}, X_{2,l}\}) - \frac 1 2 \inv(\x_k^L, \{X_{1,k}, X_{2,k}\})).
  \]
  Note that $X_{j, k}$ and $X_{j, l}$ are either in the same region, or lie in adjacent regions separated by $\beta_i$, so  for $p\in \x$,  the pairs $(p, X_{j,k})$ and $(p, X_{j,l})$ may only contribute differently to the above counts if $p$ lies on $\beta_i$. Let $x$ be the point in $\x$ that lies on $\beta_i$. If $\hgt(x)\leq 3$, then $x$ is in the left grid, and the sum of the four terms reduces to
  \[
    \frac 1 2  \inv(x, \{X_{1,l}, X_{2,l}\}) - \frac 1 2 \inv(x, \{X_{1,k}, X_{2,k}\})),
  \]
  By inspection of Figure~\ref{fig:delta_diagram}, we see that this count is zero if $\{k,l\} = \{\infty, 0\}$ or if $k=l$,  it is $\frac 1 2 $ if $k=1$, $l\in \{\infty, 0\}$, and $\hgt(x)=1$, or if $l=1$, $k\in \{\infty, 0\}$, and $\hgt(x)\in \{2,3\}$, and it is $-\frac 1 2$ in the remaining cases.  If $\hgt(x)\geq 4$, then $x$ is in the right grid, and the formula reduces to
  \[
    -\frac 1 2 \inv(x, \{X_{3,l}, X_{4,l}\})+ \frac 1 2 \inv(x, \{X_{3,k}, X_{4,k}\}),
  \] and one can compute this number by direct inspection of Figure~\ref{fig:delta_diagram} again. 

  To sum up, we reduced the grading difference to
  \begin{align*}
    \delta(\x_l) - \delta(\x_k) = & \frac 1 2 (\inv(\XX_l^R) + \inv(\OO_l^R) - \inv(\XX_k^R) - \inv(\OO_k^R))\\
    &+
    \begin{cases}
      \frac 1 2  \inv(x, \{X_{1,l}, X_{2,l}\}) - \frac 1 2 \inv(x, \{X_{1,k}, X_{2,k}\})) &  \text{if $\hgt(x)\leq 3$,}\\
      -\frac 1 2 \inv(x, \{X_{3,l}, X_{4,l}\})+ \frac 1 2 \inv(x, \{X_{3,k}, X_{4,k}\}) &  \text{if $\hgt(x)\geq 4$,}
    \end{cases}
  \end{align*}
  and computed the right hand side, depending on $k$ and $l$. The final result is summarized in Table~\ref{tab:delta_gen}.
  \begin{table}[h]
    \captionsetup{belowskip=10pt}
    \centering
    \begin{tabular}{|c||c|c|c|}
      \hline
      \backslashbox{$\hgt (x)$}{$(k, l)$} & $(\infty, 0)$ & $(0,1)$  & $(1, \infty)$ \\
      \hline \hline
      $1$ & $-1/2$ & $-1/2$ & $1/2$ \\
      \hline
      $2$ & $-1/2$ & $1/2$ & $-1/2$ \\
      \hline
      $3$ & $-1/2$ & $1/2$ & $-1/2$ \\
      \hline
      $4$ & $-1/2$ & $1/2$ & $-1/2$ \\
      \hline
      $5$ & $1/2$ & $-1/2$ & $-1/2$ \\
      \hline
      $6$ & $-1/2$ & $-1/2$ & $1/2$ \\
      \hline
    \end{tabular}
    \caption{The difference $\delta(\x_l) - \delta(\x_k)$, depending on $k$, $l$, and $\hgt(x)$, where  $x$ is the component of $\x$ in $\beta_i$.}
    \label{tab:delta_gen}
  \end{table}

  We have now done enough preliminary work to allow for a quick and simple computation of the degree of any polygon.

  We compute the change in the $\delta$-grading under $f_\infty \colon \CDTDd (\HD_\infty) \to \CDTDd (\HD_0)$ as follows. Recall that $f_{\infty} = \TT_{\infty}+ \PP_{\infty}$, and let $p$ be a triangle or a pentagon from $\x_\infty \in \SS (\HD_\infty)$ to $\y_0 \in \SS (\HD_0)$. We are to determine the value of $\delta(p)$. If $p$ is a triangle, this is already given by $\delta(\x_l) - \delta(\x_k)$, see Table~\ref{tab:delta_gen}. If $p$ is a pentagon, we need to understand $\delta(\str{p})$. 

  As above, if $\x$ is the set of intersection points in $\alphas \cap \betas'$ that corresponds to $\x_\infty$ and $\x_0$, then  let $x$ be the component of $\x$ in $\beta_i$. Similarly, if $\y$ is the set of intersection points in $\alphas \cap \betas'$ that corresponds to $\y_0$, then let $y$ be the component of $\y$ in $\beta_i$.

  By Lemma~\ref{lem:deltarect}, $\delta(\str{p}) = \numOO{\str{p}}+\numXX{\str{p}}-1$. Since $p$ is empty, and clearly $p$ and $\str{p}$ contain the same basepoints in regions not bordering $\beta_i$, then $\str{p}$ maybe only contain basepoints in $\{X_{1,0}, X_{2,0}, X_{3,0}, X_{4,0}\}$. So $\numOO{\str{p}}+\numXX{\str{p}}$ only depends on $\hgt(x)$ and $\hgt(y)$, and on the type of $p$. For example, if $\hgt(x)=4$ and $\hgt(y)=6$, then the pentagon $p$ must be of type $\ran$, and hence its straightening only contains $X_{3,0}$; see Figure~\ref{fig:delta_inf_0}.
  \begin{figure}[h]
    \centering
    \labellist
    \pinlabel  \small{$X_{1,\infty}$} at 55 70
    \pinlabel  \small{$X_{2,\infty}$} at 135 105
    \pinlabel  \small{$X_{3,\infty}$} at 55 235
    \pinlabel  \small{$X_{4,\infty}$} at 135 268
    \pinlabel  \small{$X_{1,0}$} at 360 70
    \pinlabel  \small{$X_{2,0}$} at 440 105
    \pinlabel  \small{$X_{3,0}$} at 440 235
    \pinlabel  \small{$X_{4,0}$} at 360 268
    \pinlabel \textcolor{red}{$\alpha_{n}^L$} at 510 25
    \pinlabel \textcolor{red}{$\alpha_{i}^L$} at 510 89
    \pinlabel \textcolor{red}{$\alpha_{0}^L$} at 510 160
    \pinlabel \textcolor{red}{$\alpha_{0}^R$} at 510 190
    \pinlabel \textcolor{red}{$\alpha_{i}^R$} at 510 255
    \pinlabel \textcolor{red}{$\alpha_{n}^R$} at 510 320
    \pinlabel $\HD_{\infty}$ at 100 -15
    \pinlabel $\HD_{0}$ at 405 -15
    \endlabellist
    \includegraphics[scale = .61]{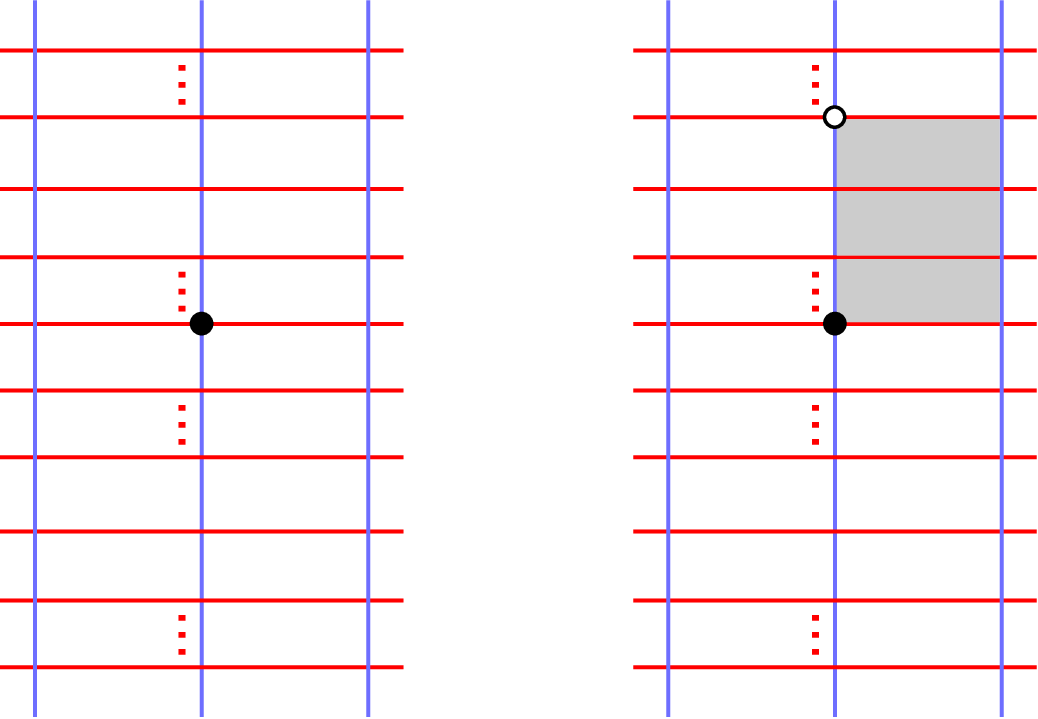}
    \vskip .5 cm
    \caption{Left: The diagram $\HD_{\infty}$; Right: The diagram $\HD_0$. The black dot, white dot, and grey rectangle are an example of of a triple $x,y,\str{p}$, where $p$ is a pentagon from a generator $\x_{\infty}$ to $\y_0$.}
    \label{fig:delta_inf_0}
  \end{figure}
  One computes the value of $\str{p}$ for all the other possibilities of heights of generators and types of pentagons similarly, by inspecting the Heegaard diagrams directly. We summarize the computation in Table~\ref{tab:delta}.
  \begin{table}[h]
    \captionsetup{belowskip=10pt}
    \centering
    \begin{tabular}{|c||c||c|c|c|c||c||c|}
      \hline
      $\hgt (x)$ & $\delta (\x_0) - \delta (\x_\infty)$ & $p$ in & Type & $\hgt (y)$ & $(\OO_0 \cup \XX_0) \cap \str{p}$ & $\delta (\str{p})$ & $\delta(p)$\\
      \hline \hline
      \mr{2}{$1$} & \mr{2}{$-\frac{1}{2}$} & $\eTri_{\infty}$ & $\nan$ & $1$ & &  & \mr{3}{$-\frac{1}{2}$}\\
      \cline{3-7}
      & & $\ePent_{\infty}$ & $\lan$ & $2, 3$, or $4$ & $X_{4,0}$ & $1-1=0$ &\\
      \cline{1-7}
      \mr{2}{$2$} & \mr{2}{$-\frac{1}{2}$} & $\eTri_{\infty}$ & $\nan$ & $2$ & &  &\\
      \cline{3-8}
      & & $\ePent_{\infty}$ & \multicolumn{5}{c|}{Does not exist}\\
      \hline
      \mr{2}{$3$} & \mr{2}{$-\frac{1}{2}$} & $\eTri_{\infty}$ & $\nan$ & $3$ & &  & \mr{4}{$-\frac{1}{2}$}\\
      \cline{3-7}
      & & $\ePent_{\infty}$ & $\ran$ & $5, 6, 1$, or $2$ & $X_{3,0}$ & $1-1=0$ &\\
      \cline{1-7}
      \mr{2}{$4$} & \mr{2}{$-\frac{1}{2}$} & $\eTri_{\infty}$ & $\nan$ & $4$ & & &\\
      \cline{3-7}
      & & $\ePent_{\infty}$ & $\ran$ & $5, 6, 1$, or $2$ & $X_{3,0}$ & $1-1=0$ &\\
      \hline
      \mr{2}{$5$} & \mr{2}{$\frac{1}{2}$} & $\eTri_{\infty}$ & \multicolumn{5}{c|}{Does not exist}\\
      \cline{3-8}
      & & $\ePent_{\infty}$ & $\lan$ & $2, 3$, or $4$ & $\emptyset$ & $0-1=-1$ & \mr{3}{$-\frac{1}{2}$}\\
      \cline{1-7}
      \mr{2}{$6$} & \mr{2}{$-\frac{1}{2}$} & $\eTri_{\infty}$ & $\nan$ & $6$ & &  &\\
      \cline{3-7}
      & & $\ePent_{\infty}$ & $\lan$ & $2, 3$, or $4$ & $X_{4,0}$ & $1-1=0$ &\\
      \hline
    \end{tabular}
    \caption{The computation of $\deg_{\delta}(f_{\infty})$, based on the relative height of the initial generator. We get $-1/2$ in all cases.}
    \label{tab:delta}
  \end{table}

  By Table~\ref{tab:delta}, we see that $f_\infty$ is homogeneous with respect to the $\delta$-grading, and shifts it by $-1/2$. 

  Calculations for $f_0, f_1$, and also for $\phi_k$ and $\psi_k$, are completely analogous. We see that $\deg_{\delta} (f_k) = -1/2$, $\deg_{\delta} (\psi_k) = 1/2$, and $\deg_{\delta} (\psi_k) = 1$.
\end{proof}

For each $k \in \set{\infty, 0, 1}$, we now define $\ee_k$ by
\[
  \ee_k = e_k - e'_k - e_k''.
\]
This definition is consistent with our definitions of $e_k$, $e'_k$, and $e_k''$. Note that if $\eT_{\infty}$ contains a positive crossing, then $(\ee_\infty, \ee_0,\ee_1)=(0,0,0)$, and if $\eT_{\infty}$ contains a negative crossing, then $(\ee_\infty, \ee_0,\ee_1)=(-1,0,1)$. In the following, we say that $\eT_\infty$ is \emph{positive} if it contains a positive crossing, and \emph{negative} otherwise.

\begin{prop}
  \label{prop:smallgradings}
  The morphisms $f_k$, $\phi_k$, and $\psi_k$ are homogeneous with respect to the $\delta$-grading, and their $\delta$-degrees are as follows:
  \begin{align*}
    \deg_{\delta} (f_\infty) & = \frac{\ee_\infty}{2}, \quad & \deg_{\delta} (f_0) & = -\frac{1}{2}, \quad & \deg_{\delta} (f_1) & = \frac{\ee_1-1}{2};\\
    \deg_{\delta} (\phi_\infty) & = \frac{- \ee_1 + 1}{2}, \quad & \deg_{\delta} (\phi_0) & = \frac{- \ee_\infty}{2}, \quad & \deg_{\delta} (\phi_1) & = \frac{1}{2};\\
    \deg_{\delta} (\psi_\infty) & = 1, \quad & \deg_{\delta} (\psi_0) & = 1, \quad & \deg_{\delta} (\psi_1) & = 1.
  \end{align*}
\end{prop}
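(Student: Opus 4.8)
The plan is to read off the $\delta$-degrees from Lemma~\ref{lem:compat}, which computes them whenever the two elementary tangles involved are compatibly oriented, together with Lemma~\ref{lem:iota_gr}, which records the grading shift produced by a change of orientation. The observation that makes this work is that in the fully blocked setting the polygons counted by $f_k$, $\phi_k$, $\psi_k$ must avoid \emph{all} the markers, both the $X$'s and the $O$'s; hence these morphisms, as maps of the underlying $\F{2}$-modules, depend only on the common unoriented combined diagram of Figure~\ref{fig:hd_cut}, and reversing the orientation of one of $\eT_\infty,\eT_0,\eT_1$ (equivalently, relabelling some markers) changes only the $\delta$-grading, by the uniform shift of Lemma~\ref{lem:iota_gr}. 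The map $\psi_k=\K_k$ counts heptagons, which run from $\SS(\HD_k)$ to itself, so its source and target are the \emph{same} graded structure $\CDTDd(\HD_k)$; no mismatch can occur, and Lemma~\ref{lem:compat} gives $\deg_\delta(\psi_k)=1$ for every $k$ and every orientation, which is the last line of the table.

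For $f_k\colon\HD_k\to\HD_{k+1}$ and $\phi_k\colon\HD_k\to\HD_{k+2}$, write $\HD_\ell$ for the target diagram. If $\eT_k$ and $\eT_\ell$ are compatibly oriented, Lemma~\ref{lem:compat} gives $\deg_\delta(f_k)=-1/2$ or $\deg_\delta(\phi_k)=1/2$ directly. If not, then exactly one of $\eT_k,\eT_\ell$ is the crossing $\eT_\infty$ and the other is a crossingless smoothing; re-orient $\eT_\infty$, and if necessary the smoothing as well, so as to obtain an oriented skein datum in which the two elementary tangles involved \emph{are} compatibly oriented. Since the polygon counts are unchanged, $f_k$ (resp.\ $\phi_k$) factors as a composite of re-marking isomorphisms with the corresponding compatibly-oriented map, the latter of $\delta$-degree $-1/2$ (resp.\ $1/2$) by Lemma~\ref{lem:compat}; and by Lemma~\ref{lem:iota_gr} each re-marking isomorphism is homogeneous of $\delta$-degree $\tfrac{1}{2}\bigl(\Neg(\text{new})-\Neg(\text{old})\bigr)$. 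As only the crossing tangle $\eT_\infty$ can contribute negative crossings, this shows that $f_k$ and $\phi_k$ are homogeneous of $\delta$-degree
\[
  \left(-\tfrac{1}{2}\ \text{or}\ \tfrac{1}{2}\right)\ \pm\ \tfrac{1}{2}\bigl(\nu^\sharp-\nu\bigr),
\]
where $\nu=\Neg(\eT_\infty)$, $\nu^\sharp$ is the number of negative crossings of $\eT_\infty$ after the re-orientation making it compatible with the relevant smoothing, and the sign is $+$ if $\eT_\infty$ is the source of the map and $-$ if it is the target. For the two maps that involve no crossing, namely $f_0\colon\HD_0\to\HD_1$ and $\phi_1\colon\HD_1\to\HD_0$, the correction term is absent, so $\deg_\delta(f_0)=-1/2$ and $\deg_\delta(\phi_1)=1/2$ unconditionally.

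It then remains to identify $\nu^\sharp$. Examining the boundary orientations of the three elementary tangles (cf.\ Figure~\ref{fig:delta_diagram}): the oriented boundary of the horizontal smoothing $\eT_0$ can equal that of a crossing only when the two strands at the crossing run in the same horizontal direction, which forces that crossing to be negative; hence $\nu^\sharp=1$ whenever the relevant smoothing is $\eT_0$, so $\nu^\sharp-\nu=1-\nu=\ee_\infty+1$. Dually, the oriented boundary of the cap--cup smoothing $\eT_1$ forces the matching crossing to be positive, so $\nu^\sharp=0$ whenever the relevant smoothing is $\eT_1$, giving $\nu^\sharp-\nu=-\nu=-\ee_1$. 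Here we use that $\eT_0$ and $\eT_1$ are crossingless, so $\ee_\infty=-\Neg(\eT_\infty)\in\{0,-1\}$ and $\ee_1=\Neg(\eT_\infty)\in\{0,1\}$. Substituting into the displayed formula — with $\eT_\infty$ as source for $f_\infty$ and $\phi_\infty$ and as target for $f_1$ and $\phi_0$, and with smoothing $\eT_0$ for $f_\infty$ and $\phi_0$ and smoothing $\eT_1$ for $f_1$ and $\phi_\infty$ — gives $\deg_\delta(f_\infty)=-\tfrac{1}{2}+\tfrac{1}{2}(\ee_\infty+1)=\ee_\infty/2$, $\deg_\delta(\phi_0)=\tfrac{1}{2}-\tfrac{1}{2}(\ee_\infty+1)=-\ee_\infty/2$, $\deg_\delta(f_1)=-\tfrac{1}{2}-\tfrac{1}{2}(-\ee_1)=(\ee_1-1)/2$, and $\deg_\delta(\phi_\infty)=\tfrac{1}{2}+\tfrac{1}{2}(-\ee_1)=(-\ee_1+1)/2$, which is the assertion of the proposition.

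The main obstacle is the geometric input of the third paragraph: one must verify that the re-orientation of $\eT_\infty$ dictated by matching $\eT_0$ (resp.\ $\eT_1$) always produces the configuration with parallel (resp.\ anti-parallel) strands, so that $\nu^\sharp$ — and hence the correction term — is independent of the a priori unconstrained orientations of the smoothings. Carrying this out carefully amounts to tracking boundary orientations and the effect of reversing a strand on the sign of the crossing; alternatively, one could bypass the factorization entirely and rerun the computation behind Lemma~\ref{lem:compat} (via Lemma~\ref{lem:deltarect}) directly for the incompatible marker configurations, re-deriving the analogues of Tables~\ref{tab:delta_gen} and \ref{tab:delta}. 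A subsidiary bookkeeping point is to fix once and for all the direction conventions for the grading shift in Lemma~\ref{lem:iota_gr}.
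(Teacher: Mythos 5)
Your proof is correct and follows essentially the same route as the paper: factor each map through a compatibly-oriented version, apply Lemma~\ref{lem:compat} to get the $\pm 1/2$, and account for the re-orientation isomorphisms via Lemma~\ref{lem:iota_gr}. The one point worth noting is that you supply a boundary-orientation argument for why the crossing tangle compatible with $\eT_0$ must be negative and the one compatible with $\eT_1$ must be positive (your $\nu^\sharp=1$ versus $\nu^\sharp=0$), whereas the paper asserts this by inspecting Figure~\ref{fig:Tel_ori}; the two are equivalent, and your version makes the input explicit.
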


\begin{proof}
  Consider $f_k \colon \CDTDd (\eT_k, n) \to \CDTDd (\eT_{k+1}, n)$. Let $\eT_{k, f_k}$ and $\eT_{k+1, f_k}$ be oriented tangles such that
  \begin{enumerate}
    \item $\eT_k$ and $\eT_{k, f_k}$ are the same tangle after forgetting orientations;
    \item $\eT_{k+1}$ and $\eT_{k+1, f_k}$ are the same tangle after forgetting orientations; and
    \item $\eT_{k, f_k}$ and $\eT_{k+1, f_k}$ have compatible orientations.
  \end{enumerate}
  Then it is evident that we can factorize $f_k$ into $f_k = \eiota_{k+1, f_k} \circ \orf_k \circ \eiota_{k, f_k}$, where
  \begin{enumerate}
    \item $\eiota_{k, f_k} \colon \CDTDd (\eT_k, n) \to \CDTDd (\eT_{k, f_k}, n)$ is the map induced by the natural correspondence between generators and domains;
    \item $\orf_k \colon \CDTDd (\eT_{k, f_k}, n) \to \CDTDd (\eT_{k+1, f_k}, n)$ is the map $f_k$ described in Section~\ref{sec:proof}; and
    \item $\eiota_{k+1, f_k} \colon \CDTDd (\eT_{k+1, f_k}, n) \to \CDTDd (\eT_{k+1}, n)$ is the map induced by the natural correspondence between generators and domains.
  \end{enumerate}
  Since $\eT_{k, f_k}$ and $\eT_{k+1, f_k}$ by definition have compatible orientations, by Lemma~\ref{lem:compat}, we immediately see that $\degd (\orf_k) = -1/2$.

  Observe now that $\eT_k$, $\eT_{k, f_k}$, and $\eT_{k, f_{k-1}}$ all have no crossings when $k \in \set{0, 1}$. Therefore, by Lemma~\ref{lem:iota_gr}, we also know the $\delta$-degrees of $\eiota_{k, f_k}$ and $\eiota_{k, f_{k-1}}$:
  \[
    \degd (\eiota_{k, f_k}) = \degd (\eiota_{k, f_{k-1}}) = 0 \qquad \text{if } k \in \set{0, 1}.
  \]

  Consider now the case $k = \infty$. Observe that $\eT_{\infty, f_\infty}$ must be negative, while $\eT_{\infty, f_1}$ must be positive. Thus, Lemma~\ref{lem:iota_gr} now implies that
  \[
    \degd (\eiota_{\infty, f_\infty}) =
    \begin{cases}
      1/2 & \text{if } \eT_\infty \text{ is positive;}\\
      0 & \text{if } \eT_\infty \text{ is negative,}
    \end{cases}
    \qquad \degd (\eiota_{\infty, f_1}) =
    \begin{cases}
      0 & \text{if } \eT_\infty \text{ is positive;}\\
      1/2 & \text{if } \eT_\infty \text{ is negative.}
    \end{cases}
  \]

  Adding the $\delta$-degrees of $\eiota_{k, f_k}$, $\orf_k$, and $\eiota_{k+1, f_k}$ together, we obtain
  \[
    \brac{\degd (f_\infty), \degd (f_0), \degd (f_1)} =
    \begin{cases}
      (0, -1/2, -1/2) & \text{if } \eT_\infty \text{ is positive;}\\
      (-1/2, -1/2, 0) & \text{if } \eT_\infty \text{ is negative.}
    \end{cases}
  \]
  Comparing this with the values of $\ee_k$ indicated above the current proposition confirms the $\delta$-degrees of $f_k$.

  A similar analysis ascertains the $\delta$-degrees of $\phi_k$ and $\psi_k$.
\end{proof}

\begin{prop}
  \label{prop:biggradings}
  The morphisms $F_k$, $\Phi_k$, and $\Psi_k$ are homogeneous with respect to the $\delta$-grading, and their $\delta$-degrees are as follows:
  \begin{align*}
    \deg_{\delta} (F_\infty) & = \frac{e_\infty}{2}, \quad & \deg_{\delta} (F_0) & = \frac{e_0 - 1}{2}, \quad & \deg_{\delta} (F_1) & = \frac{e_1 - 1}{2};\\
    \deg_{\delta} (\Phi_\infty) & = \frac{- e_1 + 1}{2}, \quad & \deg_{\delta} (\Phi_0) & = \frac{- e_\infty}{2}, \quad & \deg_{\delta} (\Phi_1) & = \frac{- e_0 + 1}{2};\\
    \deg_{\delta} (\Psi_\infty) & = 1, \quad & \deg_{\delta} (\Psi_0) & = 1, \quad & \deg_{\delta} (\Psi_1) & = 1.
  \end{align*}
\end{prop}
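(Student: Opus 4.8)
The plan is to derive Proposition~\ref{prop:biggradings} from Proposition~\ref{prop:smallgradings} by an entirely formal computation, exploiting that the $\delta$-grading is additive under composition of morphisms and under the box tensor product. First I would record the $\delta$-degrees of all the pieces appearing in the definitions $F_k = \iota_k' \boxtimes f_k \boxtimes \iota_k''$, $\Phi_k = (\iota_{k+1}' \circ \iota_k') \boxtimes \phi_k \boxtimes (\iota_{k+1}'' \circ \iota_k'')$, and $\Psi_k = \Id_k' \boxtimes \psi_k \boxtimes \Id_k''$. By Lemma~\ref{lem:iota_gr}, the isomorphism $\iota_k'$ is homogeneous and contributes $e_k'/2$ to the $\delta$-degree, $\iota_k''$ contributes $e_k''/2$, while $\Id_k'$ and $\Id_k''$ contribute $0$; and by Proposition~\ref{prop:smallgradings}, $f_k$, $\phi_k$, $\psi_k$ are homogeneous with the stated $\delta$-degrees. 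Since box-tensoring and composing homogeneous morphisms produces a homogeneous morphism whose $\delta$-degree is the sum of those of its factors, $F_k$, $\Phi_k$, and $\Psi_k$ are homogeneous, and
\[
  \degd(F_k) = \frac{e_k'}{2} + \degd(f_k) + \frac{e_k''}{2}, \qquad \degd(\Psi_k) = \degd(\psi_k) = 1,
\]
\[
  \degd(\Phi_k) = \frac{e_k' + e_{k+1}'}{2} + \degd(\phi_k) + \frac{e_k'' + e_{k+1}''}{2}.
\]

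The remaining step is pure bookkeeping: substitute the values of $\degd(f_k)$ and $\degd(\phi_k)$ from Proposition~\ref{prop:smallgradings} and simplify. Here I would use the defining relation $e_k = e_k' + \ee_k + e_k''$ (recall $\ee_0 = 0$ and $\ee_\infty + \ee_0 + \ee_1 = 0$) together with the telescoping identities $e_\infty' + e_0' + e_1' = 0$ and $e_\infty'' + e_0'' + e_1'' = 0$, which hold because the subscripts are indexed cyclically by $\Z/3$. For instance, $\degd(F_\infty) = \tfrac{e_\infty'}{2} + \tfrac{\ee_\infty}{2} + \tfrac{e_\infty''}{2} = \tfrac{e_\infty}{2}$; and, using $e_\infty' + e_0' = -e_1'$ and $e_\infty'' + e_0'' = -e_1''$ together with $\degd(\phi_\infty) = \tfrac{-\ee_1 + 1}{2}$, one gets $\degd(\Phi_\infty) = -\tfrac{e_1'}{2} + \tfrac{-\ee_1 + 1}{2} - \tfrac{e_1''}{2} = \tfrac{-e_1 + 1}{2}$. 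The other seven entries are verified identically.

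I do not anticipate a genuine obstacle here. The only points requiring care are keeping the sign conventions for grading shifts consistent (so that $\iota_k'$ and $\iota_k''$ are assigned the correct signs $+e_k'/2$ and $+e_k''/2$) and correctly tracking the $\Z/3$-cyclic indexing of the subscripts, so that the three telescoping sums and the decomposition $e_k = e_k' + \ee_k + e_k''$ are applied to the right indices. One should also be sure to invoke the additivity of the $\delta$-grading under the box tensor product — implicit in the constructions of \cite{pv, bimod} — since Proposition~\ref{prop:smallgradings} only controls the elementary pieces $f_k$, $\phi_k$, $\psi_k$ and not the full morphisms $F_k$, $\Phi_k$, $\Psi_k$.
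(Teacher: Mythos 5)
Your proposal is correct and is essentially the same argument as the paper's: additivity of $\delta$-degree under composition and box tensor, the values of $\degd(\iota_k')$, $\degd(\iota_k'')$ from Lemma~\ref{lem:iota_gr}, the values of $\degd(f_k)$, $\degd(\phi_k)$, $\degd(\psi_k)$ from Proposition~\ref{prop:smallgradings}, and the relations $e_k = e_k' + \ee_k + e_k''$ with the $\Z/3$-telescoping identities. You have simply spelled out the bookkeeping that the paper leaves to the reader; no discrepancy.
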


\begin{proof}
  We have that
  \[
    \degd (F_k) = \degd (\iota_k') + \degd (f_k) + \degd (\iota_k'') = \frac{e_k'}{2} + \degd (f_k) + \frac{e_k''}{2}.
  \]
  Proposition~\ref{prop:smallgradings}, together with the equations $\ee_0 = 0$ and $e_k' + \ee_k + e_k'' = e_k$, yield $\degd (F_k)$.

  Recall that $e_\infty + e_0 + e_1 = 0$, and similar equations hold for $e_k'$, $e_k''$, and $\ee_k$. The computation of $\degd (\Phi_k)$ follows similarly, using these equations. The $\delta$-degree of $\Psi_k$ is obvious.
\end{proof}

\begin{proof}[Proof of Theorem~\ref{thm:gradings}]
  In the proof of Theorem~\ref{thm:ourtheorem}, we applied Lemma~\ref{lem:hom_alg} to the morphisms $F_k$, $\Phi_k$, and $\Psi_k$. In this section, we modified the definitions of $F_k$, $\Phi_k$, and $\Psi_k$, to adapt to the $\delta$-graded picture. The proof will be complete if we can compute the $\delta$-degrees of the relevant maps in Lemma~\ref{lem:hom_alg}, when applied to our modified $F_k$, $\Phi_k$, and $\Psi_k$.

  With $\DDm{M}_k = \CDTDd (T_k, n)$, recall that the desired homotopy equivalence is given by the type~$\DD$ homomorphisms $G_k \colon \DDm{M}_k \to \Cone (F_{k+1})$ and $G_k' \colon \Cone (F_{k+1}) \to \DDm{M}_k$, where
  \begin{align*}
    G_k (m_k) & = (F_k (m_k), \Phi_k (m_k)),\\
    G_k' (m_{k+1}, m_{k+2}) & = \Phi_{k+1} (m_{k+1}) + F_{k+2} (m_{k+2}),
  \end{align*}
  and the homotopy morphisms $H_k \colon \DDm{M}_k \to \DDm{M}_k$ and $H_k' \colon \Cone (F_{k+1}) \to \Cone (F_{k+1})$, where
  \begin{align*}
    H_k (m_k) & = \Psi_k (m_k),\\*
    H_k' (m_{k+1}, m_{k+2}) & = (\Psi_{k+1} (m_{k+1}) + \Phi_{k+2} (m_{k+2}), \Psi_{k+2} (m_{k+2})).
  \end{align*}
  We will now use Proposition~\ref{prop:biggradings} to compute the $\delta$-degrees of $G_\infty$, $G_\infty'$, $H_\infty$, and $H_\infty'$.

  First, observe that since $\degd (F_0) = (e_0 - 1)/2$, $\Cone (F_0)$ has underlying module
  \[
    M_0 \sqbrac{\frac{e_0 + 1}{2}} \oplus M_1,
  \]
  and so, in $\Cone (F_0)$,
  \begin{align*}
    \delta (F_\infty (m_\infty), 0) & = \brac{\delta (m_\infty) + \frac{e_\infty}{2}} + \frac{e_0+1}{2} = \delta (m_\infty) + \frac{-e_1 + 1}{2},\\
    \delta (0, F_\infty (m_\infty)) & = \delta (m_\infty) + \frac{-e_1 + 1}{2}.
  \end{align*}
  This shows that $G_\infty$ is homogeneous and $\degd (G_\infty) = (- e_1 + 1)/2$. Next, if $\delta (m_0, m_1) = d$, then
  \[
    \delta (\Phi_0 (m_0)) = \brac{d - \frac{e_0 + 1}{2}} + \frac{-e_\infty}{2} = d + \frac{e_1 - 1}{2}, \qquad \delta (F_1 (m_1)) = d + \frac{e_1 - 1}{2},
  \]
  and so $\degd (G_\infty') = (e_1 - 1)/2$.

  Now clearly $\degd (H_0) = 1$. Finally, if $\delta (m_0, m_1) = d$, then
  \begin{align*}
    \delta (\Psi_0 (m_0), 0) & = d + 1,\\
    \delta (\Phi_1 (m_1), 0) & = \brac{d + \frac{-e_0 + 1}{2}} + \frac{e_0 + 1}{2} = d + 1,\\
    \delta (0, \Psi_1 (m_1)) & = d + 1,
  \end{align*}
  and so $\degd (H_0') = 1$.

  Since $G_\infty$ and $G_\infty'$ are homotopy equivalences of homogeneous degree $(-e_1+1)/2$ and $(e_1-1)/2$ respectively, and $H_\infty$ and $H_\infty'$ are homotopy morphisms of homogeneous degree $1$, our proof is complete.
\end{proof}

\begin{proof}[Proof of Corollary~\ref{cor:hfk}]
  Theorem~\ref{thm:gradings} and the Gluing Theorem for tangle Floer homology~\cite[Theorem~12.4]{pv} together imply that there exists a chain map $F_0 \colon \CFKh (L_0) \otimes V^{m-\ell_0} \otimes W \to \CFKh (L_1) \otimes V^{m-\ell_1} \otimes W$ of $\delta$-degree $(e_0 - 1)/2$ such that
  \[
    \CFKh (L_\infty; \F{2}) \otimes V^{m-\ell_\infty} \otimes W \simeq \Cone (F_0) \sqbrac{\frac{e_1-1}{2}}.
  \]
  The claimed exact triangle now follows from the exact triangle on homology associated to a mapping cone of chain complexes.
\end{proof}



\section{The oriented skein relation} 
\label{sec:oriented}



In this section, we give the proof of Theorem~\ref{thm:oriented}, which is similar to that of \cite[Theorem~9.2.1]{OSSbook}.

Let $(\eT_+, \eT_-, \eT_0)$ be an oriented skein triple of elementary $(n,n)$-tangles, with the strands at which the tangles differ oriented from right to left, as in Figure~\ref{fig:Tel_ori_1}.

\begin{figure}[h]
  \centering
  \includegraphics[scale=1.05]{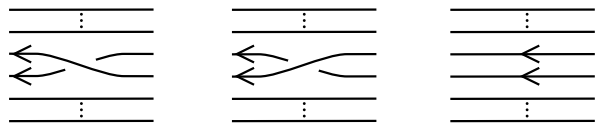}
  \caption{From left to right, the elementary tangles $\eT_+$, $\eT_-$, $\eT_0$.}
  \label{fig:Tel_ori_1}
\end{figure}

We will now describe a common diagram from which we can obtain corresponding Heegaard diagrams for these tangles. Consider Figure~\ref{fig:hd_skein_ori}. Letting
\begin{align*}
  \betas_+ & = \set{\beta_1, \dotsc, \beta_{i-1}, \beta_{i,+}, \beta_{i+1}, \dotsc, \beta_n},\\
  \betas_- & = \set{\beta_1, \dotsc, \beta_{i-1}, \beta_{i,-}, \beta_{i+1}, \dotsc, \beta_n},
\end{align*}
and
\[
  \XX = \set{X_1, \dotsc, X_n}, \quad \XX' = \set{X_1', X_2, \dotsc, X_n}, \quad \YY = \set{Y_1, Y_2, X_3, \dotsc, X_n},
\]
we define
\begin{align*}
  \HD_+ & = (\Sigma, \alphas, \betas_+, \XX, \OO), & \HD_- & = (\Sigma, \alphas, \betas_-, \XX', \OO),\\
  \HD_0 & = (\Sigma, \alphas, \betas_+, \YY, \OO), & \HD_0' & = (\Sigma, \alphas, \betas_-, \YY, \OO).
\end{align*}
Then $\HD_+$ is a Heegaard diagram for $\eT_+$, $\HD_-$ is a diagram for $\eT_-$, and both $\HD_0$ and $\HD_0'$, which are related by a commutation move~\cite[Section~5.3.1]{pv}, are diagrams for $\eT_0$.

\begin{figure}[h]
  \centering
  \labellist
  \pinlabel  $X_1$ at 101 67
  \pinlabel  $Y_1$ at 112 69
  \pinlabel  $X_1'$ at 121 70
  \pinlabel  $X_2$ at 112 60
  \pinlabel  $Y_2$ at 111 51
  \pinlabel  $O_3$ at 45 68
  \pinlabel  $O_4$ at 46 54
  \pinlabel  $c$ at 103 60
  \pinlabel  $c'$ at 120 62
  \pinlabel  \textcolor{red}{$c^{\mathit{FR}}_0$} at 148 32
  \pinlabel  \textcolor{red}{$c^{\mathit{FR}}_{n}$} at 147 89
  \pinlabel  \textcolor{red}{$c^{\mathit{FL}}_0$} at 10 35
  \pinlabel  \textcolor{red}{$c^{\mathit{FL}}_{n}$} at 9 86
  \pinlabel  \textcolor{red}{$c^{\mathit{BL}}_0$} at -5 37
  \pinlabel  \textcolor{red}{$c^{\mathit{BL}}_{n}$} at -5 85
  \pinlabel \rotatebox{90}{\textcolor{red}{$\dots$}} at -5 61
  \pinlabel \rotatebox{90}{\textcolor{red}{$\dots$}} at 12 61
  \pinlabel \rotatebox{90}{\textcolor{red}{$\dots$}} at 148 62
  \pinlabel \textcolor{blue!60!}{$\beta_{n}$} at 73 47
  \pinlabel \textcolor{blue}{$\beta_{i, +}$} at 66 21
  \pinlabel \textcolor{OliveGreen}{$\beta_{i, -}$} at 69 31
  \pinlabel \textcolor{blue!60!}{$\beta_{0}$} at 60 5
  \pinlabel \rotatebox{74}{\textcolor{blue!60!}{$\dots$}} at 61 13
  \endlabellist
  \includegraphics[scale=2]{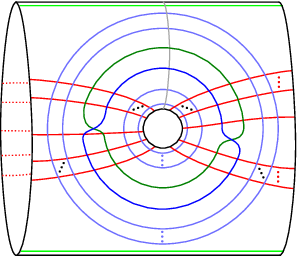}
  \caption{The diagram that combines $\HD_+, \HD_-, \HD_0, \HD_0'$.}
  \label{fig:hd_skein_ori}
\end{figure}

We denote the variables in $\CDTDm$ corresponding to $O_3$ and $O_4$ by $U_3$ and $U_4$, respectively. Observe that $\eT_+, \eT_-$, and $\eT_0$ all have the same left and the same right oriented boundaries, which we may denote by $\bdyL \T$ and $\bdyR \T$ respectively. Denote the variables in $\am{-\bdy^R\T}$ corresponding to the $i^{\text{th}}$ and $(i+1)^{\text{st}}$ point in $-\bdy^R\T$ by $U_2$ and $U_1$, respectively.

As before, we cut open the Heegaard diagram along the indicated grey circle in 
Figure~\ref{fig:hd_skein_ori} and also delete the non-combinatorial regions, to 
obtain Figure~\ref{fig:hd_cut_ori}.
\begin{figure}[h]
  \centering
  \labellist
  \pinlabel  $X_1$ at 90 395
  \pinlabel  $Y_1$ at 135 395
  \pinlabel  $X_1'$ at 180 395
  \pinlabel  $X_2$ at 135 350
  \pinlabel  $Y_2$ at 135 310
  \pinlabel  $O_3$ at 135 178
  \pinlabel  $O_4$ at 135 100
  \pinlabel  $c$ at 108 354
  \pinlabel  $c'$ at 162 357
  \pinlabel  \textcolor{red}{$c^{\mathit{FR}}_0$} at 295 270
  \pinlabel  \textcolor{red}{$c^{\mathit{FR}}_i$} at 295 364
  \pinlabel  \textcolor{red}{$c^{\mathit{FR}}_{n}$} at 295 464
  \pinlabel  \textcolor{red}{$c^{\mathit{FL}}_0$} at 295 232
  \pinlabel  \textcolor{red}{$c^{\mathit{FL}}_i$} at 295 132
  \pinlabel  \textcolor{red}{$c^{\mathit{FL}}_{n}$} at 295 35
  \pinlabel  \textcolor{red}{$c^{\mathit{BR}}_0$} at -20 270
  \pinlabel  \textcolor{red}{$c^{\mathit{BR}}_i$} at -20 364
  \pinlabel  \textcolor{red}{$c^{\mathit{BR}}_{n}$} at -20 464
  \pinlabel  \textcolor{red}{$c^{\mathit{BL}}_0$} at -20 232
  \pinlabel  \textcolor{red}{$c^{\mathit{BL}}_i$} at -20 132
  \pinlabel  \textcolor{red}{$c^{\mathit{BL}}_{n}$} at -20 35
  \pinlabel \rotatebox{90}{$\underbrace{\hspace{4cm}}$} at 335 131
  \pinlabel $\bdy^{\mathit{FL}}\Sigma$ at 375 131
  \pinlabel \rotatebox{90}{$\underbrace{\hspace{4cm}}$} at 335 365
  \pinlabel $\bdy^{\mathit{FR}}\Sigma$ at 375 365
  \pinlabel \rotatebox{90}{$\overbrace{\hspace{4cm}}$} at -55 131
  \pinlabel $\bdy^{\mathit{BL}}\Sigma$ at -95 131
  \pinlabel \rotatebox{90}{$\overbrace{\hspace{4cm}}$} at -55 365
  \pinlabel $\bdy^{\mathit{BR}}\Sigma$ at -95 365
  \pinlabel \textcolor{blue!60!}{$\beta_{n}\ldots$} at 50 -15
  \pinlabel \textcolor{blue}{$\beta_{i, +}$} at 125 -15
  \pinlabel \textcolor{OliveGreen}{$\beta_{i, -}$} at 160 -15
  \pinlabel \textcolor{blue!60!}{$\ldots\beta_{0}$} at 232 -15
  \endlabellist
  \includegraphics[scale = .6]{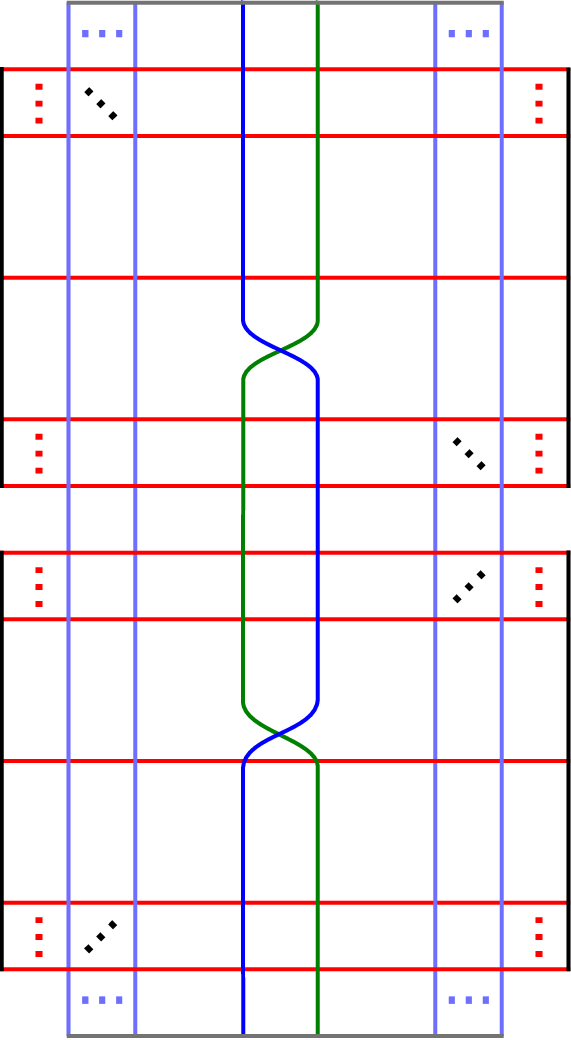}
  \vskip .5 cm
  \caption{The combined diagram for the four elementary tangles, obtained by cutting open the diagram in Figure~\ref{fig:hd_skein_ori} along the indicated grey circle and deleting the non-combinatorial regions.}
  \label{fig:hd_cut_ori}
\end{figure}

We denote by $c$ the intersection point in $\alpha_i^R \cap \beta_{i,+}$; note, then, that the two squares containing $X_1$ and $X_2$ in $\HD_+$ meet at $c$, and so do the squares containing $Y_1$ and $Y_2$ in $\HD_0$. Similarly, we denote by $c'$ the intersection point in $\alpha_i^R \cap \beta_{i,-}$; then the two squares containing $X_1'$ and $X_2$ in $\HD_0'$ meet at $c'$, and so do the squares containing $Y_1$ and $Y_2$ in $\HD_0'$.

We now partition the set of generators
\[
  \SS (\HD_+) = \II (\HD_+) \cup \NN (\HD_+), \quad \SS (\HD_0) = \II (\HD_0) \cup \NN (\HD_0)
\]
according to whether or not a given generator contains the point $c$, and similarly partition
\[
  \SS (\HD_-) = \II' (\HD_-) \cup \NN' (\HD_-), \quad \SS (\HD_0') = \II' (\HD_0') \cup \NN' (\HD_0')
\]
according to whether or not a generator contains the point $c'$.

There is a natural identification between $\II (\HD_+)$ and $\II (\HD_0)$, and one between $\NN (\HD_+)$ and $\NN (\HD_0)$, as sets. Similarly, there is a natural identification between $\II' (\HD_-)$ and $\II' (\HD_0')$, and one between $\NN' (\HD_-)$ and $\NN' (\HD_0')$. However, note that, for example, $\NN (\HD_+)$ and $\NN (\HD_0)$ have different $(M, A)$-bigradings that depend on $\HD_+$ and $\HD_0$. We will think of the Maslov grading $M$ as given by functions $M_+$, $M_-$, $M_0$, and $M_0'$, and similarly for the Alexander grading $A$.

Further, let $\TT \colon \II' (\HD_0') \to \II (\HD_+)$ be the unique one-to-one correspondence for which $\x \cap \beta_{i,-}$ and $T(\x)\cap \beta_{i,+}$ lie on the same $\alpha$ curve, and $\x \setminus \beta_{i,-} = T(\x)\setminus \beta_{i,+}$. Note that $\TT$ can also be written as a function $\TT \colon \II' (\HD_-) \to \II (\HD_0)$. We can think of $\TT$ as a map counting the small triangle that contains $X_2$.

\begin{lem}
  \label{lem:gr-gens}
  For $\x\in \II(\HD_+) = \II(\HD_0)$,
  \[
    M_+(\x) = M_0(\x), \qquad A_+(\x) = A_0(\x) - \frac 1 2;
  \]
  for $\x\in \NN(\HD_+) = \NN(\HD_0)$,
  \[
    M_+(\x) = M_0(\x), \qquad A_+(\x) = A_0(\x) + \frac 1 2;
  \]
  for $\x\in \II'(\HD_-) = \II'(\HD_0')$,
  \[
    M_-(\x) = M_0'(\x) \qquad A_-(\x) = A_0'(\x)+\frac 1 2;
  \]
  for $\x\in \NN'(\HD_-) = \NN'(\HD_0')$,
  \[
    M_-(\x) = M_0'(\x) \qquad A_-(\x) = A_0'(\x)-\frac 1 2.
  \]
  Furthermore, for $\x\in \II'(\HD_0')$ and $\TT(\x)\in \II(\HD_+)$,
  \[
    M_0'(\x) = M_+(\TT(\x))+1 \qquad A_0'(\x) = A_+(\TT(\x))+\frac 1 2,
  \]
  and for $\x\in \II'(\HD_-)$ and $\TT(\x)\in \II(\HD_0)$,
  \[
    M_-(\x) = M_0(\TT(\x))+1 \qquad A_-(\x) = A_0(\TT(\x))+\frac 1 2.
  \]
\end{lem}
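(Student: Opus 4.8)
The plan is to compute all six grading comparisons directly from the combinatorial grading formulas recalled in Section~\ref{ssec:tf}, using the observation that the four Heegaard diagrams $\HD_+$, $\HD_-$, $\HD_0$, $\HD_0'$ share the same underlying surface, $\alpha$-arcs, and $\OO$-markings, and differ only locally near $\beta_i$ (in the choice of $\beta_{i,\pm}$ and in which of the local squares carry $X$ versus $Y$ markers). Since $M$ and $A$ are each a sum of a ``left grid'' and a ``right grid'' contribution, and all the local modifications happen in the right grid, only the right-grid terms $M(\x^R)$ and $A(\x^R)$ can change; the left-grid contributions are literally identical in each pair being compared. So every comparison reduces to an elementary count of how $\inv(\x^R)$, $\inv(\x^R, \OO^R)$, $\inv(\OO^R)$, $\inv(\x^R, \XX^R)$, $\inv(\XX^R)$, and $|\XX^R|$ change, which one reads off Figure~\ref{fig:hd_cut_ori}.

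First I would handle the $\HD_+$ versus $\HD_0$ comparison (rows~1--2 of the statement). Here the diagrams have the \emph{same} $\beta$-curves $\betas_+$, the same $\OO$, and differ only in that $\HD_+$ has $X_1, X_2$ while $\HD_0$ has $Y_1, Y_2$ in the two local squares meeting at $c$. In particular $\x^R$ is literally the same set of intersection points for corresponding generators, so $\inv(\x^R)$ and $\inv(\OO^R)$ are unchanged, giving $M_+(\x) = M_0(\x)$ with no case distinction. For the Alexander grading, $|\XX^R|$ is unchanged (still $n$ markers), and the only difference is the position of the two local $X$'s: in $\HD_+$ they sit at $X_1$ (upper-left of $c$) and $X_2$ (lower-right of $c$), in $\HD_0$ at $Y_1$ (upper-right) and $Y_2$ (lower-left). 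One then computes $\inv(\x^R,\XX^R) - \inv(\x^R,\XX'^R)$ and $\inv(\XX^R) - \inv(\XX'^R)$ as functions of whether the $\beta_{i,+}$-component of $\x$ occupies $c$ (i.e.\ whether $\x \in \II$ or $\x \in \NN$); the two cases produce exactly the $\pm \tfrac12$ shift recorded. The $\HD_-$ versus $\HD_0'$ comparison (rows~3--4) is identical in spirit, now with $\betas_-$, and $X_1', X_2$ versus $Y_1, Y_2$ around $c'$; the sign flips relative to the $+$ case because $X_1'$ sits on the opposite side of the relevant $\beta$-curve from $X_1$.

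Next I would do the two comparisons involving the triangle map $\TT$ (rows~5--6). For $\x \in \II'(\HD_0')$ and $\TT(\x) \in \II(\HD_+)$: both generators contain the point on $\alpha_i^R$ at which the local squares meet ($c'$ and $c$ respectively), and $\x \setminus \beta_{i,-} = \TT(\x)\setminus\beta_{i,+}$. The $\beta_{i,-}$-component of $\x$ and the $\beta_{i,+}$-component of $\TT(\x)$ lie on the same $\alpha$-curve $\alpha_i^R$ but at different positions (the ``small triangle containing $X_2$''), so $\inv(\x^R) - \inv(\TT(\x)^R)$, together with the change in $\inv(\x^R,\OO^R)$ and in $\inv(\x^R,\XX^R)$ coming from the $X_2$/marker between $\beta_{i,+}$ and $\beta_{i,-}$, are again finite local counts read off Figure~\ref{fig:hd_cut_ori}; these yield $M_0'(\x) = M_+(\TT(\x)) + 1$ and $A_0'(\x) = A_+(\TT(\x)) + \tfrac12$. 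The comparison $\TT \colon \II'(\HD_-) \to \II(\HD_0)$ works the same way, with $\XX'$ replaced by $\YY$; since $\HD_-$ and $\HD_0'$ (resp.\ $\HD_+$ and $\HD_0$) have already been related above, this last line can alternatively be deduced by composing the previously established equalities, which is a useful consistency check.

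The main obstacle is purely bookkeeping: getting the orientations of the inversion counts right, i.e.\ correctly determining on which side of $\beta_{i,\pm}$ and $\alpha_i^R$ each of the markers $X_1, X_1', X_2, Y_1, Y_2$ and the relevant intersection points lie, and being careful that ``right grid'' here refers to the half of the cut-open diagram in Figure~\ref{fig:hd_cut_ori} that carries the crossing. Once a consistent convention is fixed from the figure, each of the six identities is a short explicit computation with the formulas for $M(\x^R), 2A(\x^R)$ (and their left-grid counterparts, which cancel). I expect the proof to be organized as: (1) reduce each comparison to right-grid terms by noting the left grids agree; (2) tabulate, for each of the four diagrams, the local configuration of markers near $\beta_i$; (3) carry out the six inversion-count differences, splitting into the $\II$/$\NN$ cases where the position of the $\beta_i$-component matters; and (4) read off the stated Maslov and Alexander shifts.
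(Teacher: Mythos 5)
Your overall strategy—direct comparison of the inversion-count grading formulas, isolating the terms that actually change between diagrams—matches the paper's proof, and your treatment of the first four identities is exactly right: the pairs $(\HD_+, \HD_0)$ and $(\HD_-, \HD_0')$ share their $\beta$-curves, so the left grids literally coincide and all differences come from the right-grid $\XX$-counts. Your observation that the sixth identity then follows from the first five by composition is also correct, and is a useful shortcut the paper does not spell out.

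The gap is in the blanket claim that ``all the local modifications happen in the right grid; the left-grid contributions are literally identical in each pair being compared,'' which you then use to reduce all six comparisons (including rows 5 and 6) to right-grid counts. This fails for the two $\TT$-comparisons. There $\HD_0'$ and $\HD_+$ use different $\beta$-circles, $\betas_-$ versus $\betas_+$, and since a basepoint's first coordinate is its position relative to the $\beta$-circles, any marker in the thin strip between $\beta_{i,+}$ and $\beta_{i,-}$ changes its first coordinate by one when you switch coordinate systems. The left-grid basepoints $O_3, O_4$ sit in exactly that strip, so the left-grid inversion counts do \emph{not} agree between $\HD_0'$ and $\HD_+$. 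The paper's actual proof of the fifth identity is in fact carried out entirely via left-grid comparisons ($\inv_{\betas_-}$ of left-grid quantities against $\inv_{\betas_+}$ of the same), while the right-grid terms are argued to agree because $\x^R$ and $\TT(\x)^R$, although containing the physically distinct points $c'$ and $c$, have identical combinatorial coordinates $(i,i)$—so, contrary to your setup, $\inv(\x^R) - \inv(\TT(\x)^R) = 0$ is not a source of change. As written, your plan for rows 5--6 omits the left-grid contribution and misattributes the shift to $\inv(\x^R)$, so the calculation would not close without also tabulating how the $\beta$-swap moves the left-grid basepoints.
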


\begin{proof}
  Let $\x\in \II(\HD_+) = \II(\HD_0)$. Both $\HD_+$ and $\HD_0$ use $\betas_+$, with respect to which we have $\inv(\x^R, \XX^R) = \inv(\x^R, \YY^R)-2$, and $\inv (\XX^R) = \inv(\YY^R)-1$, whereas all other counts in the definition of the bigrading agree for the two diagrams. Thus,  $M_+(\x) = M_0(\x)$ and $A_+(\x) = A_0(\x) - \frac 1 2$. One obtains the second, third, and fourth statements of the lemma similarly. 

  Let $\x\in \II'(\HD_0')$. We will denote inversions counted with respect to $\betas_+$ or $\betas_-$ by $\inv_{\betas_+}$ or $\inv_{\betas_-}$, respectively. Note that $\inv_{\betas_-}(\YY^L) = \inv_{\betas_+}(\XX^L)-1$. Since $\beta_{i,-}\cap \alpha_i^L\notin \x$ and  $\beta_{i,+}\cap \alpha_i^L\notin \TT(\x)$, we have $\inv_{\betas_-}(\x^L, \YY^L) = \inv(\x^L, \XX^L)$. All other terms in the definition of the bigrading agree for the two diagrams. Thus,  $M_0'(\x) = M_+(\TT(\x))+1$ and $A_0'(\x) = A_+(\TT(\x))+\frac 1 2$. One obtains the sixth statement of the lemma similarly.  \end{proof}

We define bigraded type~$\DD$ structures $(\II, \delta_{\II, \II}^1)$ and $(\NN, \delta_{\NN, \NN}^1)$ over $(\alg{A}^- (-\bdyL \T), \alg{A}^- (-\bdyR \T))$ as follows.  As a module, $\II$ is freely generated over $\F{2} [U_1, \dotsc, U_n]$ by the set $\II (\HD_+) = \II (\HD_0)$, and $\NN$ by $\NN (\HD_+) = \NN (\HD_0)$.  The structure map $\delta_{\II, \II}^1$ counts rectangles not crossing $Y_1$ or $Y_2$, and $\delta_{\NN, \NN}^1$ counts rectangles not crossing $X_1$ or $X_2$. The bigradings on $\II$ and $\NN$ are given by
\begin{align*}
  M_{\II} (\x) & = M_+ (\x) + 1 = M_0 (\x) + 1, & A_{\II} (\x) & = A_+ (\x) + 1 = A_0 (\x) + \frac{1}{2},\\
  M_{\NN} (\x) & = M_+ (\x) + 1 = M_0 (\x) + 1, & A_{\NN} (\x) & = A_+ (\x) = A_0 (\x) + \frac{1}{2},
\end{align*}
which are well defined in light of Lemma~\ref{lem:gr-gens}. We may think of $\CDTDm (\HD_+)$ as the mapping cone of a type~$\DD$ homomorphism $\delta_{\II, \NN}^1 \colon (\II, \delta_{\II, \II}^1) \to (\NN, \delta_{\NN, \NN}^1)$ that counts rectangles crossing exactly one of $Y_1$ and $Y_2$, and think of $\CDTDm (\HD_0)$ as the mapping cone of $\delta_{\NN, \II}^1 \colon (\NN, \delta_{\NN, \NN}^1) \to (\II, \delta_{\II, \II}^1)$ that counts rectangles crossing exactly one of $X_1$ and $X_2$.
In other words,
\begin{align*}
  \CDTDm (\HD_+) & = \Cone (\delta_{\II, \NN}^1) [-1] \cbrac{0}, & \deg (\delta_{\II, \NN}^1) & = (-1, -1),\\
  \CDTDm (\HD_0) & = \Cone (\delta_{\NN, \II}^1) [-1] \cbrac{-\frac{1}{2}}, & \deg (\delta_{\NN, \II}^1) & = (-1, 0).
\end{align*}
Similarly, we may define type~$\DD$ structures $(\II', \delta_{\II', \II'}^1)$ and $(\NN', \delta_{\NN', \NN'}^1)$ over the same algebras: As a module, $\II'$ is freely generated by the set $\II' (\HD_-) = \II' (\HD_0')$, and $\NN'$ by $\NN' (\HD_-) = \NN' (\HD_0')$. The structure map $\delta_{\II', \II'}^1$ counts rectangles not crossing $X_1'$ or $X_2$, and $\delta_{\NN', \NN'}^1$ counts rectangles not crossing $Y_1$ or $Y_2$. The bigradings on $\II'$ and $\NN'$ are given by
\begin{align*}
  M_{\II'} (\x) & = M_- (\x) = M_0' (\x), & A_{\II'} (\x) & = A_- (\x) - 1 = A_0' (\x) - \frac{1}{2},\\
  M_{\NN'} (\x) & = M_- (\x) = M_0' (\x), & A_{\NN'} (\x) & = A_- (\x) = A_0' (\x) - \frac{1}{2},
\end{align*}
which are well defined by Lemma~\ref{lem:gr-gens}. We may think of $\CDTDm (\HD_-)$ as the mapping cone of a type~$\DD$ homomorphism $\delta_{\NN', \II'}^1 \colon (\NN', \delta_{\NN', \NN'}^1) \to (\II', \delta_{\II', \II'}^1)$ that counts rectangles crossing exactly one of $Y_1$ and $Y_2$, and think of $\CDTDm (\HD_0')$ as the mapping cone of $\delta_{\II', \NN'}^1 \colon (\II', \delta_{\II', \II'}^1) \to (\NN', \delta_{\NN', \NN'}^1)$ that counts rectangles crossing exactly one of $X_1'$ and $X_2$. In other words,
\begin{align*}
  \CDTDm (\HD_-) & = \Cone (\delta_{\NN', \II'}^1) [0] \cbrac{1}, & \deg (\delta_{\NN', \II'}^1) & = (-1, -1),\\
  \CDTDm (\HD_0') & = \Cone (\delta_{\II', \NN'}^1) [0] \cbrac{\frac{1}{2}}, & \deg (\delta_{\II', \NN'}^1) & = (-1, 0).
\end{align*}

\begin{rmk}
  Our notation for grading shifts (for modules) differs from that in \cite{OSSbook} by a negative sign.
\end{rmk}

\begin{lem}
  \label{lem:T_isom}
  The correspondence $\TT \colon \II' (\HD_0') \to \II (\HD_+)$ extends to a type~$\DD$ isomorphism $\TT \colon (\II', \delta_{\II', \II'}^1) \to (\II, \delta_{\II, \II}^1)$ of $(M, A)$-degree $(0, 1)$.
\end{lem}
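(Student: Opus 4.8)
The plan is to show that $\TT$ intertwines the structure maps $\delta_{\II',\II'}^1$ and $\delta_{\II,\II}^1$, and that it shifts the bigrading by $(0,1)$; since $\TT$ is already a bijection on generators, establishing these two facts suffices. First I would unwind the definitions: $\HD_0'$ uses the $\beta$-curves $\betas_-$ (with the curve $\beta_{i,-}$) and markings $\YY$, while $\HD_+$ uses $\betas_+$ (with $\beta_{i,+}$) and markings $\XX$; the map $\delta_{\II',\II'}^1$ counts rectangles in $\HD_0'$ that avoid $X_1'$ and $X_2$, i.e.\ rectangles for the diagram $(\Sigma,\alphas,\betas_-,\YY,\OO)$ avoiding the relevant basepoints, and $\delta_{\II,\II}^1$ counts rectangles in $(\Sigma,\alphas,\betas_+,\XX,\OO)$ avoiding $Y_1,Y_2$. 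The key geometric observation, as the excerpt already suggests ("We can think of $\TT$ as a map counting the small triangle that contains $X_2$"), is that since $\II'$ and $\II$ consist of generators containing $c'$ and $c$ respectively, every generator has its $\beta_{i,\pm}$-component pinned near that point, so rectangles connecting two generators in $\II'$ (resp.\ $\II$) never touch the "moving" part of $\beta_{i,-}$ (resp.\ $\beta_{i,+}$). Thus the combinatorics of rectangle counts is insensitive to which of $\beta_{i,+}$ or $\beta_{i,-}$ is used, and the natural correspondence of rectangles induced by $\TT$ is a bijection preserving initial/terminal generators and output algebra elements. I would make this precise by juxtaposing with the small triangle that realizes $\TT$, exactly as in \cite[Lemma~9.2.2]{OSSbook}: a rectangle $r$ in $\HD_0'$ from $\x$ to $\y$ composes with the $X_2$-triangle to give a rectangle $r'$ in $\HD_+$ from $\TT(\x)$ to $\TT(\y)$ with $\algl{r}=\algl{r'}$, $\algr{r}=\algr{r'}$, and $U^r = U^{r'}$, and this assignment is invertible. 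Checking emptiness is preserved requires noting that the triangle contains $X_2$ but the rectangles in question avoid $Y_1,Y_2$ resp.\ $X_1',X_2$, so no new basepoint incidences are introduced on the $\II'$-to-$\II$ side.

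The grading claim is then essentially bookkeeping using Lemma~\ref{lem:gr-gens}. By that lemma, for $\x\in\II'(\HD_0')$ we have $M_0'(\x) = M_+(\TT(\x))+1$ and $A_0'(\x) = A_+(\TT(\x))+\tfrac12$. Combining with the definitions $M_{\II'}(\x)=M_0'(\x)$, $A_{\II'}(\x)=A_0'(\x)-\tfrac12$, and $M_{\II}(\TT\x)=M_+(\TT\x)+1$, $A_{\II}(\TT\x)=A_+(\TT\x)+1$, I compute $M_{\II}(\TT\x) = M_+(\TT\x)+1 = M_0'(\x) = M_{\II'}(\x)$, so $\TT$ preserves the Maslov grading; and $A_{\II}(\TT\x) = A_+(\TT\x)+1 = (A_0'(\x)-\tfrac12)+1 = A_{\II'}(\x)+1$, so $\TT$ raises the Alexander grading by $1$. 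Hence $\TT$ has $(M,A)$-degree $(0,1)$.

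Finally I would assemble these: $\TT$ is a bijective $\F{2}[U_1,\dots,U_n]$-module isomorphism of degree $(0,1)$, and the rectangle correspondence shows $\delta_{\II,\II}^1\circ\TT = (\id\otimes\TT\otimes\id)\circ\delta_{\II',\II'}^1$, which is precisely the statement that $\TT$ is a type~$\DD$ homomorphism; since it is also bijective with bijective inverse (the inverse rectangle correspondence), it is a type~$\DD$ isomorphism. The main obstacle I anticipate is the careful verification that the rectangle correspondence is genuinely a bijection and respects emptiness and the associated algebra elements in \emph{all} cases — in particular handling bordered rectangles (those touching $\bdy^L\Sigma$ or $\bdy^R\Sigma$) and making sure that the small $X_2$-triangle, when juxtaposed, never creates or destroys an intersection with a basepoint that a rectangle on one side must avoid but the corresponding rectangle on the other side need not. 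This is where I would spend the most care, appealing to the explicit local picture in Figure~\ref{fig:hd_cut_ori} around the points $c$ and $c'$; everything else is a direct translation of the argument in \cite[Section~9.2]{OSSbook} into the type~$\DD$, bordered-grid setting.
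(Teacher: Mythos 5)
Your proof is essentially the same as the paper's: both rest on the key observation that rectangles counted by $\delta_{\II,\II}^1$ and $\delta_{\II',\II'}^1$ avoid the lens region bounded by $\alpha_{i-1}^R$, $\beta_{i-1}$, $\alpha_{i+1}^R$, $\beta_{i+1}$ (because the $\beta_i$-component of every generator is pinned at $c$ or $c'$ and the rectangles avoid $Y_1, Y_2$ or $X_1', X_2$), and both deduce the $(0,1)$-degree from Lemma~\ref{lem:gr-gens} together with the definitions of the bigradings on $\II$ and $\II'$. One small inaccuracy in your phrasing: juxtaposing a rectangle $r$ in $\HD_0'$ with the $X_2$-triangle does not yield a rectangle in $\HD_+$ (the result is a disconnected domain of Maslov index $1$, not a single rectangle); the paper instead observes that $r$ and $r'$ are literally the \emph{same} $2$-chain in $\Sigma$, valid as a rectangle in both $(\Sigma,\alphas,\betas_-)$ and $(\Sigma,\alphas,\betas_+)$ precisely because it never meets $\beta_{i,\pm}$ — the small triangle only realizes the identification of endpoints, and your proposed chain-map verification via canceling juxtaposed domains $p * r$ and $r' * p'$ is an equivalent (if slightly more roundabout) way to package the same fact.
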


\begin{proof}
  Refer to Figure~\ref{fig:hd_cut_ori}. Recall that the structure map $\delta_{\II, \II}^1$ counts rectangles (of which there are seven types), introduced in Section~\ref{ssec:tf}, that do not cross $Y_1$ or $Y_2$. Since generators in $\II$ all have a component on $c$, this means that the rectangles counted in $\delta_{\II, \II}^1$ do not have an edge on $\beta_{i,+}$. Note that if such a rectangle were to cross $X_1$, then it would necessarily cross $Y_1$ (and hence $X_1'$) also; and if it were to cross $X_2$, then it would cross $Y_2$ also. This implies that the rectangles counted in $\delta_{\II, \II}^1$ do not intersect $\set{X_1, X_1', X_2, Y_1, Y_2}$. Similarly, rectangles counted in $\delta_{\II', \II'}^1$ do not intersect this set. In other words, $\delta_{\II, \II}^1$ and $\delta_{\II', \II'}^1$ both count empty rectangles that do not intersect the $2$-chain whose boundary is formed by an arc in $\alpha_{i-1}^R$, an arc in $\beta_{i-1}$, an arc in $\alpha_{i+1}^R$, and an arc in $\beta_{i+1}$, in the induced orientation. We can now conclude that for $\x', \y' \in \II'$, there is a one-to-one correspondence between rectangles connecting $\x'$ to $\y'$ and rectangles connecting $\TT (\x')$ to $\TT (\y')$.

  The fact that these rectangles do not have an edge on $\beta_{i,+}$ or $\beta_{i,-}$ also implies that, for $\x', \y' \in \II'$, an empty rectangle $r'$ connecting $\x'$ to $\y'$ crosses $O_3$ (resp.\ $O_4$) if and only if the corresponding rectangle $r$ connecting $\TT (\x')$ to $\TT (\y')$ crosses $O_3$ (resp.\ $O_4$). This shows that the algebra elements $U^{r'}$ and $U^r$ are equal. Since obviously $\algl{\x',r'} = \algl{\TT (\x'),r}$ and $\algr{\x',r'} = \algr{\TT (\x'),r}$, the term
  \[
    \algl{\x',r'} \otimes U^{r'} \y' \otimes \algr{\x',r}
  \]
  appears in $\delta_{\II',\II'}^1 (\x')$ if and only if
  \[
    \algl{\TT (\x'),r} \otimes U^r \TT (\y') \otimes \algr{\TT (\x'),r} = \algl{\x',r'} \otimes U^{r'} \TT (\y') \otimes \algr{\x',r'}
  \]
  appears in $\delta_{\II,\II}^1 (\TT (\x'))$. This shows that $\TT$ is a type~$\DD$ isomorphism.

  The $(M, A)$-degree of $\TT$ follows from Lemma~\ref{lem:gr-gens} and from the definitions of $\II$ and $\II'$.
\end{proof}

We now consider the following diagram of type~$\DD$ homomorphisms:
\begin{equation}
  \label{eq:comm_diag}
  \renewcommand{\labelstyle}{\textstyle}
  \mathcenter{
    \xymatrix{
      (\II', \delta_{\II', \II'}^1) \ar[rr]^{\delta_{\II', \NN'}^1} \ar[dd]_{\delta_{\II, \NN} \circ \TT} & & (\NN', \delta_{\NN', \NN'}^1) \ar[dd]^{\TT \circ \delta_{\NN', \II'}}\\
      & & \\
      (\NN, \delta_{\NN, \NN}^1) \ar[rr]^{\delta_{\NN, \II}^1} & & (\II, \delta_{\II, \II}^1)
    }
  }
\end{equation}
Lemma~\ref{lem:T_isom} and the discussion immediately preceding it imply that:
\begin{itemize}
  \item Each edge map in \eqref{eq:comm_diag} has $(M, A)$-degree $(-1, 0)$.
  \item The left column in \eqref{eq:comm_diag} is identified with $\CDTDm (\HD_+) [1] \cbrac{0}$.
  \item The right column in \eqref{eq:comm_diag} is identified with $\CDTDm (\HD_-) [0] \cbrac{0}$.
  \item The top row in \eqref{eq:comm_diag} is identified with $\CDTDm (\HD_0') [0] \cbrac{-1/2}$.
  \item The bottom row in \eqref{eq:comm_diag} is identified with $\CDTDm (\HD_0) [1] \cbrac{1/2}$.
\end{itemize}

\begin{lem}
  \label{lem:comm_diag}
  The type~$\DD$ homomorphisms
  \[
    \delta_{\NN, \II}^1 \circ \delta_{\II, \NN}^1 \colon (\II, \delta_{\II, \II}^1) \to (\II, \delta_{\II, \II}^1), \qquad
    \delta_{\NN', \II'}^1 \circ \delta_{\II', \NN'}^1 \colon (\II', \delta_{\II', \II'}^1) \to (\II', \delta_{\II', \II'}^1)
  \]
  are given by
  \begin{align*}
    \delta_{\NN, \II}^1 \circ \delta_{\II, \NN}^1 & = \Id_{\II} \otimes (U_1 + U_2 - U_3 - U_4),\\
    \delta_{\NN', \II'}^1 \circ \delta_{\II', \NN'}^1 & = \Id_{\II'} \otimes (U_1 + U_2 - U_3 - U_4).
  \end{align*}
  As a consequence, \eqref{eq:comm_diag} commutes.
\end{lem}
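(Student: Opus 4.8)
The plan is to compute the compositions $\delta_{\NN,\II}^1 \circ \delta_{\II,\NN}^1$ and $\delta_{\NN',\II'}^1 \circ \delta_{\II',\NN'}^1$ directly, by enumerating the domains they count. Recall that $\delta_{\II,\NN}^1$ counts rectangles crossing exactly one of $Y_1, Y_2$, and $\delta_{\NN,\II}^1$ counts rectangles crossing exactly one of $X_1, X_2$; so the composition counts domains $r * r'$ where $r$ is a rectangle crossing exactly one of $Y_1, Y_2$ from $\x \in \II$ to some $\y \in \NN$, and $r'$ is a rectangle crossing exactly one of $X_1, X_2$ from $\y$ back to some $\z \in \II$. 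Since $\x, \z$ have a component on $c = \alpha_i^R \cap \beta_{i,+}$ while $\y$ does not, the rectangle $r$ must have an edge on $\beta_{i,+}$ moving the component off $c$, and $r'$ must have an edge on $\beta_{i,+}$ moving it back. First I would argue, as in the proof of Lemma~\ref{lem:T_isom}, that the only relevant region is the strip bounded by $\alpha_{i-1}^R, \beta_{i-1}, \alpha_{i+1}^R, \beta_{i+1}$ around $c$, so that $r * r'$ is a local domain supported near $c$; all other components of $\x$ are fixed.

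The key step is then the local enumeration: inspecting Figure~\ref{fig:hd_cut_ori} near $c$, the component on $c$ can move to one of four adjacent intersection points on $\beta_{i,+}$ (one in each of the four quadrants around $c$), corresponding to the four small squares containing $X_1$ (carrying no $U$), $X_2$ (carrying no $U$), $O_3$ (carrying $U_3$), and $O_4$ (carrying $U_4$) — wait, more precisely the squares adjacent to $c$ bordered by $\beta_{i,+}$, and returning produces a thin annular (width-one) domain wrapping around $\beta_{i,+}$. Going around one way versus the other, the resulting domain $r * r'$ is forced: a full loop around $\beta_{i,+}$ in the ``$X_1 Y_1$'' direction picks up the variables from the two $X$-squares on the right boundary side, which are $U_1$ and $U_2$ (the variables in $\am{-\bdy^R\T}$ for the $i$th and $(i+1)$st points, by the labelling convention established before the lemma), while the loop the other way picks up $U_3$ and $U_4$ from the two $O$-squares $O_3, O_4$. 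Each such pair of decompositions of a single annular domain contributes, but here — unlike the cancellation arguments of Section~\ref{sec:proof} — the two decompositions $r * r'$ and (the other order) are genuinely different domains from $\x$ to $\z = \x$, giving the four monomial terms $U_1, U_2, U_3, U_4$; one must check the signs, which over $\F{2}$ are all $+$ but stated as $U_1 + U_2 - U_3 - U_4$ to match the expected $\Z$-theory (cf.\ the remark following Theorem~\ref{thm:oriented}). I would present this with a small local picture and a short case check that no other domains contribute (e.g.\ any domain with an interior corner or crossing other than the single $\beta_{i,+}$ edges would either be non-empty or fail to return to $\II$).

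The computation for $\delta_{\NN',\II'}^1 \circ \delta_{\II',\NN'}^1$ is completely parallel, with $c'$, $\beta_{i,-}$, $X_1'$ in place of $c$, $\beta_{i,+}$, $X_1$; the squares around $c'$ again carry $U_1, U_2$ on the $X$-side and $U_3, U_4$ on the $O$-side, giving the identical formula $\Id_{\II'} \otimes (U_1 + U_2 - U_3 - U_4)$. Finally, commutativity of \eqref{eq:comm_diag} is a formal consequence: going around the square one way gives $\TT \circ \delta_{\NN',\II'}^1 \circ \delta_{\II',\NN'}^1 = \TT \circ (\Id_{\II'} \otimes (U_1+U_2-U_3-U_4))$, and the other way gives $\delta_{\NN,\II}^1 \circ \delta_{\II,\NN}^1 \circ \TT = (\Id_{\II} \otimes (U_1+U_2-U_3-U_4)) \circ \TT$; since $\TT$ is $\F{2}[U_1,\dots,U_n]$-linear (indeed a type~$\DD$ isomorphism commuting with the $U$-action, by Lemma~\ref{lem:T_isom}), these agree. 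The main obstacle I anticipate is not conceptual but bookkeeping: correctly reading off from the Heegaard diagram which of the surrounding squares carry which $U_i$ — in particular confirming that the two $X$-bordering squares near $c$ are precisely the ones whose ``shadow'' on the right boundary gives $U_1$ and $U_2$ under the stated convention — and making sure the width-one annular domains are genuinely the only contributions and are counted with the right multiplicity.
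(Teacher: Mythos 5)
Your high-level strategy --- enumerate the composite domains contributing to $\delta_{\NN,\II}^1 \circ \delta_{\II,\NN}^1$ and identify four of them --- is the same as the paper's, and the formal commutativity argument at the end is fine once the local formula is established. But the geometric picture you give of the four domains has two genuine errors that would derail the enumeration.

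First, the localization you propose does not hold. You want to confine $r * r'$ to the region near $c$ bounded by $\alpha_{i-1}^R$, $\beta_{i-1}$, $\alpha_{i+1}^R$, $\beta_{i+1}$. But the two contributing domains that produce $U_3$ and $U_4$ are the vertical annuli bounded by $\beta_{i,+}$ and $\beta_{i+1}$, and by $\beta_{i-1}$ and $\beta_{i,+}$, respectively; each wraps all the way around the Heegaard surface and contains $O_3$ or $O_4$, which sit in the left grid, far from $c$. Lemma~\ref{lem:T_isom} localized the \emph{complementary} rectangles, namely those avoiding $\beta_{i,+}$; here the rectangles must have an edge on $\beta_{i,+}$, which forces the composite domain to be global, not local.

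Second, you describe the $U_1, U_2$ contributions as coming from a loop around $\beta_{i,+}$ that picks up ``variables from the two $X$-squares.'' The $X$-squares carry no $U$-variables, and these two domains are not annuli around $\beta_{i,+}$ at all. They are the horizontal strips bounded by $\alpha_i^R$ and $\alpha_{i+1}^R$, and by $\alpha_{i-1}^R$ and $\alpha_i^R$, each touching $\bdy^R \Sigma$. Here $U_1$ and $U_2$ arise not as marker counts but as elements of $\am{-\bdy^R\T}$: the two bordered rectangles in the decomposition each output a nonidempotent strand element, and their product has a double crossing with the $(i+1)^{\text{st}}$ (resp.\ $i^{\text{th}}$) right-oriented orange strand, producing a $U$-coefficient in the algebra. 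This is a different mechanism from the module-coefficient contributions of $O_3, O_4$, and the formula $\Id_{\II} \otimes (U_1 + U_2 - U_3 - U_4)$ deliberately mixes the two sources. Relatedly, each of the four domains has exactly one relevant decomposition $r * r'$; the picture you sketch of two annuli, each with two decompositions, giving four terms does not match what happens on the Heegaard diagram.
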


\begin{proof}
  Consider the homomorphism $\delta_{\NN, \II}^1 \circ \delta_{\II, \NN}^1$. This counts domains that can be written as a juxtaposition of two empty rectangles, the first of which connects some $\x \in \II$ to some $\y \in \NN$, while the second connects $\y$ to some $\z \in \II$. Inspecting Figure~\ref{fig:hd_cut_ori}, we see that for a given $\x \in \II$, there are exactly four such domains: the vertical annulus bounded by $\beta_{i-1}$ and $\beta_{i,+}$, the vertical annulus bounded by $\beta_{i,+}$ and $\beta_{i+1}$, the horizontal strip bounded by $\alpha_{i-1}$ and $\alpha_i$, and the horizontal strip bounded by $\alpha_i$ and $\alpha_{i+1}$. These four domains contribute the algebra elements $U_4$, $U_3$, $U_2$, and $U_1$ respectively.

  The homomorphism $\delta_{\NN', \II'}^1 \circ \delta_{\II', \NN'}^1$ can be computed in the same way.
\end{proof}

The commutativity of \eqref{eq:comm_diag} implies the existence of a type~$\DD$ homomorphism
\[
  (\delta_{\II, \NN}^1 \circ \TT, \TT \circ \delta_{\NN', \II'}^1) \colon \Cone (\delta_{\II', \NN'}^1) \to \Cone (\delta_{\NN, \II}^1),
\]
which we may think of as
\[
  (\delta_{\II, \NN}^1 \circ \TT, \TT \circ \delta_{\NN', \II'}^1) \colon \CDTDm (\HD_0') [0] \cbrac{-\frac{1}{2}} \to \CDTDm (\HD_0) [1] \cbrac{\frac{1}{2}},
\]
with $(M, A)$-degree $(-1, 0)$. We now work to compute this type~$\DD$ homomorphism.

Let $h_{X_2} \colon \CDTDm (\HD_0') \to \CDTDm (\HD_0')$ be the morphism defined by
\[
  h_{X_2} (\x) = \sum_{y \in \genset (\HD_0')} \sum_{\substack{r \in \eRect (\x, \y) \\ r \cap (\XX \cup \YY) = \set{X_2}}} \algl{r} \otimes U^r \y \otimes \algr{r},
\]
and for $i = 1, 2$, let $h_{Y_i} \colon \CDTDm (\HD_0') \to \CDTDm (\HD_0')$ be the morphism defined by
\[
  h_{Y_i} (\x) = \sum_{y \in \genset (\HD_0')} \sum_{\substack{r \in \eRect (\x, \y) \\ r \cap \YY = \set{Y_i}}} \algl{r} \otimes U^r \y \otimes \algr{r}.
\]
Let $h_Y = h_{Y_1} + h_{Y_2}$.

\begin{lem}
  \label{lem:htpy_deg}
  The morphism $h_{X_2}$ is homogeneous of $(M, A)$-degree $(-1, 0)$, while the morphisms $h_{Y_1}$ and $h_{Y_2}$ are both homogeneous of $(M, A)$-degree $(-1, -1)$. Moreover, identifying $\CDTDm (\HD_0')$ with $\Cone (\delta_{\II', \NN'}^1) [0] \cbrac{1/2}$ using \eqref{eq:comm_diag},
  $h_{X_2}$ sends $\II'[0]\cbrac{1/2}$ to $\NN'[0]\cbrac{1/2}$ and vanishes on $\NN'[0]\cbrac{1/2}$, while $h_{Y_i}$ vanishes on $\II'[0]\cbrac{1/2}$.
\end{lem}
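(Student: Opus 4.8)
The plan is to verify the claimed $(M,A)$-degrees of $h_{X_2}$, $h_{Y_1}$, $h_{Y_2}$ directly, and then to pin down where each morphism takes and vanishes by a combinatorial inspection of Figure~\ref{fig:hd_cut_ori}, much as in the proof of Lemma~\ref{lem:DDgr} and in \cite[Chapter~9]{OSSbook}. For the degrees: by Lemma~\ref{lem:DDgr}, $\delta^1$ on $\CDTDm(\HD_0')$ lowers the Maslov grading by one and preserves the Alexander grading, and the contribution of an empty rectangle $r$ connecting $\x$ to $\y$ satisfies $M(\algl{r}) + M(\y) + M(\algr{r}) - M(\x) = -1$ and the analogous Alexander equality $=0$. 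The morphisms $h_{X_2}$, $h_{Y_i}$ are built from exactly the same rectangle counts as $\delta^1$, except that we further restrict which of the markers $X_1',X_2,Y_1,Y_2$ the rectangle is allowed to cross; in particular a rectangle contributing to $\delta^1$ on $\CDTDm(\HD_0')$ never crosses $X_1'$ (as that marker is set to zero would kill it — rather, $\delta^1$ counts empty rectangles with no $X$'s in the interior in the $\CFKt$ picture, but here for $\CDTDm$ it counts all rectangles, weighted by $U$-powers for the $O$'s). The cleanest route is to recall that for $\CDTDm(\HD_0')$, the structure map $\delta^1 = \delta_{\II',\II'}^1 + \delta_{\II',\NN'}^1 + \delta_{\NN',\II'}^1 + \delta_{\NN',\NN'}^1 + h_{X_1'} + h_{X_2} + h_{Y} + (\text{higher-multiplicity terms})$, and since $\delta^1$ is homogeneous of degree $(-1,0)$, each homogeneous piece has a definite degree that we read off from Lemma~\ref{lem:gr-gens}.

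Concretely, I would argue as follows. A rectangle counted in $h_{X_2}$ crosses $X_2$ and nothing else in $\XX\cup\YY$; such a rectangle moves the $\beta_{i,-}$-component of the generator across $\alpha_i^R$ past $X_2$, exchanging a generator in $\II'$ (containing $c'$) with one in $\NN'$ (not containing $c'$), or vice versa — but since $X_2$ sits in exactly one of the two squares meeting at $c'$, crossing $X_2$ alone takes $\II'$ to $\NN'$ and, because one cannot cross $X_2$ alone starting from a generator in $\NN'$ (the geometry of Figure~\ref{fig:hd_cut_ori} forces any such rectangle from $\NN'$ to also cross a $Y$), $h_{X_2}$ vanishes on $\NN'$. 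The degree computation: replacing $X_2$ by $Y_2$ (i.e.\ comparing $\HD_0'$ markings to $\HD_-$ markings) changes the Alexander count by the shifts recorded in Lemma~\ref{lem:gr-gens}, giving $h_{X_2}$ degree $(-1,0)$; this matches the fact that it is the ``connecting map'' piece analogous to $\delta_{\NN',\II'}^1$ but for the pair $(X_2,\text{nothing})$ rather than $(Y_1,Y_2)$. Similarly, a rectangle counted in $h_{Y_i}$ crosses exactly one $Y$; since the $Y$'s also lie in the squares meeting at $c'$ but on the opposite side, such a rectangle starting from $\II'$ would have to contain a corner at $c'$ moving ``the wrong way'', so $h_{Y_i}$ vanishes on $\II'$, and the $(-1,-1)$ degree comes from the Alexander shift of crossing a $Y$ rather than an $X$ (cf.\ the $\deg(\delta_{\NN',\II'}^1) = (-1,-1)$ computed just before Lemma~\ref{lem:comm_diag}).

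The main obstacle is the case analysis showing the vanishing statements — namely that no empty rectangle in $\HD_0'$ can cross $X_2$ alone while starting from a $\NN'$-generator, and none can cross a single $Y_i$ while starting from an $\II'$-generator. This is where one genuinely has to look at Figure~\ref{fig:hd_cut_ori}: the key point is that $X_2$ and $Y_2$ occupy the two squares adjacent along $\alpha_i^R$ near $c'$, while $X_1'$, $Y_1$ occupy the two squares on the other side of $\beta_{i,-}$, so any rectangle whose interior meets $X_2$ but not $Y_2$ must have one of its $\beta$-edges on $\beta_{i,-}$ with an endpoint at $c'$ — which is exactly the condition that it goes from a $c'$-containing generator to a non-$c'$-containing one, i.e.\ from $\II'$ to $\NN'$. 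I would organize this as: (1) recall the degree identities from Lemma~\ref{lem:DDgr} and Lemma~\ref{lem:gr-gens}; (2) observe that $h_{X_2}, h_{Y_1}, h_{Y_2}$ are homogeneous sub-sums of $\delta^1$ and compute each degree from the marker-swap shifts; (3) do the local picture analysis at $c'$ to establish the domain/codomain and vanishing claims. Steps (1)–(2) are routine given the earlier lemmas; step (3) is the substantive one but is a short finite check given the explicit diagram.

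\begin{proof}[Proof sketch]
  By Lemma~\ref{lem:DDgr}, for any empty rectangle $r$ in $\HD_0'$ connecting $\x$ to $\y$ we have $M(\algl{r}) + M(U^r\y) + M(\algr{r}) = M(\x) - 1$ and $A(\algl{r}) + A(U^r\y) + A(\algr{r}) = A(\x)$, where the gradings are $M_0', A_0'$. The morphisms $h_{X_2}$ and $h_{Y_i}$ are, by definition, sub-sums of the rectangle count defining $\delta^1$ on $\CDTDm(\HD_0')$, restricted by which of $X_1', X_2, Y_1, Y_2$ is crossed; hence each is homogeneous for $(M_0', A_0')$ of degree $(-1,0)$ as a map on $\CDTDm(\HD_0')$. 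Re-expressing these in terms of the bigradings on $(\II',\delta_{\II',\II'}^1)$ and $(\NN',\delta_{\NN',\NN'}^1)$ via the shifts $\CDTDm (\HD_0') = \Cone (\delta_{\II', \NN'}^1) [0] \cbrac{1/2}$ and Lemma~\ref{lem:gr-gens}, a rectangle crossing only $X_2$ keeps the Alexander count unchanged relative to the shift, so $h_{X_2}$ has degree $(-1,0)$; a rectangle crossing a single $Y_i$ drops the Alexander count by one, so $h_{Y_i}$ has degree $(-1,-1)$, exactly as $\deg(\delta_{\NN',\II'}^1) = (-1,-1)$.

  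For the domain and vanishing claims, refer to Figure~\ref{fig:hd_cut_ori}. The markers $X_2$ and $Y_2$ lie in the two squares that meet along $\alpha_i^R$ at $c'$, while $X_1'$ and $Y_1$ lie in the two squares meeting $\beta_{i,-}$ on the opposite side. An empty rectangle whose interior meets $X_2$ but meets none of $X_1', Y_1, Y_2$ is forced to have a $\beta$-edge on $\beta_{i,-}$ terminating at $c'$; consequently its initial generator contains $c'$ and its terminal generator does not, i.e.\ it goes from $\II'$ to $\NN'$. Hence $h_{X_2}$ maps $\II'[0]\cbrac{1/2}$ into $\NN'[0]\cbrac{1/2}$, and there is no such rectangle starting from a generator of $\NN'$, so $h_{X_2}$ vanishes on $\NN'[0]\cbrac{1/2}$. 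Symmetrically, an empty rectangle meeting exactly one of $Y_1, Y_2$ and no $X$-marker must have a $\beta$-edge on $\beta_{i,-}$ with a corner at $c'$ oriented so that its initial generator does not contain $c'$; thus no such rectangle can start from a generator of $\II'$, and $h_{Y_i}$ vanishes on $\II'[0]\cbrac{1/2}$.
\end{proof}
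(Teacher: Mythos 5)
There is a genuine gap in your degree computation for $h_{Y_i}$, stemming from a misreading of the definition of $\CDTDm$. In your plan you assert that for $\CDTDm$ the structure map ``counts all rectangles, weighted by $U$-powers for the $O$'s,'' and in the proof sketch that ``$h_{X_2}$ and $h_{Y_i}$ are, by definition, sub-sums of the rectangle count defining $\delta^1$ on $\CDTDm(\HD_0')$, ... hence each is homogeneous for $(M_0', A_0')$ of degree $(-1,0)$.'' But the structure map of $\CDTDm(\HD)$ only counts \emph{empty} rectangles, i.e.\ those whose interior contains no $X$-type marker (only $O$'s are permitted, and those contribute $U$-powers). The $X$-type markers of $\HD_0'$ are the points of $\YY$, so a rectangle crossing $Y_i \in \YY$ is precisely one \emph{not} counted in $\delta^1_{\HD_0'}$; therefore $h_{Y_i}$ is not a sub-sum of $\delta^1_{\HD_0'}$, and your intermediate claim that it has $(M_0',A_0')$-degree $(-1,0)$ directly contradicts the lemma you are trying to prove. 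Your ``re-expressing'' step does not repair this: the shifts from $(M_0',A_0')$ to the bigradings on $\II'$ and on $\NN'$ are identical (both $M_0'$ is unchanged and $A_0'$ is shifted by $-1/2$), so passing to those gradings cannot turn a degree of $(-1,0)$ into $(-1,-1)$. Only $h_{X_2}$ is legitimately a sub-piece of $\delta^1_{\HD_0'}$ (its rectangles cross $X_2$, which is not a marker of $\HD_0'$, and no point of $\YY$), and only for that morphism does the degree $(-1,0)$ follow directly from Lemma~\ref{lem:DDgr}.

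The correct argument for $h_{Y_i}$ — and the one the paper uses — is to rerun the argument of Lemma~\ref{lem:DDgr} for a rectangle that contains exactly one $X$-type basepoint in its interior: such a rectangle still drops the Maslov grading by $1$, but now also drops the Alexander grading by $1$, giving degree $(-1,-1)$. Your informal remark that ``the $(-1,-1)$ degree comes from the Alexander shift of crossing a $Y$'' is the right intuition, but it should \emph{replace}, not supplement, the incorrect sub-sum claim. Your local analysis near $c'$ for the domain/codomain/vanishing statements is in the right spirit and matches what the paper asserts is ``clear from the local picture near $c'$.''
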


\begin{proof}
  The $(M, A)$-degree of $h_{X_2}$ follows from the fact that the rectangles counted in $h_{X_2}$ all contribute to $\delta_{\HD_0'}^1$, since $\HD_0' = (\Sigma, \alphas, \betas_{-}, \YY, \OO)$. The $(M, A)$-degree of $h_{Y_i}$ can be computed by adapting the proof of Lemma~\ref{lem:DDgr} to the case when the rectangle $r$ contains exactly one basepoint of type $X$ (in the present context, $Y_i$). The images of the morphisms (when restricted to $\II'[0]\cbrac{1/2}$ or $\NN'[0]\cbrac{1/2}$) are clear from the local picture near $c'$.
\end{proof}

Lemma~\ref{lem:htpy_deg} implies that $h_{X_2} \circ h_Y + h_Y \circ h_{X_2} \colon \CDTDm (\HD_0') \to \CDTDm (\HD_0')$ has $(M, A)$-degree $(-2, -1)$. To align with the degree shifts of other morphisms defined above, and to simplify notation, we define the morphism
\[
  h \colon \CDTDm (\HD_0') [0] \cbrac{-\frac{1}{2}} \to \CDTDm (\HD_0') [1] \cbrac{\frac{1}{2}}
\]
by
\[
  h_{X_2} \circ h_Y + h_Y \circ h_{X_2},
\]
with degrees appropriately shifted, so that $h$ has $(M, A)$-degree $(-1, 0)$.

\begin{lem}
  \label{lem:vert_is_htpy}
  There is a homotopy equivalence $\PP \colon \CDTDm (\HD_0) \to \CDTDm  (\HD_0')$ such that
  \begin{equation}
    \label{eq:vert_is_htpy}
    \PP [1] \cbrac{\frac{1}{2}} \circ (\delta_{\II, \NN}^1 \circ \TT, \TT \circ \delta_{\NN', \II'}^1) = h,
  \end{equation}
  so $h$, and consequently $h_{X_2} \circ h_Y + h_Y \circ h_{X_2}$, are type~$\DD$ homomorphisms. Furthermore, the homotopy equivalence $\PP [1] \cbrac{1/2}$ induces a homotopy equivalence
  \[
    \Cone (\delta_{\II, \NN}^1 \circ \TT, \TT \circ \delta_{\NN', \II'}^1) \to \Cone (h) = \Cone (h_{X_2} \circ h_Y + h_Y \circ h_{X_2}) [1] \cbrac{\frac{1}{2}}.
  \]
\end{lem}

\begin{proof}
  This proof is analogous to the proof of \cite[Lemma~9.2.7]{OSSbook}.

  The homotopy equivalence $\PP$ is obtained in a manner analogous to that in \cite[Lemma~5.7]{pv}. It is defined by counting pentagons, with the only difference being that here we are modifying $\beta$-circles rather than $\alpha$-circles, and there is interaction with the algebra, as  in the proof of Equation~\ref{eqn:pent_homo}. (However, note that, contrary to the homomorphisms $\PP_k$ in Section~\ref{sec:proof}, $\PP$ here counts pentagons that possibly contain two components, as in rectangles of type~\eqref{case:delta_6} in the definition of $\delta^1_{\CDTDm (\HD)}$. All pentagons counted in $\PP$ are either interior or right-bordered.)

  It can easily be checked that the $(M, A)$-degrees of the maps involved are correct. Thus, to streamline our discussion, we will ignore all gradings for the rest of this proof; doing so, in order to prove Equation~\ref{eq:vert_is_htpy}, it will be sufficient to verify the identities
  \begin{gather}
    \PP \circ \TT \circ \delta_{\NN', \II'}^1 = h_{X_2} \circ h_Y, \label{eq:htpy1}\\
    \PP \circ \delta_{\II, \NN}^1 \circ \TT = h_Y \circ h_{X_2} \label{eq:htpy2},
  \end{gather}
  considered as morphisms of \emph{ungraded} type~$\DD$ modules, by exhibiting a one-to-one correspondence between domains contributing to both sides. All cases of the correspondences are analogous to those in the proof of \cite[Lemma~9.2.7]{OSSbook}; in our context, however, each case contains a number of subcases, as the domain under consideration may be bordered and interact non-trivially with the algebras.

  We provide a sample proof of Equation~\ref{eq:htpy1}. Consider a domain contributing to $\PP \circ \TT \circ \delta_{\NN', \II'}^1$; let $r_1, p_1$, and $p_2$ respectively be the rectangle contributing to $\delta_{\NN', \II'}^1$, the triangle contributing to $\TT$, and the pentagon contributing to $\PP$. Then $p_1 * p_2$ is a rectangle $r_2$ that contains $X_2$ but not $Y_1$ or $Y_2$. The juxtaposition $r_1 * r_2$ then represents a term in $h_{X_2} \circ h_Y$. Conversely, if $r_1 * r_2$ contributes to $h_{X_2} \circ h_Y$, we may remove the small triangle $p_1$ containing $X_2$ from $r_2$ to obtain a pentagon $p_2$. Since $r_1 * r_2 = r_1 * p_1 * p_2$, we see that $\algr{r_1 * r_2} = \algr{r_1 * p_2 * p_2}$. All subcases are exhibited in Figures~\ref{fig:hx2-hy-1}, \ref{fig:hx2-hy-2}, \ref{fig:hx2-hy-3}, and \ref{fig:hx2-hy-4}, each corresponding to a row in the left of \cite[Figure~9.4]{OSSbook}. In these four figures, each pair of domains is organized as follows: The left represents a domain contributing to $h_{X_2} \circ h_Y$, and the right the corresponding domain contributing to $\PP \circ \TT \circ \delta_{\NN', \II'}^1$; the shading of the polygons, from dark to light, indicates the order of composition.

  \begin{figure}
    \centering
    \includegraphics[scale = .2]{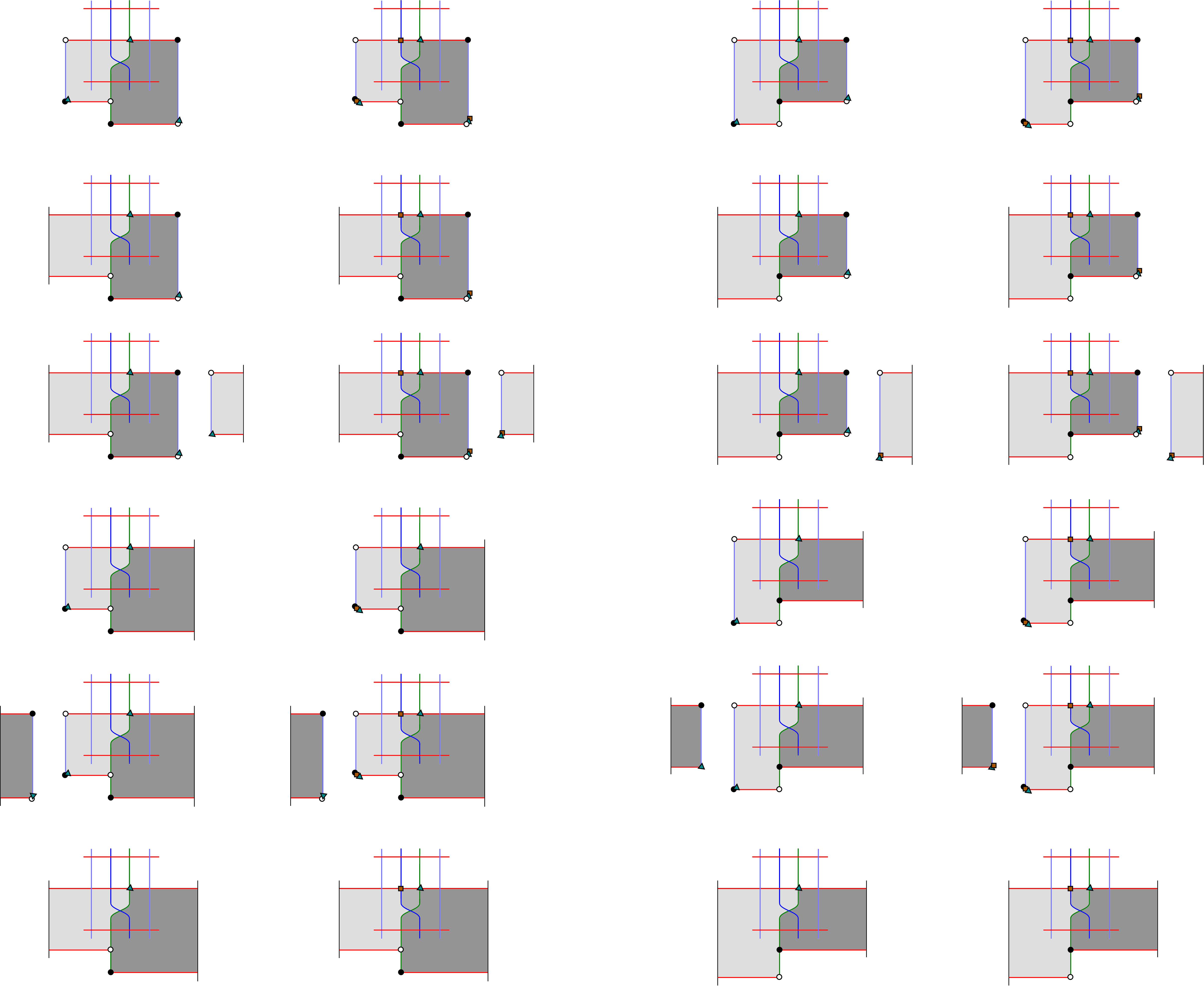}
    \caption{Corresponding domains that contribute to the two sides of Equation~\ref{eq:htpy1}, analogous to the first row, left, of \cite[Figure~9.4]{OSSbook}.}
    \label{fig:hx2-hy-1}
  \end{figure}

  \begin{figure}
    \centering
    \includegraphics[scale = .2]{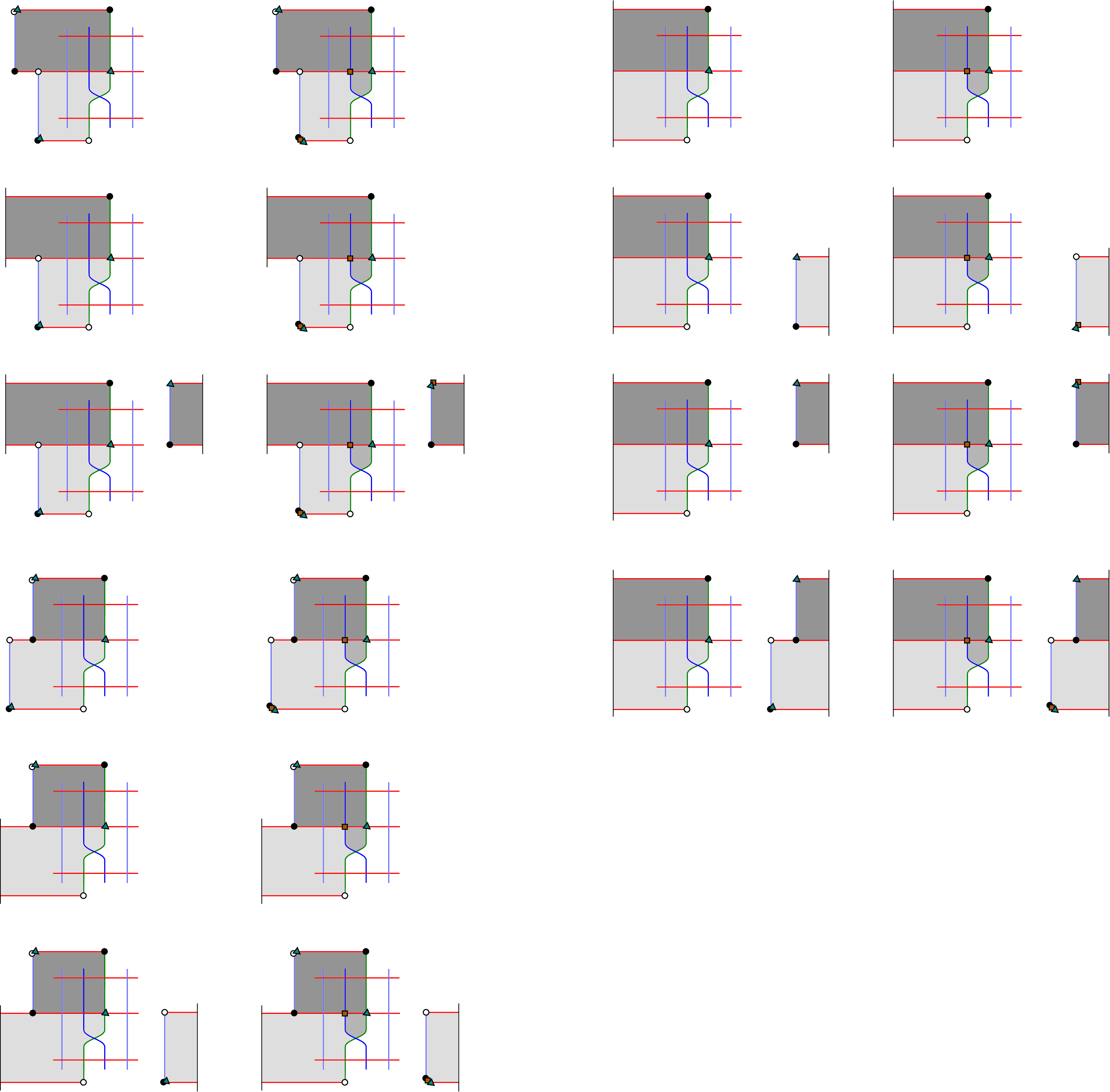}
    \caption{Corresponding domains that contribute to the two sides of Equation~\ref{eq:htpy1}, analogous to the second row, left, of \cite[Figure~9.4]{OSSbook}.}
    \label{fig:hx2-hy-2}
  \end{figure}

  \begin{figure}
    \centering
    \includegraphics[scale = .2]{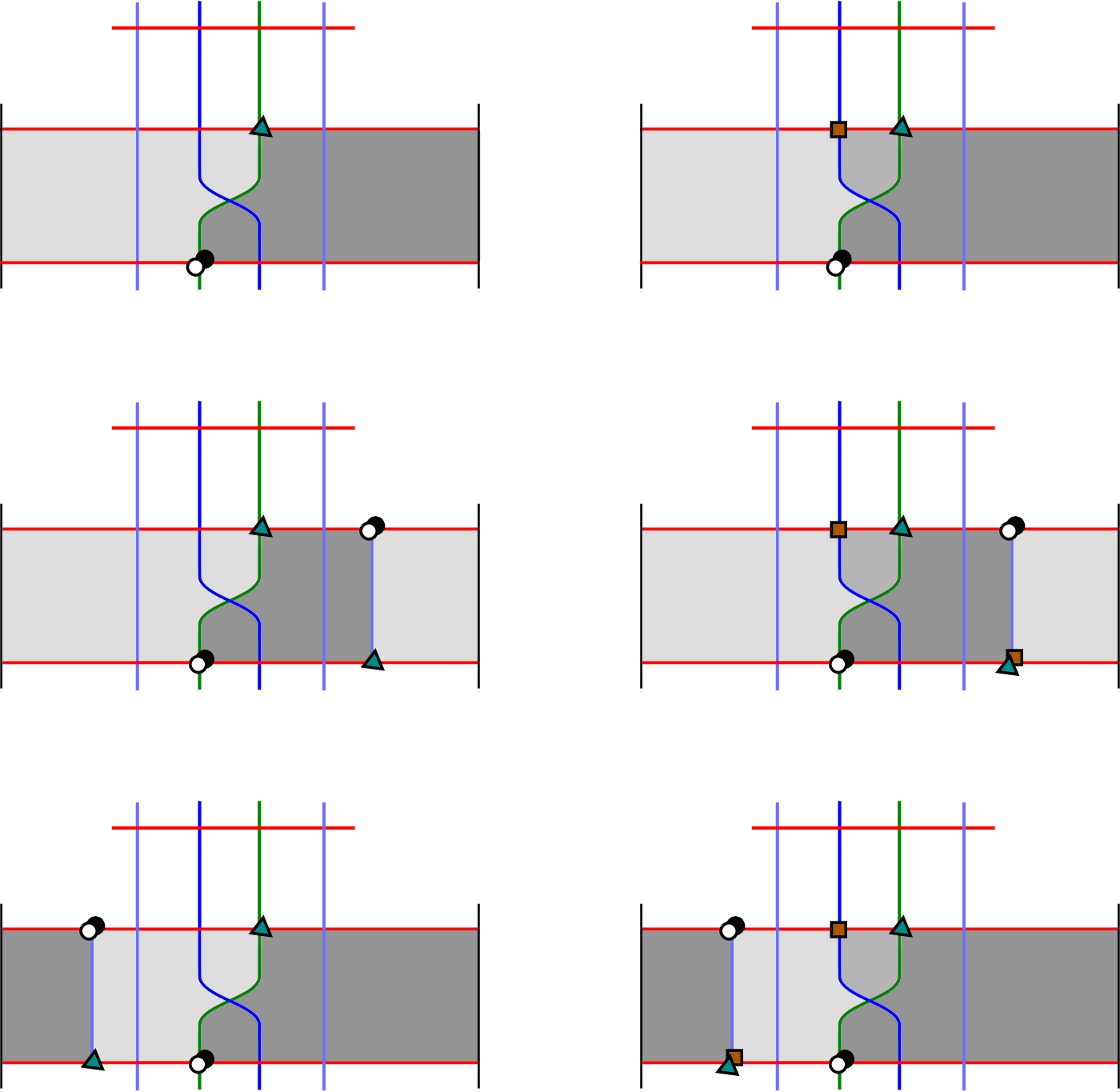}
    \caption{Corresponding domains that contribute to the two sides of Equation~\ref{eq:htpy1}, analogous to the third row, left, of \cite[Figure~9.4]{OSSbook}.}
    \label{fig:hx2-hy-3}
  \end{figure}

  \begin{figure}
    \centering
    \includegraphics[scale = .2]{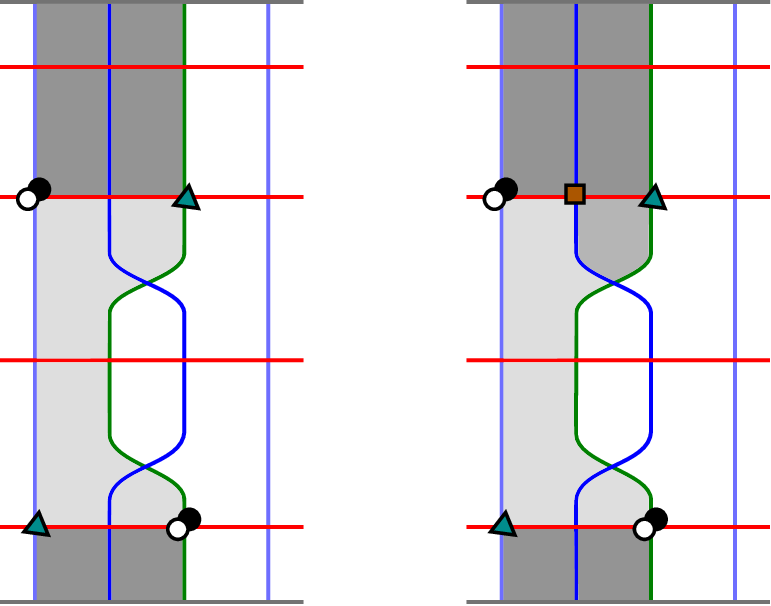}
    \caption{Corresponding domains that contribute to the two sides of Equation~\ref{eq:htpy1}, analogous to the last row, left, of \cite[Figure~9.4]{OSSbook}.}
    \label{fig:hx2-hy-4}
  \end{figure}

  The proof of Equation~\ref{eq:htpy2} follows similarly, as in the proof of \cite[Lemma~9.2.7]{OSSbook}. All subcases are exhibited in Figures~\ref{fig:hy-hx2-1}, \ref{fig:hy-hx2-2}, \ref{fig:hy-hx2-3}, and \ref{fig:hy-hx2-4}, each corresponding to a row in the right of \cite[Figure~9.4]{OSSbook}. In these four figures, each pair of domains is organized as follows: The left represents a domain contributing to $h_Y \circ h_{X_2}$, and the right the corresponding domain contributing to $\PP \circ \delta_{\II, \NN}^1 \circ \TT$; the shading is as before.

  \begin{figure}
    \centering
    \includegraphics[scale = .2]{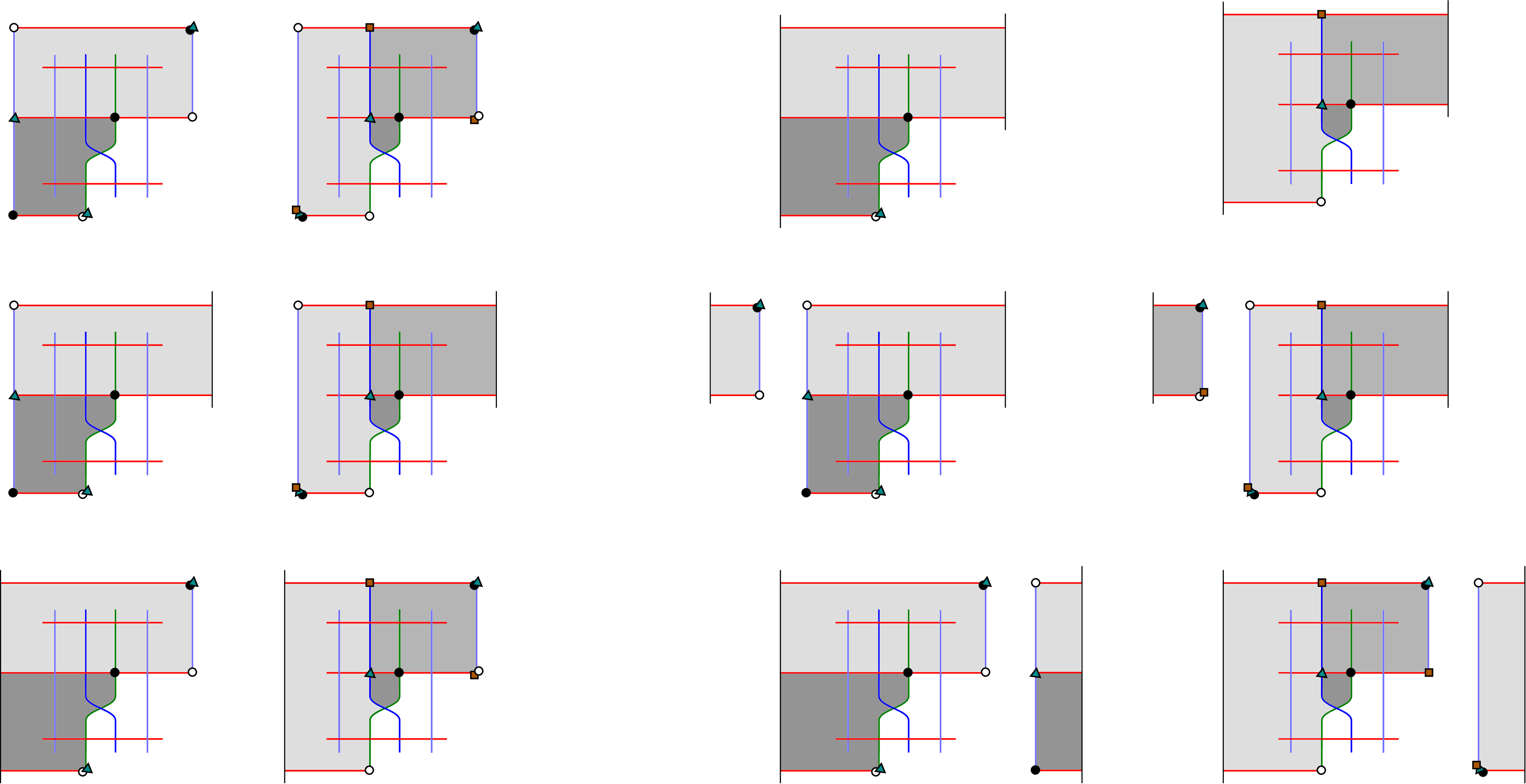}
    \caption{Corresponding domains that contribute to the two sides of Equation~\ref{eq:htpy2}, analogous to the first row, right, of \cite[Figure~9.4]{OSSbook}.}
    \label{fig:hy-hx2-1}
  \end{figure}

  \begin{figure}
    \centering
    \includegraphics[scale = .2]{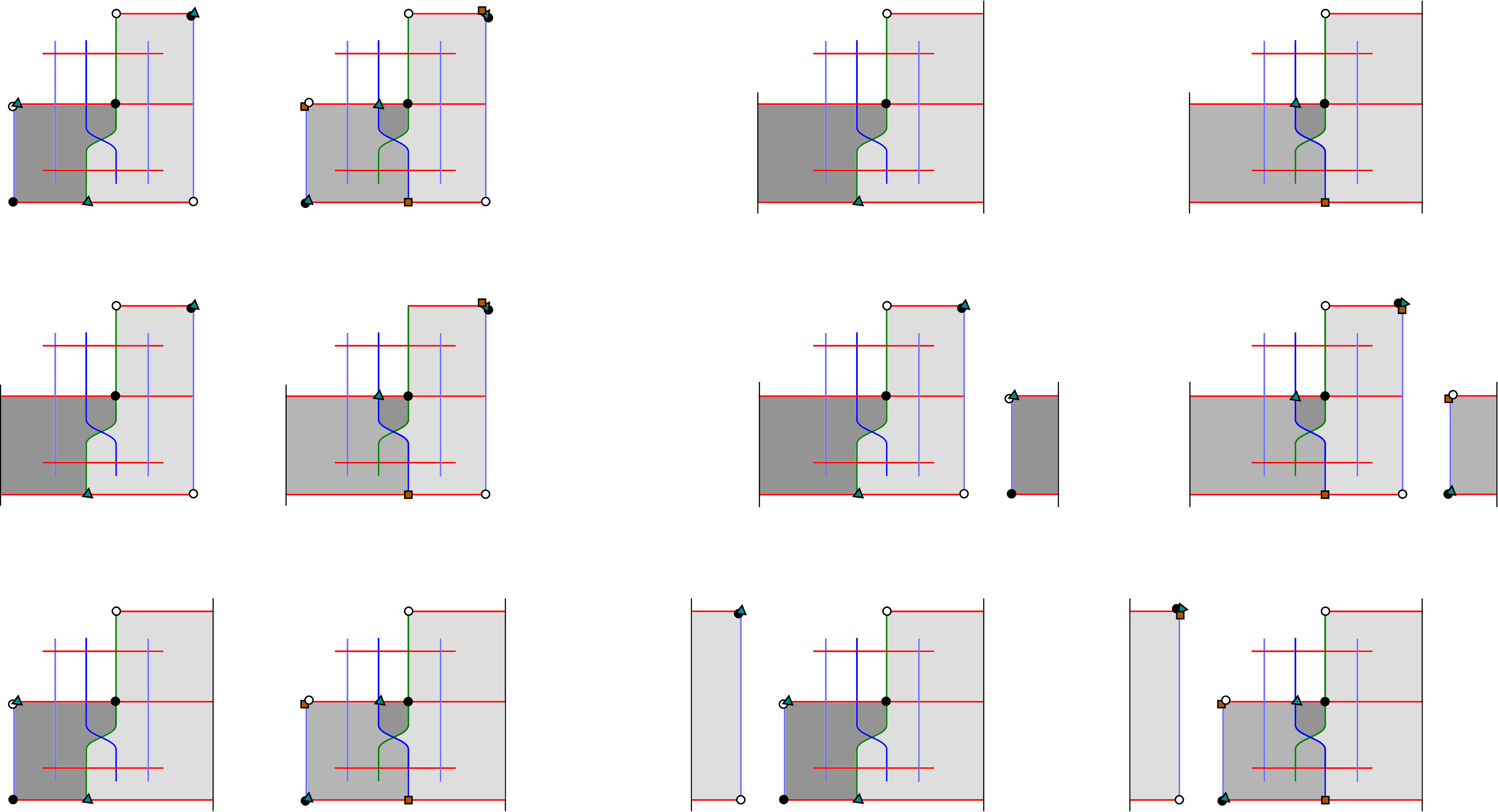}
    \caption{Corresponding domains that contribute to the two sides of Equation~\ref{eq:htpy2}, analogous to the second row, right, of \cite[Figure~9.4]{OSSbook}.}
    \label{fig:hy-hx2-2}
  \end{figure}

  \begin{figure}
    \centering
    \includegraphics[scale = .2]{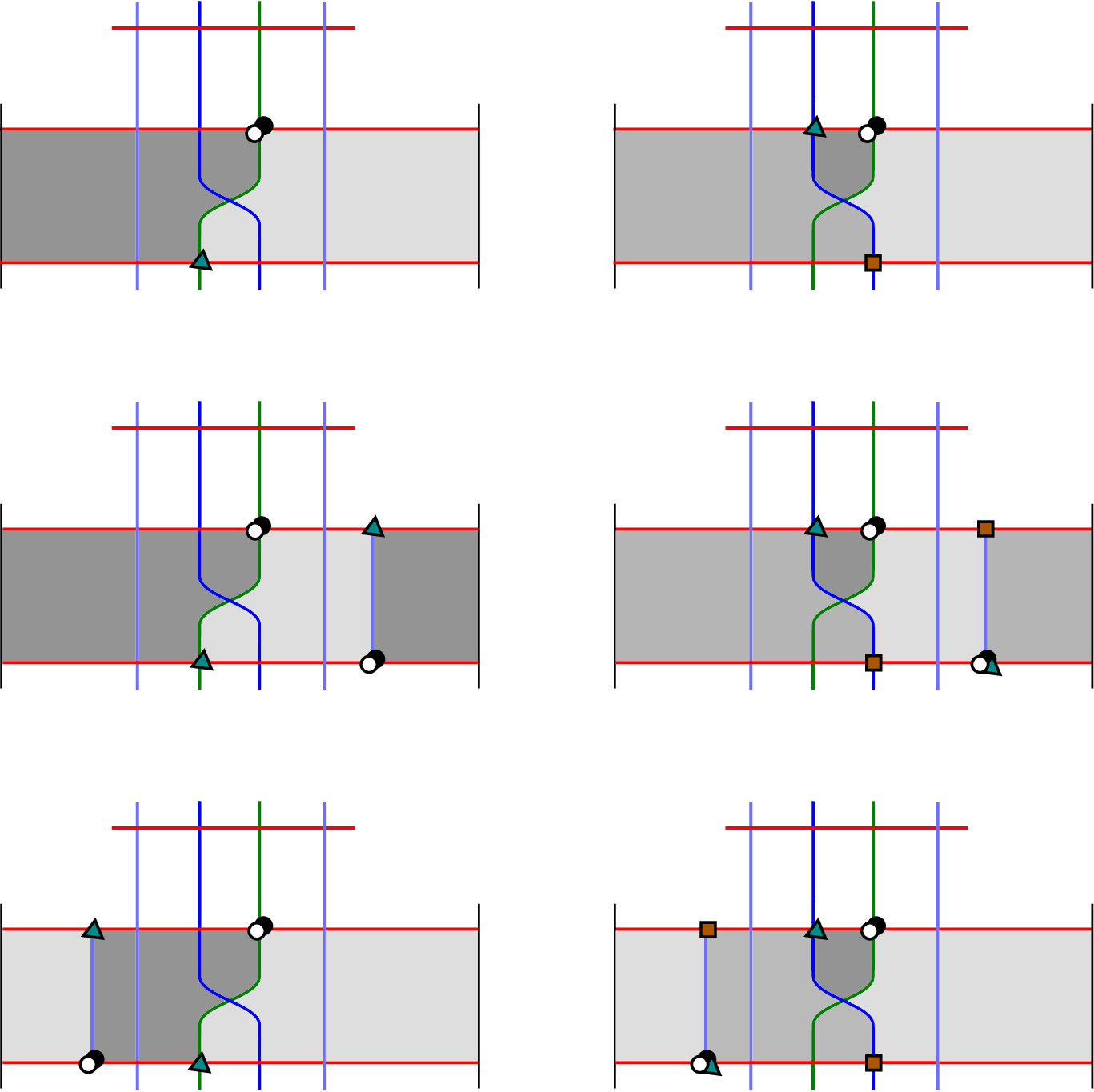}
    \caption{Corresponding domains that contribute to the two sides of Equation~\ref{eq:htpy2}, analogous to the third row, right, of \cite[Figure~9.4]{OSSbook}.}
    \label{fig:hy-hx2-3}
  \end{figure}

  \begin{figure}
    \centering
    \includegraphics[scale = .2]{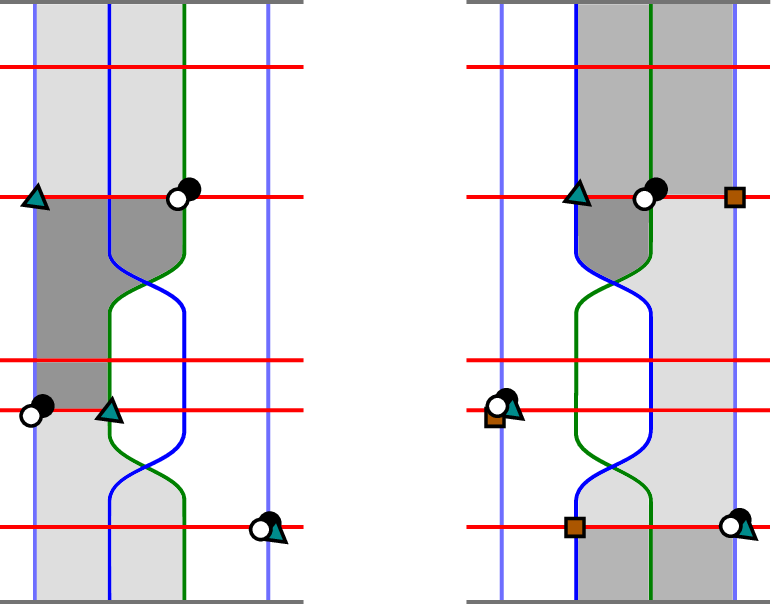}
    \caption{Corresponding domains that contribute to the two sides of Equation~\ref{eq:htpy2}, analogous to the last row, right, of \cite[Figure~9.4]{OSSbook}.}
    \label{fig:hy-hx2-4}
  \end{figure}

  In all subcases except one, the domain contributing to the left of Equation~\ref{eq:htpy1} or Equation~\ref{eq:htpy2} is the same one contributing to the right. The only exception is shown in Figure~\ref{fig:hy-hx2-4}, where the domains contributing to the two sides are both annuli of width one, but one appears to the left and one to the right of the local picture; the intersection of either domain with $\OO$ is $\set{O_4}$, and so their contributions are the same.

  Lemma~\ref{lem:hom_alg_2} applied to Equation~\ref{eq:vert_is_htpy} implies that $\Cone (f_1) \simeq \Cone (f_2)$.
\end{proof}

\begin{lem}
  \label{lem:h_is_u}
  The type $\DD$ homomorphisms $h_{X_2} \circ h_Y + h_Y \circ h_{X_2}$ and $\Id_{ \CDTDm (\HD_0')} \otimes(U_2 - U_1)$ are homotopic, and so there is a homotopy equivalence
  \[
    \Cone (h_{X_2} \circ h_Y + h_Y \circ h_{X_2})\simeq \Cone(\Id_{ \CDTDm (\HD_0')} \otimes(U_2 - U_1))
  \]
\end{lem}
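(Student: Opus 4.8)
The plan is to build an explicit chain homotopy $P$ between $h_{X_2}\circ h_Y + h_Y\circ h_{X_2}$ and $\Id_{\CDTDm(\HD_0')}\otimes(U_2-U_1)$ by counting appropriate $2$-chains, following the pattern of the analogous statement \cite[Lemma~9.2.8]{OSSbook}. First I would analyze what domains contribute to the composition $h_{X_2}\circ h_Y + h_Y\circ h_{X_2}$: these are juxtapositions $r_1 * r_2$ of empty rectangles in $\HD_0'$ where the first crosses exactly one of $Y_1,Y_2$ and the second crosses $X_2$ but neither $Y_i$ (or vice versa). By inspecting Figure~\ref{fig:hd_cut_ori} near the points $c'$, $X_1'$, $X_2$, $Y_1$, $Y_2$, I would enumerate, for each generator $\x$, the finitely many such composite domains, keeping track of which $O$'s each crosses so that the correct power of the $U_i$ variables is recorded. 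As in the proof of Lemma~\ref{lem:comm_diag}, the composite domains will fall into a small number of families — thin annuli and horizontal strips around $\alpha_i^R$, $\beta_{i,-}$ — and their total contribution should collapse to multiplication by $U_2-U_1$ (the $U_3$ and $U_4$ contributions cancelling in pairs, consistent with the signs convention of Remark following Theorem~\ref{thm:oriented}).

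Next I would define the homotopy morphism $P\colon \CDTDm(\HD_0')\to\CDTDm(\HD_0')$ of the appropriate $(M,A)$-degree by counting the ``larger'' $2$-chains — pentagons, hexagons, and the doubled-rectangle regions of Type~\eqref{case:delta_6} — that interpolate between the two sides, again with interaction with the algebras as in the proof of Equation~\ref{eqn:pent_homo} (where the differential of a bordered algebra element absorbs the decompositions of a bordered region). The key identity to verify is $\partial P = (h_{X_2}\circ h_Y + h_Y\circ h_{X_2}) + \Id_{\CDTDm(\HD_0')}\otimes(U_2-U_1)$, which I would establish by the standard juxtaposition-cancellation argument: a generic $2$-chain contributing to one term of $\partial P$ admits exactly one alternative decomposition contributing to another term, so everything cancels in pairs except the specified ``boundary'' terms. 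I would organize the enumeration of cases mirroring \cite[Figure~9.5]{OSSbook}, noting that in our setting each figure spawns several subcases according to whether the relevant region is interior, left-bordered, or right-bordered. Once $\partial P = h_{X_2}\circ h_Y + h_Y\circ h_{X_2} + \Id\otimes(U_2-U_1)$ is checked, the homotopy equivalence of mapping cones $\Cone(h_{X_2}\circ h_Y + h_Y\circ h_{X_2})\simeq\Cone(\Id_{\CDTDm(\HD_0')}\otimes(U_2-U_1))$ follows immediately, since homotopic type~$\DD$ homomorphisms have homotopy equivalent mapping cones — a standard fact (e.g.\ a special case of Lemma~\ref{lem:hom_alg_2} with $\PP=\Id$, or directly by an explicit homotopy equivalence built from $P$).

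The main obstacle I anticipate is the bookkeeping of the bordered subcases and the $U$-variable weights: unlike in \cite{OSSbook}, the $2$-chains here can run off the boundary of $\Sigma$ and contribute nontrivial algebra elements, so each of the handful of ``model'' domains in the grid picture becomes a family, and one must confirm that the algebra-element contributions $\algl{\cdot}$, $\algr{\cdot}$ and the powers $U^{(\cdot)}$ match up across each canceling pair — in particular that the differential $d_{\aun}$ (or $d_{\am{}}$) of a bordered element correctly produces all the smaller decompositions, exactly as in Figures~\ref{fig:pent_d_tree} and \ref{fig:pent_d_mult}. This is routine but delicate. A secondary point requiring care is the sign/coefficient claim $U_2 - U_1$ rather than $U_2+U_1$: over $\F 2$ this is vacuous, but since (per the Remark) we state signs as they would appear over $\Z$, I would verify that the $U_1$- and $U_2$-weighted strips appear with opposite induced orientations while the $U_3$- and $U_4$-weighted annuli cancel, consistent with the computation in Lemma~\ref{lem:comm_diag}.
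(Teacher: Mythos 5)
There is a genuine gap in your proposal: the composition $h_{X_2}\circ h_Y + h_Y\circ h_{X_2}$ does \emph{not} collapse to $\Id\otimes(U_2-U_1)$ after constructing a single homotopy, and the $U_3$ and $U_4$ contributions do not simply ``cancel in pairs.'' What the paper actually shows is that the natural homotopy $h_{X_2,Y}$ (counting empty rectangles containing $X_2$ and exactly one of $Y_1,Y_2$ --- note these are ordinary \emph{rectangles}, not the pentagons/hexagons/doubled regions you propose) exhibits $h_{X_2}\circ h_Y + h_Y\circ h_{X_2}$ as homotopic to $\Id\otimes(U_2+U_4)$. The $U_2$-term comes from the component of $\Sigma\setminus\alphas$ containing $X_2$ and the $U_4$-term from the annular component of $\Sigma\setminus\betas_-$ containing $X_2$; there is no cancellation with a $U_3$ or $U_1$ term at this stage. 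You appear to be conflating the domain analysis here with that of Lemma~\ref{lem:comm_diag}, where the different composition $\delta_{\NN,\II}^1\circ\delta_{\II,\NN}^1$ produces all four terms $U_1+U_2-U_3-U_4$. That is a different count: the $h$-maps are restricted by the $Y_i$-crossing conditions, so only two of the four ``model'' domains survive.

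The missing ingredient in your proposal is the separate algebraic fact (here \cite[Lemma~3.35]{pv}) that the $U_1$- and $U_4$-actions on $\CDTDm(\HD_0')$ are themselves chain homotopic, because $O_1$ and $O_4$ mark the same tangle strand. Only after invoking that fact, and recalling that $U_2+U_1=U_2-U_1$ over $\F_2$, does one obtain the claimed homotopy to $\Id\otimes(U_2-U_1)$. Without noticing that the direct computation lands on $U_2+U_4$ rather than $U_2-U_1$, and without the homotopy $U_4\simeq U_1$, the argument as outlined would stall: no choice of homotopy $P$ built purely from pentagon/hexagon counts near $c'$ is going to move an entire annular $U_4$-contribution to the $U_1$-strip, because those two domains lie on opposite sides of the diagram. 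Your general strategy (build a homotopy by polygon counts, verify $\partial P$ via domain cancellation, conclude the cone equivalence) is the right template, and your concern about the bordered subcases and algebra-element bookkeeping is well placed, but the specific target of the first homotopy and the need for the second are the content of the lemma.
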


\begin{proof}
  For $i = 1, 2$, let $h_{X_2,Y_i} \colon \CDTDm (\HD_0') \to \CDTDm (\HD_0')$ be the morphism defined by
  \[
    h_{X_2,Y_i} (\x) = \sum_{y \in \genset (\HD_0')} \sum_{\substack{r \in \eRect (\x, \y) \\ r \cap \YY = \set{Y_i} \\ X_2\in r}} \algl{r} \otimes U^r \y \otimes \algr{r}.
  \]
  Let $h_{X_2,Y} = h_{X_2, Y_1} + h_{X_2, Y_2}$. One can check that $h_{X_2,Y}$ has degree $(-1,-1)$; further, $h_{X_2,Y}$ sends $\NN'$ to $\NN'$ and vanishes on $\II'$. 

  Observe that on $\NN'$ the moprhism $h_{X_2} \circ h_Y$ decomposes as
  \[
    h_{X_2} \circ h_Y =  \delta_{\NN', \NN'}^1\circ h_{X_2,Y} + h_{X_2,Y}\circ \delta_{\NN', \NN'}^1 + d  h_{X_2,Y}  + \Id_{\NN'} \otimes U_2 + \Id_{\NN'} \otimes U_4.
  \]
  The map $\Id_{\NN'} \otimes U_2$ (resp.\ $\Id_{\NN'} \otimes  U_4$) comes from juxtapositions of rectangles such that the support of the resulting domain is the component of $\Sigma \setminus \alphas$ containing $X_2$ (resp.\ the annular component of $\Sigma \setminus \betas_{-}$ containing $X_2$). 

  Similarly, on $\II'$ the morphism $h_Y \circ h_{X_2}$ decomposes as
  \[
    h_Y \circ h_{X_2} = h_{X_2,Y}\circ \delta_{\II', \NN'}^1+ \Id_{\II'} \otimes U_2 + \Id_{\II'} \otimes U_4.
  \]

  Adding up the two identities, we obtain
  \[
    h_{X_2} \circ h_Y + h_Y \circ h_{X_2} = \partial h_{X_2,Y} + \Id_{ \CDTDm (\HD_0')} \otimes(U_2+U_4),
  \]
  i.e.\ $h_{X_2,Y}$ is a homotopy between $h_{X_2} \circ h_Y + h_Y \circ h_{X_2}$ and $\Id_{ \CDTDm (\HD_0')} \otimes(U_2+U_4)$. By \cite[Lemma~3.35]{pv}, the actions of $U_1$ and $U_4$ are homotopic, and so the statement of the lemma follows.
\end{proof}

\begin{proof}[Proof of Theorem~\ref{thm:oriented}]
  Define the bigraded type~$\DD$ homomorphism $\Ppm \colon \CDTDm (\HD_+) \to \CDTDm (\HD_-)$ by the two horizontal maps in \eqref{eq:comm_diag}; then by definition, $\Cone (\Ppm)$ is identified with the cone of the commutative diagram \eqref{eq:comm_diag} itself, without degree shifts. On the other hand, the cone of \eqref{eq:comm_diag} is also identified with $\Cone (\delta_{\II, \NN}^1 \circ \TT, \TT \circ \delta_{\NN', \II'}^1)$. Combining this fact with Lemmas~\ref{lem:vert_is_htpy} and \ref{lem:h_is_u}, we have
  \[
    \Cone (\Ppm) = \Cone (\delta_{\II, \NN}^1 \circ \TT, \TT \circ \delta_{\NN', \II'}^1) \simeq \Cone (\Id_{\CDTDm (\HD_0')} \otimes (U_2 - U_1)) [1] \cbrac{\frac{1}{2}}.
  \]
  Finally, there is an obvious homotopy equivalence
  \[
    \Cone (\Id_{\CDTDm (\HD_0')} \otimes (U_2 - U_1)) \simeq \Cone (\Id_{\CDTDm (\HD_0)} \otimes (U_2 - U_1))
  \]
  induced by $\PP$ and $\PP'$, which completes the proof.
\end{proof}

\begin{proof}[Proof of Theorem~\ref{thm:oriented-tilde}]
  Note that $\Id_{\CDTDm (\HD_0)} \otimes (U_2 - U_1)$ has $(M, A)$-degree $(-2, -1)$, and so its mapping cone has underlying module $\CDTDm (\HD_0) [-1] \cbrac{-1} \oplus \CDTDm (\HD_0) [0] \cbrac{0}$. Setting all $U_i$ to zero in Theorem~\ref{thm:oriented}, we obtain a homomorphism $\Ppmt$ such that
  \[
    \Cone (\Ppmt) \simeq \Cone (\Id_{\CDTDt (\HD_0)} \otimes 0) [1] \cbrac{\frac{1}{2}},
  \]
  but the latter mapping cone is simply
  \[
    \CDTDt (\HD_0) [0] \cbrac{-\frac{1}{2}} \oplus \CDTDt (\HD_0) [1] \cbrac{\frac{1}{2}}.
  \]
  A gluing argument as in the proof of Theorem~\ref{thm:ourtheorem} extends this result to any oriented skein triple.
\end{proof}

One immediately recovers the closed case. We outline the proof below. 

\begin{proof}[Proof of Corollary~\ref{cor:hfk-oriented}]
  The proof is analogous to that of Corollary~\ref{cor:hfk}. Theorems~\ref{thm:oriented} and \ref{thm:oriented-tilde} together with the gluing property of tangle Floer homology (see \cite[Theorem~6.1]{pv} for ``minus'') imply the existence of a chain map $F_{+,-} \colon \CFKm (L_+) \to \CFKm (L_-)$ whose mapping cone corresponds to $\CFKm(L_0)$, so that the exact triangles on homology follow.
\end{proof}

Finally, we prove Corollary~\ref{cor:gl11}.

\begin{proof}[Proof of Corollary~\ref{cor:gl11}]
  By the type~$\DA$ version of Theorem~\ref{thm:oriented-tilde}, we have a type~$\DA$ homomorphism $\Ppmt  \colon \CDTAt (\eT_+) \to \CDTAt (\eT_-)$ of $(M, A)$-degree $(0,0)$ and a homotopy equivalence
  \[
    \Cone (\Ppmt) \simeq \CDTAt (\eT_0) \sqbrac{0} \cbrac{-\frac{1}{2}} \oplus \CDTAt (\eT_0)\sqbrac{1} \cbrac{\frac{1}{2}}.
  \]
  By \cite{epv}, we have homomorphisms on Grothendieck groups
  \[  \sqbrac{\Cone ( \Ppmt )} = \sqbrac{\CDTAt (\eT_+)\sqbrac{1}\oplus \CDTAt (\eT_-)} = -Q(\eT_+) + Q(\eT_-)
  \]
  and
  \begin{align*}
    \sqbrac{ \CDTAt (\eT_0) \sqbrac{0} \cbrac{-\frac{1}{2}} \oplus \CDTAt (\eT_0)\sqbrac{1} \cbrac{\frac{1}{2}}}
    & = q^{-1} \sqbrac{\CDTAt (\eT_0)} - q \sqbrac{ \CDTAt (\eT_0)}\\
    & = q^{-1} Q(\eT_0) - q Q(\eT_0),
  \end{align*}
  where we have used the fact that the Maslov grading descends to powers of $-1$, and twice the Alexander grading descends to powers of $q$ in the Grothendieck group. This completes the proof.
\end{proof}


\bibliographystyle{hamsplain2}
\bibliography{master}

\end{document}